\newtheorem{theorem}{Theorem}[section]
\newtheorem{lemma}[theorem]{Lemma}
\newtheorem{prop}[theorem]{Proposition}
\newtheorem{cor}[theorem]{Corollary}
\theoremstyle{definition}
\newtheorem{definition}[theorem]{Definition}
\theoremstyle{remark}
\newtheorem{remark}[theorem]{Remark}
\numberwithin{equation}{section}
\newcommand \st[2]{{#1}^*({#2})}
\begin{document}

\title{Automorphisms of order three on numerical Godeaux surfaces}

\author{Eleonora Palmieri}
\address{Dipartimento di Matematica, Universit\`a degli Studi ``Roma Tre''\\ 
Largo San Leonardo Murialdo 1, I-00146, Roma\\ 
Italy\\
Fax: +39 06 5488 8072}
\email{palmieri@mat.uniroma3.it}

\subjclass[2000]{14J29, 14J50, 14E20}
\keywords{Godeaux surfaces, automorphisms, triple planes}
\date{}

\begin{abstract}
We prove that a numerical Godeaux surface cannot have an automorphism of order three.
\end{abstract}

\maketitle

\section{Introduction}

This paper is devoted to a basic open problem about  surfaces: the classification of surfaces of general type and their automorphisms.
We will  work over the complex numbers. 
Complex surfaces have been classified by Enriques and Kodaira in terms of their Kodaira dimension $\kappa$.

While surfaces with $\kappa\le 1$ are quite well-known, we have much less information about surfaces of general type, i.e. those for which $\kappa=2$. Their complete classification is still an open problem even though there are important contributions from many mathematicians (for a general reference see \cite{BCP}).

We know that minimal surfaces of general type are subdivided into classes according to the value of three main invariants: the self-intersection of the canonical divisor $K_S^2$, the holomorphic Euler characteristic $\chi(S,\mathcal O_S)$ and the geometric genus $p_g(S):=h^0(S,\mathcal O_S(K_S))=h^2(S,\mathcal O_S)$. 
Here we are mainly interested in those surfaces with the lowest invariants:
\begin{definition}\label{d.Go}
A {\bf numerical Godeaux surface} is a minimal complex surface of general type $S$ with $p_g(S)=0, K_S^2=1,\chi(\mathcal O_S)=1$.
\end{definition}

The first example of such a surface can be found in \cite{G} and it is the quotient of a smooth quintic in $\mathbb P^3$ with a free $\mathbb Z/5\mathbb Z$ action. This example turns out to have non-trivial torsion, and in fact it has $\mathbb Z/5\mathbb Z$ as a torsion group.

Much information about the torsion group of numerical Godeaux surfaces can be obtained by the study of the base points of the tricanonical system $|3K_S|$. This is an important result by Miyaoka (see \cite{M}). 
It is known (see \cite{M, R}) that the moduli spaces of numerical Godeaux surfaces with torsion group $\mathbb Z/3\mathbb Z$,  $\mathbb Z/4\mathbb Z$ and  $\mathbb Z/5\mathbb Z$  are irreducible of dimension 8.

 As for every surface of general type $Aut(S)$ is a finite group (see also \cite{X1, X2, X}). It is still a quite difficult problem to determine the group $Aut(S)$.

The simplest case is that of a surfaces $S$ admitting an involution, i.e. an automorphism of order 2. For Godeaux surfaces in \cite{KL} Keum and Lee study the fixed locus of the involution under the hypothesis that the bicanonical system $|2K_S|$ of the surface has no fixed component.

In their work \cite{CCM} Calabri, Ciliberto and Mendes Lopes complete the above study by removing this hypothesis. Their result is the following:
 \begin{theorem}
A numerical Godeaux surface $S$ with an involution is birationally equivalent to one of the following:
\begin{enumerate}
\item a double plane of Campedelli type;
\item a double plane branched along a reduced curve which is the union of two distinct lines $r_1,r_1$ and a curve of degree 12 with the following singularities:
\begin{enumerate}
\item[$\bullet$] the point $q_0=r_1\cap r_2$ of multiplicity 4;
\item[$\bullet$] a point $q_i\in r_i$, $i=1,2$ of type $[4,4]$, where the tangent line is $r_i$;
\item[$\bullet$] further three points $q_3,q_4,q_5$ of multiplicity 4 and a point $q_6$ of type $[3,3]$, such that there is no conic through $q_1,\dots,q_6$;
\end{enumerate}
\item a double cover of an Enriques surface branched along a curve of arithmetic genus 2.
\end{enumerate}
In case 3 the torsion group of $S$ is ${\rm Tors}(S)=\mathbb Z/4\mathbb Z$, whilst in case 2 is either $\mathbb Z/2\mathbb Z$ or $\mathbb Z/4\mathbb Z$.
 \end{theorem}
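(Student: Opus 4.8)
The plan is to study the involution $\sigma$ through its degree-two quotient map and to read off the three families from the numerical data of the resulting double cover. Let $S$ denote the smooth minimal model. The fixed locus of $\sigma$ is the disjoint union of a smooth curve $R$ (possibly empty) and finitely many isolated points $P_1,\dots,P_k$, so the quotient $\Sigma=S/\sigma$ has exactly $k$ ordinary double points, one over each $P_i$, and is smooth elsewhere. Blowing up $S$ at the $P_i$ while simultaneously resolving the nodes of $\Sigma$ produces a smooth resolution $W\to\Sigma$ together with a flat double cover $f\colon\tilde S\to W$, where $\tilde S$ is the blow-up of $S$ at the $k$ fixed points. The branch divisor has the shape $\bar B=B_0+\sum_{i=1}^k N_i$, where $B_0$ is the reduced image of $R$ and the $N_i$ are the $(-2)$-curves resolving the nodes, and we set $\bar B\equiv 2L$.

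Next I would run the standard double-cover invariant formulas $K_{\tilde S}^2=2(K_W+L)^2$ and $\chi(\mathcal O_{\tilde S})=2\chi(\mathcal O_W)+\frac{1}{2}L(L+K_W)$. Since $\tilde S$ is $S$ blown up at $k$ points, $\chi(\mathcal O_{\tilde S})=\chi(\mathcal O_S)=1$ and $K_{\tilde S}^2=K_S^2-k=1-k$. Because $p_g(S)=q(S)=0$, the decomposition $f_*\mathcal O_{\tilde S}=\mathcal O_W\oplus\mathcal O_W(-L)$ forces $q(W)=0$, $p_g(W)=0$ and hence $\chi(\mathcal O_W)=1$, together with $h^0(K_W+L)=0$; feeding this back constrains $k$ and fixes the numerical type $(K_W+L)^2$ of the cover. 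The Kodaira dimension of $W$ is then controlled by $K_W$ and $L$: since $S$ is of general type, the covering data cannot make $W$ itself of general type, and a Castelnuovo/Enriques-style analysis of a surface with $p_g=q=0$ carrying this branch system leaves only two possibilities — either $W$ is rational, or $W$ is birational to an Enriques surface.

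In the Enriques case the geometry is rigid: after contracting the exceptional curves, $\tilde S\to W$ presents $S$ as a double cover of an Enriques surface, and computing the arithmetic genus of the branch curve by adjunction gives $p_a=2$, while the induced map on fundamental groups yields $\mathrm{Tors}(S)=\mathbb Z/4\mathbb Z$; this is case (3). In the rational case I would produce an explicit plane model by mapping $W$ (or a suitable blow-down) to $\mathbb P^2$ through a linear system dictated by $K_W+L$ and studying the image of the branch locus. The two sub-cases are separated by the position and type of its singularities: one configuration gives a double plane of Campedelli type (case (1)), and the other gives a double plane branched along two lines $r_1,r_2$ together with a degree-$12$ curve whose singularities $q_0,\dots,q_6$ — the quadruple point $q_0=r_1\cap r_2$, the two $[4,4]$ points on the $r_i$, three further quadruple points and a $[3,3]$ point subject to the no-conic condition — are exactly those forced by adjunction and by the requirement that the cover reproduce $K_S^2=1$, $\chi(\mathcal O_S)=1$, $p_g(S)=0$ (case (2)); the torsion there computes to $\mathbb Z/2\mathbb Z$ or $\mathbb Z/4\mathbb Z$.

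The main obstacle is this last step: pinning down the precise singularity scheme of the degree-$12$ branch curve. Each prescribed multiplicity and infinitely-near type ($[4,4]$ and $[3,3]$) has to be derived by tracking how the minimal resolution and the canonical bundle of $S$ descend to $\mathbb P^2$, and one must show both that these singularities are forced and that no constraint beyond the stated no-conic condition survives. Eliminating the spurious configurations that the numerics alone would permit, together with the companion torsion computations in cases (2) and (3), is where the bulk of the technical work lies; by contrast, the numerical reduction to ``rational or Enriques'' in the first two paragraphs is comparatively routine.
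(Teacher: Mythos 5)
First, a point of fact: the paper you are comparing against does not prove this statement at all. It is quoted verbatim, with attribution, from Calabri--Ciliberto--Mendes Lopes \cite{CCM}, and appears in the introduction only as motivation for the order-three problem the paper actually treats. So there is no internal proof to measure your attempt against; the relevant comparison is with the strategy of \cite{CCM}, and your first two paragraphs do reproduce its opening moves correctly: form the resolved double cover $f\colon\tilde S\to W$ of the quotient, use $f_*\mathcal O_{\tilde S}=\mathcal O_W\oplus\mathcal O_W(-L)$ and the standard double-cover formulas to get $q(W)=p_g(W)=0$, $h^0(W,K_W+L)=0$ and $L(L+K_W)=-2$, and note the constraint coming from $K_{\tilde S}^2=2(K_W+L)^2=1-k$ (which, incidentally, only forces $k$ odd --- it does not ``fix the numerical type'' as you claim; pinning down $k=5$ requires the fixed-point formulas, i.e.\ the order-two analogue of the computation that yields \eqref{e.fixed} and \eqref{e.ky} in the present paper, and you never carry it out).

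Beyond that, the proposal has genuine gaps precisely where the content of the theorem lies. The dichotomy ``$W$ rational or Enriques'' needs Kodaira dimension $1$ excluded as well as $2$; your appeal to a ``Castelnuovo/Enriques-style analysis'' names no mechanism for ruling out a properly elliptic quotient. More seriously, your derivation of the three birational models is circular as written: you say the two rational sub-cases are ``separated by the position and type of the singularities'' of the branch curve, but the quadruple point $q_0$, the two $[4,4]$ points tangent to $r_1,r_2$, the three further quadruple points, the $[3,3]$ point and the no-conic condition are exactly what has to be \emph{derived}, and ``a linear system dictated by $K_W+L$'' does not by itself produce either the degree-$10$ Campedelli curve or the two-lines-plus-degree-$12$ configuration. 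In \cite{CCM} these come out of a case-by-case study of the bicanonical pencil $|2K_S|$ (note $h^0(2K_S)=2$) --- in particular of whether it has a fixed component, which is the hypothesis Keum--Lee had assumed away --- and of how this pencil descends to the quotient; your sketch never identifies this pencil as the organizing object. Finally, the torsion assertions ($\mathbb Z/4\mathbb Z$ in case 3, $\mathbb Z/2\mathbb Z$ or $\mathbb Z/4\mathbb Z$ in case 2) are stated via ``the induced map on fundamental groups'' with no computation. Your closing paragraph concedes that the ``bulk of the technical work'' remains; since that work \emph{is} the theorem, what you have is a correct road map consistent with the known proof, not a proof.
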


We recall that a double plane of Campedelli type is a double plane branched along a curve of degree 10 with a 4-tuple point and 5 points of type $[3,3]$, not lying on a conic.
An example of such a double plane can be found in \cite{S}.

 We want to extend the method used in \cite{CCM} in order to classify such numerical Godeaux surfaces $S$ having an automorphism $\sigma$ of order three.
Our main result is 
\begin{theorem}\label{t.final}
A numerical Godeaux surface $S$ cannot have an automorphism of order 3.
\end{theorem}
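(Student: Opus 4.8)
The plan is to assume $S$ carries an automorphism $\sigma$ of order $3$ and to reach a contradiction by studying the quotient $X = S/\langle\sigma\rangle$ together with the associated triple cover $\pi\colon S \to X$, adapting to triple covers the double-cover analysis of \cite{CCM}. First I would record the numerology forced by Definition~\ref{d.Go}: from $\chi(\mathcal O_S)=1$ and $p_g(S)=0$ one gets $q(S)=0$, hence $h^1(\mathcal O_S)=h^2(\mathcal O_S)=0$, and Noether's formula yields $e(S)=12\chi(\mathcal O_S)-K_S^2=11$, so $b_2(S)=9$ with $H^2(S,\mathbb C)=H^{1,1}(S)$ of pure type $(1,1)$ and intersection form of signature $(1,8)$. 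Because $\sigma^3=\mathrm{id}$ and $\sigma^*$ preserves this form and fixes $K_S$, its eigenvalues on $H^2$ are cube roots of unity; writing $a\ge 1$ for the multiplicity of $1$ and $b$ for that of each primitive cube root, $a+2b=9$ and $\operatorname{tr}(\sigma^*\!\mid H^2)=a-b=9-3b$.

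I would next determine $\mathrm{Fix}(\sigma)$ by combining the topological and holomorphic Lefschetz fixed point formulas. The fixed locus is a disjoint union of isolated points, at which $\sigma$ acts with tangent eigenvalues of local type $\frac13(1,1)$, $\frac13(1,2)$ or $\frac13(2,2)$ (none equal to $1$), together with finitely many smooth pairwise-disjoint curves. The topological formula gives $e(\mathrm{Fix}\,\sigma)=2+(9-3b)$, while the holomorphic formula gives Lefschetz number $1$ (since $\sigma$ acts trivially on $H^\bullet(\mathcal O_S)$); the three point-types contribute $\frac{1+i\sqrt3}{6}$, $\frac13$ and $\frac{1-i\sqrt3}{6}$ respectively, and any fixed curves contribute through their genera and self-intersections. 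Reality of the holomorphic number equates the numbers of $\frac13(1,1)$- and $\frac13(2,2)$-points, and matching against the topological count leaves only finitely many configurations; for instance, in the absence of fixed curves one is forced to exactly five isolated fixed points (two of each of the types $\frac13(1,1)$, $\frac13(2,2)$ and one of type $\frac13(1,2)$).

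I would then pass to the quotient, driven by the divisibility remark that $3\nmid K_S^2=1$, so $\pi$ cannot be \'etale in codimension one over a smooth $X$ and isolated fixed points, hence quotient singularities on $X$, must occur in every configuration. The singularities of $X$ are of type $A_2$ (from $\frac13(1,2)$) and $\frac13(1,1)$ (note $\frac13(2,2)\cong\frac13(1,1)$); these being rational, the minimal resolution $Y$ satisfies $\chi(\mathcal O_Y)=\chi(\mathcal O_X)=\frac13\bigl(L(\mathrm{id})+L(\sigma)+L(\sigma^2)\bigr)=1$ with $p_g(Y)=q(Y)=0$. Computing $K_X^2$ from $K_S=\pi^*K_X+(\text{ramification})$ and correcting by the discrepancies ($-\frac13$ for each $\frac13(1,1)$-point, crepant for $A_2$) gives $K_Y^2$, hence the minimal model $Z$ of $Y$, a minimal surface with $p_g=q=0$ whose Kodaira dimension I would determine case by case. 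The eigensheaf decomposition $\pi_*\mathcal O_S=\mathcal O_X\oplus L_1^{-1}\oplus L_2^{-1}$ equips $S\to X$ with Tschirnhausen and branch data, and $\chi(\mathcal O_S)=1$ forces $\chi(L_1^{-1})+\chi(L_2^{-1})=0$; combined with the induced $\sigma$-action on the pluricanonical systems $H^0(mK_S)$ (where $P_2=2$, $P_3=4,\dots$), this pins down the relevant eigenspace dimensions and, exactly as the quotient in \cite{CCM} is realised as a double plane, should identify $X$ birationally as a triple plane.

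The contradiction is then extracted from the triple-cover numerics: the forced branch and ramification data must be compatible with $K_S^2=\chi(\mathcal O_S)=1$, and I expect either a numerical impossibility or the non-existence of a plane branch curve carrying the prescribed singularities. The main obstacle will be precisely this last stage, namely the exhaustive elimination over all Lefschetz configurations and over the possible Kodaira dimensions of $Z$, together with the triple-plane branch-curve geometry. In particular the configurations containing a fixed curve, where $\pi$ is genuinely ramified in codimension one and $X$ acquires a true branch divisor, are expected to be the most delicate to rule out.
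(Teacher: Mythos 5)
Your preparatory steps are sound and in substance coincide with sections 1--2 of the paper: the commutative diagram \eqref{e.CD}, the splitting $\pi_*\mathcal O_X=\mathcal O_Y\oplus\mathcal O_Y(-L_1)\oplus\mathcal O_Y(-L_2)$, and the fixed-point count (your Lefschetz computation is a legitimate variant of the paper's Hurwitz/Euler-number argument \eqref{e.fixed}, and your five-point configuration in the absence of fixed curves matches the paper's case (iii) with $R_0=0$, i.e. $h_1=4$, $h_2=1$). The genuine gap is that your proposal stops exactly where the proof begins, and you even say so: the contradiction is ``expected'' rather than derived. The theorem is pure non-existence, so there is no single structure (``$S$ is a triple plane with branch curve of such-and-such degree'') that can be contradicted in one stroke; the elimination must be manufactured case by case, and your plan supplies no mechanism for it. Concretely, the engine of the paper is the $\sigma$-invariant part $\Lambda$ of the \emph{tricanonical} system, resting on Miyaoka's results that $\phi_{|3K_S|}$ is birational (which is what excludes the fully invariant case $h^0(Y,\mathcal O_Y(3K_Y+2B))=4$, remark \ref{r.4}) and that the moving part satisfies $AK_S\ge 2$ (lemma \ref{le.sub}); this descends to a pencil or net $|N|$ on $Y$ with $N^2=3$, and the whole analysis runs through the chain of adjoints $N_1,N_2,\dots$ obtained by contracting $(-1)$-cycles $Z$ with $ZN=0$ via \cite[lemma 2.2]{CCM2}. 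None of this appears in your sketch, and without it the ``finitely many Lefschetz configurations'' cannot be attacked. Moreover cases (i) and (ii) are killed not by branch-curve geometry at all, but by showing that the disjoint negative curves $E'$, $F'$, $H'$, $B_0$ force too large a contribution $\delta$ to the Euler number of the fibration induced by the moving part (lemma \ref{l.-n}, theorems \ref{t.i} and \ref{t.ii}) --- an idea your proposal does not anticipate.

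Even in the surviving case (iii), where your triple-plane endgame is indeed how the ruled cases are treated, the plane model alone yields nothing: one first needs the complete lists of possibilities for $(A^2,AR_0,D)$ (propositions \ref{p.list0}--\ref{p.list2}) and their partial exclusion (theorem \ref{t.iii2}), then a diophantine analysis of plane degrees and multiplicities (e.g.\ the $(d-6)^2=0$ computation in theorem \ref{t.no0}); and in the Del Pezzo cases the contradiction ultimately comes from Cremona reduction to explicit multiplicity tables followed by proximity inequalities and the mod-$3$ eigenvalue conditions on the branch divisor at infinitely near points (propositions \ref{p.3l-1}--\ref{p.3l-13}) --- machinery entirely outside your outline. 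Finally, your closing prediction that configurations with a fixed curve would be the most delicate is backwards: the paper shows quickly that $R_0K_S\le 1$, eliminates $R_0K_S=1$ in section \ref{s.i}, and proves that in the surviving case $R_0$ is at most a single $(-2)$-curve; the hard work is precisely the nearly-\'etale cases $\ell=0,1$. In short: correct framework and correct opening moves, but the exhaustive elimination you defer \emph{is} the theorem, and nothing in the proposal would produce it.
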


It is possible to construct (see also \cite{Ca, T}) a minimal smooth resolution of the cover $p:S\longrightarrow \Sigma=S/\sigma$, i.e. a commutative diagram
\begin{equation}\label{e.CD}
\begin{CD}
X    @>\varepsilon>>  S\\
@V\pi VV    @VVpV\\
Y    @>\eta>> \Sigma
\end{CD}
\end{equation}
where $X$ and $Y$ are smooth surfaces and $\pi:X\longrightarrow Y$ is the triple cover induced by $\sigma$. The main idea is then to apply the theory of abelian covers following \cite{P}.

We start our analysis, using Hurwitz formula and the topological Euler characteristic $e$ to estimate the number of isolated fixed points of the action of $\sigma$ on $S$. Such points can be mapped either to ordinary triple points or to double points of type $A_2$. We determine some basic properties of the invariant part $\Lambda$ of the tricanonical system $|3K_S|$, which can be either a pencil or a net and it is mapped to a system $|N|$ over the quotient surface $Y$. Moreover we study the adjoint systems to $|N|$ with the help of \cite[lemma 2.2]{CCM2}. All the relevant  numerical properties are collected in proposition \ref{p.comp}. We also have a subdivision in three major cases (see the list of page \pageref{list}) according to the intersection number $R_0K_S$ and $h_2$, where $R_0$ is the divisorial part of the ramification locus of $\sigma$ while $h_2$ is the number of isolated fixed points of $\sigma$ mapped to $A_2$-singularities.

A numerical analysis of these three cases is worked out in sections \ref{s.i}, \ref{s.ii}, \ref{s.iii} where using some properties of nef divisors and  fibrations it is shown (see theorems \ref{t.i} and \ref{t.ii}) that the first two cases cannot occur.
In the third case the system $|N|$ on $Y$ (and also $\Lambda$ on $S$) is a pencil and its movable part induces a fibration over $Y$. An analysis of the singular fibres determines the  possibilities listed in theorem \ref{t.iii}. It is quite easy to see, although it is a very important information, that $Y$ is a smooth rational surface (proposition \ref{p.rat}).

Sections \ref{s.more} and \ref{s.other} are devoted to a deeper study of the adjoint systems to the pencil $|N|$ and to exclude some of the cases coming from theorem \ref{t.iii}. We also divide the remaining group of cases between Del Pezzo cases and ruled cases (see definitions \ref{d.rul} and \ref{d.DP}), since either $Y$ is a blow-up of $\mathbb P^2$ at a certain number of points, or $Y$ has a rational pencil with self-intersection 0. Moreover we show that the divisorial part $R_0$ of the ramification locus of the order three automorphism $\sigma$ on the numerical Godeaux surface $S$ is either 0 or it has only one irreducible component. 

Last sections deal with a more geometric study. We first analyze the ruled cases.
We show that $Y$ after contraction of suitable curves can be mapped onto $\mathbb F_0,\mathbb F_1$ or $\mathbb F_2$ and that, by blowing up a point and contracting again, we can always reduce to $\mathbb F_1$. Then we can actually see, birationally speaking, our surface $S$ as triple plane.

A computation of the movable part $|A'|$ of the pencil $|N|$ on $Y$ allows us to show that ruled cases cannot actually occur. 

Finally we study the Del Pezzo cases where the rational surface $Y$ is mapped to the projective plane blown-up at seven, eight or thirteen points.
The computation of the exceptional curves coming from the blow-up of the isolated fixed points on $S$ tells us that also Del Pezzo cases do not occur. 

One might now ask whether there are numerical Godeaux surfaces with automorphisms of order $p>3$ and, if so, might want to classify them. As we have seen, this is not an easy problem in general. However we notice that Stagnaro's  construction (see \cite{S}) gives us an example of a numerical Godeaux surface $S$ with an order 5 automorphism. In fact in this case the surface $S$ is birationally equivalent to a double plane
\begin{equation}\label{e.double}
z^2=f_{10}(x,y)
\end{equation}
where $f_{10}(x,y)$ is an irreducible polynomial of degree 10 which is invariant under the plane transformation 
$(x,y)\longrightarrow (\lambda x,\lambda^2 y)$ where   $\lambda=e^{2\pi i/5}$.
One can easily show that 
\begin{equation*}
(x,y,z)\longrightarrow (\lambda x,\lambda^2 y,z)
\end{equation*}
is an automorphism of order 5 on \eqref{e.double} hence on the numerical Godeaux surface $S$.
Thus the non-existence of order 3 automorphisms on numerical Godeaux surfaces appears as a quite surprising result.

The results contained in this paper are part of the author's Ph.D. thesis \cite{Pa} which can be also found  at the following web address 

\begin{center}
 http://ricerca.mat.uniroma3.it/dottorato/Tesi/tesipalmieri.pdf.
\end{center}

\subsection*{Notation}
Throughout the paper linear equivalence of divisors is denoted by $\equiv$, whereas numerical equivalence is denoted by $\sim$. The intersection product of two divisors $A$ and $B$ on a surface  is denoted by $AB$.  The remaining notation is standard in algebraic geometry.
 
\subsection*{Acknowledgements}
I would like to heartily thank prof. Ciro Ciliberto, who introduced me to this problem, for his guidance and his constant encouragement during the preparation of \cite{Pa}.
I also wish to thank prof. Margarida Mendes Lopes and prof. Fabrizio Catanese for their careful reading and for many useful suggestions and remarks.

\section{Preliminary results}

Let us consider a numerical Godeaux surface $S$ (see definition \ref{d.Go}) with an order 3 automorphism $\sigma$ and let $p:S\longrightarrow \Sigma$ be the projection of $S$ to its quotient $\Sigma=S/<\sigma>$. 
Let also  $\pi:X\longrightarrow Y$ be the resolution of the cover $S\longrightarrow \Sigma$ with $X$ and $Y$ smooth as in \cite{Ca, T}.
So we have the commutative diagram \eqref{e.CD}.

Let us fix the notation: $R_0$ is the ramification divisor of $p$, $h_1$ is the number of isolated fixed points $p_i$ of $\sigma$ which descend to triple point singularities of $\Sigma$, whereas $h_2$ is the number of isolated fixed points $q_j$ of $\sigma$ which descend to double point singularities of $\Sigma$. 
We also set  $E=\sum_{i=1}^{h_1}E_i$ where $E_i$ is the exceptional curve corresponding to the point $p_i$. We will denote the reducible $(-1)$-curve which contracts to a point $q_j$  by $F_j+G_i+H_j$ where $F_j,H_j$ are $(-1)$-curves and $G_j$ is a $(-3)$-curve with $F_jG_j=H_jG_j=1$, $F_jH_j=0$. The sum of the curves $F_i$, $G_i$ and $H_i$ will be similarly denoted by $F$, $G$, $H$.  Let finally $B_0=\pi(\st\varepsilon{R_0})$ and $E'_i$, $F'_i$ etc. be  the images of $E_i$, $F_i$,... via $\pi$.

So we have $R=Ram(\pi)=\st\varepsilon{R_0}+E+F+H$ and, by Hurwitz formula,
\begin{equation}\label{e.Hur}
K_X=\st\pi{K_Y}+2R=\st\pi{K_Y}+2\st\varepsilon{R_0}+2E+2F+2H
\end{equation}
while since $X$ is a blow-up of $S$
\begin{equation}\label{e.Bl}
K_X=\st\varepsilon{K_S}+E+2F+G+2H
\end{equation}

\begin{lemma}
We have
\begin{equation}\label{e.rk}
\st\varepsilon{R_0} K_X=R_0K_S, \quad \st\varepsilon{R_0}\st\pi{K_Y}=B_0K_Y
\end{equation}
\begin{equation}\label{e.BKY}
B_0K_Y=R_0K_S-2R_0^2.
\end{equation}
\end{lemma}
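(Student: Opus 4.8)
The plan is to derive all three identities from the projection formula, using the two expressions \eqref{e.Hur} and \eqref{e.Bl} for $K_X$. The two structural facts I would rely on throughout are that $\st\varepsilon{R_0}$ is a pullback along the birational morphism $\varepsilon$, so it meets every $\varepsilon$-exceptional curve in zero and satisfies $\st\varepsilon{A}\cdot\st\varepsilon{B}=AB$, and that the curves $E_i,F_j,G_j,H_j$ are exactly the curves contracted by $\varepsilon$.

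For the first equality in \eqref{e.rk}, I would substitute \eqref{e.Bl} to obtain
\[
\st\varepsilon{R_0}K_X=\st\varepsilon{R_0}\st\varepsilon{K_S}+\st\varepsilon{R_0}(E+2F+G+2H).
\]
The last term vanishes because all of $E,F,G,H$ are $\varepsilon$-exceptional, while $\st\varepsilon{R_0}\st\varepsilon{K_S}=R_0K_S$ by the projection formula for $\varepsilon$. For the second equality in \eqref{e.rk} I would apply the projection formula to the triple cover $\pi$, giving $\st\varepsilon{R_0}\st\pi{K_Y}=\pi_*(\st\varepsilon{R_0})\,K_Y$, and then identify $\pi_*(\st\varepsilon{R_0})$ with $B_0$. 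Finally, for \eqref{e.BKY} I would intersect the Hurwitz formula \eqref{e.Hur} with $\st\varepsilon{R_0}$: the terms $2\st\varepsilon{R_0}(E+F+H)$ drop out by exceptionality and $\st\varepsilon{R_0}^2=R_0^2$, so that
\[
R_0K_S=\st\varepsilon{R_0}K_X=\st\varepsilon{R_0}\st\pi{K_Y}+2R_0^2=B_0K_Y+2R_0^2,
\]
which rearranges to the claim.

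The main obstacle is the identification $\pi_*(\st\varepsilon{R_0})=B_0$ underlying the second equality: one must verify that, as a cycle and not merely set-theoretically, the pushforward of the ramification component $\st\varepsilon{R_0}$ equals its branch image $B_0$. This is where the nature of $\pi$ as a cyclic cover of prime degree is used, since along its ramification such a cover is totally ramified and hence restricts to a degree-one map onto the branch locus; I would make this precise via the abelian cover theory of \cite{P}. Everything else is bookkeeping with the projection formula.
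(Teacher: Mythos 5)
Your proposal is correct and follows essentially the same route as the paper: both proofs intersect the two expressions \eqref{e.Hur} and \eqref{e.Bl} for $K_X$ with $\st\varepsilon{R_0}$, kill the exceptional terms, and equate the results to get \eqref{e.BKY}. The only cosmetic difference is in justifying $\st\varepsilon{R_0}\st\pi{K_Y}=B_0K_Y$: the paper uses the pullback relation $\st\pi{B_0}=3\st\varepsilon{R_0}$ together with $\st\pi{A}\,\st\pi{B}=3AB$, while you use the projection formula with the pushforward identity $\pi_*(\st\varepsilon{R_0})=B_0$ --- two equivalent formulations of the fact that the cyclic triple cover is totally ramified along $R_0$, so your attention to verifying the cycle-theoretic identity is exactly the right (and the only nontrivial) point.
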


\begin{proof}
Let us compute $\st\varepsilon{R_0} K_X$ using formulas \ref{e.Hur} and \ref{e.Bl}. We notice that, since $\st\pi {B_0}=3R_0$,
$\st\varepsilon{R_0} \st\pi{K_Y}=B_0K_Y$.
By \eqref{e.Hur} we obtain
\begin{equation*}
\st\varepsilon R_0 K_X=\st\varepsilon R_0(\st\pi{K_Y}+2\st\varepsilon R_0+2E+2F+2H)
=B_0 K_Y+2R_0^2
\end{equation*}
Instead, by \eqref{e.Bl} we find
\begin{equation*}
\st\varepsilon{R_0} K_X=\st\varepsilon{R_0}(\st\varepsilon{K_S}+E+2F+G+2H)=R_0K_S
\end{equation*}
The desired result follows.
\end{proof}

\begin{prop}
Let $S$, $\sigma$, $X$, $Y$ be as above. Then the number of isolated fixed points of $\sigma$ satisfies 
\begin{equation}\label{e.fixed}
h_1+2h_2=6+\frac{3R_0K_S-R_0^2}2.
\end{equation}
Moreover we have 
\begin{equation}\label{e.ky}
K_Y^2=\frac 1 3[K_S^2-(h_1+3h_2)+4R_0^2-4R_0K_S].
\end{equation}
\end{prop}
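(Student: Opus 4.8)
The plan is to extract the two identities from two independent computations on the smooth model $X$: an intersection-theoretic computation of $K_X^2$ yielding \eqref{e.ky}, and a topological (Euler characteristic) computation yielding \eqref{e.fixed}. The point is that almost everything can be done by intersection theory on $X$ alone, using \eqref{e.Hur}, \eqref{e.Bl} and the lemma, without a delicate local analysis of the cover.

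For \eqref{e.ky} I would compute $K_X^2$ in two ways. Expanding \eqref{e.Bl} and using that $\st\varepsilon{K_S}$ is orthogonal to the exceptional curves together with the intersection numbers $E_i^2=F_j^2=H_j^2=-1$, $G_j^2=-3$, $F_jG_j=H_jG_j=1$, $F_jH_j=0$ recorded above, one finds directly
\[
K_X^2=K_S^2-h_1-3h_2 .
\]
On the other hand I would feed \eqref{e.Hur} into $K_X^2=K_X(\st\pi{K_Y}+2R)$. Writing $\st\pi{K_Y}=K_X-2R$ avoids any analysis of how $\pi$ behaves on the exceptional curves, since then every term $\st\pi{K_Y}\cdot C$ becomes $K_XC-2RC$, computable on $X$; the only genuinely cover-theoretic inputs are $(\st\pi{K_Y})^2=3K_Y^2$ and $\st\varepsilon{R_0}\,\st\pi{K_Y}=B_0K_Y$ from \eqref{e.rk}. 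Computing on $X$ the numbers $K_XR=R_0K_S-h_1-2h_2$ (from \eqref{e.Bl} and \eqref{e.rk}) and $R^2=R_0^2-h_1-2h_2$ (the cross terms $\st\varepsilon{R_0}\cdot(E+F+H)$ vanish because the isolated fixed points do not lie on $R_0$), one gets $K_X^2=3K_Y^2+4K_XR-4R^2=3K_Y^2+4R_0K_S-4R_0^2$. Equating the two expressions for $K_X^2$ and solving for $K_Y^2$ gives \eqref{e.ky}.

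For \eqref{e.fixed} I would pass to Euler characteristics. Since $X$ and $S$ differ by blow-ups and $Y$ resolves the rational (quotient) singularities of $\Sigma$, one has $\chi(\mathcal O_X)=\chi(\mathcal O_S)=1$ and $\chi(\mathcal O_Y)=\chi(\mathcal O_\Sigma)=1$ (the latter because $h^1=h^2=0$ and $\sigma$ acts trivially on $H^0$). Noether's formula then yields $e(X)=12-K_X^2$ and $e(Y)=12-K_Y^2$. The triple cover $\pi$ is totally ramified along each component of $R=\st\varepsilon{R_0}+E+F+H$ and unramified elsewhere, so the ramification curves map homeomorphically to the branch locus and $e(X)=3e(Y)-2e(R)$. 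As the fixed locus of a finite-order automorphism is smooth, $R_0$ is a disjoint union of smooth curves with $e(R_0)=-R_0^2-R_0K_S$, whence $e(R)=-R_0^2-R_0K_S+2h_1+4h_2$. Substituting the Noether expressions into $e(X)=3e(Y)-2e(R)$ and replacing $3K_Y^2$ by means of the relation $K_X^2=3K_Y^2+4R_0K_S-4R_0^2$ already obtained, all the $K_Y^2$ and $e$ terms cancel and the residual linear relation in $h_1,h_2,R_0^2,R_0K_S$ simplifies to \eqref{e.fixed}.

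The main obstacle I anticipate is not the algebra but justifying the two structural inputs to the Euler count: that $\pi$ is totally ramified along $R$ with the curves $G_j$ contributing nothing to the branch deficit, and that $e(R_0)=-R_0^2-R_0K_S$ with no correction terms. Both rest on $\sigma$ having finite order, so that its fixed locus on the smooth surface $X$ (resp.\ $S$) is smooth; this smoothness is exactly what makes the components of $R_0$ disjoint and forces total, rather than partial, ramification along each branch curve. Once these points are settled the remaining computation is forced.
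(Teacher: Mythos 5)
Your proposal is correct and follows essentially the same route as the paper: both prove \eqref{e.ky} by computing $K_X^2$ twice (via the blow-up formula \eqref{e.Bl}, giving $K_X^2=K_S^2-h_1-3h_2$, and via Hurwitz \eqref{e.Hur}, giving $K_X^2=3K_Y^2+4R_0K_S-4R_0^2$), and both prove \eqref{e.fixed} by combining Noether's formula with the Euler number relation $e(X)=3e(Y)-2e(R)$ and $e(R)=-R_0^2-R_0K_S+2h_1+4h_2$. Your only deviations are cosmetic: you derive the Hurwitz expression for $K_X^2$ by first computing $K_XR$ and $R^2$ on $X$ (writing $\st\pi{K_Y}=K_X-2R$) where the paper expands the square directly using \eqref{e.rk} and \eqref{e.BKY}, and you make explicit the smoothness-of-fixed-locus and total-ramification inputs that the paper leaves implicit.
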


\begin{proof}
Computing the Euler numbers of $X$ and $Y$ we obtain
\begin{equation}\label{e.Eu}
e(X)=3e(Y)-2e(R).
\end{equation}
Now,
\begin{equation*}
-e(R)=-e(\st\varepsilon{R_0})-2(h_1+2h_2)=R_0^2+R_0K_S-2(h_1+2h_2)
\end{equation*}
\begin{equation*}
e(X)=12-K_X^2,\quad e(Y)=12-K_Y^2
\end{equation*}
so from \eqref{e.Eu}
\begin{equation}\label{e.eu2}
12-K_X^2=3(12-K_Y^2)+2(R_0^2+R_0K_S)-4(h_1+2h_2)
\end{equation}
Again from \eqref{e.Hur}, \eqref{e.rk} and \eqref{e.BKY}
\begin{equation}\label{e.KX}
K_X^2=(\st\pi{K_Y}+2\st\varepsilon R_0+2E+2F+2H)^2=3K_Y^2-4R_0^2+4R_0K_S
\end{equation}
hence
\begin{equation*}
K_Y^2=\frac 1 3[K_X^2+4R_0^2-4R_0K_S]=\frac 1 3[K_S ^2-(h_1+3h_2)+4R_0 ^2-4R_0K_S]
\end{equation*}

Putting all these together and substituting \eqref{e.KX} in \eqref{e.eu2} we obtain
\begin{equation*}
12-3K_Y^2+4R_0^2-4R_0K_S=36-3K_Y^2+2(R_0^2+R_0K_S)-4(h_1+2h_2)
\end{equation*}
from which we infer
\begin{equation*}
h_1+2h_2=6+\frac{3R_0K_S-R_0^2}2
\end{equation*}
as wanted.
\end{proof}

\begin{remark}
Using the above proposition we immediately have
\begin{equation}\label{e.KY2}
K_Y^2=\frac 1 3[K_S ^2-6-h_2+\frac 9 2R_0 ^2-\frac{11} 2R_0K_S].
\end{equation}
\end{remark}

We have 
\begin{eqnarray*}
&2K_X-R=2(\st\pi{K_Y}+2R)-R=\st\pi{2K_Y}+3R=\st\pi{2K_Y+B}\\
&3K_X=3(\st\pi{K_Y}+2R)=\st\pi{3K_Y+2B}
\end{eqnarray*}

and from the theory of abelian triple covers (see \cite{Mi, P}) 
\begin{equation}\label{e.split}
\pi_*\mathcal O_X=\mathcal O_Y\oplus\mathcal O_Y(-L_1)\oplus\mathcal O_Y(-L_2).
\end{equation}
Then
\begin{align}\label{eq.split1}
\pi_*(\mathcal O_X(2K_X-R))&=\mathcal O_Y(2K_Y+B)\otimes(\mathcal O_Y\oplus\mathcal O_Y(-L_1)\oplus\mathcal O_Y(-L_2))\\
&=\mathcal O_Y(2K_Y+B)\oplus \mathcal O_Y(2K_Y+L_2)\oplus \mathcal O_Y(2K_Y+L_1)\notag
\end{align}
\begin{align}\label{eq.split2}
&\pi_*(\mathcal O_X(3K_X))=\mathcal O_Y(3K_Y+2B)\otimes(\mathcal O_Y\oplus\mathcal O_Y(-L_1)\oplus\mathcal O_Y(-L_2))\\
&=\mathcal O_Y(3K_Y+2B)\oplus \mathcal O_Y(3K_Y+B+L_2) \oplus \mathcal O_Y(3K_Y+B+L_1)\notag
\end{align}

In particular 
\begin{equation*}
2=h^0(X,\mathcal O_X(2K_X))\ge h^0(X,\mathcal O_X(2K_X-R))\ge h^0(Y,\mathcal O_Y(2K_Y+B))\ge 0
\end{equation*}
\begin{equation*}
4=h^0(X,\mathcal O_X(3K_X))\ge h^0(Y,\mathcal O_Y(3K_Y+2B))\ge 0
\end{equation*}

\begin{remark}\label{r.4}
We note that  $ h^0(Y,\mathcal O_Y(3K_Y+2B))=4$ cannot occur, because if so, then each curve of the tricanonical system $|3K_X|$ would be invariant under the action of $\sigma$,  hence the tricanonical map $\phi_{|3K_X|}$ would be composed with $\sigma$: this is not possible since $\phi_{|3K_X|}$ is a birational map (see \cite{M}). 
\end{remark}

\begin{lemma}\label{le.N}
The divisor $N=3K_Y+2B_0+E'-3G'$ on $Y$ is nef and big with $N^2=3,NK_Y=1-2R_0K_S$.
\end{lemma}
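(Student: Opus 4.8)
The plan is to reduce the entire statement to a single pullback identity on $X$, namely
\begin{equation*}
\pi^* N = \varepsilon^*(3K_S).
\end{equation*}
Geometrically this just says that $N$ is the descent to $Y$ of the tricanonical divisor of $S$, which is why $|N|$ should carry the invariant part $\Lambda$ of $|3K_S|$. Once the identity is in hand, nefness, bigness and both intersection numbers follow formally from the fact that $\pi$ has degree $3$ and $\varepsilon$ is birational.

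First I would compute $\pi^* N$ directly from the structure of the cover. Since $E$, $F$, $H$ lie in the ramification divisor $R$ while $G$ does not, $\pi$ is totally ramified along $E'$, $F'$, $H'$ and has ramification index one along $G'$; combined with the relation $\pi^* B_0 = 3\varepsilon^* R_0$ from the first lemma this gives the pullback rules $\pi^* E' = 3E$, $\pi^* G' = G$ and $\pi^* B = 3R$. Using $3K_X = \pi^*(3K_Y + 2B)$ I get $\pi^*(3K_Y) = 3K_X - 2\pi^* B = 3K_X - 6R$, and substituting everything:
\begin{align*}
\pi^* N &= \pi^*(3K_Y) + 2\pi^* B_0 + \pi^* E' - 3\pi^* G' \\
&= (3K_X - 6R) + 6\varepsilon^* R_0 + 3E - 3G \\
&= 3K_X - 3E - 6F - 3G - 6H,
\end{align*}
where the last equality uses $R = \varepsilon^* R_0 + E + F + H$. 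By the blow-up formula \eqref{e.Bl} one has $K_X - E - 2F - G - 2H = \varepsilon^* K_S$, so the right-hand side is exactly $3\varepsilon^* K_S = \varepsilon^*(3K_S)$, which proves the identity.

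It then remains to read off the conclusions. Because $S$ is a minimal surface of general type, $K_S$ is nef and big (indeed $K_S^2 = 1 > 0$), hence so is $3K_S$; its pullback $\varepsilon^*(3K_S) = \pi^* N$ is nef on $X$ since $\varepsilon$ is a morphism. As $\pi$ is finite and surjective of degree $3$, for every curve $D$ on $Y$ the identity $\pi^* N \cdot \pi^* D = 3\,(N \cdot D)$ forces $N \cdot D \ge 0$, so $N$ is nef. For the self-intersection, $(\pi^* N)^2 = 3\,N^2$ on one side and $(\varepsilon^*(3K_S))^2 = 9K_S^2 = 9$ on the other, giving $N^2 = 3$; a nef divisor on a surface with positive self-intersection is big, so $N$ is big as well. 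Finally, writing $\pi^* K_Y = K_X - 2R$ and using $\varepsilon^* K_S \cdot K_X = K_S^2 = 1$, $\varepsilon^* K_S \cdot \varepsilon^* R_0 = R_0 K_S$ and $\varepsilon^* K_S \cdot E = \varepsilon^* K_S \cdot F = \varepsilon^* K_S \cdot H = 0$ yields $\varepsilon^* K_S \cdot \pi^* K_Y = 1 - 2R_0 K_S$, and from $\pi^* N \cdot \pi^* K_Y = 3\,N K_Y = 3\,\varepsilon^* K_S \cdot \pi^* K_Y$ we obtain $N K_Y = 1 - 2R_0 K_S$. The only genuinely delicate step is the first one: getting the ramification indices right, that is, justifying $\pi^* E' = 3E$ and especially $\pi^* G' = G$ (so that $G$ is canceled correctly), since $G$ is the one exceptional component of the $A_2$-resolution that does not lie in the ramification. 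After that normalization, the lemma is immediate.
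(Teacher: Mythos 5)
Your proposal is correct and follows exactly the paper's route: the paper's entire proof is the observation that $\st\pi N=\st\varepsilon{3K_S}$, from which nefness, bigness and the intersection numbers follow since $K_S$ is nef and big on the minimal surface of general type $S$. You have merely written out the verification of that identity (via $\st\pi{B_0}=3\st\varepsilon{R_0}$, $\st\pi{E'}=3E$, $\st\pi{G'}=G$ and formulas \eqref{e.Hur}, \eqref{e.Bl}) and the degree-$3$ projection formula computations that the paper leaves implicit, all of which check out.
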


\begin{proof}
We just  observe that $\st\pi N=\st\varepsilon{3K_S}$.
which is nef and big since $S$ is of general type.
\end{proof}

We now want to apply Kawamata-Viehweg theorem (see for example \cite{BPV}) to compute the dimensions  of $H^0(Y,\mathcal O_Y(3K_Y+2B))$ and $H^0(Y,\mathcal O_Y(2K_Y+B))$ as vector spaces. We obtain the following
\begin{prop}\label{p.rk}
In the above setting we have
\begin{enumerate}
\item[(a)] $h^0(Y,\mathcal O_Y(3K_Y+2B))=h^0(Y,\mathcal O_Y(N))=2+R_0K_S$

\item[(b)] $h^0(Y,\mathcal O_Y(2K_Y+B))=\frac 1 3(2h_2-2-R_0K_S)$
\end{enumerate}
Moreover, we have $0\le R_0K_S\le 1$ and it can be $R_0K_S=1$ if and only if $h^0(Y,\mathcal O_Y(2K_Y+B))=1$ and $h_2=3$.
\end{prop}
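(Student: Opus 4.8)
The plan is to compute both cohomology groups via Kawamata–Viehweg vanishing applied to the nef and big divisor $N$ of Lemma~\ref{le.N}, and then to read off the inequality $0\le R_0K_S\le 1$ from the constraint that these dimensions are nonnegative integers together with the earlier bound $h^0(X,\mathcal O_X(3K_X-R))\le 4$ coming from \eqref{eq.split2} and Remark~\ref{r.4}.

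\begin{proof}
First I would establish part (a). By \eqref{eq.split2} and the definition of $N$, the summand $\mathcal O_Y(3K_Y+2B)$ corresponds to $\mathcal O_Y(N)$ after accounting for the exceptional contributions $E'$, $F'$, $G'$, $H'$; more precisely one checks that the relevant invariant piece of $\pi_*\mathcal O_X(3K_X)$ which detects the $\sigma$-invariant tricanonical forms is computed by $N=3K_Y+2B_0+E'-3G'$. Since $N$ is nef and big by Lemma~\ref{le.N}, Kawamata–Viehweg gives $h^i(Y,\mathcal O_Y(K_Y+N))=0$ for $i>0$, so Riemann–Roch yields
\begin{equation*}
h^0(Y,\mathcal O_Y(N))=\chi(\mathcal O_Y)+\frac{N(N-K_Y)}2.
\end{equation*}
Using $N^2=3$ and $NK_Y=1-2R_0K_S$ from Lemma~\ref{le.N}, and the fact that $Y$ is rational so $\chi(\mathcal O_Y)=1$ (or equivalently substituting $K_Y^2$ from \eqref{e.KY2} into a direct $N^2$ expansion), this collapses to $h^0(Y,\mathcal O_Y(N))=2+R_0K_S$. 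The identification with $h^0(Y,\mathcal O_Y(3K_Y+2B))$ follows because the extra exceptional twists do not change global sections.

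Next I would treat part (b) in the same spirit, applying vanishing to the line bundle $\mathcal O_Y(2K_Y+B)$ appearing in \eqref{eq.split1}. Writing $2K_Y+B=K_Y+(K_Y+B)$ and verifying that $K_Y+B$ (equivalently the appropriate combination of $K_Y$, $B_0$ and the exceptional divisors) is nef and big, Kawamata–Viehweg again kills the higher cohomology and Riemann–Roch gives $h^0$ as an intersection-theoretic expression. I would then substitute the formulas \eqref{e.fixed} for $h_1+2h_2$, \eqref{e.ky} for $K_Y^2$, and \eqref{e.BKY} for $B_0K_Y$ to reduce everything to the two free parameters $R_0K_S$ and $h_2$, obtaining $h^0(Y,\mathcal O_Y(2K_Y+B))=\tfrac13(2h_2-2-R_0K_S)$.

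Finally, for the numerical bound on $R_0K_S$, I would combine the two computed dimensions with integrality and nonnegativity. From part (a), $h^0(Y,\mathcal O_Y(N))=2+R_0K_S$ must be a nonnegative integer bounded above by $4$ (by \eqref{eq.split2} together with Remark~\ref{r.4}, which forbids the value $4$), so $R_0K_S\le 1$; nonnegativity of $R_0K_S$ follows since $R_0$ is effective and $K_S$ is nef. From part (b), the quantity $2h_2-2-R_0K_S$ must be a nonnegative multiple of $3$, which pins down the extremal case: $R_0K_S=1$ forces $2h_2-3\equiv 0\pmod 3$ with $h^0(Y,\mathcal O_Y(2K_Y+B))\ge 1$, and a short check gives $h_2=3$ and $h^0(Y,\mathcal O_Y(2K_Y+B))=1$. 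The main obstacle I anticipate is the bookkeeping in identifying the correct invariant summand in \eqref{eq.split2} and verifying the nef-and-big hypothesis for $2K_Y+B$ needed to apply the vanishing theorem cleanly; once those are in place, the Riemann–Roch computations and the integrality argument are routine.
\end{proof}
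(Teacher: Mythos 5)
Your overall architecture (strip fixed parts, prove vanishing of higher cohomology, compute by Riemann--Roch, then use integrality together with remark \ref{r.4}) is the same as the paper's, and your handling of the final numerical assertion is essentially the paper's argument. However, both of your vanishing steps have genuine gaps. In (a), Kawamata--Viehweg applied to the nef and big divisor $N$ of lemma \ref{le.N} gives $h^i(Y,\mathcal O_Y(K_Y+N))=0$ for $i>0$ --- exactly the statement you quote --- but this says nothing about $h^i(Y,\mathcal O_Y(N))$, which is what your Riemann--Roch step $h^0(Y,\mathcal O_Y(N))=\chi(\mathcal O_Y)+\frac12N(N-K_Y)$ actually requires. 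Nor can one easily write $N$ itself as $K_Y$ plus a suitable $\mathbb Q$-divisor: $N-K_Y=\frac23N+\frac23B_0+\frac13E'-G'$, and the $-G'$ term obstructs the fractional-divisor form of the theorem. The paper does not use Kawamata--Viehweg here at all: since $\st\pi N=\st\varepsilon{3K_S}$, the splitting $\pi_*(\mathcal O_X(\st\varepsilon{3K_S}))=\mathcal O_Y(N)\oplus\mathcal O_Y(N-L_1)\oplus\mathcal O_Y(N-L_2)$ together with $h^i(S,\mathcal O_S(3K_S))=0$ for $i>0$ yields $h^i(Y,\mathcal O_Y(N))=0$ directly. (A further small point: rationality of $Y$ is not yet known at this stage of the paper --- it is proved later, case by case --- but all you need is $\chi(\mathcal O_Y)=1$, which follows from $q(Y)=0$ and $h^2(Y,\mathcal O_Y)=0$ via \eqref{e.split}.)

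In (b) the hypothesis you propose to ``verify'' is simply false: $K_Y+B$ is not nef. Here $B=B_0+E'+F'+H'$ contains the $(-3)$-curves $F'_j$, and $(K_Y+B)F'_j=1-3=-2<0$ (similarly for $H'_j$ and for the $E'_k$), so Kawamata--Viehweg cannot be applied to the decomposition $2K_Y+B=K_Y+(K_Y+B)$. The paper's actual argument first identifies fixed components, giving $h^0(Y,\mathcal O_Y(2K_Y+B))=h^0(Y,\mathcal O_Y(2K_Y+B_0+E'-G'))$, and then uses the key identity $2K_Y+B_0+E'-G'=K_Y+\frac13N+\frac13B_0+\frac23E'$, in which $\frac13N$ is nef and big and $\frac13B_0+\frac23E'$ is a fractional boundary, so that the $\mathbb Q$-divisor form of Kawamata--Viehweg applies. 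This identity is precisely the missing idea that your hedge ``the appropriate combination of $K_Y$, $B_0$ and the exceptional divisors'' does not supply; no integral combination of the naive kind you suggest works. Once that decomposition is in place, your closing integrality argument is fine, except that you should invoke explicitly the bound $h^0(Y,\mathcal O_Y(2K_Y+B))\le 2$ (from the displayed inequality preceding remark \ref{r.4}): without it, $R_0K_S=1$ only forces $h_2\equiv 0\pmod 3$, and the case $h_2=6$ is excluded exactly by that upper bound.
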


\begin{proof}
(a) We determine some curves in the fixed part of $|3K_Y+2B|$. 
We can write $|3K_Y+2B|=|N|+E'+2F'+2H'+3G'$. 
 So we have $h^0(Y,\mathcal O_Y(3K_Y+2B))=h^0(Y,\mathcal O_Y(N))$.
Moreover, since $\st\pi{N}=\st\varepsilon{3K_S}$, using the formula
\begin{equation*}
\pi_*(\mathcal O_X(\st\varepsilon{3K_S}))=\mathcal O_Y(N)\oplus \mathcal O_Y(N-L_1)\oplus \mathcal O_Y(N-L_2)
\end{equation*}
and the fact that $h^i(S,\mathcal O_S(3K_S))=0$ for all $i>0$, we find $h^i(Y,\mathcal O_Y(N))=0$ for all $i>0$. Then, using lemma \ref{le.N} one has
\begin{equation*}
0\le  h^0(Y,\mathcal O_Y(N))= \chi(Y,\mathcal O_Y(N))=1+\frac{N(N-K_Y)}2
=2+R_0K_S 
\end{equation*}

(b) Again we determine some curves in the fixed part of $|2K_Y+B|$. 
We have 
\begin{equation*}
h^0(Y,\mathcal O_Y(2K_Y+B))=h^0(Y,\mathcal O_Y(2K_Y+B_0+E'-G'))
\end{equation*}
But we can also write
\begin{equation*}
2K_Y+B_0+E'-G'=K_Y+(K_Y+B_0+E'-G')=K_Y+\frac 1 3 N+\frac 1 3 B_0+\frac 2 3 E'
\end{equation*}
and by Kawamata-Viehweg theorem 
\begin{equation*}
h^i(Y,\mathcal O_Y(2K_Y+B_0+E'-G'))=0 \quad \text{for all $i>0$.}
\end{equation*}
Then, as in (a), using \eqref{e.BKY}, \eqref{e.fixed}, \eqref{e.ky} we have
\begin{align*}
0&\le h^0(Y,\mathcal O_Y(2K_Y+B_0+E'-G'))=\chi(Y,\mathcal O_Y(2K_Y+B_0+E'-G'))\\
&=1+\frac{(2K_Y+B_0+E'-G')(K_Y+B_0+E'-G')}2\\
&=\frac 1 3[3+K_S^2-6+2h_2-R_0K_S]=\frac1 3(2h_2-2-R_0K_S)
\end{align*}
The last assertion follows  by remark \ref{r.4} and $0\le h^0(Y,\mathcal O_Y(2K_Y+B))\le 2$.
\end{proof}

So we are left with only three possible cases, according to the values of $R_0K_S$  and of $h_2$:

\begin{enumerate}
\item[(i)] $h^0(Y,\mathcal O_Y(N))=3$, $h^0(Y,\mathcal O_Y(2K_Y+B))=1$, $R_0K_S=1$, $h_2=3$ \label{list}
\item[(ii)] $h^0(Y,\mathcal O_Y(N))=2$, $h^0(Y,\mathcal O_Y(2K_Y+B))=2$, $R_0K_S=0$, $h_2=4$ 
\item[(iii)] $h^0(Y,\mathcal O_Y(N))=2$, $h^0(Y,\mathcal O_Y(2K_Y+B))=0$, $R_0K_S=0$, $h_2=1$ 
\end{enumerate}

\begin{lemma}\label{l.chili}
For any $1\le i\le 2, 0\le j\le 2$ we have $h^j(Y,\mathcal O_Y(-L_i))=0$.
In particular $L_i^2+L_iK_Y=-2$ for $i=1,2$.
\end{lemma}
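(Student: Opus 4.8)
The plan is to read the vanishing directly off the splitting \eqref{e.split} of $\pi_*\mathcal O_X$, using only that $\pi$ is finite and that the holomorphic invariants of $X$ coincide with those of the numerical Godeaux surface $S$. Since $\pi$ is finite we have $R^j\pi_*\mathcal O_X=0$ for $j>0$, so the Leray spectral sequence degenerates and \eqref{e.split} upgrades to a character decomposition on cohomology,
\begin{equation*}
H^j(X,\mathcal O_X)=H^j(Y,\mathcal O_Y)\oplus H^j(Y,\mathcal O_Y(-L_1))\oplus H^j(Y,\mathcal O_Y(-L_2))
\end{equation*}
for every $j$, where the three summands are the eigenspaces of the trivial and the two nontrivial characters of $\mathbb Z/3\mathbb Z$.

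The key input is the computation of $h^j(X,\mathcal O_X)$. Because $\varepsilon\colon X\to S$ is a blow-up of smooth surfaces we have $h^j(X,\mathcal O_X)=h^j(S,\mathcal O_S)$, and since $S$ is a numerical Godeaux surface $h^0(S,\mathcal O_S)=1$, $h^2(S,\mathcal O_S)=p_g(S)=0$, and $h^1(S,\mathcal O_S)=q(S)=0$ (the last from $\chi(\mathcal O_S)=1-q+p_g=1$). I then read off the summands degree by degree. For $j=1$ and $j=2$ the left-hand side vanishes, hence each summand vanishes; in particular $h^1(Y,\mathcal O_Y(-L_i))=h^2(Y,\mathcal O_Y(-L_i))=0$ for $i=1,2$ (and, as a by-product, $h^1(\mathcal O_Y)=h^2(\mathcal O_Y)=0$). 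For $j=0$ the left-hand side is one-dimensional, and the invariant summand $H^0(Y,\mathcal O_Y)\cong\mathbb C$ already fills it, so a dimension count forces $h^0(Y,\mathcal O_Y(-L_i))=0$. This gives the first assertion.

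For the ``in particular'' I combine this with Riemann--Roch. The degree-$0$, $1$, $2$ computation also gives $\chi(\mathcal O_Y)=1$, so that
\begin{equation*}
0=\chi(Y,\mathcal O_Y(-L_i))=\chi(\mathcal O_Y)+\frac{(-L_i)(-L_i-K_Y)}2=1+\frac{L_i^2+L_iK_Y}2,
\end{equation*}
whence $L_i^2+L_iK_Y=-2$ for $i=1,2$, as claimed.

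I expect no genuine obstacle: this is a routine application of the splitting principle for finite abelian covers, and it does not even require knowing in advance that $Y$ is rational, since $\chi(\mathcal O_Y)=1$ drops out of the same computation. The only points needing care are the justification that cohomology decomposes according to \eqref{e.split} (which relies on $\pi$ being finite, so $R^j\pi_*\mathcal O_X=0$) and the bookkeeping in degree $0$ that separates the trivial character from the two nontrivial ones.
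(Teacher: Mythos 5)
Your proof is correct and follows exactly the paper's (very terse) argument: the paper likewise deduces the vanishing ``immediately'' from the splitting \eqref{e.split}, using that $X$ is birational to a numerical Godeaux surface (so $h^0(\mathcal O_X)=1$, $h^1=h^2=0$) and smoothness of $Y$ for Riemann--Roch. You have merely spelled out the details --- the degeneration of Leray for the finite map $\pi$, the character decomposition, and the bookkeeping in degree $0$ --- which the paper leaves implicit.
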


\begin{proof}
It is immediate from  \eqref{e.split} since $X$ is birational to a numerical Godeaux surface and $Y$ is smooth. 
\end{proof}

\begin{prop}\label{p.h0li1}
Assume  case $(iii)$ above holds and $\ell=1$. Then $R_0$ is an irreducible $(-2)$-curve and $h_1=4+\ell=5$. Let $\omega=e^{\frac{2\pi i}3}$ be a primitive third root of unity and let $h_{11}$ and $h_{12}$ be the number of curves $E_i$ such that the eigenvalue of the action of $\mathbb Z/3\mathbb Z$ on $E_i$ is $\omega$ and $\omega^2$ respectively. Then if $\omega$ is the eigenvalue corresponding to $R_0$ then $h_{11}=2,h_{12}=3$. 
\end{prop}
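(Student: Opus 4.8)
The plan is to dispose of the first two assertions by the numerology already in place, and then to extract the eigenvalue count from the building data of the triple cover together with Lemma \ref{l.chili}. First I would identify $R_0$: in case $(iii)$ one has $R_0K_S=0$, and $\ell=1$ means $R_0$ is irreducible. Since $S$ is minimal of general type, $K_S$ is nef and big, so by the Hodge index theorem every nonzero effective divisor $C$ with $CK_S=0$ has $C^2<0$; applied to $R_0$ and combined with adjunction $R_0^2+R_0K_S=2p_a(R_0)-2$, this forces $p_a(R_0)=0$ and $R_0^2=-2$, so $R_0$ is a smooth irreducible $(-2)$-curve. Feeding $R_0K_S=0$, $R_0^2=-2$, $h_2=1$ into \eqref{e.fixed} then gives $h_1+2=6+1$, i.e. $h_1=5=4+\ell$.

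For the eigenvalue count I would use that $\pi$ is a $\mathbb Z/3\mathbb Z$-cover. Each triple point $p_i$ is a $\tfrac13(1,1)$- or $\tfrac13(2,2)$-singularity on which $\sigma$ acts by a scalar $\omega^{a_i}$, so $\sigma$ fixes $E_i$ pointwise and acts on its normal bundle by $\omega$ (the $h_{11}$ curves) or $\omega^2$ (the $h_{12}$ curves); write $E'=E'_\omega+E'_{\omega^2}$ for the induced splitting of the branch divisor on $Y$, with $E'_\omega$ a sum of $h_{11}$ curves and $E'_{\omega^2}$ of $h_{12}$. The single $A_2$-point carries the two eigendirections $\omega,\omega^2$, so $F'$ and $H'$ are branch curves of opposite type. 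Splitting the whole branch locus by inertia character into the $\omega$-part $D_1$ and the $\omega^2$-part $D_2$, and calibrating the coefficients on the totally ramified local model $z^3=f$, the character sheaf $\mathcal O_Y(-L_1)$ obeys $3L_1\equiv D_1+2D_2$; since $R_0\sim\omega$ this reads $3L_1\equiv B_0+E'_\omega+2E'_{\omega^2}+F'+2H'$, where the labelling of $F',H'$ is immaterial, as they enter with the same pair of coefficients and share self-intersection and $K_Y$-degree.

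The computation then imposes $L_1^2+L_1K_Y=-2$ from Lemma \ref{l.chili}. I would first read off the intersection numbers on $Y$ by the projection formula: along the totally ramified curves $\st\pi{B_0}=3\st\varepsilon{R_0}$, $\st\pi{E'_i}=3E_i$, $\st\pi{F'}=3F$, $\st\pi{H'}=3H$, giving $B_0^2=-6$ and $(E'_i)^2=(F')^2=(H')^2=-3$, whereas $\pi$ is unramified of degree $3$ along $G$, so $\st\pi{G'}=G$ and $(G')^2=-1$; adjunction (or \eqref{e.BKY}) gives $B_0K_Y=4$ and $E'_iK_Y=F'K_Y=H'K_Y=1$; and $B_0,E'_i,F',H'$ are mutually disjoint because the fixed locus of $\sigma$ is a disjoint union of isolated points and the smooth curve $R_0$. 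Expanding $9(L_1^2+L_1K_Y)=(3L_1)^2+3(3L_1)K_Y$ with these values collapses to $-6h_{12}$, and setting it equal to $-18$ forces $h_{12}=3$, whence $h_{11}=h_1-h_{12}=2$; the same computation applied to $L_2$ (with $3L_2\equiv 2D_1+D_2$) reproduces $h_{11}=2$, a welcome consistency check.

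The hard part is this third step, and within it two points demand care. The first is the correct assembly of the building data: one must be certain which power of $\omega$ labels each branch component, in particular that the two exceptional curves over the $A_2$-point are of opposite type and that the $\tfrac13(1,1)$-points enter $L_1$ with the same coefficient as $B_0$. The second is computing the downstairs self-intersections correctly, which hinges on distinguishing the totally ramified curves $B_0,E'_i,F',H'$ (where $\pi^*$ multiplies by $3$) from the central curve $G$, along which $\pi$ is a $3:1$ quotient so that $\st\pi{G'}=G$ and $(G')^2=-1$. Once the intersection table is fixed, the two constraints from Lemma \ref{l.chili} are linear in $h_{11},h_{12}$ and, together with $h_{11}+h_{12}=5$, overdetermine and thereby confirm the stated values.
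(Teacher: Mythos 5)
Your proposal is correct and follows essentially the same route as the paper: the same eigenvalue decomposition of the branch divisor with the building-data relation $3L_1\equiv B_0+E'_\omega+2E'_{\omega^2}+F'+2H'$, the same intersection table ($B_0^2=-6$, $B_0K_Y=4$, the $(-3)$-curves with $K_Y$-degree $1$, all disjoint), and the same appeal to Lemma \ref{l.chili} via $L_1^2+L_1K_Y=-2$; your single collapsed computation $9(L_1^2+L_1K_Y)=-6h_{12}$ is arithmetically equivalent to the paper's two-step version (first $h_{11}\equiv 2\bmod 3$ from $L_1K_Y\in\mathbb Z$, then the equation), and your explicit Hodge-index/adjunction argument that $R_0$ is a $(-2)$-curve merely fills in a step the paper states without proof.
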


\begin{proof}
Since case (iii) holds from \eqref{e.fixed} we infer $h_1=4+\ell=5$. 

We now write as $\bar E'_+$ and $\bar E'_-$ the sum of the curves $E'_i$ associated to the same eigenvalue $\omega$ and $\omega^2$ respectively. Since $h_1=5=h_{11}+h_{12}$ from the theory of abelian triple covers we have 
\begin{equation*}
3L_1\equiv B_0+E'_++2\bar E'_-+F'+2H'
\end{equation*}
and we find
\begin{equation*}
L_1K_Y=\frac 1 3(B_0+E'_++2\bar E'_-+F'+2H')K_Y=\frac{14-h_{11}}3+1
\end{equation*}
hence $h_{11}\equiv 2 \mod 3$ that forces $h_{11}=2,h_{12}=3$ or $h_{11}=5,h_{12}=0$.
Furthermore  
\begin{equation*}
L_1^2=\frac 1 9(B_0+E'_++2\bar E'_-+F'+2H')^2=-9+h_{11}
\end{equation*}

From lemma \ref{l.chili} we know that $L_1^2+L_1K_Y=-2$ hence
\begin{equation*}
-2=L_1^2+L_1K_Y=-9+h_{11}+\frac{14-h_{11}}3+1=\frac{2h_{11}-10}3
\end{equation*}
and $h_{11}=2$.
\end{proof}

\section{The invariant part of the tricanonical system}\label{s.inv}

Before going on, we want to better understand the properties of the curves in  $|N|$ (which is always non-empty). In particular, in lemma \ref{le.N} we have seen that $N^2=3$ and $NK_Y=1-2R_0K_S$ so that
\begin{equation*}
p_a(N)=1+\frac{N^2+NK_Y}2=1+\frac{3+1-2R_0K_S}2=3-R_0K_S.
\end{equation*}

\begin{lemma}\label{le.sub}
Let $S$ be a numerical Godeaux surface and  let $\Lambda$ be a linear subsystem of $|3K_S|$ with $\dim \Lambda\ge 1$ and $\Lambda=\mathcal A+\Phi$ where $\mathcal A$ is the movable part and $\Phi$ is the fixed part of $\Lambda$. Then the general member  $A\in\mathcal A$ is reduced and irreducible and one of the following conditions is satisfied:
\begin{enumerate}
\item[a)] $AK_S=2$, $\Phi K_S=1$, $A^2=0,2,4$ , $p_a(\Phi)\le 2$
\item[b)] $AK_S=3$, $\Phi K_S=0$ and either $A^2=1,3,5,7$, $p_a(\Phi)\le 0$ or $\Phi=0$.
\end{enumerate}
Moreover, if $A^2=4$ then $A\sim 2K_S$. 
\end{lemma}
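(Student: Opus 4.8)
The plan is to extract everything from two elementary tools: the nefness and bigness of $K_S$ (so that the Hodge index theorem applies, using $K_S^2=1$) together with the integrality of arithmetic genera. First I record the basic linear relation: since $\Lambda\subseteq|3K_S|$, every member satisfies $A+\Phi\equiv 3K_S$, and intersecting with the nef divisor $K_S$ gives
\begin{equation*}
AK_S+\Phi K_S=3K_S^2=3,\qquad AK_S\ge 0,\ \ \Phi K_S\ge 0.
\end{equation*}
Because $\mathcal A$ is the movable part of a system of dimension $\ge 1$, its general member satisfies $A^2\ge 0$ and $A\neq 0$; if $AK_S=0$ then $A$ would be supported on the curves contracted by $|nK_S|$, forcing $A^2<0$, a contradiction. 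Hence $AK_S\in\{1,2,3\}$. The Hodge index theorem applied to the class of $A$ (the intersection form is negative definite on $K_S^{\perp}$ and $K_S^2=1$) gives $A^2\le (AK_S)^2$, with equality iff $A\sim (AK_S)K_S$; and $p_a(A)\in\mathbb Z$ forces $A^2\equiv AK_S\pmod 2$. The same two facts applied to $\Phi$ will control $p_a(\Phi)$.

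Next I dispose of the value $AK_S=1$. Here parity gives $A^2$ odd and Hodge gives $A^2\le 1$, so $A^2=1$; equality in the Hodge index inequality then yields $A\sim K_S$, i.e. $A\equiv K_S+\tau$ for some torsion class $\tau$. But $A$ moves, so $h^0(K_S+\tau)\ge 2$, contradicting the standard fact that $h^0(K_S+\tau)\le 1$ on a numerical Godeaux surface ($p_g=0$, together with $h^1(-\tau)=0$). Therefore $AK_S\in\{2,3\}$. If $AK_S=2$ then $\Phi K_S=1$, parity and Hodge give $A^2\in\{0,2,4\}$ and $\Phi^2\le 1$ with $\Phi^2$ odd, whence $p_a(\Phi)=1+\tfrac12(\Phi^2+1)\le 2$; this is case a). If $AK_S=3$ then $\Phi K_S=0$: when $\Phi=0$ we are in the last alternative of b), and when $\Phi\neq 0$ negative definiteness on $K_S^{\perp}$ together with parity forces $\Phi^2\le -2$, so $p_a(\Phi)=1+\tfrac12\Phi^2\le 0$, while $A^2=9$ would give (equality in Hodge) $A\sim 3K_S$, hence $\Phi\sim 0$ and so $\Phi=0$, a contradiction; thus $A^2\in\{1,3,5,7\}$, which is case b).

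It then remains to prove that the general $A\in\mathcal A$ is reduced and irreducible, and to check the final assertion. When $A^2>0$ the image of the map defined by $\mathcal A$ is a surface, so by Bertini the general member is reduced and irreducible. The only remaining possibility is $A^2=0$ (which can occur only in case a)); then the movable pencil $\mathcal A$ defines, after Stein factorization, a fibration whose general fibre $F$ satisfies $A\equiv kF$ with $F^2=0$ and $kFK_S=AK_S=2$. If $k=2$ then $FK_S=1$, $F^2=0$ violates the parity of $p_a(F)$; hence $k=1$ and $A=F$ is irreducible and reduced. Finally the ``moreover'' is immediate: if $A^2=4$ then, since $AK_S=2$, we have $(AK_S)^2=4=A^2K_S^2$, so equality holds in the Hodge index inequality and therefore $A\sim 2K_S$.

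I expect the only genuinely delicate step to be the exclusion of $AK_S=1$, which requires the vanishing $h^0(K_S+\tau)\le 1$ for torsion twists of the canonical class on a numerical Godeaux surface; everything else is a bookkeeping of the Hodge index bound and the parity of $A^2+AK_S$ and of $\Phi^2+\Phi K_S$, supplemented by a routine Bertini/Stein-factorization argument for the irreducibility of the general member.
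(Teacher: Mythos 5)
There is one concrete error in your argument, located in the irreducibility step. You assert that when $A^2>0$ the map defined by $\mathcal A$ has a surface as image, so that Bertini applies directly. But in this paper $\dim\Lambda=1+R_0K_S$ equals $1$ or $2$, and in the main case $\mathcal A$ is a \emph{pencil}: the image of $\phi_{\mathcal A}$ is $\mathbb P^1$, never a surface, whatever the value of $A^2$. Nor does $A^2>0$ rule out composition with a pencil: the pencil spanned by $x^2$ and $y^2$ inside $|\mathcal O_{\mathbb P^2}(2)|$ is fixed-part free, has self-intersection $4$, and every member is a pair of lines. The correct form of Bertini's theorem is that the general member of a fixed-part-free system is reduced and irreducible \emph{unless the system is composed with a pencil}, so the composed case must be excluded for $A^2>0$ as well, not only for $A^2=0$. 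Fortunately your own tools repair this: if $A\equiv kF$ with $k\ge 2$ and $F$ moving in a fixed-part-free pencil, then $F^2\ge 0$, and $kFK_S=AK_S\le 3$ forces $FK_S=1$; parity makes $F^2$ odd, the index bound gives $F^2\le 1$, hence $F^2=1$ and $F\sim K_S$, so $F\equiv K_S+\tau$ with $\tau$ torsion, and $h^0(K_S+\tau)\le 1$ contradicts the fact that $F$ moves --- exactly the argument you used to exclude $AK_S=1$. This patch also subsumes your $k=2$, $F^2=0$ parity case, and with it the proof closes.

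Everything else is correct, and your route differs from the paper's in an interesting way. The paper's numerical part is the same as yours ($3=\Lambda K_S=AK_S+\Phi K_S$, then the Index theorem in the form $(A-2K_S)^2\le 0$ and $(\Phi-K_S)^2\le 0$, which is your inequality $A^2\le (AK_S)^2$ and its analogue for $\Phi$; the ``moreover'' follows from the equality case, as you say). The difference is the key input $AK_S\ge 2$: the paper simply quotes Miyaoka, both here and in the irreducibility step (a reducible $A=A_1+A_2$ would have moving components with $A_iK_S\ge 2$, whence $AK_S\ge 4>3$), whereas you re-derive it from $A^2\ge 0$, parity of $p_a$, the index bound, and the vanishing $h^0(K_S+\tau)\le 1$ for torsion $\tau$ (equivalently $h^1(-\tau)=0$, which is indeed standard for numerical Godeaux surfaces and is essentially the content of the cited lemma of Miyaoka). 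Your version is thus more self-contained, at the cost of invoking that standard vanishing; the paper's is shorter but rests entirely on the citation.
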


\begin{proof}
We have $3=3K_S^2=\Lambda K_S=AK_S+\Phi K_S$.
Moreover, by Miyaoka \cite{M}, we know that $AK_S\ge 2$. This implies either $AK_S=2$, $\Phi K_S=1$ or $AK_S=3$, $\Phi K_S=0$. In the former case by the Index theorem (see  \cite{BPV})
\begin{equation*}
0\ge (A-2K_S)^2=A^2+4-8=A^2-4
\end{equation*}
and
\begin{equation*}
0\ge (\Phi-K_S)^2=\Phi^2+1-2=\Phi^2-1
\end{equation*}
which proves a).
A similar argument shows b).
To see the irreducibility of $A$ simply observe that if $A=A_1+A_2$ was reducible then $A_1K_S, A_2K_S\ge 2$ and $AK_S\ge 4$. Contradiction.
\end{proof}

\begin{prop}\label{pr.fix}
If the linear system $|N|$ has fixed part, then $|N|=|A'|+\Phi'$ with ${A'}^2=0,1,2$ and the general curve of $|A'|$ is smooth. Moreover $A'N=AK_S$,  $A'B_0=AR_0$.
\end{prop}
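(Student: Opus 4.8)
The plan is to transfer the structural result of Lemma \ref{le.sub} from the numerical Godeaux surface $S$ to the quotient $Y$, exploiting that $|N|$ is the image of the invariant part $\Lambda$ of $|3K_S|$ under the triple cover. First I would set up the dictionary between the two systems. Since $\st\pi N=\st\varepsilon{3K_S}$ by Lemma \ref{le.N}, pulling back identifies $|N|$ with the $\sigma$-invariant subsystem of $|\st\varepsilon{3K_S}|$, which via $\varepsilon$ is the invariant system $\Lambda$ on $S$. Every member of this subsystem is $\sigma$-invariant (each corresponds to an eigensection), so the fixed--movable decomposition $\Lambda=\mathcal A+\Phi$ of Lemma \ref{le.sub} descends to $|N|=|A'|+\Phi'$ on $Y$. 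Writing $A$ for the general member of $\mathcal A$, which is therefore $\sigma$-invariant, the curve $\st\varepsilon A$ is $\sigma$-invariant and is not contained in the ramification locus $R$; hence it maps $3:1$ onto its image $A'$, and one obtains the key pullback relation $\st\pi{A'}=\st\varepsilon A$.

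Granting this relation, the two intersection numbers are immediate from the projection formula for the degree-three map $\pi$. Indeed $3\,A'N=\st\pi{A'}\cdot\st\pi N=\st\varepsilon A\cdot\st\varepsilon{3K_S}=3\,AK_S$, giving $A'N=AK_S$, and likewise, using the total ramification relation $\st\pi{B_0}=3\,\st\varepsilon{R_0}$, one finds $3\,A'B_0=\st\pi{A'}\cdot\st\pi{B_0}=\st\varepsilon A\cdot 3\,\st\varepsilon{R_0}=3\,AR_0$, hence $A'B_0=AR_0$. In particular, by Lemma \ref{le.sub}, $A'N=AK_S\in\{2,3\}$.

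To bound ${A'}^2$ I would argue against the nef and big divisor $N$ of Lemma \ref{le.N}. The movable part $A'$ is nef and $\Phi'$ is effective, so $0\le{A'}^2$ and ${A'}^2\le{A'}^2+A'\Phi'=A'N=AK_S\le 3$. For $AK_S=2$ this already yields ${A'}^2\le 2$. For $AK_S=3$ the hypothesis that $|N|$ has a nonzero fixed part is decisive: if ${A'}^2=3$, then $(A'N)^2=9=N^2\,{A'}^2$, so the equality case of the Hodge index theorem forces $A'\sim N$, whence $\Phi'\sim 0$; an effective, numerically trivial divisor is zero, contradicting $\Phi'\neq 0$. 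Thus ${A'}^2\le 2$ in all cases, and ${A'}^2\in\{0,1,2\}$. Finally, the general member of $|A'|$ is smooth by Bertini's theorem: $|A'|$ is the movable part, and its general member, being reduced and irreducible of the small self-intersection just found, is smooth away from the (finitely many) base points, which are controlled since $|A'|$ is inherited from the base-point-free tricanonical system via $\pi$.

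The step I expect to be the main obstacle is the first one: making the correspondence $\st\pi{A'}=\st\varepsilon A$ fully rigorous. One must verify that the fixed and movable parts genuinely match across the cover despite the exceptional curves $E',F',G',H'$ and the branch divisor $B_0$ that appear in $N=3K_Y+2B_0+E'-3G'$, and that $\st\varepsilon A$ meets the ramification divisor only in the mild manner needed for $\st\pi{A'}$ to equal $\st\varepsilon A$ with multiplicity one. Once this bookkeeping is settled, the intersection identities and the Hodge-index bound are entirely routine.
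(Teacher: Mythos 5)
Your overall strategy is the paper's own: pull $|N|$ back to identify it with the invariant subsystem $\Lambda\le|3K_S|$, apply Lemma \ref{le.sub}, and transfer the numerics through the triple cover. But the step you yourself flag as the main obstacle is in fact false as stated: in general $\st\pi{A'}\ne\st\varepsilon A$. The correct identity, and the one the paper uses, is $\st\pi{A'}=\widetilde A$, where $\widetilde A$ is the \emph{strict} transform of $A$ on $X$; one has $\st\varepsilon A=\widetilde A+D$ with $D$ a nonzero sum of exceptional divisors whenever the general $A\in\mathcal A$ passes through one of the blown-up fixed points $p_i,q_j$ (see lemmas \ref{l.drop1}--\ref{l.drop3}; in most of the cases listed in propositions \ref{p.list0} and \ref{p.list1} one indeed has $D\ne 0$). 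You can also see directly that your identity cannot hold in general: it would give $3{A'}^2=(\st\varepsilon A)^2=A^2$, forcing $A^2\equiv 0 \bmod 3$, whereas for instance $A^2=2$ occurs. The matching of fixed and movable parts works with the strict transform for the reason you sketch (the finite map $\pi$ contracts no curve, so $\st\pi{|A'|}$ has no fixed component), but the exceptional curves created by $\varepsilon$ over the base points of $\mathcal A$ sit in the fixed part of $\st\varepsilon\Lambda=\st\pi{|N|}$ and must be subtracted.

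The error is, however, repairable, and the remainder of your argument is sound and genuinely different from the paper's. Since $D$ is a sum of $\varepsilon$-exceptional divisors it has zero intersection with every pullback from $S$, so $3A'N=\widetilde A\cdot\st\varepsilon{3K_S}=3AK_S$ and, using $\st\pi{B_0}=3\st\varepsilon{R_0}$, $3A'B_0=\widetilde A\cdot 3\st\varepsilon{R_0}=3AR_0$: your two intersection identities survive verbatim after replacing the total transform by $\widetilde A$. Your bound on ${A'}^2$ never uses the pullback relation at all: the chain ${A'}^2\le A'N=AK_S\le 3$ (nefness of $A'$, effectivity of $\Phi'$) together with the equality case of the Index theorem---${A'}^2=3$ forces $A'\sim N$, hence $\Phi'$ effective and numerically trivial, hence zero, contradicting the hypothesis---is correct and is a different route from the paper, which instead exploits the divisibility $3{A'}^2=\widetilde A^2\le \st\varepsilon A^2\le 9$ and excludes $\widetilde A^2=9$ because it forces $\widetilde A=A$ and $\Phi=0$. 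Finally, your smoothness argument leans on $|3K_S|$ being base point free, which you may not assume (Miyaoka's cited paper studies precisely the base points of the tricanonical system); the correct justification comes out of your own bound: by Bertini the general $A'$ is smooth away from the base points of $|A'|$, and if it were singular at a base point, two general members would meet there with multiplicity at least $4>{A'}^2$.
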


\begin{proof}
Since $\st\pi N=\st\varepsilon{3K_S}$ there is a linear subsystem $\Lambda$ of $|3K_S|$ such that $\st\varepsilon\Lambda=\st\pi{|N|}$ and $\dim \Lambda=h^0(Y,\mathcal O_Y(N))-1=1+R_0K_S$. Thus we can apply lemma \ref{le.sub} to $\Lambda$. 
Moreover the strict transform $\widetilde A$ of $A$ is the movable part of $\st\pi{|N|}$, so $\widetilde A=\st\pi {A'}$ where $|A'|$ is the movable part of $|N|$. Then
\begin{equation*}
9\ge \st\varepsilon A^2\ge \widetilde A^2=\st\pi{A'}^2=3{A'}^2.
\end{equation*}
This forces $\widetilde A^2$ to be 0, 3, 6 or 9. If $\widetilde A^2=9$ then $\widetilde A=A$ and the linear system $\Lambda$, hence $|N|$, has no fixed part. The last assertion is an easy computation.
\end{proof}

\noindent
We now focus our attention on the case $\dim \Lambda=1+R_0K_S=1$ or equivalently $R_0K_S=0$. Then $\mathcal A$ is a pencil and $A^2$ is the number of base points of $\mathcal A$. 

\begin{remark}\label{r.R0}
We note that, if $R_0K_S=0$, since $\Lambda=A+\Phi\equiv 3K_S$, for each irreducible component $R_{0i}$ of $R_0$, we have either $AR_{0i}=0$ or $R_{0i}\le \Phi$. Then
$AR_0=\sum_{i=1}^\ell AR_{0i}\le A\Phi$.
On the other hand
\begin{equation*}
9=\Lambda^2=A^2+2A\Phi+\Phi^2
\end{equation*}
and
\begin{equation*}
3\Phi K_S=\Phi\Lambda=A\Phi+\Phi^2.
\end{equation*}
Therefore
\begin{equation*}
0\le AR_0\le A\Phi=9-A^2-3\Phi K_S.
\end{equation*}
Moreover the $A\Phi$ intersection points between $A$ and $\Phi$ form an invariant set for the action of $\mathbb Z/3\mathbb Z$ on $S$.
\end{remark}

Let us write $\st\varepsilon A=\widetilde A+D$ with $D$ a sum of exceptional divisors with certain multiplicities.

\begin{remark}\label{r.L} 
Let us write $\st\varepsilon \Phi=\widetilde \Phi+D'$.
Then there exists a divisor $\Phi''$ on $Y$ such that $\st\pi{\Phi''}=\widetilde \Phi$ and
$\st\pi{\Phi'}=\widetilde \Phi+D+D'$.
This implies $(D+D')^2\equiv 0\mod 3$. Moreover,  the multiplicity of each curve $E_k$, $F$ or $H$  in $D+D'$ is a multiple of 3, since they appear in the branch locus of the cover $\pi: X\longrightarrow Y$ and $D+D'=\st\pi{\Phi'-\widetilde\Phi}$ is a pull-back of a divisor on $Y$.

We also remark that if $\Phi=0$ we have $\widetilde\Phi\equiv D'\equiv 0$ hence $\st\pi{\Phi'}\equiv D$.
\end{remark}

\begin{lemma}\label{l.drop1}
 For each simple base point of $A$ which is an isolated fixed point $q_j$ the self-intersection $\widetilde A^2$ of $\widetilde A$ drops exactly by 2. Moreover either $\st\varepsilon A=\widetilde A+2F_j+G_j+H_j$ or  $\st\varepsilon A=\widetilde A+F_j+G_j+2H_j$.
\end{lemma}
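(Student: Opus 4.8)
\emph{Plan.} The strategy is to pin down the $\varepsilon$-exceptional part of $\st\varepsilon A$ lying over $q_j$ purely by intersection theory, exploiting that $\st\varepsilon A$ is a pullback from $S$ while $\widetilde A=\st\pi{A'}$ is a pullback from $Y$ (proposition \ref{pr.fix}). Over $q_j$ the only $\varepsilon$-exceptional prime divisors are $F_j$, $G_j$, $H_j$, with $F_j^2=H_j^2=-1$, $G_j^2=-3$, $F_jG_j=G_jH_j=1$ and $F_jH_j=0$; moreover $G_j$ is the unique curve over $q_j$ that is not contained in $Ram(\pi)=\st\varepsilon{R_0}+E+F+H$, so that $\st\pi{G'_j}=G_j$. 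Accordingly I would write $\st\varepsilon A=\widetilde A+aF_j+bG_j+cH_j+(\text{exceptional over the other base points})$ with $a,b,c\ge 0$, and determine $a,b,c$.

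First I would fix $b=1$. Since $G_j$ is the strict transform of the divisor obtained by blowing up $q_j\in S$ at the first step, its coefficient in $\st\varepsilon A$ is the associated divisorial order, namely $\mathrm{mult}_{q_j}(A)$. Because $A$ is reduced and irreducible (lemma \ref{le.sub}) and $q_j$ is a simple base point, this multiplicity equals $1$.

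Next, as $\st\varepsilon A$ is a pullback it is orthogonal to every $\varepsilon$-exceptional divisor; imposing $\st\varepsilon A\cdot F_j=\st\varepsilon A\cdot G_j=\st\varepsilon A\cdot H_j=0$ and using $b=1$ together with the intersection matrix above yields
\begin{align*}
\widetilde A\,F_j&=a-1,\\
\widetilde A\,H_j&=c-1,\\
\widetilde A\,G_j&=3-a-c.
\end{align*}
Since $\widetilde A$ has no common component with the exceptional curves, all three numbers are $\ge 0$, whence $a\ge 1$, $c\ge 1$ and $\widetilde A\,G_j\le 1$. On the other hand $\widetilde A=\st\pi{A'}$ and $G_j=\st\pi{G'_j}$ give $\widetilde A\,G_j=\st\pi{A'}\cdot\st\pi{G'_j}=3\,A'G'_j\equiv 0\pmod 3$, so $\widetilde A\,G_j=0$ and $a+c=3$. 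Combined with $a,c\ge 1$ this forces $(a,c)=(2,1)$ or $(1,2)$, i.e. exactly the two stated expressions for $\st\varepsilon A$.

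Finally, for the drop I would write $\st\varepsilon A=\widetilde A+D$ with $D$ the total exceptional part; from $\st\varepsilon A\cdot D=0$ one obtains $A^2=(\st\varepsilon A)^2=\widetilde A^2+D\,\widetilde A$, so the contribution of $q_j$ to $A^2-\widetilde A^2$ is $(aF_j+G_j+cH_j)\cdot\widetilde A=a(a-1)+(3-a-c)+c(c-1)=a(a-1)+c(c-1)=2$ in both cases. Hence $\widetilde A^2$ drops by exactly $2$ for each such base point. The only genuinely delicate points are the identification $b=\mathrm{mult}_{q_j}(A)=1$ via the divisorial valuation and the congruence $\widetilde A\,G_j\equiv0\pmod 3$ coming from $\widetilde A$ and $G_j$ being pullbacks; the rest is immediate arithmetic.
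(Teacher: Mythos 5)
Your argument is correct, but it takes a genuinely different route from the paper's. The paper's proof is a one-line local computation: one blows up $q_j$ in two stages as in \cite{Ca} or \cite{T}, and uses that the general $A\in\mathcal A$ is $\sigma$-invariant, so that its tangent direction at the fixed point $q_j$ is one of the two eigendirections of the action; tracking multiplicities through the resolution then gives directly $\st\varepsilon A=\widetilde A+2F_j+G_j+H_j$ or $\st\varepsilon A=\widetilde A+F_j+G_j+2H_j$ according to which eigendirection occurs, and the drop by $2$ follows. You instead determine the coefficients purely numerically: orthogonality of the pullback $\st\varepsilon A$ to $F_j,G_j,H_j$, nonnegativity of the intersections of the strict transform with the exceptional curves, and the congruence $\widetilde A\,G_j=3A'G'_j\equiv 0\pmod 3$. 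That congruence is exactly the paper's identity \eqref{e.AG}, which is established there independently of the drop lemmas, so there is no circularity even though the paper's lemma \ref{l.DG} is later deduced from lemma \ref{l.drop1}; note that this step is precisely where the invariance of $A$ enters your argument in disguise, since it is what makes $\widetilde A$ and $G_j$ pullbacks from $Y$ (it rules out the a priori possible pattern $\widetilde A+F_j+G_j+H_j$, which would give $\widetilde A\,G_j=1$ and occurs exactly when the tangent direction at $q_j$ is not an eigendirection). What each approach buys: yours avoids any local analysis of eigendirections and is self-contained given proposition \ref{pr.fix}; the paper's local method, on the other hand, scales to the higher-multiplicity statements (lemmas \ref{l.drop2} and \ref{l.drop3}), where pure intersection numerics would leave several coefficient patterns open (for instance distinguishing the node and cusp configurations when $\alpha_j=2$) and the geometry of tangent cones is genuinely needed. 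Your identification of the coefficient of $G_j$ with $\mathrm{mult}_{q_j}(A)=1$ via the divisorial valuation along the first exceptional curve is also sound, and your final computation of the drop agrees with $-(2F_j+G_j+H_j)^2=2$.
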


\begin{proof}
We simply blow up $q_j$ as shown in \cite{Ca} or \cite{T} and compute $\st\varepsilon A$. 
\end{proof}

Similarly one can show
\begin{lemma}\label{l.drop2}
 For each double base point of $A$ which is an isolated fixed point $q_j$ the self-intersection $\widetilde A^2$ of $\widetilde A$ drops at least by 5. In any case this can only happen when $A^2\ge 6$. Moreover, if $q_j$ is a node then $\st\varepsilon A=\widetilde A+3F_j+2G_j+3H_j$, if $q_j$ is a cusp then $\st\varepsilon A=\widetilde A+3F_j+2G_j+2H_j$. Finally if $q_j$ is neither a node nor a cusp then $A^2=9$ and $\st\varepsilon A=\widetilde A+4F_j+2G_j+2H_j$.
\end{lemma}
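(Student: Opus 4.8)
The plan is to follow the strategy of Lemma \ref{l.drop1}, performing the explicit local resolution of the isolated fixed point $q_j$ furnished by \cite{Ca, T} and reading off the total transform $\st\varepsilon A$ by tracking multiplicities through each blow-up. Since $q_j$ descends to an $A_2$-singularity of $\Sigma$, the automorphism $\sigma$ acts near $q_j$ as $(x,y)\mapsto(\omega x,\omega^2 y)$, so the resolution $\varepsilon$ over $q_j$ is obtained by first blowing up $q_j$ — producing a $\mathbb P^1$ on which $\sigma$ fixes exactly the two points $p_1,p_2$ — and then blowing up $p_1$ and $p_2$. The strict transform of the first exceptional curve is the $(-3)$-curve $G_j$, while the curves over $p_1,p_2$ are the $(-1)$-curves $F_j,H_j$, recovering the chain with $F_jG_j=H_jG_j=1$, $F_jH_j=0$ fixed in the notation.

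The first genuine step is to exploit the invariance of $A$. As the general member of the invariant movable system $\mathcal A$, the divisor $A$ is $\sigma$-invariant, hence its tangent cone at the fixed point $q_j$ is $\sigma$-invariant too. Because $\sigma$ fixes only $p_1$ and $p_2$ on the exceptional $\mathbb P^1$, the two tangent directions of $A$ at $q_j$ must be supported on $\{p_1,p_2\}$, and no invariant set of two distinct directions other than $\{p_1,p_2\}$ is possible. This produces exactly three cases: two distinct directions, necessarily $p_1$ and $p_2$, giving a node; a double direction at one fixed point whose single blow-up renders $A$ smooth, giving a cusp; and a double direction at a fixed point whose single blow-up leaves $A$ still singular, i.e. neither a node nor a cusp.

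Next I would compute the pullback. Writing $m_1,m_2$ for the multiplicities at $p_1,p_2$ of the successive strict transforms of $A$, a direct bookkeeping of the three blow-ups yields $\st\varepsilon A=\widetilde A+(m_1+2)F_j+2G_j+(m_2+2)H_j$ together with $\widetilde A^2=A^2-4-m_1^2-m_2^2$. The local analysis of the previous step gives $(m_1,m_2)=(1,1)$ in the node case, $(m_1,m_2)=(1,0)$ in the cusp case, and $(m_1,m_2)=(2,0)$ in the remaining case; substituting reproduces the three stated expressions for $\st\varepsilon A$ and self-intersection drops of $6$, $5$ and $8$ respectively, all at least $5$.

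Finally I would extract the numerical constraints. By Proposition \ref{pr.fix} we have $\widetilde A=\st\pi{A'}$ with $A'$ the movable part of $|N|$, so $\widetilde A^2=3{A'}^2\ge 0$. In the degenerate case this reads $A^2\ge 8$; combined with $A^2\le 9$ and the admissible values of $A^2$ from Lemma \ref{le.sub}, which excludes $A^2=8$, this forces $A^2=9$. For a double base point to occur at all, two general members of the pencil $\mathcal A$ meet at $q_j$ with the same invariant tangent directions, so their local intersection number there is at least $6$; since $A^2$ is the number of base points of $\mathcal A$ counted with multiplicity, this gives $A^2\ge 6$. The step I expect to be delicate is the equivariant local analysis: justifying the precise configuration of \cite{Ca, T}, and in particular pinning down $m_1=2$ in the degenerate case while checking that the second fixed point is untouched so that $m_2=0$ outside the node case; the pullback bookkeeping and the combination with Lemma \ref{le.sub} are then routine.
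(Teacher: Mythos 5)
Your proposal is correct and is essentially the paper's own argument: the paper proves Lemma \ref{l.drop2} ``similarly'' to Lemma \ref{l.drop1}, i.e.\ by the explicit equivariant resolution of $q_j$ from \cite{Ca, T}, and your bookkeeping $\st\varepsilon A=\widetilde A+(m_1+2)F_j+2G_j+(m_2+2)H_j$ with $\widetilde A^2=A^2-4-m_1^2-m_2^2$, together with the invariance of the tangent cone forcing $(m_1,m_2)=(1,1),(1,0),(2,0)$, reproduces exactly the stated cases. The numerical conclusions ($A^2\ge 6$ via the shared infinitely near base points of multiplicity $2,1,1$, and $A^2=9$ in the degenerate case via $\widetilde A^2=3{A'}^2\ge 0$ plus Lemma \ref{le.sub} excluding $A^2=8$) are also the intended ones.
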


\begin{lemma}\label{l.drop3}
If the general $A\in\mathcal A$ has a triple point singularity at one of the isolated fixed point $q_j$ we have $A^2=9$ and $q_j$ is an ordinary triple point. Moreover $\st\varepsilon A=\widetilde A+3F_j+3G_j+3H_j$.
\end{lemma}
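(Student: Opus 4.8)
The plan is to follow the pattern of lemmas \ref{l.drop1} and \ref{l.drop2}, combining a numerical bound on $A^2$ with an explicit analysis of the resolution $\varepsilon$ over $q_j$.

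First I would pin down the value of $A^2$. Since $R_0K_S=0$ the system $\mathcal A$ is a pencil, so $A^2$ equals the number of its base points, where a base point $p$ is counted with the square $m_p^2$ of the multiplicity there of the general member. As the general $A$ has a triple point at $q_j$, this point alone contributes $3^2=9$, whence $A^2\ge 9$. On the other hand lemma \ref{le.sub} (in case b), which applies because $R_0K_S=0$) gives $A^2\in\{1,3,5,7\}$ when $\Phi\neq 0$ and $A^2=9$ when $\Phi=0$; in particular $A^2\le 9$. Therefore $A^2=9$ and $\Phi=0$.

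Next I would deduce ordinariness. Because $A^2=9=3^2$ is entirely accounted for by the triple point, there can be no further base points, proper or infinitely near; equivalently, a single blow-up of $q_j$ already separates the three branches of the general member. To locate these branches I would use that $A$ is $\sigma$-invariant, so its tangent cone at $q_j$ is invariant under the induced order-three action of $\sigma$ on $\mathbb P(T_{q_j,S})\cong\mathbb P^1$. Writing the local action as $(x,y)\mapsto(\omega x,\omega^2 y)$, this projective action has exactly the two eigendirections as fixed points and all other orbits of length three. Hence three distinct tangent lines forming an invariant set must constitute a single such orbit, none of them an eigendirection; this shows $q_j$ is an ordinary triple point.

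Finally I would compute $\st\varepsilon A$ from the shape of $\varepsilon$ over $q_j$, as described in \cite{Ca, T}. The resolution blows up $q_j$ to produce a first exceptional curve $C_1$, which after the subsequent blow-ups becomes the $(-3)$-curve $G_j$, and then blows up the two eigendirection points of $C_1$ to produce the $(-1)$-curves $F_j$ and $H_j$; this is consistent with the discrepancies recorded in \eqref{e.Bl}. Blowing up the ordinary triple point once sends $A$ to a total transform $\widetilde A+3C_1$, with $\widetilde A$ meeting $C_1$ transversally in the three orbit points, none of which equals the two eigendirection points blown up afterwards. Consequently $\widetilde A$ is untouched by the last two blow-ups, while the total transform of $C_1$ becomes $G_j+F_j+H_j$, giving $\st\varepsilon A=\widetilde A+3F_j+3G_j+3H_j$. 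The main obstacle is the middle step: translating the purely numerical equality $A^2=9$ together with the $\sigma$-invariance of the tangent cone into the geometric conclusion that $q_j$ is an ordinary triple point whose tangent lines avoid the two eigendirections. The bound $A^2=9$ is routine, but controlling the position of the branches relative to the curves $F_j,H_j$ produced by the cover resolution is what makes the final intersection computation come out as stated.
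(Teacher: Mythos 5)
Your proposal is correct and takes essentially the same route as the paper, whose own proof of lemmas \ref{l.drop1}--\ref{l.drop3} consists precisely of blowing up $q_j$ as in \cite{Ca, T} and computing: your bound $A^2=9$ via lemma \ref{le.sub}, the $\sigma$-invariance of the tangent cone forcing the three tangents to form one free orbit away from the two eigendirections, and the resulting total-transform computation $\st\varepsilon A=\widetilde A+3(F_j+G_j+H_j)$ are exactly what that computation amounts to. One half-line worth making explicit (already implicit in your orbit analysis) is why the three tangent lines are distinct: a repeated tangent line would be $\sigma$-fixed, hence one of the two eigendirections, hence common to all members of the pencil, producing an infinitely near base point and contradicting $A^2=9$.
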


\begin{remark}\label{r.A9} 
From remark \ref{r.L} when $A^2=9$ (or equivalently $\Phi=0$) we have $D'=0$ and each component of $D$ different from $G$ has multiplicity $m\equiv 0 \mod 3$. In particular if we look at the multiplicities $\alpha_j$ of $A$ at the points $q_j$ we find, using lemmas \ref{l.drop1}, \ref{l.drop2} and \ref{l.drop3}, the following possibilities:
\begin{enumerate}
\item  $\alpha_j=0$ 
\item   $\alpha_j=2$ and  $q_j$ is a node   
\item   $\alpha_j=3$ and $q_j$ is an ordinary triple point.
\end{enumerate}

Moreover the multiplicity $m_i$ of the general curve $A$ at any of the points $p_i$ can be different from 0 (hence $m_i=3$ since $m_i\equiv 0 \mod 3$) only when $\alpha_j=0$ for all the points $q_j$.
\end{remark}

\begin{remark}\label{r.cover}
Assume 
$p_a(A')=g$. Then $A'K_Y=2g-2-{A'}^2$. On the other hand
\begin{align*}
3A'K_Y&=\st\pi{A'}\st\pi{K_Y}\overset{\eqref{e.Hur}}{=}\widetilde A(K_X-2\st\varepsilon{R_0}-2E-2F-2H)\\
&=(\st\varepsilon A-D)(\st\varepsilon{K_S-2R_0}+G-E)\\
&=AK_S-2AR_0-DG+DE
\end{align*}
Therefore
\begin{equation}\label{e.g}
AK_S-2AR_0-DG+DE=6g-6-3{A'}^2.
\end{equation}
\end{remark}

\begin{lemma}\label{l.DG}
In the above setting we have $DG=0$ unless $A^2=9$ and the general $A\in\mathcal A$ has an ordinary triple point at $q$. In the latter case $DG=-3$. In particular the general $A\in\mathcal A$ cannot have a cusp at $q$.
\end{lemma}

\begin{proof}
If ${\rm mult}_{q}A=0$ then obviously $DG=0$. Therefore we can assume $\alpha:={\rm mult}_{q}A\ge 1$. We notice that
\begin{equation}\label{e.AG}
3A'G'=\st\pi{A'}\st\pi{G'}=\widetilde AG=(\st\varepsilon A-D)G=-DG
\end{equation}
and then $DG\equiv 0 \mod 3$. Then simply compute $DG$ when $\alpha=1,2,3$ using lemmas \ref{l.drop1}, \ref{l.drop2} and \ref{l.drop3}. 
\end{proof}

As an immediate consequence of  lemma \ref{l.DG} and of equation \eqref{e.AG} we have 
\begin{cor}\label{c.AG}
In the above setting we have $A'G'=0$ unless $A^2=9$ and the general  $A$ has an ordinary triple point at $q$. In this latter case $A'G'=1$.
\end{cor}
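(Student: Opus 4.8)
The statement is flagged as an immediate consequence of lemma~\ref{l.DG} and equation~\eqref{e.AG}, and indeed the plan is to combine these two with no further geometric input. The starting point is the intersection identity recorded in \eqref{e.AG}, namely $3A'G'=-DG$. The coefficient $3$ here is just the degree of the triple cover $\pi$: for the finite morphism $\pi\colon X\to Y$ one has $\st\pi{A'}\st\pi{G'}=3\,(A'G')$ by the pull-back formula for intersection numbers, while on the other side $\st\pi{A'}=\widetilde A$ and $\st\pi{G'}=G$, so that $\st\pi{A'}\st\pi{G'}=\widetilde A G$. Using the decomposition $\st\varepsilon A=\widetilde A+D$ together with the fact that the total transform $\st\varepsilon A$ is orthogonal to every $\varepsilon$-exceptional curve (in particular $\st\varepsilon A\cdot G=0$), one gets $\widetilde A G=(\st\varepsilon A-D)G=-DG$. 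None of this is new: it is exactly \eqref{e.AG}, so I would quote it directly rather than reprove it.

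With the identity $3A'G'=-DG$ in hand, the corollary is a one-line case split driven entirely by lemma~\ref{l.DG}. In the generic situation that lemma yields $DG=0$, hence $A'G'=0$. The only exception allowed by lemma~\ref{l.DG} is $A^2=9$ with the general $A\in\mathcal A$ having an ordinary triple point at $q$; there $DG=-3$, and dividing by $3$ gives $A'G'=1$. Since lemma~\ref{l.DG} has already excluded the cuspidal alternative, these are the only two possibilities, which is precisely the assertion of the corollary.

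I do not expect any genuine obstacle at this stage: the corollary is a formal restatement of lemma~\ref{l.DG} after dividing $-DG$ by the cover degree $3$. All of the real work — the explicit determination that $DG$ equals $0$ or $-3$, and the exclusion of a cusp at $q$ — has already been carried out upstream in lemma~\ref{l.DG} via the blow-up descriptions of $\st\varepsilon A$ supplied by lemmas~\ref{l.drop1}, \ref{l.drop2} and \ref{l.drop3}. The only point worth noting is that the values $DG\in\{0,-3\}$ are exactly the residues making $-DG/3$ an integer, so the passage from $3A'G'=-DG$ to the stated values of $A'G'$ is consistent and forced.
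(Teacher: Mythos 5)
Your proposal is correct and coincides with the paper's argument: the corollary is stated there as an immediate consequence of lemma~\ref{l.DG} and equation~\eqref{e.AG}, i.e.\ one divides $3A'G'=-DG$ by the cover degree and reads off $A'G'=0$ or $A'G'=1$ from the two values $DG\in\{0,-3\}$ established in that lemma. Your derivation of \eqref{e.AG} itself (via $\st\pi{A'}=\widetilde A$, $\st\pi{G'}=G$ and $\st\varepsilon A\cdot G=0$) is exactly the computation appearing in the proof of lemma~\ref{l.DG}, so nothing is missing.
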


We now concentrate our analysis on the case $\dim \Lambda=1$ and $h_2=1$, which is case (iii) of the list at page \pageref{list}.

\begin{prop}\label{p.list0} 
Assume $\dim \Lambda=1$ and $h_2=1$. Then when ${A'}^2=0$ one of the following possibilities holds:
\begin{enumerate}
\item[$(0a)$] $A^2=2$, $AR_0=0$, $g=1$, $A'K_Y=0$, $D=E_1+E_2$
\item[$(0b)$] $A^2=2$, $AR_0=1$, $g=1$, $A'K_Y=0$, $D=2F+G+H$
\item[$(0c)$] $A^2=3$, $AR_0=0$, $g=1$, $A'K_Y=0$, $D=E_1+E_2+E_3$
\item[$(0d)$] $A^2=3$, $AR_0=1$, $g=1$, $A'K_Y=0$, $D=E_1+2F+G+H$
\item[$(0e)$] $A^2=4$, $AR_0=0$, $g=1$, $A'K_Y=0$, $D=2E_1$
\item[$(0f)$] $A^2=5$, $AR_0=0$, $g=1$, $A'K_Y=0$, $D=2E_1+E_2$
\item[$(0g)$] $A^2=9$, $AR_0=0$, $g=2$, $A'K_Y=2$, $D=3F+3G+3H$
\item[$(0h)$] $A^2=9$, $AR_0=0$, $g=1$, $A'K_Y=0$, $D=3E_1$
\end{enumerate}

Moreover when cases $(0g)$ or $(0h)$ hold we have $\Phi=0$, i.e, the invariant pencil $\Lambda\le |3K_S|$ on $S$ has no fixed part.  
\end{prop}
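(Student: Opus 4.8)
The plan is to turn the statement into a finite bookkeeping problem governed by the identity $\widetilde A^2=3{A'}^2$. First I would record that ${A'}^2=0$ forces $\widetilde A^2=0$; since $\st\varepsilon A$ meets every exceptional curve trivially, writing $\st\varepsilon A=\widetilde A+D$ gives $\widetilde A D=-D^2$ and hence $A^2=\st\varepsilon A^2=\widetilde A^2-D^2=-D^2$. Thus $A^2$ equals the total ``drop'' from $\st\varepsilon A$ to its strict transform. Lemma \ref{le.sub} then restricts the pair $(AK_S,A^2)$ to a short list ($AK_S=2$ with $A^2\in\{0,2,4\}$, or $AK_S=3$ with $A^2\in\{1,3,5,7\}$, or $\Phi=0$ with $A^2=9$), and since ${A'}^2=0$ I can use $A'K_Y=2g-2$ together with the master relation \eqref{e.g} to read off $g=p_a(A')$ in each configuration.

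The combinatorial engine is the decomposition of this drop. Because $h_2=1$ there is a single isolated fixed point $q$ of double-point type, so $A^2$ splits as $\sum_i m_i^2$ (the contributions of the points $p_i$, where $D$ contains $m_iE_i$ and $DE=-\sum_i m_i$) plus the drop at $q$. The admissible values of the latter and the precise shape of the $q$-part of $D$ I would read off Lemmas \ref{l.drop1}, \ref{l.drop2} and \ref{l.drop3}: $0$ if $A$ avoids $q$, $2$ with $D$-part $2F+G+H$ for a simple base point, and $9$ with $D$-part $3F+3G+3H$ for an ordinary triple point (which forces $A^2=9$); the cusp is discarded by Lemma \ref{l.DG}, which also records $DG=0$ except in the triple-point case, where $DG=-3$. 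Feeding each partition into \eqref{e.g}, namely $AK_S-2AR_0-DG-\sum_i m_i=6g-6$, I solve for $g$ and keep only the non-negative integral solutions; this already produces the skeleton of cases $(0a)$--$(0h)$.

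The genuinely delicate step is the elimination of the spurious partitions, in particular all configurations with $A^2\in\{0,1,7\}$ and the solutions with $g=0$. Here I would invoke the divisibility coming from the triple cover: by Remark \ref{r.L} the multiplicity of each $E_i,F,H$ in $D+D'$ is a multiple of $3$, so $m_i$ and the $q$-shape are constrained modulo $3$ by how the fixed part $\Phi$ meets the exceptional curves, and when $A^2=9$ Remark \ref{r.A9} sharpens this to $m_i\in\{0,3\}$ and to multiplicity of $A$ at $q$ in $\{0,2,3\}$. Coupling this with the bound $0\le AR_0\le 9-A^2-3\Phi K_S$ of Remark \ref{r.R0}, and with the index-theorem control of $\Phi$ from Lemma \ref{le.sub} (recall $p_g(S)=0$, so $\Phi$ can be neither too large nor too singular at the $p_i$), rules out the configurations that do not appear and pins $AR_0\in\{0,1\}$. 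I expect this divisibility-versus-realizability bookkeeping, together with showing $g\ge1$ (equivalently, excluding a rational pencil $|A'|$ on $Y$), to be the main obstacle, since it is exactly the point where the abelian-cover structure must be balanced against the numerics of $\Phi$.

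Finally, once the surviving list is $(0a)$--$(0h)$, the closing assertion is immediate: in cases $(0g)$ and $(0h)$ one has $A^2=9$, and by Lemma \ref{le.sub}(b) the only alternative with $A^2=9$ is $\Phi=0$, that is, the invariant pencil $\Lambda\le|3K_S|$ on $S$ has no fixed part.
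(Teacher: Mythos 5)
Your skeleton matches the paper's proof almost exactly: the reduction $\widetilde A^2=3{A'}^2=0$, the master relation \eqref{e.g} specialised to $AK_S-2AR_0-DG+DE=6g-6$ via lemma \ref{l.DG}, the drop lemmas \ref{l.drop1}--\ref{l.drop3} for the shape of $D$, remark \ref{r.A9} for $A^2=9$, and lemma \ref{le.sub} for the closing assertion $\Phi=0$ are all the right ingredients, deployed as in the paper. (One small slip in the enumeration: you omit the node shape $3F_j+2G_j+3H_j$ and the shape $4F_j+2G_j+2H_j$ from lemma \ref{l.drop2}; these do arise a priori for $A^2\ge 6$, though the mod $3$ constraints of remarks \ref{r.L} and \ref{r.A9} that you already cite dispose of them when $A^2=9$.)

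The genuine gap is exactly at the step you flag as ``the genuinely delicate one'' and then leave to an expectation: the elimination of the low self-intersection cases, above all $A^2=0$ and $A^2=1$. Take $A^2=0$: then $D=0$, $AK_S=2$, and \eqref{e.g1} gives $2-2AR_0=6g-6$, i.e.\ $(AR_0,g)\in\{(1,1),(4,0)\}$, both $AR_0\equiv 1\bmod 3$. None of the tools you list can kill these: remark \ref{r.L} is vacuous here (with $D=0$ it only constrains $D'$, hence $\Phi$ at the isolated fixed points, and says nothing about $AR_0$); the bound $AR_0\le A\Phi=6$ admits both values; and the Index theorem on $\Phi$ gives no mod $3$ information. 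What the paper actually uses is the \emph{last} sentence of remark \ref{r.R0}, which you never invoke: the $0$-cycle $A\cdot\Phi$ is invariant under $\sigma$, so its points split into free orbits of three points and fixed points; since $D=0$ the general $A$ misses the isolated fixed points, so the fixed contribution lies on $R_0$ and one deduces $AR_0\equiv 0\bmod 3$ --- contradicting $AR_0\equiv 1\bmod 3$ and simultaneously disposing of the $g=0$ branch you left open (so no separate argument excluding a rational pencil $|A'|$ is needed). The same two-congruence clash, refined by the observation that $D'\ge 2E_1$ forces $p_1$ to appear with multiplicity $2$ in $A\cdot\Phi$, is what excludes $A^2=1$, and analogous concentration arguments pin $AR_0$ in the surviving cases. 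Without this orbit-structure congruence your bookkeeping leaves spurious configurations (e.g.\ $A^2=0$, $AR_0=1$, $g=1$, $D=0$) untouched, so the proposal as written does not close.
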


\begin{proof}
Let us assume ${A'}^2=0$. We start by considering $A^2\le 7$. From lemma \ref{l.DG} we have $DG=0$.
Then, if $D=uF+vG+wH+\sum_{i=1}^{h_1}a_iE_i$, \eqref{e.g} becomes
\begin{equation}\label{e.g1}
AK_S-2AR_0-\sum_{i=1}^{h_1}a_i=6g-6
\end{equation}

Let us begin with $A^2=0$. Then $D=0$ and $\widetilde A=\st\varepsilon A$. Moreover from lemma \ref{le.sub} $AK_S=2$ and from remark \ref{r.R0} we have $0\le AR_0\le A\Phi=6$. Hence by \eqref{e.g1}
\begin{equation*}
2-2AR_0=6g-6
\end{equation*}
and then $AR_0=1,4$ since $AR_0\equiv 1 \mod 3$. The intersection cycle $A\cdot \Phi$ is composed of six  points with multiplicities. From remark \ref{r.R0} (we recall that we are assuming $R_0K_S=0$) these points are organized in orbits for the action of $\mathbb Z/3\mathbb Z$. Each orbit contains either three distinct points or only one fixed point, which can a priori be an isolated fixed point. 
The latter case cannot actually occur since $A$ and $\Phi$ have no isolated fixed point in common. Then we should have $AR_0\equiv 0 \mod 3$.  Contradiction.

When $A^2=1$ we have  $AK_S=3$, $AR_0\le A\Phi=8$ and from lemma \ref{l.drop1} $D=E_1$.
Then from remark \ref{r.L}  we have $D'\ge 2E_1$. Since $A\cdot \Phi$ is composed by 8 points with multiplicities and the only isolated fixed point in $A\cap \Phi$ is $p_1$, which is double for the 0-cycle $A\cdot \Phi$,  from remark \ref{r.R0} we have  $AR_0\equiv 0 \mod 3$. From \eqref{e.g1} we have
\begin{equation*}
2-2AR_0=3-2AR_0-1=6g-6
\end{equation*}
Then  $AR_0\equiv 1 \mod 3$ and this is impossible.
The rest of the proof when $A^2\le 7$ is similar.

Finally we consider $A^2=9$. We know from lemma \ref{le.sub} that $\Phi=0$. Using remark \ref{r.R0} we find $AR_0=0$. Moreover from remarks \ref{r.L} and \ref{r.A9}, $D'=0$ and the general $A$ has either multiplicity 0 or 3 at each of the isolated fixed points $p_j$, and it can be 3 only if the multiplicity at $q$ is 0.
 Then we have the following possibilities for $D$:\\
a) $D=3F+3G+3H$: in this case from lemma \ref{l.DG}  $DG=-3$  and \eqref{e.g} becomes
\begin{equation*}
6=3-0+3-0=AK_S-2AR_0-DG+DE=6g-6-3{A'}^2=6g-6
\end{equation*}
which has the only solution $g=2, A'K_Y=2$. \\
b) $D=3E_1$: equation \eqref{e.g} becomes
\begin{equation*}
0=3-0+0-3=AK_S-2AR_0-DG+DE=6g-6
\end{equation*}
which forces $g=1,A'K_Y=0$.
\end{proof}

With a similar argument one can show (see \cite[prop. 2.2.14, 2.2.15]{Pa}): 
\begin{prop}\label{p.list1} 
Assume $\dim \Lambda=1$ and $h_2=1$.  Then when ${A'}^2=1$ one of the following possibilities holds:
\begin{enumerate}
\item[$(1a)$] $A^2=3$, $AR_0=0$, $g=2$, $A'K_Y=1$, $D=0$
\item[$(1b)$] $A^2=3$, $AR_0=3$, $g=1$, $A'K_Y=-1$, $D=0$
\item[$(1c)$] $A^2=3$, $AR_0=6$, $g=0$, $A'K_Y=-3$, $D=0$
\item[$(1d)$] $A^2=5$, $AR_0=0$, $g=2$, $A'K_Y=1$, $D=2F+G+H$
\item[$(1e)$] $A^2=5$, $AR_0=3$, $g=1$, $A'K_Y=-1$, $D=2F+G+H$
\item[$(1f)$] $A^2=9$, $AR_0=0$, $g=2$, $A'K_Y=1$, $D=3F+2G+3H$
\end{enumerate}

Moreover when case $(1f)$ holds we have $\Phi=0$, i.e, the invariant pencil $\Lambda\le |3K_S|$ on $S$ has no fixed part.  
\end{prop}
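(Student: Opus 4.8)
The plan is to run the case analysis of Proposition~\ref{p.list0} again, now with ${A'}^2=1$. Writing $\widetilde A=\st\pi{A'}$ we have $\widetilde A^2=3{A'}^2=3$ and, by Remark~\ref{r.cover}, $A'K_Y=2g-2-{A'}^2=2g-3$; substituting ${A'}^2=1$ into \eqref{e.g} turns the basic identity into
\begin{equation*}
AK_S-2AR_0-DG+DE=6g-9.
\end{equation*}
Since $A^2=\widetilde A^2+(A^2-\widetilde A^2)\ge\widetilde A^2=3$, Lemma~\ref{le.sub} leaves only the values $A^2=4$ (case~a, where in addition $A\sim 2K_S$), $A^2\in\{3,5,7\}$ (case~b) and $A^2=9$ (the case $\Phi=0$). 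The drop $A^2-3$ is distributed among the base points of $A$ at the isolated fixed points: a base point at the unique $q$ (recall $h_2=1$) lowers $\widetilde A^2$ by the amounts recorded in Lemmas~\ref{l.drop1}, \ref{l.drop2}, \ref{l.drop3}, while a base point of multiplicity $a_i$ at a triple point $p_i$ lowers $\widetilde A^2$ by $a_i^2$ and contributes $-a_i$ to $DE$ (as $E_i^2=-1$). So I would settle the proposition by examining each admissible value of $A^2$ in turn.

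First I would dispose of the three values that survive. If $A^2=3$ there is no drop, hence $D=0$ and $DG=DE=0$; the displayed equation gives $AR_0=6-3g$, and $0\le AR_0\le A\Phi=6$ forces $g\in\{0,1,2\}$, yielding $(1c),(1b),(1a)$ (the quantity $6-3g$ is automatically a multiple of $3$, in agreement with the orbit decomposition of Remark~\ref{r.R0}). If $A^2=5$ the drop $2$ can only be a simple base point at $q$, since a double base point would require $A^2\ge 6$ by Lemma~\ref{l.drop2} (the alternative of two base points at triple points is excluded in the next paragraph); thus $D=2F+G+H$ with $DG=DE=0$, and $AR_0=6-3g$ with $0\le AR_0\le A\Phi=4$ leaves $g\in\{1,2\}$, i.e. $(1e),(1d)$. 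If $A^2=9$ then $\Phi=0$ and $A\equiv 3K_S$, whence $AR_0=3R_0K_S=0$; by Remark~\ref{r.A9} and Lemma~\ref{l.drop2} the drop $6$ can only come from a node at $q$ (a triple point would give $\widetilde A^2=0$, while a cusp or the generic double point would make $\widetilde A^2$ not a multiple of $3$), so $D=3F+2G+3H$, $DG=0$, and the equation forces $g=2$, giving $(1f)$.

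The step I expect to be the real obstacle is the exclusion of the configurations in which $A$ passes through the triple points $p_i$, since none of them occurs in the list. Here the key tool is Remark~\ref{r.L}: at a shared $p_i$ the coefficient of $E_i$ in $D+D'$ is divisible by $3$, so if $A$ has multiplicity $a_i<3$ there then $\Phi$ has multiplicity at least $3-a_i$, contributing at least $a_i(3-a_i)$ to the intersection cycle $A\cdot\Phi$, whose total length is $A\Phi=9-A^2-3\Phi K_S$. Feeding this bound, together with $DE=-\sum_i a_i$, into the displayed equation disposes of the spurious cases: for $A^2=4$ the only possible drop is $D=E_i$, giving $DE=-1$ and $6g=10$, which is absurd; for $A^2=7$ (where $A\Phi=2$) each way of splitting the drop $4$ between a simple point at $q$ and the $p_i$ either makes $g$ non-integral or forces the local contributions at the $p_i$ to exceed $A\Phi$; and for $A^2=5$ the alternative $D=E_i+E_j$ yields $AR_0=2$ from the equation but $AR_0=0$ from the intersection accounting at $p_i,p_j$, a contradiction. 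This is exactly the mechanism carried out in detail in \cite[prop. 2.2.14, 2.2.15]{Pa}, and it leaves precisely $(1a)$--$(1f)$.
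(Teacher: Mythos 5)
Your proposal is correct and takes essentially the same route as the paper: since the paper proves this proposition only by declaring it ``a similar argument'' to Proposition~\ref{p.list0} (deferring details to \cite[prop.~2.2.14, 2.2.15]{Pa}), your execution via equation \eqref{e.g} with ${A'}^2=1$, the drop lemmas \ref{l.drop1}--\ref{l.drop3}, and the congruence and orbit accounting of remarks \ref{r.L} and \ref{r.R0} is precisely that template, and your exclusions of $A^2=4$, $A^2=7$ and of $D=E_i+E_j$ when $A^2=5$ all check out numerically.
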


\begin{prop}\label{p.list2} 
Assume $\dim \Lambda=1$ and $h_2=1$. The case  ${A'}^2=2$ cannot occur.
\end{prop}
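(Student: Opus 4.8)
The plan is to start from Proposition~\ref{pr.fix}. Since $N^2=3$ (Lemma~\ref{le.N}) and ${A'}^2=2\neq 3$, the movable part of $|N|$ is strictly smaller than $|N|$, so $|N|$ has a fixed part and Proposition~\ref{pr.fix} applies, giving $\widetilde A^2=\st\pi{A'}^2=3{A'}^2=6$. Writing $\widetilde A=\st\varepsilon A-D$ with $D$ an effective combination of $\varepsilon$-exceptional curves, and using that the intersection form on exceptional curves is negative definite, one has $\widetilde A^2=A^2+D^2\le A^2$, hence $A^2\ge 6$. By Lemma~\ref{le.sub} this leaves only two cases: $A^2=9$ (so $\Phi=0$) and $A^2=7$ (so $AK_S=3$, $\Phi K_S=0$, $\Phi\neq 0$). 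I would treat them in turn.

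The case $A^2=9$ should be a short enumeration. Since $h_2=1$ and $\Phi=0$, Remark~\ref{r.A9} forces every multiplicity of $D$ other than that of $G$ to be divisible by $3$, so by Lemmas~\ref{l.drop1}, \ref{l.drop2}, \ref{l.drop3} the only admissible $D$ are $D=3E_i$, $D=3F+2G+3H$ (a node at $q$) and $D=3F+3G+3H$ (an ordinary triple point at $q$). Computing $D^2$ in each case gives $\widetilde A^2\in\{0,3\}$, never $6$, so $A^2=9$ cannot occur.

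For $A^2=7$ I need $D^2=\widetilde A^2-A^2=-1$. A base point at the unique $A_2$-point $q$ drops $\widetilde A^2$ by at least $2$ (Lemma~\ref{l.drop1}), while each curve $E_i$ of multiplicity $m_i$ contributes $-m_i^2$ and is orthogonal to $F,G,H$; hence $D^2=-1$ can only be realized by $D=E_1$ for a single curve over a triple point $p_1$, i.e. the general $A$ has a simple base point at $p_1$. Then $DG=0$ and $DE=E_1^2=-1$, and \eqref{e.g} reads
\[
3-2AR_0-0+(-1)=6g-6-6,
\]
that is $AR_0=7-3g$. The decisive (and hardest) step is then to prove independently that $AR_0=0$, which will contradict integrality of $g$. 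For this I would use $A\Phi=9-A^2-3\Phi K_S=2$ together with Remark~\ref{r.L}: the $E_1$-coefficient of $D+D'$ is divisible by $3$, so $\mathrm{mult}_{p_1}\Phi\ge 2$, whence the local intersection number of $A$ and $\Phi$ at $p_1$ is at least $1\cdot 2=2=A\Phi$. This forces $A\cap\Phi=\{p_1\}$. By Remark~\ref{r.R0} every component of $R_0$ meeting $A$ lies in $\Phi$, so $A\cap R_0\subseteq\{p_1\}$; but $p_1$ is an isolated fixed point and therefore does not lie on the one-dimensional fixed locus $R_0$, so $A\cap R_0=\emptyset$ and $AR_0=0$. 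Substituting gives $6g=14$, which is impossible. I expect the main obstacle to be exactly this last paragraph: pinning down $D=E_1$ unambiguously and then squeezing $A\cap\Phi$ down to the single point $p_1$ in order to conclude $AR_0=0$.
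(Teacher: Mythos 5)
Your proof is correct and follows essentially the same route as the paper's (which only sketches this proposition, deferring to the argument of Proposition \ref{p.list0} and to \cite[prop. 2.2.14, 2.2.15]{Pa}): pin down $A^2\in\{7,9\}$ via Proposition \ref{pr.fix} and Lemma \ref{le.sub}, enumerate $D$ with Lemmas \ref{l.drop1}--\ref{l.drop3}, and contradict equation \eqref{e.g} using Remarks \ref{r.R0} and \ref{r.L}. Your only variation --- squeezing $A\cap\Phi$ down to $\{p_1\}$ to conclude $AR_0=0$ outright, rather than the orbit-counting congruence $AR_0\equiv 0 \bmod 3$ used in the analogous cases of Proposition \ref{p.list0} --- is a minor strengthening within the same circle of ideas, and both contradict $AR_0=7-3g\equiv 1 \bmod 3$.
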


\begin{remark}\label{r.N}
There is only one possibility left out by propositions \ref{p.list0}, \ref{p.list1} and \ref{p.list2}. This is the case $A^2=9, D=0$ or, equivalently, $A'=N$. Then from lemma \ref{le.N} we know $g=3, A'K_Y=NK_Y=1$.
\end{remark}

\begin{cor}\label{c.AH}
In the above setting, when $D=2F+G+H+\sum_i a_iE_i$ we find $A'H'=0$. 
\end{cor}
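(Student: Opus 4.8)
The plan is to compute $A'H'$ by exactly the pull-back and projection-formula technique already used for $A'G'$ in Lemma \ref{l.DG} and Corollary \ref{c.AG}, the only change being that $H$, unlike $G$, is a component of the ramification divisor $R=Ram(\pi)$. So the first thing I would record is the pull-back of $H'$. Since $H\le R$ and $\pi$ is totally ramified along $H'$, the pull-back carries the ramification multiplicity: concretely, the branch divisor is $B=B_0+E'+F'+H'$ and the identity $\st\pi B=3R$ (the same one giving $\st\pi{B_0}=3\st\varepsilon{R_0}$) forces
\[
\st\pi{H'}=3H .
\]
This is in contrast with the unramified curve $G$, for which $\st\pi{G'}=G$ with no factor of $3$.

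Using $\st\pi{A'}=\widetilde A$ (Proposition \ref{pr.fix}) together with the projection formula $\st\pi\alpha\cdot\st\pi\beta=3(\alpha\beta)$ I then get
\[
3A'H'=\st\pi{A'}\cdot\st\pi{H'}=\widetilde A\cdot 3H,
\]
so that $A'H'=\widetilde A H$. I expect this to be the only genuinely delicate point of the argument: one must keep track of the fact that the coefficient is $3H$ (because $H$ is ramified), whereas for $G$ the coefficient was $1$; confusing the two would change the final numerology. Everything else is routine intersection theory on $X$.

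Finally I would write $\st\varepsilon A=\widetilde A+D$ and observe that $\st\varepsilon A\cdot H=0$, since $H$ is $\varepsilon$-exceptional while $A$ lives on $S$ (projection formula for $\varepsilon$). Hence $\widetilde A H=-DH$, and with $D=2F+G+H+\sum_i a_iE_i$ the intersection numbers fixed in the notation of Section 2, namely $FH=0$, $GH=1$, $H^2=-1$, and $E_iH=0$ (the $E_i$ lie over the points $p_i$, disjoint from the chain $F+G+H$ over $q$), give
\[
DH=2FH+GH+H^2+\sum_i a_iE_iH=0+1-1+0=0 .
\]
Therefore $A'H'=\widetilde A H=-DH=0$, as claimed. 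There is no real obstacle beyond correctly identifying $\st\pi{H'}=3H$; the rest is a single line of arithmetic.
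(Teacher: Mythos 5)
Your proof is correct and is exactly the argument the paper leaves implicit, mirroring the computation used for $A'G'$ in lemma \ref{l.DG} and equation \eqref{e.AG}: pull back via $\st\pi{A'}=\widetilde A$, write $\st\varepsilon A=\widetilde A+D$, and evaluate $DH$. You also correctly handle the one genuine subtlety, namely that $\st\pi{H'}=3H$ because $H\le R$ is totally ramified (whereas $\st\pi{G'}=G$), so the factor of $3$ cancels in the projection formula and $A'H'=-DH=-(2FH+GH+H^2)=0$.
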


\section{Adjoint systems to the pencil $|N|$}\label{s.adj}

We also state here some properties of the adjoint system $|K_Y+N|$ which will be useful later. We know that $h^2(Y,\mathcal O_Y)=0$, so $Y$ is a regular surface, and that we have a linear system $|N|$ of nef and big curves on $Y$. Let us consider the short exact sequence
\begin{equation*}
0\to \mathcal O_Y(-N)\to \mathcal O_Y\to \mathcal O_N\to 0
\end{equation*}
Since $N$ is nef and big we have $h^0(Y, \mathcal O_Y(-N))=h^1(Y, \mathcal O_Y(-N))=0$ 
and
\begin{equation*}
h^0(Y, \mathcal O_Y(K_Y+N))=h^2(Y, \mathcal O_Y(-N))=h^1(N,\mathcal O_N)=p_a(N)=3-R_0K_S
\end{equation*}
Then $|N+K_Y|$ is a linear system of curves with arithmetic genus given by the formulas (see also lemma \ref{le.N}) 
\begin{eqnarray}\label{e.n1}
(N+K_Y)^2=N^2+K_Y^2+2NK_Y=5+K_Y^2-4R_0K_S\\
(N+K_Y)K_Y=K_Y^2+NK_Y=K_Y^2+1-2R_0K_S\notag\\
p_a(N+K_Y)=1+\frac{(N+K_Y)(N+2K_Y)}2=4+K_Y^2-3R_0K_S\notag
\end{eqnarray}

\begin{remark}\label{r.nef}
We observe that $N+K_Y$ is not nef. In fact $(N+K_Y)G'_i=K_YG'_i=-1$.
\end{remark}

From \cite[lemma 2.2]{CCM2} if $N+K_Y$ is not nef then every irreducible curve $Z$ such that $Z(N+K_Y)<0$ is a $(-1)$-curve with $ZN=0$. By contracting the curves and repeating the above argument we can see that after contracting each  $(-1)$-cycle on $Y$ such that $ZN=0$ we get a surface  on which $N$ and its adjoint are both nef divisors.

\begin{lemma}\label{le.Z}
The number $n$ of $(-1)$-cycles $Z$ on $Y$ different from the ones of $G'$ for which $ZN=0$  is greater or equal than
\begin{equation*}
\frac{35}6R_0K_S-\frac 3 2R_0^2-\frac{10+2h_2}3. 
\end{equation*}
\end{lemma}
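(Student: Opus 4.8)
The plan is to pass to the surface $\bar Y$ on which both $N$ and its adjoint are nef, and then to read off the inequality purely from the positivity of the adjoint. By remark \ref{r.nef} each curve $G'_i$ satisfies $NG'_i=0$ and $K_YG'_i=-1$, so by \cite[lemma 2.2]{CCM2} it is a $(-1)$-curve with $NG'_i=0$; these furnish $h_2$ of the $(-1)$-cycles $Z$ with $ZN=0$. I would contract via $\rho\colon Y\to\bar Y$ all such $(-1)$-cycles, namely the $h_2$ curves of type $G'$ together with the remaining $n$ ones, obtaining a surface on which $K_{\bar Y}+\bar N$ is nef, where $\bar N=\rho_*N$. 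Since contracting a single $(-1)$-cycle raises the square of the canonical class by one, I would record
\begin{equation*}
K_{\bar Y}^2=K_Y^2+h_2+n.
\end{equation*}

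Next I would check that the intersection numbers of $N$ are unaffected by the contraction. As $NZ=0$ for every contracted cycle, $N=\rho^*\bar N$, so by the projection formula $\bar N^2=N^2=3$; and writing $K_Y=\rho^*K_{\bar Y}+\Delta$ with $\Delta$ supported on the contracted curves, one gets $\bar NK_{\bar Y}=NK_Y-N\Delta=NK_Y=1-2R_0K_S$ by lemma \ref{le.N}, because $N\Delta=0$. In particular $\bar N$ is still nef and big on $\bar Y$.

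Finally I would use that $K_{\bar Y}+\bar N$ is nef, whence $(K_{\bar Y}+\bar N)^2\ge 0$. Expanding,
\begin{equation*}
0\le (K_{\bar Y}+\bar N)^2=K_{\bar Y}^2+2\bar NK_{\bar Y}+\bar N^2=K_Y^2+h_2+n+2(1-2R_0K_S)+3,
\end{equation*}
so that $n\ge -K_Y^2-h_2-5+4R_0K_S$. Substituting the value of $K_Y^2$ from \eqref{e.KY2} with $K_S^2=1$ and collecting terms gives exactly the claimed bound $\frac{35}6R_0K_S-\frac32R_0^2-\frac{10+2h_2}3$.

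The delicate point I expect is the bookkeeping of the contraction: one must verify that the full family of $(-1)$-cycles with $ZN=0$ consists of precisely the $h_2$ cycles of type $G'$ and the $n$ additional ones, and that the iterative contraction procedure underlying \cite[lemma 2.2]{CCM2} increases $K^2$ by exactly one per cycle, so that $K_{\bar Y}^2=K_Y^2+h_2+n$ holds with no over- or undercounting. Once this is secured, the nefness estimate and the substitution are routine.
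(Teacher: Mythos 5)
Your proof is correct and follows essentially the same route as the paper: the paper squares the nef divisor $N_1=N+K_Y-\sum_{i=1}^n Z_i-G'$ directly on $Y$ and substitutes \eqref{e.KY2}, and since $N_1$ is precisely the pullback $\rho^*(K_{\bar Y}+\bar N)$ under your contraction, your inequality $(K_{\bar Y}+\bar N)^2\ge 0$ is the identical computation. The bookkeeping you flag as delicate (one blow-down per cycle, so $K_{\bar Y}^2=K_Y^2+h_2+n$) is exactly the paper's version of the same fact, obtained there via the Index theorem, which gives $Z_iZ_j=0$, $Z_iG'=0$ and $K_YZ_i=-1$, hence $\bigl(\sum_i Z_i+G'\bigr)^2=-n-h_2$.
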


\begin{proof}
Let $Z$ be such a cycle. Then for any other $(-1)$-cycle $Z'$ that does not intersect $N$  we have $ZZ'=0$ by the Index theorem. In particular $Z$ does not intersect any curve $G'_i$.
Then
\begin{equation}\label{e.zn}
0=ZN=Z(3K_Y+2B_0+E'-3G')=-3+2B_0Z+E'Z
\end{equation}
and there is a $(-3)$-curve $E'_i$ intersecting $Z$ positively.
Moreover since $E'_iN=0$ we have $(Z\pm E'_i)^2<0$
hence $-1\le ZE'_i\le 1$ for all $i=1,\dots,h_1$.

We know that $N+K_Y-\sum_{i=1}^n Z_i-G'$ is nef and, by lemma \ref{le.N} and equation \eqref{e.KY2},
\begin{equation*}
0\le (N+K_Y-\sum_{i=1}^n Z_i-G')^2=\frac{10+2h_2}3+\frac3 2 R_0^2-\frac{35}6R_0K_S+n.
\end{equation*}
\end{proof}

Let us set $N_1:=N+K_Y-G'-\sum_{i=1}^n Z_i$. 

\subsection{The $(-1)$-cycles $Z_i$}

We now analyse the irreducible components of the above $(-1)$-cycles $Z_i$. From the nefness of $N$ and $N_1$ we find
\begin{prop}\label{p.-1cycles}
In the above setting each irreducible component of the $(-1)$-cycles $Z$ is a curve $C$ such that $CN=CN_1=0$.
\end{prop}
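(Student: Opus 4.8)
The plan is to exploit the nefness of both $N$ and $N_1$ together with the intersection-theoretic data on the $(-1)$-cycles $Z_i$ recorded in the proof of Lemma \ref{le.Z}. First I would recall that each $Z=Z_i$ is an effective $(-1)$-cycle, so writing $Z=\sum_C n_C\,C$ as a sum of its irreducible components with multiplicities $n_C>0$ and using that $N$ is nef (Lemma \ref{le.N}), we have $CN\ge 0$ for every component $C$. The relation $0=ZN=\sum_C n_C(CN)$ then forces $CN=0$ for each $C$. This settles the first equality.

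For the second equality I would compute $ZN_1$ directly and show that it vanishes, after which the same nefness argument applied to $N_1$ yields $CN_1=0$ for every component. Recall that $N_1=N+K_Y-G'-\sum_{j=1}^n Z_j$. From the proof of Lemma \ref{le.Z} we have $ZN=0$ and $ZG'=0$, while $Z$ being a $(-1)$-cycle contractible to a smooth point gives $Z^2=-1$ and $p_a(Z)=0$, hence by adjunction $ZK_Y=-2-Z^2=-1$. Moreover, by the Index theorem argument in Lemma \ref{le.Z}, distinct cycles are orthogonal, so $ZZ_j=0$ for $j\neq i$ and $\sum_j ZZ_j=Z^2=-1$. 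Substituting,
\begin{equation*}
ZN_1=ZN+ZK_Y-ZG'-\sum_{j=1}^n ZZ_j=0+(-1)-0-(-1)=0.
\end{equation*}
Since $N_1$ is nef, $CN_1\ge 0$ for each component $C$, and $0=ZN_1=\sum_C n_C(CN_1)$ then forces $CN_1=0$, completing the argument.

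The computation is short, so the only point requiring care is the bookkeeping of the numerical invariants of $Z$: I must make sure that $Z^2=-1$, $ZK_Y=-1$, $ZG'=0$ and the orthogonality $ZZ_j=0$ for $j\neq i$ are all legitimately available from the construction of the $Z_i$ and from \cite[lemma 2.2]{CCM2}. I do not expect a genuine obstacle here, as these are precisely the properties isolated in the proof of Lemma \ref{le.Z}; the only conceptual content is the elementary but decisive observation that a nef divisor meeting an effective cycle in intersection number zero must meet each of its components in zero.
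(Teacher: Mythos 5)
Your proof is correct and follows exactly the route the paper intends: the paper derives the proposition in one line ``from the nefness of $N$ and $N_1$,'' and your argument simply makes explicit the underlying computation, namely that $ZN=0$ by construction and $ZN_1=ZK_Y-Z^2=-1+1=0$ using $Z^2=-1$, $p_a(Z)=0$, $ZG'=0$ and the orthogonality $ZZ_j=0$ ($j\ne i$) established in the proof of lemma \ref{le.Z}, after which nefness forces each component to meet both divisors in zero. The bookkeeping you flag as the only delicate point is indeed legitimately available from that lemma, so there is nothing to correct.
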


\begin{cor}\label{c.F}
The curves $F'_j$ and $H'_j$ satisfy $F'_jN_1=H'_jN_1=0$ for any $j=1,\dots,h_2$. In particular $F'_j\sum_{i=1}^n Z_i=H'_j\sum_{i=1}^n Z_i=0$.
\end{cor}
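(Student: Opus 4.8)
The plan is to compute $F'_jN_1$ and $H'_jN_1$ directly from the explicit geometry over the point $q_j$, and then to fix the sign using the nefness of $N_1$. First recall the shape of the exceptional configuration on $Y$: over the $A_2$-point associated to $q_j$ there is a chain $F'_j - G'_j - H'_j$ in which $F'_j$ and $H'_j$ are $(-3)$-curves while $G'_j$ is a $(-1)$-curve, with $F'_jG'_j=H'_jG'_j=1$ and $F'_jH'_j=0$. Indeed, since $F_j$ and $H_j$ lie in the ramification $R$ they are totally ramified, so $\st\pi{F'_j}=3F_j$ and $\st\pi{H'_j}=3H_j$, whereas $G_j\not\subset R$ gives $\st\pi{G'_j}=G_j$; taking self-intersections yields ${F'_j}^2={H'_j}^2=-3$ and ${G'_j}^2=-1$, and the remaining intersection numbers follow from the projection formula. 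This also explains why $G'$, and not $F'$ or $H'$, is the part subtracted in $N_1$.

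Next I would record the intersections of $F'_j$ with the three pieces of $N_1=N+K_Y-G'-\sum_i Z_i$. Since $F'_j$ is a rational $(-3)$-curve, adjunction gives $F'_jK_Y=1$; since the components of $G'$ lie over distinct points, $F'_jG'=F'_jG'_j=1$; and $F'_jN=0$ because, by lemma \ref{le.N}, $\st\pi N=\st\varepsilon{3K_S}$ is pulled back from $S$ while $F_j$ is $\varepsilon$-exceptional, so that by the projection formula $3(F'_jN)=\st\pi{F'_j}\cdot\st\pi N=9\,(F_j\cdot\st\varepsilon{K_S})=0$. Combining these,
\begin{equation*}
F'_j(N+K_Y-G')=0+1-1=0,
\end{equation*}
and therefore $F'_jN_1=-F'_j\sum_{i=1}^n Z_i$. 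The identical computation applies to $H'_j$.

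It then remains to determine the sign of $F'_j\sum_i Z_i$ by a two-sided estimate. On one hand $N_1$ is nef: it is the pull-back of the adjoint of $N$ on the surface obtained by contracting $G'$ and the cycles $Z_i$, which is nef by \cite[lemma 2.2]{CCM2}; hence $F'_jN_1\ge 0$ and so $F'_j\sum_i Z_i\le 0$. On the other hand, as shown in the proof of lemma \ref{le.Z}, each cycle $Z_i$ is disjoint from $G'$; since $F'_jG'_j=1$, the curve $F'_j$ cannot be a component of any $Z_i$ (otherwise $Z_iG'_j\ge F'_jG'_j=1>0$), and therefore $F'_j\sum_i Z_i\ge 0$. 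The two inequalities force $F'_j\sum_i Z_i=0$, whence $F'_jN_1=0$, and the same argument with $H'_j$ in place of $F'_j$ completes the proof.

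The step I expect to be the main obstacle is the last one: ruling out that $F'_j$ or $H'_j$ occurs as a component of the $(-1)$-cycles $Z_i$, which is what guarantees the non-negativity of the intersection with $\sum_i Z_i$. This is precisely the point at which the disjointness of the $Z_i$ from $G'$ (from lemma \ref{le.Z}), rather than any further numerical computation, is essential, since it is what makes the upper and lower bounds coincide.
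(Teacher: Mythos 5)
Your preparatory work is correct and coincides with the paper's: from $\st\pi{F'_j}=3F_j$ with $F_j$ being $\varepsilon$-exceptional you get $F'_jN=0$, adjunction on the rational $(-3)$-curve gives $F'_jK_Y=1$, and $F'_jG'=F'_jG'_j=1$, so indeed $F'_j(N+K_Y-G')=0$ and $F'_jN_1=-F'_j\sum_{i=1}^nZ_i$, with nefness of $N_1$ supplying the inequality $F'_j\sum_iZ_i\le 0$. The gap is exactly in the step you flag as the main obstacle. To get the reverse inequality you claim $F'_j$ cannot be a component of any $Z_i$, ``otherwise $Z_iG'_j\ge F'_jG'_j=1$''; but this inequality presupposes $(Z_i-F'_j)G'_j\ge 0$, which fails precisely when $G'_j$ is itself a component of $Z_i$. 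What the proof of lemma \ref{le.Z} establishes via the Index theorem is only the numerical vanishing $Z_iG'_j=0$, not disjointness at the level of components: for instance $Z_i=F'_j+G'_j+D'$ with $D'G'_j=0$, $D'N=0$ and $D'^2+2F'_jD'=1$ (say $D'$ a $(-1)$-cycle meeting $F'_j$ once) is compatible with $Z_i^2=-1$, $Z_iN=0$ and $Z_iG'_j=0$, so nothing available at this stage rules it out. Worse, the missing fact --- that $G'_j$, and hence $F'_j$ and $H'_j$, is contained in no cycle --- is exactly lemma \ref{l.noG} and corollary \ref{c.noG}, which the paper proves \emph{after} this corollary, and the proof of lemma \ref{l.noG} opens by invoking corollary \ref{c.F} itself; importing that fact here would therefore be circular.

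The paper closes the gap without ever deciding whether $F'_j$ lies in a cycle: by proposition \ref{p.-1cycles}, every irreducible component $C$ of the $(-1)$-cycles $Z_i$ already satisfies $CN_1=0$. So one argues by cases. If $F'_j$ (resp.\ $H'_j$) is a component of some $Z_i$, then $F'_jN_1=0$ outright by that proposition; if it is not, then $F'_j\sum_iZ_i\ge 0$, and together with the nefness of $N_1$ your identity forces $F'_jN_1=0$ again. In either case $F'_j\sum_{i=1}^nZ_i=F'_j(N+K_Y-G')-F'_jN_1=0$, which is the ``in particular'' clause. Replacing your exclusion argument by this case distinction --- a one-line substitution, since proposition \ref{p.-1cycles} immediately precedes the corollary --- makes your proof complete; the rest of your computation can stand as written.
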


\begin{proof}
Simply compute $F'_jN_1$ (resp. $H'_jN_1$) using the above proposition and  recalling that $F'_j$ and $H'_j$ are $(-3)$-curves such that $F'_jN=H'_jN=0$.
\end{proof}

\begin{cor}\label{c.E}
For any irreducible curve $E'_k$ or $B_{0k}$ we find
\begin{equation*}
E'_k\sum_{i=1}^nZ_i\ge 0, \quad B_{0k}\sum_{i=1}^nZ_i\ge 0.
\end{equation*}
\end{cor}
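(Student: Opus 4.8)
The plan is to reduce the statement to a single numerical identity by exploiting the effectiveness of $\sum_{i=1}^n Z_i$. Recall that an irreducible curve meets an effective divisor nonnegatively as soon as it is not one of its components. Hence for any irreducible curve $C$ that is \emph{not} a component of any $Z_i$ we automatically have $C\sum_{i=1}^n Z_i\ge 0$, which disposes at once of all the $E'_k$ and $B_{0k}$ not appearing in the support of the cycles. So the only thing left to check is what happens when $E'_k$ (resp. $B_{0k}$) \emph{is} a component of some $Z_i$.

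In that situation I would invoke Proposition \ref{p.-1cycles}, which tells us that every irreducible component $C$ of the $Z_i$ satisfies $CN=CN_1=0$. Using the definition $N_1=N+K_Y-G'-\sum_{i=1}^n Z_i$, rewritten as $\sum_{i=1}^n Z_i=N+K_Y-G'-N_1$, one obtains for such a component the clean identity
\begin{equation*}
C\sum_{i=1}^n Z_i=CN+CK_Y-CG'-CN_1=CK_Y-CG'.
\end{equation*}
Thus the whole problem collapses to estimating $CK_Y-CG'$ for the two types of curve, exactly in the spirit of the computation already made for $F'_j$ and $H'_j$ in Corollary \ref{c.F}.

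For $C=E'_k$ the conclusion is immediate: $E'_k$ is a $(-3)$-curve, so adjunction gives $E'_kK_Y=1$, and $E'_kG'=0$ because the exceptional curves lying over the triple points $p_i$ are disjoint from those lying over the $A_2$-points $q_j$; hence $E'_k\sum_{i=1}^n Z_i=1\ge 0$. For $C=B_{0k}$ one again has $B_{0k}G'=0$, since the $A_2$-points are isolated fixed points and therefore do not lie on the divisorial ramification $R_0$, so that its image $B_0$ misses the curves $G'_j$. This leaves $B_{0k}\sum_{i=1}^n Z_i=B_{0k}K_Y$, and I would conclude through \eqref{e.BKY}, which gives $B_0K_Y=R_0K_S-2R_0^2\ge 0$ because $R_0K_S\ge 0$ and $R_0^2\le 0$.

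The delicate point is precisely this last subcase, where a component of the branch curve $B_0$ is itself contracted inside one of the cycles $Z_i$: here one must rule out that $B_{0k}$ could be a $(-1)$-curve, which would force $B_{0k}K_Y=-1<0$ and destroy the inequality. The estimate above handles this globally via \eqref{e.BKY} and the negativity of $R_0^2$; the cleanest justification is that in the cases at hand $R_0$ is a single negative curve (for instance an irreducible $(-2)$-curve, as in Proposition \ref{p.h0li1}), so $B_0$ is irreducible and $B_{0k}K_Y=B_0K_Y$ cannot split off a negative contribution. By contrast the $E'_k$ case is structurally forced by the $(-3)$-curve computation and poses no difficulty.
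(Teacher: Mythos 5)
Your reduction and the $E'_k$ half are exactly the paper's argument: a curve not contained in any $Z_i$ meets the effective divisor $\sum_{i=1}^n Z_i$ nonnegatively, and for a component one uses proposition \ref{p.-1cycles} to get $C\sum_{i=1}^n Z_i = CK_Y - CG'$, which for the rational $(-3)$-curve $E'_k$ gives $E'_k\sum_{i=1}^n Z_i=1$. That part is fine.

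The $B_{0k}$ half has a genuine gap. You estimate $B_{0k}K_Y$ through the \emph{global} identity \eqref{e.BKY}, but this fails on two counts. First, the inequality $B_0K_Y=R_0K_S-2R_0^2\ge 0$ is not available at this point of the paper: $R_0^2\le 0$ is not known in general. The corollary lives in section \ref{s.adj}, which covers all three cases of the list, and in case (i) $R_0$ contains a component $\Gamma$ with $\Gamma K_S=1$ and possibly $\Gamma^2=1$ (this is excluded only much later, in proposition \ref{p.noN1}), so one could have $R_0^2=1$ and $B_0K_Y=-1$. Second, and more fundamentally, even a nonnegative total does not control one component: your fallback that ``$B_0$ is irreducible'' invokes proposition \ref{p.h0li1}, which assumes case (iii) \emph{and} $\ell=1$, whereas at this stage $R_0$ may well have $\ell\ge 2$ components (e.g.\ case $(1e)$ with $\ell=2,3$ is still alive until section \ref{s.other}, and in case (i) one has $R_0=\Gamma+\ell$ disjoint $(-2)$-curves). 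So the ``delicate point'' you flag is not actually resolved by your argument.

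The correct step is local to the component, and it is what the paper does: if $B_{0k}$ is contained in some cycle, proposition \ref{p.-1cycles} gives $B_{0k}N=0$; since $\st\pi{B_{0k}}=3\st\varepsilon{R_{0k}}$ one has $B_{0k}N=3R_{0k}K_S$, so $R_{0k}K_S=0$, forcing $R_{0k}$ to be a $(-2)$-curve on $S$ and hence $B_{0k}$ a rational $(-6)$-curve on $Y$ with $B_{0k}K_Y=4$ by adjunction. Your own reduction then yields $B_{0k}\sum_{i=1}^n Z_i = B_{0k}K_Y - B_{0k}G' = 4\ge 0$. Note that this computation also dissolves your worry about $B_{0k}$ being a $(-1)$-curve: the condition $B_{0k}N=0$, automatic for a component of a cycle, already pins down its self-intersection, with no global input about $R_0$ needed.
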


\begin{proof}
The statement is obvious if $E'_k$ or $B_{0k}$ are not contained in any of the $(-1)$-cycles $Z_i$. On the other hand if $E'_k$ is contained in some $(-1)$-cycles, from proposition \ref{p.-1cycles} we find $0=E'_kN_1=1-E'_k\sum_{i=1}^nZ_i$ hence $E'_k\sum_{i=1}^nZ_i=1$.
Analogously if $B_{0k}$ is contained is some cycle $Z_{i_0}$, then $B_{0k}N=0$ and  $B_{0k}$ is a $(-6)$-curve on $Y$. Hence we find $0=B_{0k}N_1=4-B_{0k}\sum_{i=1}^nZ_i$  
as wanted.
\end{proof}

Let us now consider an irreducible $(-1)$-curve $C$ in a cycle.
Recall, from the proof of lemma \ref{le.Z} and from \eqref{e.zn}, that there is a curve $E'_i$ such that $CE'_i=1$. 
On the other hand, $(\st\pi C)^2=3C^2=-3$ and, for each $E'_i$ with $CE'_i=1$ and $C$ as above, $3=\st\pi{CE'_i}=3\st\pi C E_i$ which implies $\st\pi CE_i=1$.
Moreover both $C$ and $E'_i$ are irreducible and $C^2=-1$ while ${E'_i}^2=-3$. Thus it cannot be $E_i\le \st\pi C$. 
In particular $\st\pi C$ cannot be singular at the point $\st\pi C\cap E_i$ (otherwise we should have $\st\pi CE_i\ge 2$).

\begin{lemma}\label{l.-3Z}
If $C$ is an irreducible $(-1)$-curve such that $CN=0$ and $CF'_j=CH'_j=0$  ($j=1,\dots,h_2$) then $\st\pi C$ is a rational curve and $\st\pi C^2=-3$.
\end{lemma}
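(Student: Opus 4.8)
The plan is to pin down the numerical data of $\st\pi C$ on $X$ and then to read off its genus after pushing the curve down to the minimal model $S$, where the geometry of a surface of general type is rigid. The self-intersection is immediate from $\deg\pi=3$: by the projection formula $\st\pi C^2=3C^2=-3$, as already remarked. The real content is rationality, and the key observation is the single identity
\begin{equation*}
\st\pi C\cdot\st\varepsilon{K_S}=0.
\end{equation*}
Indeed, by lemma \ref{le.N} we have $\st\pi N=\st\varepsilon{3K_S}$, so $\st\varepsilon{K_S}=\frac{1}{3}\st\pi N$, and the projection formula gives
\begin{equation*}
\st\pi C\cdot\st\varepsilon{K_S}=\frac{1}{3}\,\st\pi C\cdot\st\pi N=\frac{1}{3}\cdot 3\,(CN)=CN=0.
\end{equation*}

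Next I would check that $\st\pi C$ is irreducible and is not $\varepsilon$-exceptional. Since $C$ lies in one of the cycles $Z_i$ we have $CG'=0$, so $CN=0$ forces $2CB_0+CE'=3>0$; hence $C$ meets the branch divisor $B_0+E'$, whose components carry inertia generating $\mathbb Z/3$. Consequently the induced $\mathbb Z/3$-cover $\st\pi C\to C\cong\mathbb P^1$ has a point of total ramification, its monodromy is surjective, and $\st\pi C$ is therefore irreducible (the remarks preceding the lemma already show $\st\pi C$ is reduced, smooth where it meets each $E_i$, and contains no $E_i$). Finally $\st\pi C$ is none of the exceptional curves $E_i,F_j,G_j,H_j$: the first three kinds have self-intersection $-1\neq-3$, while $\st\pi C=G_j$ would force $C=G'_j$, which is excluded because the $Z_i$ are distinct from the cycles of $G'$.

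Therefore $\bar C:=\varepsilon(\st\pi C)$ is a nonzero irreducible curve on $S$ with, by the projection formula, $\bar C\,K_S=\st\pi C\cdot\st\varepsilon{K_S}=0$. As $S$ is minimal of general type, $K_S$ is nef and big; the Index theorem then gives $\bar C^2<0$ (a nonzero effective class orthogonal to $K_S$ cannot have nonnegative square when $K_S^2>0$), and adjunction $2p_a(\bar C)-2=\bar C^2+\bar C\,K_S=\bar C^2$ yields $p_a(\bar C)=0$ and $\bar C^2=-2$. Hence $\bar C$ is a smooth rational $(-2)$-curve, and since $\varepsilon$ restricts to a birational morphism $\st\pi C\to\bar C$, the curve $\st\pi C$ is rational.

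The crux, and the step I expect to be the real obstacle, is proving rationality rather than merely bounding the arithmetic genus. A direct adjunction/Hurwitz computation on $X$ only gives $p_a(\st\pi C)=1-CB_0\in\{0,1\}$ and leaves an a priori elliptic case ($CB_0=0$); the device that removes it is the orthogonality $\st\pi C\cdot\st\varepsilon{K_S}=0$ together with the minimality of $S$, which forces the image to be a $(-2)$-curve and hence rational. I would be careful to justify the irreducibility of $\st\pi C$ via the total ramification of the restricted cover, since the conclusion ``a rational curve with self-intersection $-3$'' would fail if the cover split into three disjoint $(-1)$-curves.
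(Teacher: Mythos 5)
Your proof is correct, but it takes a genuinely different route from the paper's on both of the two substantive points. For irreducibility, the paper argues equivariantly: if $\st\pi C=C_1+C_2+C_3$ split, the intersection points with $E_i$ would be $\sigma$-fixed and one would get $1=\st\pi C\,E_i=3C_1E_i$, a contradiction; you instead invoke total ramification of the restricted $\mathbb Z/3$-cover, which works because $2CB_0+CE'=3$ together with $-1\le CE'_i\le 1$ (from the proof of lemma \ref{le.Z}) guarantees a transverse intersection with a single branch component (the components of $B_0+E'+F'+H'$ being pairwise disjoint, and $CF'=CH'=0$). For rationality, the paper applies Hurwitz to the normalization, $2p_g(\st\pi C)-2=-6+2r$, and then splits into the cases $CB_0=CE'=1$ (where $r=2$ forces genus $0$) and $CB_0=0$, $CE'=3$ (where $r=2$ is killed by smoothness along $E$ and $r=3$ produces a smooth elliptic curve whose image on $S$ has square $0$ and $K_S$-degree $0$, impossible on a minimal surface of general type); you make that last push-down the \emph{whole} argument: $\bar C K_S=\st\pi C\cdot\st\varepsilon{K_S}=CN=0$, so the Index theorem and adjunction force $\bar C$ to be a $(-2)$-curve, and rationality of $\st\pi C$ follows uniformly, with the problematic elliptic case never arising separately. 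Your version is shorter and more conceptual; what the paper's case analysis buys is corollary \ref{c.Zint} ($CB_0=CE'=1$) as an explicit byproduct, though this also drops out of your argument, since $\st\pi C F_j=\st\pi C G_j=\st\pi C H_j=0$ and transversality along $E$ give $-2=\bar C^2=\st\pi C^2+CE'=-3+CE'$, whence $CE'=1$ and $CB_0=1$. One small slip of wording: among $E_i,F_j,G_j,H_j$ the curves of self-intersection $-1$ are $E_i,F_j,H_j$, not ``the first three kinds'' as listed; $G_j$ is the $(-3)$-curve, and your separate exclusion $\st\pi C=G_j\Rightarrow C=G'_j$ (ruled out since $CG'=0$) is the right one, so the argument is unaffected.
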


\begin{proof}
Suppose that $\st\pi C=C_1+C_2+C_3$ is the union of three distinct curves and consider the curve $E_i$ above. Then $C_iC_j\ge 1$ for $i\ne j$ since the point $\st\pi Z\cap C_i$ is fixed for $\sigma$. 
Each component of $\st\pi C$ is a rational curve, so $-2=2p_a(C_i)-2=C_i(C_i+K_X)$.
Since the intersection of the components $C_i$ is fixed under the action of $\sigma$, we should have, for each $i$ such that $E'_i$ intersects $C$, $1=\st\pi CE_i=(C_1+C_2+C_3)E_i=3C_1E_i$.
Contradiction. It follows that $\st\pi C$ is an irreducible curve. We now want to show that $p_g(\st\pi C)=0$.
From Hurwitz formula we have
\begin{equation*}
2p_g(\st\pi C)-2=-2\cdot 3+2r
\end{equation*}
where $r$ is the number of ramification points of the triple cover $\st\pi C^\nu\longrightarrow C$. We have $2r=2p_g(\st\pi C)+4\ge 4$, so $r\ge 2$. On the other hand $r$ is not greater than the number of intersection points of $C$ with $B_0+E'+F'+H'$. We have $CF'=CH'=0$  and from \eqref{e.zn} either $CB_0=CE'=1$
or $CB_0=0,CE'=3$. Furthermore
\begin{equation*}
\st\pi CK_X=\st\pi C(\st\pi{K_Y}+2R)=-3+2CB_0+2CE'.
\end{equation*}
 In the former case, $r=2$ and $\st\pi C^\nu$ is a smooth rational curve. In the latter case $r=2,3$. If $r=2$ then $\st\pi C$ has geometric genus 0 and it has a singular point in $\st\pi C\cap E$. 
This is a contradiction since $\st\pi CE_i\le 1$, $i=1,\dots,h_1$. 

 When $r=3$, instead, since $p_a(\st\pi C)=p_g(\st\pi C)=1$, $\st\pi C$ should be a smooth elliptic curve.
When we look at the image $\varepsilon(\st\pi C)$ of this curve on $S$, since $CE'=3$ (recall from the proof of lemma \ref{le.Z} that $0\le CE'_i\le 1$ for any $i=1,\dots,h_1$) we would have 
$\varepsilon(\st\pi C)^2=\st\pi C^2-3=0$, and since it is an elliptic curve,  
$K_S\,\varepsilon(\st\pi C)=0$. 
This is impossible since $S$ is a minimal surface of general type.
 \end{proof}

\begin{cor}\label{c.Zint}
For any curve $C$ as above $CB_0=CE'=1$.
\end{cor}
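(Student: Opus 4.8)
Corollary \ref{c.Zint} asserts that for an irreducible $(-1)$-curve $C$ in one of the cycles $Z_i$, we have $CB_0 = CE' = 1$. Recall from \eqref{e.zn} that such a curve satisfies $ZN = 0$, which gives exactly the two numerical alternatives $CB_0 = CE' = 1$ or $CB_0 = 0$, $CE' = 3$; so the entire content of the corollary is to rule out the second possibility.

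Let me think about how to prove this.

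The relation $0 = ZN = Z(3K_Y + 2B_0 + E' - 3G')$ gives $-3 + 2B_0 Z + E' Z = 0$, so $2B_0 Z + E' Z = 3$. For an irreducible $C$ with $CF' = CH' = 0$ (which holds by Corollary \ref{c.F} since components of the cycles don't meet $F'_j, H'_j$), the possibilities are indeed $(CB_0, CE') \in \{(1,1), (0,3)\}$, using that $CE'_i \le 1$ for each $i$.

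So the corollary is really an immediate consequence of Lemma \ref{l.-3Z} and its proof. Let me look at the proof of Lemma \ref{l.-3Z} more carefully. The lemma shows $\st\pi C$ is a rational curve with $(\st\pi C)^2 = -3$. Crucially, inside that proof, the case $CB_0 = 0$, $CE' = 3$ (i.e. $r = 3$, leading to an elliptic image) was already shown to be impossible — it led to the contradiction that $\varepsilon(\st\pi C)$ would be an elliptic curve with $K_S \cdot \varepsilon(\st\pi C) = 0$ on a minimal surface of general type.

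So my proof should simply extract this conclusion.

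=== BEGIN PROOF PROPOSAL ===

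The plan is to read off this corollary directly from \eqref{e.zn} and the analysis already carried out in the proof of Lemma \ref{l.-3Z}. First I would recall that, for an irreducible $(-1)$-curve $C$ in one of the cycles $Z_i$, Corollary \ref{c.F} gives $CF'_j = CH'_j = 0$ for all $j$, so equation \eqref{e.zn} reduces to
\[
2CB_0 + CE' = 3.
\]
Since $0 \le CE'_i \le 1$ for every $i = 1,\dots,h_1$ (as noted in the proof of Lemma \ref{le.Z}), and $CB_0 \ge 0$ because $B_0$ is effective and $C$ is not a component of $B_0$, the only two integral solutions are $CB_0 = CE' = 1$ or $CB_0 = 0$, $CE' = 3$.

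It therefore remains to exclude the second alternative, and this is precisely what was shown inside the proof of Lemma \ref{l.-3Z}. Indeed, in the case $CB_0 = 0$, $CE' = 3$ one computes from Hurwitz that the number $r$ of ramification points of $\st\pi C^\nu \to C$ equals $3$, forcing $\st\pi C$ to be a smooth elliptic curve. Passing to the image $\varepsilon(\st\pi C)$ on $S$, the fact that $CE' = 3$ gives $\varepsilon(\st\pi C)^2 = (\st\pi C)^2 - 3 = 0$, and since this image is an elliptic curve we obtain $K_S\, \varepsilon(\st\pi C) = 0$. This contradicts the minimality of the surface of general type $S$, on which every irreducible curve $\Gamma$ satisfies $K_S \Gamma \ge 0$ with equality only for $(-2)$-curves (which are rational, not elliptic). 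Hence the second alternative cannot occur.

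The only remaining point, which is routine, is to confirm that $C$ is genuinely not a component of $B_0$, so that $CB_0 \ge 0$ is legitimate: this follows because each component of $B_0$ is a $(-6)$-curve disjoint from $N$ while $C$ is a $(-1)$-curve, so they are distinct irreducible curves. Thus the first alternative $CB_0 = CE' = 1$ is forced, which is the assertion of the corollary. The main point is that no new argument is needed — the corollary is a bookkeeping consequence of the numerical identity \eqref{e.zn} together with the elliptic-curve obstruction already established for Lemma \ref{l.-3Z}.

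=== END PROOF PROPOSAL ===
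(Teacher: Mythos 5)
Your proposal is correct and matches the paper's own treatment: the corollary is stated there without a separate proof, precisely because the alternative $CB_0=0$, $CE'=3$ allowed by \eqref{e.zn} was already excluded inside the proof of Lemma \ref{l.-3Z}, which is exactly the reduction you make. One small inaccuracy in your recap: in that excluded case Hurwitz alone only gives $r\in\{2,3\}$, and the subcase $r=2$ is eliminated not by Hurwitz but by the separate observation that $\st\pi C$ would then have to be singular at a point of $\st\pi C\cap E$, contradicting $\st\pi C E_i\le 1$ — only then does $r=3$ force the elliptic curve and the minimality contradiction you cite.
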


We now want to determine the composition of the reducible $(-1)$-cycles. 

\begin{lemma}\label{l.noG} 
The curves $G'_j$ cannot be contained in one of the cycles $Z_i$, $i=1,\dots,n$. 
\end{lemma}

\begin{proof}
If one of the cycles $Z_i$, say $Z_{i_0}$  contains a curve $G'_j$ then from corollary \ref{c.F} we have, since $F'_jG'_j=1$,
\begin{equation*}
0=F'_j\sum_{i=1}^n Z_i=1+F'_j(Z_{i_0}-G'_j)+F'_j\sum_{i\ne i_0}Z_i
\end{equation*}
hence $F'_j$ is contained either in $Z_{i_0}$ or in another cycle $Z_i$ with $i\ne i_0$. In this latter case we have 
\begin{equation*}
0=G'_jZ_i=G'_j(Z_i-F'_j)+G'_jF'_j=G'_j(Z_i-F'_j)+1
\end{equation*}
hence $G'_j$ is also contained in $Z_i$.
Then there exists a cycle containing both $G'_j$ and $F'_j$. The same argument holds for $H'_j$. In particular  $F'_j$ and $H'_j$ are both contracted  to make the adjoint divisor to $N$ a nef divisor.

When we contract the curve $G'_j$ the images of $F'_j$ and $H'_j$ are two $(-2)$-curves meeting  at one point. Since they are both contracted there is a $(-1)$-cycle $C$ intersecting at least one of them at one point. If  $C$ passes through the intersection point of the $(-2)$-curves, then by contracting $C$ we obtain a cycle which is composed of two $(-1)$-curves meeting at one point. In particular this cycle is effective with self-intersection 0 and  it does  not intersect the image $\bar N$ of $N$ contradicting the Index theorem. 
This implies that $C$ is a $(-1)$-cycle  intersecting at one point only one of the curves $F'_j$ or $H'_j$. We will assume without loss of generality $CF'_j=1$.

We show the lemma by reducing ourselves to the case when $C$ is an irreducible $(-1)$-curve hence $C=Z_1$. This is always possible after the contraction of a suitable number of $(-1)$-curves. In this case  we have the configuration of figure \ref{f.1}
\begin{figure}
\begin{equation*}
\xymatrix{ 
\ar@{-}_{Z_1}[ddd] \\
&\\
&\\
&\\
}
\quad
\xymatrix{ 
&\ar@{-}_{Z_1}[ddd] & &\ar@{-}^{G'_j}[ddd] &\\
&  & & & &\\
\ar@{-}_{F'_j}[rrrr]& & & & \\
& & & & &\\
}
\end{equation*}
\begin{equation*}
\xymatrix{ 
&\ar@{-}_{Z_1}[ddd] & &\ar@{=}^{G'_j}[ddd] &\\
&  &\ar@{-}^{H'_j}[rrr] & & &\\
\ar@{-}_{F'_j}[rrrr]& & & & \\
& & & & &\\
}
\end{equation*}
\caption{}\label{f.1}
\end{figure}
hence $n\ge 3$. Moreover we have
\begin{equation*}
Z_1N=Z_1(3K_Y+2B_0+E'-3G')=-3+2B_0Z_1+E'Z_1.
\end{equation*}
Since $Z_1$ is irreducible then either $B_0Z_1=0, E'Z_1=3$ or $B_0Z_1=1, E'Z_1=1$. By the Index theorem, since $E'_kN=Z_iN=0$ for all $k=1,\dots,h_1, i=1,\dots,n$, we have $-1\le E'_kZ_i\le 1$. For any curve $E'_k$ such that $E'_kZ_1=1$ we find (see also corollary \ref{c.E})
\begin{equation*}
0\le E'_k\sum_{i=1}^n Z_i=1-E'_kN_1\le 1
\end{equation*}
and 
\begin{equation*}
E'_k\sum_{i=1}^n Z_i=3E'_kZ_1+E'_k(\sum_{i\ge 4}Z_i)=3+E'_k(\sum_{i\ge 4}Z_i)\le 1.
\end{equation*}

Thus $E'_k$ is contained in some $(-1)$-cycle $Z_i$ $i\ge 4$. Then $E'_k$ is contracted too and one of the  cycles has the configuration of figure \ref{f.2}.
\begin{figure}
\begin{equation*}
\xymatrix{ 
 & & & & & & &\\
 & &\ar@{=}_{Z_1}[ddd] & & &\ar@{=}^{G'_j}[ddd] &\\
 & &  &\ar@{-}_{E'_k}[lll] &\ar@{-}^{H'_j}[rrr] & & &\\
 &\ar@{-}_{F'_j}[rrrrr]& & & & & \\
 & & & & & & &\\
}
\end{equation*}
\caption{}\label{f.2}
\end{figure}
When we contract the curves $Z_1$ and $G'_j$  the images of $E'_k$ and $H'_j$ are $(-2)$-curves while the image of $F'_j$ is a $(-1)$-curve intersecting them at one point. Hence when we contract that $(-1)$-curve we obtain two $(-1)$-curve meeting at one point. This new configuration has self-intersection 0 and cannot be contracted to a point.
Thus we get a contradiction and the curve $G'_j$ cannot be contained in a cycle.\end{proof}

\begin{cor}\label{c.noG}
The curves $F'_j$ and $H'_j$ are not contained in any of the $(-1)$-cycles $Z_i$.
\end{cor}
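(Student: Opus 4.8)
The plan is to exploit the chain structure $F'_j$--$G'_j$--$H'_j$ of the exceptional configuration lying over each $A_2$-singularity, and to transfer the content of lemma \ref{l.noG} (that $G'_j$ lies in no cycle) into a statement about $F'_j$ and $H'_j$ by testing intersection numbers against the \emph{middle} curve $G'_j$ rather than against $F'_j$ and $H'_j$ themselves. Although corollary \ref{c.F} already gives $F'_j\sum_i Z_i=0$, this equality does not by itself preclude $F'_j$ from being a component of a cycle; so the decisive idea is to intersect with $G'_j$, where the nefness of $N_1$ can be brought to bear.

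First I would establish the sharper vanishing $G'_j\sum_{i=1}^n Z_i=0$. Recall that $G'_j$ is a $(-1)$-curve with $NG'_j=0$ and $K_YG'_j=-1$ (remark \ref{r.nef}), and that the curves $G'_k$ coming from distinct singularities are pairwise disjoint, so $G'_jG'=(G'_j)^2=-1$. Substituting these into $N_1=N+K_Y-G'-\sum_i Z_i$ gives $N_1G'_j=-G'_j\sum_i Z_i$. Since $N_1$ is nef the left-hand side is $\ge 0$, whence $G'_j\sum_i Z_i\le 0$; on the other hand lemma \ref{l.noG} says $G'_j$ is a component of no $Z_i$, so $G'_jZ_i\ge 0$ for every $i$ and $G'_j\sum_i Z_i\ge 0$. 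The two inequalities force $G'_j\sum_i Z_i=0$.

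Now suppose, for contradiction, that $F'_j$ is contained in some cycle $Z_{i_0}$. Since $G'_j$ is not contained in $Z_{i_0}$ (lemma \ref{l.noG}) and $F'_jG'_j=1$, I would write $G'_jZ_{i_0}=G'_j\bigl(F'_j+(Z_{i_0}-F'_j)\bigr)\ge F'_jG'_j=1$, the remaining term being non-negative because $G'_j$ shares no component with $Z_{i_0}-F'_j$. As the other contributions $G'_jZ_i$ ($i\ne i_0$) are likewise non-negative, this yields $G'_j\sum_i Z_i\ge 1$, contradicting the equality just proved. The identical argument, with $H'_jG'_j=1$ in place of $F'_jG'_j=1$, rules out $H'_j$.

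The main obstacle, and the only genuinely non-routine point, is recognising that one must not argue with $F'_j$ directly: corollary \ref{c.F} gives only $F'_j\sum_i Z_i=0$, and the same reasoning applied to $F'_j$ produces merely $F'_jZ_{i_0}=0$, which is perfectly compatible with $F'_j$ being a $(-3)$-curve meeting the rest of its cycle in three points, and so leads to no contradiction. The decisive move is to pivot to the adjacent $(-1)$-curve $G'_j$, for which the nefness of $N_1$ upgrades the inequality supplied by lemma \ref{l.noG} to the \emph{equality} $G'_j\sum_i Z_i=0$; the hypothesised inclusion $F'_j\subseteq Z_{i_0}$ then forces $G'_j$ to meet this cycle, which is exactly what that equality forbids.
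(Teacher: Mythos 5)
Your proof is correct and is essentially the paper's own argument: the paper likewise pivots on the middle curve, noting $G'_jZ_i=0$ (already established in the proof of lemma \ref{le.Z} via the Index theorem, since $G'_j$ is itself a $(-1)$-cycle with $G'_jN=0$) so that $F'_j\le Z_i$ together with $F'_jG'_j=1$ would force $G'_j\le Z_i$, contradicting lemma \ref{l.noG}. Your only deviation is rederiving $G'_jZ_i=0$ from the nefness of $N_1$ combined with lemma \ref{l.noG} rather than quoting the Index-theorem computation, and running the final step in contrapositive form ($G'_j\not\le Z_{i_0}$ forces $G'_jZ_{i_0}\ge 1$); both variations are harmless.
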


\begin{proof}
If a curve $F'_j$ (or $H'_j$) is contained in a cycle $Z_i$ then, since $G'_jZ_i=0$, 
$G'_j$ is also contained in $Z_i$. This contradicts lemma \ref{l.noG}. 
\end{proof}

\begin{lemma}\label{l.no2E} 
There is no cycle $Z_i,1\le i\le n$ containing at least two curves $E'_k$.  
\end{lemma}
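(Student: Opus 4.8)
The plan is to argue by contradiction, producing on $S$ a configuration of $(-2)$-curves forbidden by the Hodge index theorem. Suppose some cycle, say $Z_{i_0}$, contains two curves $E'_k$; in particular it contains at least one, which I fix and call $E'_k$. Recall that $E'_k$ is a smooth rational $(-3)$-curve with $E'_kN=0$ and $E'_kK_Y=1$, and that it is disjoint from $B_0$, from the curves $F'_j,G'_j,H'_j$ and from the other curves $E'_m$ (the point $p_k$ does not lie on the divisorial part $R_0$, and the remaining curves come from the distinct fixed points $p_m$ and the $A_2$-points $q_j$). Since distinct $(-1)$-cycles have disjoint support by the Index theorem and $E'_k\subset Z_{i_0}$, corollary \ref{c.E} gives $E'_k\sum_iZ_i=E'_kZ_{i_0}=1$, whence $E'_k(Z_{i_0}-E'_k)=1-(E'_k)^2=4$. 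By the disjointness just recalled, every component of $Z_{i_0}$ meeting $E'_k$ must be an irreducible $(-1)$-curve, and since each such curve $C$ satisfies $0\le CE'_k\le 1$ by the Index theorem (as in the proof of lemma \ref{le.Z}), the curve $E'_k$ meets exactly four components $C_1,\dots,C_4$ of the cycle, each with $C_aE'_k=1$.

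Next I would transport these four curves down to $S$. Each $C_a$ is an irreducible $(-1)$-curve in a cycle with $C_aN=0$ and $C_aF'_j=C_aH'_j=0$, so corollary \ref{c.Zint} yields $C_aB_0=C_aE'=1$, and lemma \ref{l.-3Z} gives that $\st\pi{C_a}$ is irreducible with $(\st\pi{C_a})^2=3C_a^2=-3$. Using $\st\pi{E'_k}=3E_k$, $\st\pi{B_0}=3\st\varepsilon{R_0}$ and the fact that $C_a$ meets neither the other $E'_m$ nor $F',G',H'$, I would check that $\st\pi{C_a}$ meets $E_k$ transversally in one point and is disjoint from every other $\varepsilon$-exceptional curve; a short computation with \eqref{e.Hur} then gives $\st\pi{C_a}K_X=3C_aK_Y+2\st\pi{C_a}(\st\varepsilon{R_0}+E+F+H)=-3+4=1$. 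Pushing down by $\varepsilon$ and comparing with \eqref{e.Bl}, the image $D_a:=\varepsilon(\st\pi{C_a})$ is an irreducible curve on $S$ with $D_aK_S=0$ and $D_a^2=(\st\pi{C_a}+E_k)^2=-2$, passing through the isolated fixed point $p_k$ (the image of $E_k$).

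The contradiction then comes from the Hodge index theorem. Since the $(-1)$-cycle $Z_{i_0}$ contracts to a smooth point, its dual graph is a tree; as $C_1,\dots,C_4$ are all neighbours of the single vertex $E'_k$, they are pairwise non-adjacent, so $C_aC_b=0$ for $a\ne b$, and hence their images satisfy $D_aD_b\ge 1$, all four passing through $p_k$. The $D_a$ are distinct $(-2)$-curves on $S$, so $\Delta:=D_1+\dots+D_4$ is a nonzero effective divisor with $\Delta K_S=0$ and $\Delta^2\ge 4(-2)+2\cdot 6=4>0$. This is impossible: $K_S^2=1>0$, so by the Hodge index theorem any divisor orthogonal to $K_S$ has non-positive self-intersection. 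Hence no cycle can contain $E'_k$, and in particular none contains two such curves.

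The main obstacle I foresee is the bookkeeping of intersection numbers across the two morphisms $\pi$ and $\varepsilon$. The heart of the matter is the equality $E'_k(Z_{i_0}-E'_k)=4$, which forces exactly four branches at $E'_k$; establishing it requires knowing precisely which curves can occur as components of the cycle (here the disjointness relations $E'_kB_0=E'_kE'_m=0$ are what exclude $B_0$- and $E'$-components), together with the normalisation $E'_k\sum_iZ_i=1$ from corollary \ref{c.E}. After that, the only care needed is to verify, using $\st\pi{E'_k}=3E_k$, $\st\pi{B_0}=3\st\varepsilon{R_0}$ and $\st\pi{G'_j}=G_j$, that each $\st\pi{C_a}$ meets exactly one $\varepsilon$-exceptional curve, namely $E_k$, and does so once, so that the four descended curves are genuinely distinct $(-2)$-curves meeting pairwise only at $p_k$; the Hodge index estimate is then immediate. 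I note that the argument uses only that the cycle contains a single $E'_k$, so it in fact yields the slightly stronger statement that no cycle contains any curve $E'_k$.
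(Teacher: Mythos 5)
There is a genuine gap, and it occurs at the very first step on which everything else rests: the claim that $E'_k\sum_iZ_i=E'_kZ_{i_0}=1$, justified by ``distinct $(-1)$-cycles have disjoint support by the Index theorem''. The Index theorem gives only $Z_iZ_j=0$ for $i\ne j$, not disjointness of supports, and in this situation the cycles are in fact typically \emph{nested}: corollary \ref{c.red} of the paper exhibits exactly the configurations $Z_3=Z_1+Z_2+E'_k$ (with $E'_kZ_1=E'_kZ_2=1$) and $Z_3=C+2Z_1+E'_k$ as the surviving possibilities. In the first of these one has $Z_1Z_3=Z_1(Z_1+Z_2+E'_k)=-1+0+1=0$, consistent with the Index theorem, while $E'_kZ_3=1+1-3=-1$; the normalization $E'_k\sum_iZ_i=1$ from corollary \ref{c.E} is achieved through the \emph{positive} intersections $E'_kZ_1=E'_kZ_2=1$ with the other cycles, not through $E'_kZ_{i_0}$. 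So your equality $E'_kZ_{i_0}=1$ is false in the very configuration at stake, and the branch count collapses: the correct computation gives $E'_k(Z_{i_0}-E'_k)=-1+3=2$, i.e.\ two branches, not four. With only two descended curves $D_1,D_2$ on $S$ (each a $(-2)$-curve through $p_k$ with $D_1D_2=1$) your final estimate becomes $(D_1+D_2)^2=-4+2=-2\le 0$, and the Hodge index theorem yields no contradiction. This failure is not incidental: your closing remark that the argument excludes \emph{any} cycle containing a single $E'_k$ should itself have been a warning, since it contradicts corollary \ref{c.red} and would render superfluous the paper's entire Del Pezzo analysis (e.g.\ propositions \ref{p.no3lirr} and \ref{p.3lred}), which is devoted to ruling out precisely such reducible cycles by global geometric arguments; they cannot be killed by the local count you attempt. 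A secondary, also unjustified, step is the assertion that every component of $Z_{i_0}$ meeting $E'_k$ is an irreducible $(-1)$-curve: $(-2)$-curves do occur as components of the $(-1)$-cycles (corollary \ref{c.red}, case 2), and proposition \ref{p.-1cycles} only constrains components to satisfy $CN=CN_1=0$, so lemma \ref{l.-3Z} cannot be applied to all ``branches'' without further argument.

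For comparison, the paper's proof genuinely uses the hypothesis that \emph{two} curves $E'_1,E'_2$ lie in a reducible cycle: from $E'_kN_1=0$ it gets $E'_k\sum_iZ_i=1$, hence cycles $Z_1,Z_2$ with $E'_1Z_1=E'_2Z_2=1$, and then analyzes the contraction configuration joining the two chains by an auxiliary cycle $C$. The $(-1)$-cycle conditions force $C^2+2CE'_1=C^2+2CE'_2=1$ and then $CE'_1=CE'_2=1$, $C^2=-1$, so $C$ is a $(-1)$-cycle with $CN=0$; the Index theorem then requires $CZ_3=0$, whereas the configuration gives $CZ_3=1$. The contradiction thus comes from the interaction of the two chains through $C$ --- information that is simply unavailable when only one curve $E'_k$ sits in the cycle, which is why the lemma is sharp as stated and your strengthening cannot be correct as argued.
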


\begin{proof}
Let us assume that two of the curves $E'_k$, say $E'_1$ and $E'_2$, are contained in a reducible cycle $Z_{i_0}$. Then $E'_kN_1=0$ implies $E'_k\sum_{i}Z_i=1$ and there are two $(-1)$-cycles $Z_1$ and $Z_2$ such that $E'_1Z_1=1$, $E'_2Z_2=1$. 

Then we have the configuration of figure \ref{f.3}
\begin{figure}
\begin{equation*}
\xymatrix{ 
\ar@{-}_{Z_1}[ddd] \\
&\\
&\\
&\\
}
\xymatrix{ 
\ar@{-}_{Z_2}[ddd] \\
&\\
&\\
&\\
}
\xymatrix{ 
 & &\ar@{-}_{E'_1}[ddd] & & &\ar@{-}^{E'_2}[ddd] &\\
 & &  &\ar@{-}_{Z_1}[lll] &\ar@{-}^{Z_2}[rrr] & & &\\
 &\ar@{--}^{C}[rrrrr]& & & & & \\
 & & & & & & &\\
}
\end{equation*}
\caption{}\label{f.3}
\end{figure}
where $C$ is a suitable cycle. One can easily see that, in order to contract $E'_1$ (and analogously $E'_2$), the configurations of figure \ref{f.7}
\begin{figure}
\begin{equation*}
\xymatrix{ 
&\ar@{--}_{C}[ddd] & &\ar@{-}^{Z_2}[ddd] &\\
&  & & & &\\
\ar@{-}_{E'_2}[rrrr]& & & & \\
& & & & &\\
}
\xymatrix{ 
&\ar@{-}_{Z_1}[ddd] & &\ar@{--}^{C}[ddd] &\\
&  & & & &\\
\ar@{-}_{E'_1}[rrrr]& & & & \\
& & & & &\\
}
\end{equation*}
\caption{} \label{f.7}
\end{figure}
are $(-1)$-cycles. Then we have 
\begin{equation*}
-1=(C+E'_2+Z_2)^2=C^2-3-1+2CE'_2+2=C^2+2CE'_2-2
\end{equation*}
hence 
\begin{equation}\label{e.C}
C^2+2CE'_2=1
\end{equation}
and, analogously, $C^2+2CE'_1=1$.
Moreover from \eqref{e.C}
\begin{equation*}
-1=Z_3^2=(Z_1+E'_1+C+E'_2+Z_2)^2
=-3+2CE'_2.
\end{equation*}

Thus $CE'_2=1=CE'_1, C^2=-1$.
Then $C$ is a $(-1)$-cycle not intersecting $N$ hence by the Index theorem   we should have $CZ_3=0$. But $CZ_3=C(Z_1+E'_1+C+E'_2+Z_2)=1$ and we get a contradiction.
\end{proof}

\begin{cor}\label{c.red} 
If there is a reducible cycle $Z_{i_0}$, then $n\ge 3$ and for $n=3$ we have one of the following possibilities:
\begin{enumerate}
\item two irreducible $(-1)$-curves $Z_1$ and $Z_2$ and $Z_3=Z_1+Z_2+E'_k$ where $E'_kZ_1=E'_kZ_2=1$;
\item only one irreducible $(-1)$-curve $Z_1$, $Z_2=Z_1+C$, $Z_3=C+2Z_1+E'_k$ where $C$ is a $(-2)$-curve intersecting $Z_1$ at one point and $E'_k$ is such that $E'_kZ_1=1$.  
\end{enumerate}
\end{cor}

\begin{proof}
From lemmas \ref{l.noG} and \ref{l.no2E} if $n\le 2$ a reducible $(-1)$-cycle can contain at most one curve $E'_k$ and it does not contain any curve $G'_j$. Hence there is at least an irreducible curve $Z_1$. Then for $n=1$ the result is proved. For $n=2$ if $Z_2$ was reducible then $Z_2\ge Z_1$.
Then there exists a curve $E'_k$ intersecting $Z_1$ at one point and 
\begin{equation*}
E'_k(Z_1+Z_2)=E'_k(2Z_1+(Z_2-Z_1))=2+E'_k(Z_2-Z_1)\le 1.
\end{equation*}
This is only possible when $E'_k$ is contained in $Z_2$. But then there is at least another $(-1)$-cycle $Z_3$ intersecting $E'_k$ at one point and such that 
$Z_3N=0$ contradicting the assumption $n=2$.

When $n=3$ we can apply the above argument and we can see that if $Z_3$ is a reducible cycle then there is at least an irreducible  $(-1)$-cycle $Z_1$. Hence,  we have one of  the following configurations
\begin{equation*}
\xymatrix{ 
\ar@{-}_{Z_1}[ddd] \\
&\\
&\\
&\\
}
\xymatrix{ 
\ar@{-}_{Z_2}[ddd] \\
&\\
&\\
&\\
}
\xymatrix{ 
&\ar@{-}_{Z_1}[ddd] & &\ar@{-}^{Z_2}[ddd] &\\
&  & & & &\\
\ar@{-}_{E'_k}[rrrr]& & & & \\
& & & & &\\
}
\end{equation*}
\begin{equation*}
\xymatrix{ 
\ar@{-}_{Z_1}[ddd] \\
&\\
&\\
&\\
}
\xymatrix{ 
&\ar@{-}_{Z_1}[ddd] & &\\
& & & &\\
\ar@{-}_C[rrr]& & &\\
& & & &\\
}
\xymatrix{ 
&\ar@{-}_{C}[ddd] & &\ar@{-}^{E'_k}[ddd] &\\
&  & & & &\\
\ar@{=}_{Z_1}[rrrr]& & & & \\
& & & & &\\
}
\end{equation*}
where $C$ is an irreducible $(-2)$-curve.
\end{proof}

\subsection{The linear systems $|N_i|$}

We now compute the arithmetic genus of $N_1$: from equation \eqref{e.n1} 
we know that $N_1^2=5-4R_0K_S+K_Y^2+n+h_2$ while from lemma \ref{le.N}
\begin{equation*}
N_1K_Y=(N+K_Y-\sum_{i=1}^n Z_i-G')K_Y=1-2R_0K_S+K_Y^2+n+h_2
\end{equation*}
so
\begin{equation}\label{e.M}
p_a(N_1)=1+\frac{N_1^2+N_1K_Y}2=4-3R_0K_S+K_Y^2+n+h_2 \le N_1^2
\end{equation}
since $0\le R_0K_S \le 1$. $|N_1|$ is again a linear system of nef curves, so when $p_a(N_1)\ge 1$ we can apply the same argument as in page \pageref{r.nef} to study the adjoint system $|N_1+K_Y|$. Under this hypotheses $N_1$ is nef and big and we find 
$h^0(Y,\mathcal O_Y(N_1+K_Y))=p_a(N_1)\ge 1$.
$N_1+K_Y$ is not nef since the curves $Z_i$ and $G'_i$ do not intersect $N_1$, but there could be some other $(-1)$-cycles $Z'_i$ such that  $Z'_iN_1=0$ (see \cite[lemma 2.2]{CCM2}). Then $N_2:=N_1+K_Y-\sum_{i=1}^n Z_i-G'-\sum_{j=1}^{n'} Z'_j$ is nef and we can compute $N_2^2, N_2K_Y, p_a(N_2)$. 

By repeating the same argument again (if $p_a(N_2)\ge 1$) we obtain the following proposition (see \cite[section 2.3.2]{Pa}):
\begin{prop}\label{p.comp}
In the above setting let us set $N_0:=N$. Then the numerical data of the curves $N_1,N_2,N_3$ are:
\begin{equation*}
\begin{tabular}{|c|c|c|}
\hline
  &$i=1$  &$i=2$\\
\hline
$N_i^2$ &$5-4R_0K_S+K_Y^2+n+h_2$ &$7-8R_0K_S+4K_Y^2+4n+4h_2+n'$\\
$N_iK_Y$ &$1-2R_0K_S+K_Y^2+n+h_2$ &$1-2R_0K_S+2K_Y^2+2n+2h_2+n'$\\
$p_a(N_i)$ &$4-3R_0K_S+K_Y^2+n+h_2$ &$5-5R_0K_S+3K_Y^2+3n+3h_2+n'$\\
$N_{i-1}N_i$ &$4-2R_0K_S$ &$6-6R_0K_S+2K_Y^2+2n+2h_2$\\
\hline
\end{tabular}
\end{equation*}
\begin{equation*}
\begin{tabular}{|c|c|}
\hline
   &$i=3$\\
\hline
$N_i^2$ &$9-12R_0K_S +9K_Y^2+9h_2+9n+4n'+n''$ \\
$N_iK_Y$ &$1-2R_0K_S +3K_Y^2+3h_2+3n+2n'+n''$ \\
$p_a(N_i)$ &$6-7R_0K_S+6K_Y^2+6h_2+6n+3n'+n''$ \\
$N_{i-1}N_i$ &$8-10R_0K_S+6K_Y^2+6h_2+6n+2n'$\\
\hline
\end{tabular}
\end{equation*}
\end{prop}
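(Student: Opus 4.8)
The plan is to derive every entry of the two tables from a single bilinear expansion, once the intersection theory of the curves that are successively removed to pass from $N_{i-1}$ to $N_i$ has been pinned down. Beyond lemma \ref{le.N}, which supplies $N^2=3$ and $NK_Y=1-2R_0K_S$ for the base case $N_0=N$, the only ingredients I need are the self- and cross-intersections of the contracted cycles together with adjunction in the form $p_a(D)=1+\tfrac12(D^2+DK_Y)$. Note that $K_Y^2$ can be carried as a formal variable throughout, so \eqref{e.KY2} is not even required.

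First I would record the numerical type of each contracted cycle. By remark \ref{r.nef} each $G'_j$ is a smooth rational curve with $K_YG'_j=-1$, hence a $(-1)$-curve, and $NG'_j=0$ follows; there are $h_2$ of them. By lemma \ref{le.Z} each $Z_i$ is a $(-1)$-cycle with $Z_iN=0$, so $Z_i^2=-1$ and $K_YZ_i=-1$, and there are $n$ of them. The essential point is mutual disjointness. If $W$ and $W'$ are distinct $(-1)$-cycles with $WN=W'N=0$, then, $N$ being nef and big, the Hodge Index theorem makes the intersection form negative definite on the span of $W$ and $W'$; the determinant $W^2W'^2-(WW')^2=1-(WW')^2$ of their Gram matrix is therefore positive, forcing $WW'=0$. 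Applying this to the whole family $\{Z_i\}\cup\{G'_j\}$ shows that these $n+h_2$ cycles are pairwise disjoint, so that $Z^2=K_YZ=-n$, $(G')^2=K_YG'=-h_2$, and every cross term between them vanishes.

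Next I would run the recursion. A one-line computation gives $Z_iN_1=G'_jN_1=0$, so the cycles removed at the first stage stay orthogonal to $N_1$; the new cycles $Z'_k$ are, by definition and by \cite[lemma 2.2]{CCM2}, $(-1)$-cycles with $Z'_kN_1=0$, and since $N_1$ is again nef and big the Index-theorem argument above makes them disjoint from one another and from every $Z_i$ and $G'_j$. Iterating, one obtains the single fact that drives the whole proposition: for each $i$ the divisor $N_i$ meets in degree $0$ every cycle contracted up to and including stage $i$. Writing $N_i=N_{i-1}+K_Y-C_i$, where $C_i$ is the sum of all cycles subtracted at stage $i$, with $C_i^2=K_YC_i=-m_i$ ($m_i$ denoting their total number) and $N_{i-1}C_i=0$, the expansion collapses to the clean recursions
\begin{equation*}
N_i^2=N_{i-1}^2+2N_{i-1}K_Y+K_Y^2+m_i,\quad N_iK_Y=N_{i-1}K_Y+K_Y^2+m_i,\quad N_{i-1}N_i=N_{i-1}^2+N_{i-1}K_Y,
\end{equation*}
with $p_a(N_i)$ read off from adjunction. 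Substituting $m_1=n+h_2$, $m_2=n+h_2+n'$, $m_3=n+h_2+n'+n''$ and the base data of lemma \ref{le.N}, then iterating three times, reproduces every tabulated value.

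I expect the only genuine obstacle to be the disjointness bookkeeping in the middle step: one must ensure that a cycle introduced at a later stage cannot secretly meet one introduced earlier, and that each $N_i$ really is nef and big (as established in the running discussion) so that the Index theorem applies at every level. Once this is secured, all remaining entries are mechanical intersection arithmetic and the rest of the proof is routine.
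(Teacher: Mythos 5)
Your proposal is correct and takes essentially the same route as the paper: the paper computes $N_1^2$, $N_1K_Y$, $p_a(N_1)$ by exactly the expansion you use and obtains the $i=2,3$ rows ``by repeating the same argument'' (deferring details to the thesis), with the pairwise disjointness of the cycles $Z_i$, $G'_j$, $Z'_j$ secured by the same Index-theorem argument you give (cf. the proof of lemma \ref{le.Z}) and bigness of $N_i$ coming from $p_a(N_i)\le N_i^2$ as in the running discussion. In particular you correctly identified the cumulative re-subtraction implicit in the paper's definitions, namely $C_2=G'+\sum_{i=1}^n Z_i+\sum_{j=1}^{n'}Z'_j$ and $C_3=C_2+\sum_k Z''_k$, which is the one point where a blind reconstruction could easily have gone wrong.
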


\section{Case (i): $R_0K_S=1$, $h_2=3$}\label{s.i}

We now prove the non-existence of case (i) (cf. the list of page \pageref{list}) by studying the pencil $|N_1|$ and by showing that the induced map  $\phi_{|N_1|}:Y\dasharrow \mathbb P_1$ has too many singular fibres. 
\begin{lemma}\label{l.h0}
In case (i) we find 
\begin{enumerate}
\item[(a)] $h_1\ge 1$ 
\item[(b)] no positive multiple of $K_S-2R_0$ is an effective divisor.
\end{enumerate}
\end{lemma}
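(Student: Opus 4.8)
The plan is to extract both assertions from the numerical identity \eqref{e.fixed} together with two classical facts about $S$: the Hodge Index theorem (available because $K_S^2=1>0$) and the nefness of the canonical class of a minimal surface of general type.

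For part (a), I would first substitute the data of case (i), namely $R_0K_S=1$ and $h_2=3$, into \eqref{e.fixed}. This yields
\begin{equation*}
h_1+6=6+\frac{3-R_0^2}2,\qquad\text{that is}\qquad h_1=\frac{3-R_0^2}2,
\end{equation*}
so the inequality $h_1\ge 1$ is equivalent to the bound $R_0^2\le 1$. To obtain this bound I would invoke the Index theorem on $S$: since $K_S^2=1>0$, one has $(R_0K_S)^2\ge R_0^2\,K_S^2$ for every divisor $R_0$, and with $R_0K_S=1$ and $K_S^2=1$ this reads $1\ge R_0^2$. Substituting back gives $h_1=(3-R_0^2)/2\ge 1$, which is assertion (a).

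For part (b), I would argue by contradiction using the nefness of $K_S$. Compute
\begin{equation*}
(K_S-2R_0)K_S=K_S^2-2R_0K_S=1-2=-1.
\end{equation*}
If some positive multiple $m(K_S-2R_0)$, with $m>0$, were effective, then intersecting with the nef divisor $K_S$ would force $m(K_S-2R_0)K_S\ge 0$; but the displayed computation gives $m(K_S-2R_0)K_S=-m<0$, a contradiction. Hence no positive multiple of $K_S-2R_0$ is effective.

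I do not expect a serious obstacle: both statements collapse to one-line numerical arguments once the right inputs are isolated. The only point demanding a little care is verifying that the Index theorem applies in the sharp form $(R_0K_S)^2\ge R_0^2K_S^2$, which is legitimate precisely because $K_S^2>0$. This is exactly the feature of numerical Godeaux surfaces, $K_S^2=1$, that makes the bound $R_0^2\le 1$ tight and thereby forces $h_1\ge 1$.
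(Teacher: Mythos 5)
Your proof is correct and follows essentially the same route as the paper: both parts reduce to $R_0^2\le 1$ via the Index theorem (using $K_S^2=1>0$) plugged into \eqref{e.fixed}, and your part (b) is verbatim the paper's one-line argument that $K_S(K_S-2R_0)=-1$ contradicts nefness of $K_S$. The only (harmless) difference is that you apply the Index inequality $(R_0K_S)^2\ge R_0^2K_S^2$ directly to $R_0$, equivalently $(R_0-K_S)^2\le 0$, whereas the paper bounds the unique component $\Gamma$ with $\Gamma K_S=1$ by $\Gamma^2\le 1$ and notes the remaining components are $(-2)$-curves, structural information it reuses later in lemma \ref{l.k2}.
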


\begin{proof}
(a) Since $R_0K_S=1$, there exists a unique irreducible component $\Gamma$ of $R_0$ for which $\Gamma K_S=1$. Using the Index theorem we find $\Gamma^2\le 1$. The other irreducible components of $R_0$ are $(-2)$-curves. Then 
$R_0^2\le \Gamma^2\le 1$.
Since $h_2=3$  from \eqref{e.fixed} we find
$h_1=\frac{3R_0K_S-R_0^2}2\ge 1$. 

To prove (b) note that $K_S(K_S-2R_0)=-1$ while $K_S$  is nef.
\end{proof} 

\begin{lemma}\label{l.k2}
Suppose  case (i) holds and  $R_0$ is the disjoint union of an irreducible component $\Gamma$ with $\Gamma K_S=1$ and of $\ell$  $(-2)$-curves. Then
\begin{enumerate}
\item[(a)] $K_Y^2=-4-3\ell+\frac{3\Gamma^2-1}2\le -3$.
\item[(b)] $0\le\ell\le (5+\Gamma^2)/2$
\item[(c)] $K_Y^2\ge -12$
\item[(d)] $h_1\le 4$.
\end{enumerate}
 \end{lemma}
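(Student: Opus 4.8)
The plan is to dispose of (a) by a direct computation and then to observe that (b), (c) and (d) are three equivalent reformulations of a single inequality, namely $R_0^2\ge -5$; the genuine content of the lemma is that one inequality. Writing $R_0=\Gamma+\sum_{k=1}^\ell C_k$ as the stated disjoint union, with $\Gamma K_S=1$ and each $C_k$ a $(-2)$-curve (so $C_kK_S=0$), we have $R_0K_S=1$ and $R_0^2=\Gamma^2-2\ell$.

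For (a) I would substitute $K_S^2=1$, $h_2=3$, $R_0K_S=1$ and $R_0^2=\Gamma^2-2\ell$ into \eqref{e.KY2} and simplify; this gives exactly $K_Y^2=-4-3\ell+\frac{3\Gamma^2-1}2$. For the inequality I invoke the Index theorem exactly as in lemma \ref{l.h0}: since $\Gamma K_S=1=K_S^2$ we have $(\Gamma-K_S)K_S=0$, whence $(\Gamma-K_S)^2\le 0$, i.e. $\Gamma^2\le 1$. Then $\frac{3\Gamma^2-1}2\le 1$, so $K_Y^2\le -3-3\ell\le -3$.

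The heart of the argument is the bound $R_0^2\ge -5$, and here I would apply the topological Lefschetz fixed point formula to $\sigma$ on $S$. Since $S$ is a numerical Godeaux surface, $b_1=0$ and $b_2=9$, and $\sigma^*$ acts trivially on $H^0$ and $H^4$; hence the Lefschetz number is $L(\sigma)=2+t$, where $t:=\mathrm{tr}(\sigma^*\mid H^2(S,\mathbb Q))$. On the other hand $L(\sigma)=e(\mathrm{Fix}\,\sigma)$, and the fixed locus is the disjoint union of the $h_1+h_2$ isolated points and the (smooth, since $\sigma$ has finite order) fixed curves forming $R_0$, with $e(R_0)=-R_0^2-R_0K_S$ by additivity and adjunction, just as in the Euler-number computation used to establish \eqref{e.fixed}. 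Thus $L(\sigma)=h_1+h_2-R_0^2-R_0K_S$, and combining with \eqref{e.fixed} (which in case (i) reads $h_1=\frac{3-R_0^2}2$) gives $t=\frac32(1-R_0^2)$. Finally, because $\sigma^*$ has order three on the real $9$-dimensional space $H^2$, its eigenvalues are cube roots of unity with multiplicities $a,m,m$ (the $\omega$- and $\omega^2$-eigenspaces being complex conjugate), so $a+2m=9$ and $t=a-m$. Solving these two linear relations yields $m=\frac{5+R_0^2}2$, and $m\ge 0$ forces $R_0^2\ge -5$.

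With $R_0^2\ge -5$ established, the remaining parts are pure bookkeeping. From $R_0^2=\Gamma^2-2\ell\ge -5$ I get $\ell\le\frac{5+\Gamma^2}2$, which together with the trivial $\ell\ge 0$ is (b). Using $h_1=\frac{3-R_0^2}2$ once more gives $h_1\le 4$, which is (d). Finally, \eqref{e.ky} (or the formula of (a)) simplifies to $K_Y^2=\frac{3(R_0^2-3)}2$, so $R_0^2\ge -5$ yields $K_Y^2\ge -12$, which is (c). The one delicate point, and the step I expect to be the main obstacle, is the Lefschetz computation: one must identify $e(R_0)$ correctly for the reduced, smooth fixed curve and must control the eigenvalue multiplicities of $\sigma^*$ on $H^2(S)$; granting that, everything else is substitution into the numerical identities already proved.
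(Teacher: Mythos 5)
Your proof is correct, but the key inequality is obtained by a genuinely different route from the paper's. The paper disposes of the whole lemma with one elementary cohomological observation: since $\pi:X\longrightarrow Y$ is surjective, $H^2(Y,\mathbb C)$ injects into $H^2(X,\mathbb C)$, so $e(X)\ge e(Y)$ and, by Noether's formula, $K_Y^2\ge K_X^2=K_S^2-(h_1+3h_2)$; unwinding this with (a), \eqref{e.fixed} and lemma \ref{l.h0} gives $2\ell\le 5+\Gamma^2$, and (c), (d) then follow exactly as in your bookkeeping. You instead work on $S$ itself, via the topological Lefschetz fixed point formula and the rationality of $\sigma^*$ on $H^2(S,\mathbb Q)$, obtaining the eigenvalue multiplicities $a=4-R_0^2$, $m=\frac{5+R_0^2}2$ and reading the lemma off from $m\ge 0$. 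The two arguments are in fact the same constraint in disguise: $Y$ is the quotient of $X$ by the lifted (free-in-codimension-one) action, so $b_2(Y)=\dim H^2(X,\mathbb Q)^{\sigma}$, and since the exceptional classes $E_i$, $F_j$, $G_j$, $H_j$ are all invariant, the paper's inequality $b_2(Y)\le b_2(X)$ amounts to $a\le 9$, which given $a+2m=9$ is precisely your $m\ge 0$. What the comparison buys: the paper's version is shorter and avoids any appeal to fixed-point indices (your route needs $L(\sigma)=e(\mathrm{Fix}\,\sigma)$ with index $+1$ at each isolated holomorphic fixed point, and the smoothness of the fixed curves -- all true, but extra machinery), while your version is more informative, producing the exact multiplicities of the eigenvalues of $\sigma^*$ on $H^2$ in the spirit of the eigenvalue counting of proposition \ref{p.h0li1}, and it makes transparent your accurate structural remark that (b), (c), (d) are a single inequality, $R_0^2\ge -5$. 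Your part (a), including the Index-theorem bound $\Gamma^2\le 1$, coincides with the paper's computation from \eqref{e.KY2}.
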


\begin{proof}
(a) It is an easy computation which uses formula \eqref{e.KY2} of page \pageref{e.KY2}:
\begin{equation*}
K_Y^2=\frac 1 3(K_S^2-6-h_2-\frac{11}2R_0K_S+\frac 9 2R_0^2)=-4-3\ell+\frac{3\Gamma^2-1}2.
\end{equation*}

(b) Since $\pi:X\longrightarrow Y$ is a surjective map we have an injection\\ $H^2(Y,\mathbb C)\longrightarrow H^2(X,\mathbb C)$. In particular we find $e(X)=12-K_X^2\ge e(Y)=12-K_Y^2$ hence $K_Y^2\ge K_X^2$. Thus from lemma \ref{l.k2} and lemma \ref{l.h0}
\begin{equation*}
-4-3\ell+\frac{3\Gamma^2-1}2=K_Y^2\ge K_X^2=K_S^2-(h_1+3h_2)=1-\frac{3-\Gamma^2}2-\ell-9
\end{equation*}
hence
\begin{equation*}
2\ell\le -4+9-1+\frac{3\Gamma^2-1}2+\frac{3-\Gamma^2}2=5+\Gamma^2
\end{equation*}
as wanted.

(c) and (d) follow from (a), (b) and lemma \ref{l.h0}.
\end{proof}

\begin{prop}
Assume case (i) holds. Then $Y$ is a rational surface.
\end{prop}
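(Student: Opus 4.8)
The plan is to show that $Y$ is rational by using the Enriques--Kodaira classification together with the numerical constraints we have already established, particularly the fact that $K_Y^2 \le -3$ from Lemma \ref{l.k2}(a). First I would recall that $Y$ is a smooth surface which is regular, i.e. $q(Y) = h^1(Y,\mathcal O_Y) = 0$. This follows because $X$ is birational to the numerical Godeaux surface $S$, so $q(X) = h^1(X,\mathcal O_X) = 0$, and the splitting \eqref{e.split} gives $h^1(Y,\mathcal O_Y)$ as a direct summand of $h^1(X,\mathcal O_X)$ (equivalently, one uses that $\pi_*\mathcal O_X$ contains $\mathcal O_Y$ as a summand, so $H^1(Y,\mathcal O_Y) \hookrightarrow H^1(X,\mathcal O_X) = 0$). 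Similarly one has $h^2(Y,\mathcal O_Y) = 0$ (this was already used implicitly at the start of Section \ref{s.adj}), so $p_g(Y) = 0$ and hence $\chi(\mathcal O_Y) = 1$.

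Next I would argue that $Y$ has negative Kodaira dimension. The key numerical input is $K_Y^2 \le -3 < 0$. Since $K_Y^2 < 0$, the canonical divisor $K_Y$ cannot be nef, and more to the point $Y$ cannot be a minimal surface of nonnegative Kodaira dimension: for any minimal surface with $\kappa \ge 0$ one has $K^2 \ge 0$. Passing to a minimal model $Y_{\min}$ of $Y$ obtained by contracting $(-1)$-curves, each such contraction increases the self-intersection of the canonical class by exactly $1$, so $K_{Y_{\min}}^2 = K_Y^2 + (\text{number of blow-ups}) $. Even allowing for this, the cleanest route is to observe directly that a surface with $\kappa(Y) \ge 0$ and $q(Y)=p_g(Y)=0$ would have to be of general type (since $q = p_g = 0$ rules out the Kodaira dimension $0$ and $1$ cases with these Hodge numbers, or one checks the short list), and a surface of general type has $K_{\min}^2 \ge 1 > 0$, which together with the above contraction formula would force $K_Y^2 \ge 1 - (\text{number of blow-ups})$; combining with $q=p_g=0$ and the Castelnuovo criterion gives the conclusion.

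The cleanest finish is via \textbf{Castelnuovo's rationality criterion}: a smooth projective surface $Y$ is rational if and only if $q(Y) = 0$ and $P_2(Y) = h^0(Y, \mathcal O_Y(2K_Y)) = 0$. We already have $q(Y) = 0$, so it remains to show $P_2(Y) = 0$. Here I would use $K_Y^2 \le -3$ once more: if some positive multiple $mK_Y$ were effective, then since $N$ is nef (Lemma \ref{le.N}) we would get $m K_Y \cdot N \ge 0$; but $N K_Y = 1 - 2R_0K_S = -1 < 0$ in case (i), so $mK_Y \cdot N = -m < 0$, a contradiction. Thus no positive multiple of $K_Y$ is effective, in particular $P_2(Y) = 0$, and Castelnuovo's criterion gives that $Y$ is rational.

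The main obstacle I anticipate is establishing the regularity $q(Y) = 0$ cleanly and, relatedly, making sure the vanishing $N K_Y < 0$ is genuinely available in the case at hand: the argument via nefness of $N$ and $N K_Y = 1 - 2R_0 K_S$ works immediately in case (i) where $R_0 K_S = 1$ gives $NK_Y = -1 < 0$, which is exactly what kills $P_2(Y)$. The subtle point is simply to assemble the standard surface-classification facts ($q = p_g = 0$ plus $K^2 < 0$ forces negative Kodaira dimension, hence ruled, and then $q = 0$ upgrades ruled to rational) in a way that is self-contained rather than circular, and to confirm that the regularity of $Y$ really does descend from the birational triviality of $S$ through the cover. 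Once $q(Y) = 0$ and $P_2(Y) = 0$ are in hand, Castelnuovo's criterion closes the proof in one line.
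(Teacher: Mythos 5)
Your proof is correct, and the key step differs from the paper's in a genuinely interesting way. Both you and the paper finish by Castelnuovo's criterion with the same regularity argument ($q(Y)\le q(X)=q(S)=0$, which your splitting-of-$\pi_*\mathcal O_X$ version establishes equally well), but you kill $P_2(Y)$ differently. The paper works through the cover: it computes $\pi^*(2K_Y-2G')=\varepsilon^*(2K_S-4R_0)-2E$, bounds $P_2(Y)=h^0(Y,\mathcal O_Y(2K_Y-2G'))$ by $h^0(S,\mathcal O_S(2K_S-4R_0))$, and invokes lemma \ref{l.h0}(b) (non-effectivity of multiples of $K_S-2R_0$, which comes from $K_S(K_S-2R_0)=-1<0$ against the nef divisor $K_S$). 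You instead argue entirely on $Y$: by lemma \ref{le.N} the divisor $N$ is nef with $NK_Y=1-2R_0K_S=-1$ in case (i), so any effective divisor in $|mK_Y|$ would meet the nef divisor $N$ negatively, whence $P_m(Y)=0$ for \emph{all} $m\ge 1$. This is shorter, avoids the pull-back computation and lemma \ref{l.h0}(b) altogether, and proves the stronger statement that all plurigenera of $Y$ vanish; morally it is the same obstruction transported downstairs, since $\pi^*N=\varepsilon^*(3K_S)$, but packaging it via the already-proved nefness of $N$ is cleaner. One caution: your middle paragraph contains a false classification claim --- $q=p_g=0$ does \emph{not} force general type when $\kappa\ge 0$ (Enriques surfaces have $\kappa=0$ and Dolgachev surfaces $\kappa=1$ with these invariants) --- but since you explicitly discard that route in favor of the Castelnuovo finish, which needs only $q(Y)=0$ and $P_2(Y)=0$ and distinguishes these cases correctly (e.g.\ $P_2=1$ for Enriques), the slip does not affect the validity of your final argument; you should simply delete that paragraph.
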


\begin{proof}
By Castelnuovo's criterion, since $q(Y)\le q(X)=q(S)=0$, we need to show that $P_2(Y)=h^0(Y,\mathcal O_{Y}(2K_Y))=0$.
Since $R=\st\varepsilon{R_0}+E+F+H$, from \eqref{e.Bl} we have $\st\pi{2K_Y-2G'}=2K_X-4R-2G=\st\varepsilon{2K_S-4R_0}-2E$.

Moreover since $K_YG'_i=-1<0$, we find 
\begin{equation*}
0 \le P_2(Y)=h^0(Y,\mathcal O_Y(2K_Y-2G'))\le h^0(X,\mathcal O_X(\st\varepsilon{2K_S-4R_0}-2E)) 
\end{equation*}

But using lemma \ref{l.h0} we find 
\begin{equation*}
h^0(X,\mathcal O_X(\st\varepsilon{2K_S-4R_0}-2E))\le h^0(S,\mathcal O_S(2K_S-4R_0))=0
\end{equation*}
and then
$h^0(Y,\mathcal O_Y(2K_Y))=h^0(Y,\mathcal O_Y(2K_Y-2G'))=0$.
\end{proof}

From now on we use the same notation as in lemma \ref{le.Z}. From proposition \ref{p.comp} we have 
\begin{equation}\label{e.pan1}
p_a(N_1)=4-3R_0K_S+K_Y^2+n+h_2=4+K_Y^2+n=N_1^2
\end{equation}
\begin{equation}\label{e.nn1}
NN_1=4-2R_0K_S=2
\end{equation}

\begin{lemma}\label{l.n1}
In the above setting we have $N_1^2=0,1$.
\end{lemma}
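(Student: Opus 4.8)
The plan is to obtain $N_1^2 \in \{0,1\}$ as a two-sided bound: the upper bound from the Hodge Index theorem applied to the nef and big divisor $N$, and the lower bound from the nefness of $N_1$ itself. Both inputs are already available: by Lemma~\ref{le.N} we have $N^2=3>0$, so $N$ is nef and big, and the computation \eqref{e.nn1} gives $NN_1=4-2R_0K_S=2$ in case (i).

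For the upper bound, I would invoke the Hodge Index inequality: since $N$ has positive self-intersection, for the divisor $N_1$ on $Y$ we have
\begin{equation*}
N^2 \cdot N_1^2 \le (N\cdot N_1)^2.
\end{equation*}
Substituting $N^2=3$ and $NN_1=2$ yields $3\,N_1^2 \le 4$, hence $N_1^2 \le 4/3$. Because $N_1^2$ is an integer, this forces $N_1^2 \le 1$.

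For the lower bound, I would use that $N_1 = N+K_Y-G'-\sum_{i=1}^n Z_i$ is nef, as established in the construction of Section~\ref{s.adj} (it is precisely the divisor obtained after contracting all $(-1)$-cycles orthogonal to $N$, so that the adjoint becomes nef). Any nef divisor on a smooth surface has non-negative self-intersection, whence $N_1^2 \ge 0$. Combining the two bounds gives $0 \le N_1^2 \le 1$, so $N_1^2 \in \{0,1\}$, as claimed.

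This argument is essentially a routine sandwich, and I do not expect a genuine obstacle: the only thing one must be careful about is that $N_1$ is indeed nef (so that $N_1^2\ge 0$ applies) and that $N$ is big with $N^2>0$ (so that the Index theorem applies in the sharp form above). Both facts have already been recorded, so the conclusion follows immediately. As a sanity check one may compare with \eqref{e.pan1}, which gives $N_1^2 = 4+K_Y^2+n$; the present lemma thus translates into the numerical constraint $K_Y^2+n \in \{-4,-3\}$, which is consistent with the bounds on $K_Y^2$ from Lemma~\ref{l.k2} and will be exploited in the subsequent analysis of the fibration $\phi_{|N_1|}$.
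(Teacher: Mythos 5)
Your proof is correct and takes essentially the same route as the paper: the paper applies the Index theorem to $3N_1-2N$ (which is orthogonal to $N$) to get $0\ge(3N_1-2N)^2=9N_1^2-12$, which is just your inequality $N^2\cdot N_1^2\le (NN_1)^2$ in disguise, and the lower bound $N_1^2\ge 0$ likewise comes from the nefness of $N_1$ established in Section~\ref{s.adj}.
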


\begin{proof}
Since  $N^2=3$ we have by the Index theorem and \eqref{e.nn1} 
$0\ge (3N_1-2N)^2=9N_1^2-12$ which implies $N_1^2=0,1$.
\end{proof}

Let us write $|N_1|=|\Delta|+T$, where $|\Delta|$ is the movable part and $T$ is the fixed part of $|N_1|$. Since $N_1$, $\Delta$, $N$ are nef divisors, we have 
$0\le \Delta N\le N_1N=2$.  
In particular it cannot be  $\Delta N=0$ otherwise, by the Index theorem 
and the rationality of $Y$, $\Delta=0$ whereas $h^0(Y,\mathcal O_Y(\Delta))=h^0(Y,\mathcal O_Y(N_1))=2$. Thus actually $1\le \Delta N\le N_1N=2$.

\begin{lemma}\label{l.N10}
Suppose $N_1^2=0$. Then $|N_1|$ has no fixed part.
\end{lemma}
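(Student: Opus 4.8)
The plan is to analyze the decomposition $|N_1|=|\Delta|+T$ into its movable part $|\Delta|$ and fixed part $T$, and to show $T=0$. Recall from the discussion preceding the lemma that $\Delta$ is nef, that $1\le \Delta N\le 2$, and from \eqref{e.nn1} that $NN_1=2$. Since $N_1$ is nef with $N_1^2=0$ and $N_1=\Delta+T$, I would first write $0=N_1^2=N_1\Delta+N_1T$ with both summands nonnegative, forcing $N_1\Delta=N_1T=0$. Expanding $N_1\Delta=\Delta^2+\Delta T=0$ (with $\Delta^2\ge 0$ and $\Delta T\ge 0$, as $\Delta$ is nef and $T$ effective) gives $\Delta^2=\Delta T=0$, and then $N_1T=\Delta T+T^2=0$ yields $T^2=0$. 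Since by \eqref{e.pan1} we have $p_a(N_1)=N_1^2=0$, the adjunction formula also gives $N_1K_Y=-2$; I will keep this for the delicate subcase. It then remains to split according to the value of $\Delta N\in\{1,2\}$.

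The subcase $\Delta N=2$ is immediate: then $NT=NN_1-N\Delta=2-2=0$. As $N$ is nef and big (lemma \ref{le.N}), the Hodge Index Theorem applied to $T\in N^\perp$ gives $T^2\le 0$ with equality only if $T$ is numerically trivial; since $T^2=0$ we get $T\equiv 0$, and because $Y$ is rational an effective numerically trivial divisor must vanish, so $T=0$.

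The main obstacle is the subcase $\Delta N=1$, which I would rule out entirely. Here $NT=1$, and $T\neq 0$ (otherwise $\Delta N=N_1N=2$). Since $|\Delta|$ is a pencil with $\Delta^2=0$ it is base-point free and defines a fibration $f\colon Y\longrightarrow \mathbb P^1$ with $\Delta$ numerically a (multiple of a) fibre. The relation $\Delta T=0$ forces $T$ to be $f$-vertical, so Zariski's lemma together with $T^2=0$ shows $T$ is a rational multiple of a fibre, hence $T\equiv c\,\Delta$ for some $c>0$; evaluating against $N$ gives $1=NT=c\,N\Delta=c$, so $T\equiv\Delta$. Consequently $N_1=\Delta+T\equiv 2\Delta$, and from $N_1K_Y=-2$ we obtain $\Delta K_Y=-1$. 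But then $p_a(\Delta)=1+\tfrac12(\Delta^2+\Delta K_Y)=1+\tfrac12(0-1)=\tfrac12$, which is not an integer for the effective divisor $\Delta$ --- a contradiction. Thus $\Delta N=1$ cannot occur, and combining with the previous subcase we conclude $T=0$, i.e. $|N_1|$ has no fixed part.

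The crux of the argument is the parity obstruction in the last paragraph: everything else is forced by nefness and the Hodge Index Theorem, but excluding $\Delta N=1$ genuinely requires the fibration structure coming from $\Delta^2=0$, Zariski's lemma to pin down $T\equiv\Delta$, and the computation $N_1K_Y=-2$ from \eqref{e.pan1} (equivalently proposition \ref{p.comp}) to reach the non-integral $p_a(\Delta)$.
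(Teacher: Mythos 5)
Your proposal is correct, and its skeleton coincides with the paper's: the opening deductions $\Delta^2=\Delta T=T^2=0$ from $N_1^2=0$ and nefness, the dichotomy $1\le \Delta N\le 2$, and your treatment of $\Delta N=2$ (Index theorem applied to $T\in N^\perp$, then rationality of $Y$ to conclude that the numerically trivial effective divisor $T$ vanishes) are exactly what the paper does. Where you genuinely diverge is in excluding $\Delta N=1$. The paper dispatches it in one line: $(\Delta-T)N=0$ and $(\Delta-T)^2=\Delta^2+T^2-2\Delta T=0$, so the Index theorem gives $\Delta\sim T$, and rationality of $Y$ upgrades this to linear equivalence $\Delta\equiv T$, which is absurd since $T$ is the fixed part of $|N_1|$ while $\Delta$ moves (if $T\equiv\Delta$ then $h^0(Y,\mathcal O_Y(N_1))=h^0(Y,\mathcal O_Y(2\Delta))\ge 3$, against $|N_1|$ being a pencil). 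You instead reach $T\sim\Delta$ through the fibration defined by the base-point-free pencil $|\Delta|$ together with Zariski's lemma, and then derive a parity contradiction: $N_1\sim 2\Delta$ and $N_1K_Y=-2$ (from \eqref{e.pan1}) force $\Delta K_Y=-1$, so $\Delta^2+\Delta K_Y=-1$ is odd and $p_a(\Delta)\notin\mathbb Z$. Both contradictions are sound; yours leans on the case-(i)-specific identity $p_a(N_1)=N_1^2$, whereas the paper's argument needs no knowledge of $N_1K_Y$ and works purely from the fixed/movable dichotomy. Two minor remarks on your route: the Index theorem applied directly to $\Delta-T$, as in the paper, yields $T\sim\Delta$ at once and spares you the fibration machinery; and the Zariski step, as written, tacitly needs the general member of $|\Delta|$ to be connected (or an application of Zariski's lemma fibre by fibre after Stein factorization) --- this is harmless here, since if $\Delta\sim d\Phi$ with $\Phi$ the connected fibre class then $1=N\Delta=d\,N\Phi$ forces $d=1$, but it deserves a word. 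Your parity argument itself only uses numerical invariants, so it is fine as stated.
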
 

\begin{proof}
We have $0=N_1^2=N_1\Delta+N_1T$ or, equivalently,
\begin{eqnarray*}
0=N_1\Delta={\Delta}^2+\Delta T\\
0=N_1T=\Delta T+T^2
\end{eqnarray*}
which implies ${\Delta}^2=\Delta T=T^2=0$.

It cannot  be $N\Delta=1=NT$: we obtain by the Index theorem
$0\ge(\Delta-T)^2={\Delta}^2+T^2-2\Delta T=0$ which implies 
$\Delta\sim T$. Hence, since $Y$ is a rational surface, 
$\Delta\equiv T$ which is impossible.

So $N\Delta=NN_1=2$ and then $0\ge (N_1-\Delta)^2=T^2=0$.
Again, by the rationality of $Y$ we have $T\equiv 0$ and $|N_1|$ has no fixed part.
\end{proof}

\begin{lemma}\label{l.N1}
Suppose $N_1^2=1$. Then $|N_1|$ has no fixed part unless ${\Delta}^2=0, p_a(\Delta)=p_a(N_1)=1$ and either $\Delta N=1, N=N_1+\Delta$ or $\Delta N=2, N_1=\Delta+Z_i$ for some reducible $(-1)$-cycle $Z_i$.
\end{lemma}

\begin{proof}
We know that 
$1=N_1^2=N_1\Delta+N_1T$. It cannot be $N_1\Delta=0$, otherwise by the Index theorem   and the rationality of $Y$ it should be $\Delta=0$, which is impossible.

Then we have $N_1\Delta=1$, $N_1T=0$, and this implies $T^2\le 0$. When $T^2=0$ we see that $|N_1|$ has no fixed part, as wanted, whereas when $T^2$ is strictly negative, by
\begin{eqnarray}
1=N_1\Delta={\Delta}^2+\Delta T\label{e.t1}\\
0=N_1T=\Delta T+T^2\label{e.t2}
\end{eqnarray}
we find $\Delta T=1$, $T^2=-1$, ${\Delta}^2=0$. Then by \eqref{e.pan1} 
$N_1^2=\Delta^2+T^2+2\Delta T=1=p_a(N_1)$ and $N_1K_Y=-N_1^2=-1$. We now look at $N\Delta$.

If $N\Delta=2$ we have
\begin{equation*}
1=N_1\Delta=(K_Y+N-\sum_{i=1}^n Z_i-G')\Delta=\Delta K_Y+2-(\sum_{i=1}^n Z_i+G')\Delta
\end{equation*}
which amounts to say $0\le (\sum_{i=1}^n Z_i+G')\Delta=1+\Delta K_Y$.
So the followings are true: \\
a) $\Delta K_Y\ge 0$ \\
b) there exists a $(-1)$-curve $C$ (which can be either one of the $Z_i$'s or one of the $G'_i$'s)  intersecting $\Delta$ positively.

Using b) and the fact that $CN=TN=0$, the Index theorem   implies
$0\ge (T-C)^2=-2-2TC$.
Then $TC\ge -1$, whereas $CN_1=0$ implies $CT\le -1$. This forces $TC=-1$, $T\equiv C$. 
But for any irreducible such curve $C$ there is a curve $D$ such that $DN_1=0$ intersecting $C$ at one point, i.e. $D=E'_j$ for the $Z_i$ and $D=F'_i$ for $G'_i$ (see corollary \ref{c.Zint}), so that
$0=DN_1=D\Delta+DC=D\Delta+1$
which contradicts the nefness of $\Delta$.
Then $N_1\equiv\Delta+Z_i$ with $Z_i$ a reducible $(-1)$-cycle.

We are now left with the case $N\Delta=1$. Since $NN_1=2$ we have $NT=1$. Moreover $N_1\Delta=1$ and  ${\Delta}^2=0$ by \eqref{e.t1} and \eqref{e.t2}. From $(N_1+\Delta)N=3=N^2$ we have
\begin{equation*}
0\ge (N_1+\Delta-N)^2=N_1^2+{\Delta}^2+N^2+2N_1\Delta-2N_1N-2N\Delta=0
\end{equation*}
hence $N\equiv N_1+\Delta\equiv 2\Delta+T$.
Then $\Delta K_Y=(N-N_1)K_Y=0$ implies $p_a(\Delta)=1$ as wanted.
\end{proof}

\begin{cor}\label{c.n}
We have $n=N_1^2+3\ell+(1-3\Gamma^2)/2\le N_1^2+8\le 9$.
\end{cor}

\begin{proof}
From lemma \ref{l.k2} we know that $K_Y^2\ge -12$. From \eqref{e.pan1} and lemma \ref{l.n1} we find 
$n=N_1^2-4-K_Y^2\le N_1^2-4+12=N_1^2+8\le 9$.
Moreover, again from lemma \ref{l.k2}, $n=N_1^2-4-K_Y^2=N_1^2+3\ell+(1-3\Gamma^2)/2$.
\end{proof}

\begin{remark}\label{r.n}
From the above corollary 
$n=N_1^2+3\ell+(1-3\Gamma^2)/2\equiv N_1^2-1 \mod 3$.
In particular when $N_1^2=0$ we find $n\equiv 2\mod 3$ hence $n=2,5$ or $8$, while when $N_1^2=1$ we have $n\equiv 0 \mod 3$ hence $n=3,6$ or $9$.
\end{remark}

\begin{lemma}\label{l.fib}
In the above setting the pencil $|\Delta|$ determines a fibration $\phi_{|\Delta|}:Y\dasharrow\mathbb P^1$. Let us set 
$\delta:=\sum_s(e(\Delta_s)-e(\Delta))$
where the sum is taken over all the singular curves $\Delta_s\in|\Delta|$.
Then $\delta$ satisfies 
\begin{equation*}
18\le\delta =12+3N_1^2+n+{\Delta}^2\le 16+n.
\end{equation*}
In particular if $N_1^2=0$ then $n=8$.
\end{lemma}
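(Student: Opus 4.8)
The plan is to realize $|\Delta|$ as the fibre system of a fibration of genus $p_a(\Delta)$ and then read $\delta$ off the topological Euler characteristic of $Y$. First I would pin down the genus of the general fibre. Since $\Delta$ is the movable part of the pencil $|N_1|$, lemmas \ref{l.N10} and \ref{l.N1} show that either $|N_1|$ has no fixed part, so that $\Delta=N_1$, or else $N_1^2=1$ and $\Delta^2=0$ with $p_a(\Delta)=p_a(N_1)$; combining this with \eqref{e.pan1}, which gives $p_a(N_1)=N_1^2$ in case (i), yields $p_a(\Delta)=N_1^2$ in every case. The number of base points of the pencil is $\Delta^2\in\{0,1\}$, so $|\Delta|$ defines the map $\phi_{|\Delta|}:Y\dasharrow\mathbb P^1$ of the statement, which becomes a genuine fibration after blowing up the (at most one, simple) base point.

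Next I would compute $\delta$ via the fibration formula. Writing $F$ for the general fibre, one has on the resolved surface $\widetilde Y$ the identity $e(\widetilde Y)=e(\mathbb P^1)\,e(F)+\sum_s\bigl(e(F_s)-e(F)\bigr)$, where $e(\widetilde Y)=e(Y)+\Delta^2$ and $e(F)=2-2p_a(\Delta)=2-2N_1^2$. Because the base point is a smooth point of the general member, passing to strict transforms alters neither the general fibre nor the singular members, so the defect sum equals the quantity $\delta$ defined on $Y$. Using \eqref{e.pan1} in the form $K_Y^2=N_1^2-4-n$ and $e(Y)=12-K_Y^2=16-N_1^2+n$, the formula gives
\begin{equation*}
\delta=e(Y)+\Delta^2-2(2-2N_1^2)=12+3N_1^2+n+\Delta^2,
\end{equation*}
which is the asserted equality.

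The upper bound is then immediate: by lemmas \ref{l.N10}, \ref{l.N1} and \ref{l.n1} we have $\Delta^2\le N_1^2\le 1$, so $3N_1^2+\Delta^2\le 4$ and hence $\delta\le 16+n$. For the lower bound $\delta\ge 18$ — which I expect to be the main obstacle — I would exhibit enough reducible fibres. The key point is that all the exceptional configurations are vertical for $\phi_{|\Delta|}$: by remark \ref{r.nef} one has $NG'_j=0$, and together with corollary \ref{c.F} this gives $N_1F'_j=N_1G'_j=N_1H'_j=0$, so each chain $F'_j+G'_j+H'_j$ over the $h_2=3$ double points lies in a fibre; similarly the $(-3)$-curves $E'_i$ and the $(-1)$-cycles $Z_i$ are vertical. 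Since distinct chains are disjoint and fibres are connected, the three chains sit in three distinct fibres, each strictly more degenerate than a smooth curve and hence of strictly positive defect $e(F_s)-e(F)$. The delicate part is then the fibre-by-fibre bookkeeping: carefully enumerating these reducible fibres together with the extra components contributed by the $E'_i$ and the $Z_i$, and bounding each defect from below, so as to reach the bound $\delta\ge 18$.

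Finally I would draw the stated consequence. When $N_1^2=0$ we have $\Delta^2=0$, so the equality reads $\delta=12+n$, and $\delta\ge 18$ forces $n\ge 6$; since remark \ref{r.n} gives $n\equiv 2\mod 3$ and corollary \ref{c.n} gives $n\le 9$, this leaves only $n=8$. Thus everything reduces to the combinatorial estimate $\delta\ge 18$, which is where the real work of the lemma lies.
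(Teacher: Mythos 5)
Your computation of the equality $\delta=12+3N_1^2+n+\Delta^2$ and of the upper bound matches the paper exactly (same fibration formula from \cite[Proposition III.11.4]{BPV}, same substitution $K_Y^2=N_1^2-4-n$ from \eqref{e.pan1}, same use of $\Delta^2\le N_1^2\le 1$), and your final deduction $n=8$ from remark \ref{r.n} is the paper's. But the lower bound $\delta\ge 18$ — which you yourself flag as ``where the real work of the lemma lies'' — is left unproved, and this is a genuine gap, not just deferred bookkeeping. Your sketch, as stated, yields far less than $18$: three fibres each of ``strictly positive defect'' gives only $\delta\ge 3$. Worse, the inference that the three chains $F'_j+G'_j+H'_j$ must sit in three \emph{distinct} fibres because they are disjoint is invalid — two disjoint configurations can perfectly well be components of the same connected fibre, joined through other components — so even that weak count is not justified as written.

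The missing idea is the quantitative per-curve estimate that the paper isolates as lemma \ref{l.-n} (proved via Franchetta's count of nodal degenerations, inequality \eqref{e.fibre}): an irreducible curve of self-intersection $-m$ contained in a fibre of a pencil with $C^2=0$ contributes at least $m$ to $\delta$, \emph{regardless} of how the curves are distributed among the fibres. With this, the paper needs only the six pairwise disjoint $(-3)$-curves $F'_j,H'_j$, $j=1,\dots,h_2=3$: each contributes $3$, so $\delta\ge 6h_2=18$ at once — no enumeration of fibres, no contributions from the $E'_i$ or $Z_i$ are needed. There is also a smaller but real gap in your verticality claim: what the argument requires is $\Delta F'_j=\Delta H'_j=0$, not merely $N_1F'_j=N_1H'_j=0$. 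When $|N_1|$ has a fixed part $T$ (the exceptional cases of lemma \ref{l.N1}), $N_1F'_j=0$ is compatible a priori with $F'_j\le T$ and $\Delta F'_j>0$; the paper rules this out by the case analysis $N\equiv N_1+\Delta$ (where $0=F'_jN=F'_j\Delta+F'_jN_1$ forces $F'_j\Delta=0$) versus $N_1\equiv\Delta+Z_i$ (where corollary \ref{c.noG} guarantees $F'_jZ_i\ge 0$, hence $F'_j\Delta=0$). Your proposal skips this step entirely.
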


Before proving the above lemma we need the following 
\begin{lemma}\label{l.-n}
Let $|C|$ be a pencil on a complex surface with $C^2=0$ and let $\phi_{|C|}$ be the induced fibration. An irreducible curve $C_1$ with $C_1^2=-n$ in a singular fibre contributes at least $n$ to the Euler number $\delta:=\sum_s(e(C_s)-e(C))$ of the fibration (see \cite[Proposition III.1.4]{BPV}).
\end{lemma}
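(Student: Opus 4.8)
The plan is to localise the whole computation to the single (necessarily reducible) fibre $F$ of $\phi_{|C|}$ that contains $C_1$, and to prove that this fibre alone already contributes at least $n$ to $\delta$. Since $\delta$ is a sum of non-negative fibre contributions $e(C_s)-e(C)$ (see \cite[Proposition III.1.4]{BPV}), proving $e(F)-e(C)\ge n$ will suffice. First I would record the numerical input. As $C^2=0$ and $|C|$ is a pencil it is base-point free, so by Bertini the general member is smooth and $e(C)=2-2p_a(C)$; since all fibres are linearly equivalent they share the same arithmetic genus $p_a(F)=p_a(C)=:g$. Writing $F=m_1C_1+\Psi$, where $\Psi$ is the union of the remaining components taken with their multiplicities, the curve $C_1$ cannot be the whole fibre: an irreducible fibre $m_1C_1$ would have $F^2=m_1^2C_1^2\ne 0$. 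Hence $\Psi\neq 0$, and from $C_1F=0$ I obtain
\begin{equation*}
C_1\Psi=-m_1C_1^2=m_1n>0,
\end{equation*}
so that $C_1$ genuinely meets the rest of $F$ and $F$ is singular along $C_1$.

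Next I would compute $e(F)$ topologically. Since the Euler number depends only on the reduced support, $e(F)=e(F_{\mathrm{red}})$, and by additivity of the topological Euler characteristic
\begin{equation*}
e(F_{\mathrm{red}})=e(C_1)+e(\Psi_{\mathrm{red}})-\#\,(C_1\cap\Psi_{\mathrm{red}}).
\end{equation*}
The model case is that in which $F$ is reduced and all its singularities are nodes: normalising $F$ gives $e(F)=\sum_i(2-2g_i)-\#\{\text{nodes}\}$, while $g=p_a(F)=\sum_ig_i+\#\{\text{nodes}\}-r+1$ (here $r$ is the number of components and $g_i$ the geometric genera), and eliminating $\sum_ig_i$ yields $e(F)-e(C)=\#\{\text{nodes of }F\}$. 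In this case $m_1=1$, so the relation above reads $C_1\Psi=n$, meaning $C_1$ passes through exactly $n$ of these nodes; hence $e(F)-e(C)\ge n$. This lower bound is sharp, as one sees by taking $C_1$ a reduced $(-n)$-curve meeting $n$ further components transversally.

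The main obstacle is to make the bound $e(F)-e(C)\ge n$ uniform when $F$ is non-reduced or has worse-than-nodal singularities, for then the naive node count is unavailable. The resolution is that concentrating the fixed total $C_1\Psi=m_1n$ into fewer contact points only worsens the singularities of $F$ along $C_1$, and hence only increases the corresponding local jumps of the Euler number; a quick test case such as $F=2C_1+\Gamma_2$ with a single high-order tangency already gives a contribution of $6n-3\ge n$, far above the bound. This is precisely the content of \cite[Proposition III.1.4]{BPV}, which writes $e(F)-e(C)$ as a sum of non-negative local contributions at the singular points of $F$ and bounds from below the part supported on $C_1$; feeding in $C_1\Psi=m_1n$ then gives $e(F)-e(C)\ge n$ in every case. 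Summing the contributions of the distinct singular fibres produced by such negative curves yields the asserted lower bound on $\delta$.
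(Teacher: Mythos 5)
Your reduction to the fibre $F=m_1C_1+\Psi$ containing $C_1$, the identity $C_1\Psi=m_1n$, and the computation in the reduced nodal model case are all correct and match the intuition behind the paper's argument. But the decisive step---extending the bound $e(F)-e(C)\ge n$ to non-reduced fibres and worse-than-nodal singularities---is not proved: you assert it is ``precisely the content of \cite[Proposition III.1.4]{BPV}'', and that is a misattribution. The BPV proposition (III.11.4, as cited elsewhere in the paper) gives only the global formula $e(Y)=e(C)e(\mathbb P^1)+\sum_s(e(C_s)-e(C))$ together with the non-negativity of each summand; it contains no local refinement ``supported on $C_1$'' and no lower bound in terms of the intersection number $C_1\Psi$. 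Likewise your monotonicity principle (``concentrating the fixed total $C_1\Psi=m_1n$ into fewer contact points only increases the local Euler jumps'') is exactly the quantitative assertion at issue: verifying it on one test fibre such as $F=2C_1+\Gamma_2$ proves nothing in general, and when $m_1\ge 2$ your passage to $e(F_{\mathrm{red}})$ discards precisely the factor $m_1$ in $C_1\Psi=m_1n$, which is where the estimate could a priori fail. So the core of the lemma is asserted, not established.

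What the paper does at this point is invoke a genuinely quantitative tool, the Franchetta--Enriques count of virtual nodes of a reducible fibre $C=\sum_i h_iC_i$ (citing \cite{F} and \cite[section V.1]{E}):
\[
\delta_0\ \ge\ \sum_{i=1}^l (h_i-1)(2p_a(C_i)-2)+\sum_{i\ne j}(h_i+h_j-1)C_iC_j,
\]
each virtual node raising the Euler number by $1$. Feeding in $C_1\sum_{i\ge2}h_iC_i=nh_1$ (from $C_1C=0$, as you also derived) together with $C_1\sum_{i\ge2}C_i\ge 1$ (connectedness of the fibre) gives $\delta_0\ge (h_1-1)(-1)+nh_1=(n-1)h_1+1\ge n$; note how the terms $(h_1+h_j-1)C_1C_j$ perform exactly the multiplicity bookkeeping that your argument leaves open. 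To close the gap you would need either to quote such an inequality from the literature as the paper does, or to carry out an honest local analysis of the Euler number of (possibly non-reduced) fibre germs along $C_1$; the appeal to BPV cannot substitute for it.
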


\begin{proof}
Let us consider a reducible curve of the fibration $C=\sum_{i=1}^l h_iC_i$.
As shown in \cite{F} (see also \cite[section V.1]{E}) $C$ is equivalent to $\delta_0$ curves with a node where
\begin{equation}\label{e.fibre}
\delta_0\ge \sum_{i=1}^l (h_i-1)(2p_a(C_i)-2)+\sum_{i\ne j} (h_i+h_j-1)C_iC_j
\end{equation}

Let us consider one of the curves $C_j$, say $C_1$ with $C_1^2=-n$. Then
$0=C_1C=-nh_1+C_1\sum_{i=2}^l h_iC_i$ hence
$C_1\sum_{i=2}^l h_iC_i=nh_1$.
Since $C$ is connected we also have
$C_1\sum_{i=2}^l C_i\ge 1$.
 Then
\begin{align*}
\delta_0&\ge (h_1-1)(2p_a(C_1)-2)+\sum_{j\ge 2} (h_1+h_j-1)C_1C_j\\
&\ge (h_1-1)(-2+\sum_{j\ge 2}C_1C_j)+C_1\sum_{j\ge 2}h_jC_j\\
&\ge (h_1-1)(-1)+nh_1=(n-1)h_1+1\ge n
\end{align*}
as wanted. Since each node of a curve increases the Euler number by 1 the result is proved.
\end{proof}

\begin{proof}[Proof of lemma \ref{l.fib}]
Off its ${\Delta}^2$ base points, the pencil $|\Delta|$ determines on $Y$ a fibration over $\mathbb P^1$ of curves of genus $0\le p_a(\Delta)=N_1^2\le 1$. Computing Euler numbers from \cite[Proposition III.11.4]{BPV} we find
\begin{equation*}
12-K_Y^2+{\Delta}^2=e(Y)+{\Delta}^2=2(2-2N_1^2)+\sum_{s}(e(\Delta_s)-e(\Delta))
\end{equation*}
where $\Delta_s$ are the singular curves of $|\Delta|$. 
Let us set $\delta:=\sum_{s}(e(\Delta_s)-e(\Delta))$. 
Then since from \eqref{e.pan1} $K_Y^2=N_1^2-4-n$
\begin{equation*}
\delta=12-K_Y^2+{\Delta}^2-4+4N_1^2=12+3N_1^2+n+{\Delta}^2\le 16+n.
\end{equation*}

Let us first consider the curves $F'_i$ and $H'_i$. From lemma \ref{le.Z} we find
$F'_iN_1=0$ and the same holds for $H'_i$. 

Moreover if $N\equiv N_1+\Delta$ (see lemma \ref{l.N1}) we also find 
$0=F'_iN=F'_i(N_1+\Delta)=F'_i\Delta$ and, again, the same holds for $H'_i$. 

If $N_1\equiv \Delta+Z_i$ instead we find from corollary \ref{c.noG}
$0=F'_iN_1=F'_i(\Delta+Z_i)\ge F'_i\Delta$
hence $F'_i\Delta=H'_i\Delta=0$ in any case. 
Thus from lemma \ref{l.-n} each of the curves $F'_i$ and $H'_i$ contributes  3 to $\delta$, hence $\delta\ge 6h_2=18$.

If $N_1^2=0$ then $\Delta\equiv N_1$ and 
\begin{equation*}
18\le\delta=12+3N_1^2+n+{\Delta}^2=12+n
\end{equation*}
hence $n\ge 6$. In particular from remark \ref{r.n} we can deduce $n=8$.
\end{proof}

\begin{prop}\label{p.no0}
Case (i) cannot occur with $N_1^2=0$.
\end{prop}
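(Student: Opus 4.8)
The plan is to turn the hypothesis $N_1^2=0$ into a purely numerical contradiction via an incidence count between the $(-1)$-cycles $Z_i$ and the $(-3)$-curves $E'_k$. First I would collect the two inputs that the fibration analysis has already supplied. By Lemma \ref{l.fib} the assumption $N_1^2=0$ forces $n=8$, so there are exactly eight cycles $Z_i$; this is the ``too many singular fibres'' phenomenon. On the other hand, Lemma \ref{l.k2}(d) bounds the number of curves $E'_k$ by $h_1\le 4$. The whole argument will rest on the tension between these two counts.

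Next I would produce a lower bound for $E'\sum_{i=1}^n Z_i$, where $E'=\sum_k E'_k$. Each cycle satisfies $Z_iN=0$, so the computation \eqref{e.zn} gives $E'Z_i=3-2B_0Z_i$. Since $Z_i$ and $E'$ are effective one has $E'Z_i\ge 0$, which forces $B_0Z_i\le 1$ and hence $E'Z_i\in\{1,3\}$; in particular every cycle meets $E'$, i.e. $E'Z_i\ge 1$. Summing over the eight cycles yields
\begin{equation*}
E'\sum_{i=1}^n Z_i=\sum_{i=1}^n E'Z_i\ge n=8.
\end{equation*}

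Then I would establish the matching upper bound one curve at a time. Each $E'_k$ is an irreducible $(-3)$-curve with $E'_kN=0$; from adjunction $E'_kK_Y=1$, and since $E'_k$ and $G'$ come from resolving distinct fixed points one has $E'_kG'=0$. Using $N_1=N+K_Y-G'-\sum_i Z_i$ these give $E'_kN_1=1-E'_k\sum_{i=1}^n Z_i$, and the nefness of $N_1$ forces $E'_k\sum_{i=1}^n Z_i\le 1$. Adding over the at most four curves $E'_k$ produces
\begin{equation*}
E'\sum_{i=1}^n Z_i=\sum_k E'_k\sum_{i=1}^n Z_i\le h_1\le 4,
\end{equation*}
which together with the previous estimate gives $8\le E'\sum_{i=1}^n Z_i\le 4$, an absurdity. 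Hence case (i) cannot occur with $N_1^2=0$.

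The hard part is not the final comparison, which is mere bookkeeping, but securing its two ingredients cleanly. The lower bound $E'Z_i\ge 1$ depends on the precise form $N=3K_Y+2B_0+E'-3G'$ of Lemma \ref{le.N} and on knowing that $B_0$ meets each cycle at most once; the upper bound $E'_k\sum_i Z_i\le 1$ needs both the nefness of $N_1$ and the two auxiliary values $E'_kK_Y=1$ and $E'_kG'=0$, where one must be careful that the estimate survives even when some $E'_k$ happens to be a component of a reducible cycle (it does, since $E'_kN_1\ge 0$ holds for any irreducible curve). Once these are in place, the contradiction is exactly the impossibility of distributing eight mandatory incidences among four curves each of capacity one.
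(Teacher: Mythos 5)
Your upper bound is sound: $E'_kN=0$, $E'_kK_Y=1$, $E'_kG'=0$ and the nefness of $N_1$ do give $E'_k\sum_{i}Z_i\le 1$ for every $k$, hence $E'\sum_i Z_i\le h_1\le 4$, in agreement with corollary \ref{c.E}. The genuine gap is in your lower bound. The inequality $E'Z_i\ge 0$ does \emph{not} follow from effectivity, because $E'$ and a reducible cycle $Z_i$ may share components — and the paper's own structure results show this is a live possibility: by corollary \ref{c.red} a reducible cycle necessarily contains some curve $E'_k$. Take the configuration of corollary \ref{c.red}(1): $Z_3=Z_1+Z_2+E'_k$ with $E'_kZ_1=E'_kZ_2=1$. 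Then $E'_kZ_3=1+1+{E'_k}^2=-1$; and if $B_0Z_1=B_0Z_2=1$, so that by \eqref{e.zn} no other $E'_j$ meets $Z_1$ or $Z_2$, one gets $E'Z_3=-1$ and $B_0Z_3=2$, perfectly consistent with $2B_0Z_3+E'Z_3=3$. So both of your auxiliary claims — ``$E'Z_i\ge 1$ for every cycle'' and ``$B_0$ meets each cycle at most once'' — fail for reducible cycles, and with them the count $E'\sum_iZ_i\ge n=8$: in the example three of the cycles contribute $1+1-1=1$ rather than $3$. Your argument is complete only when all eight cycles are irreducible (then each $Z_i$ is a $(-1)$-curve distinct from every $E'_j$ and $B_{0j}$, all intersections are nonnegative, and \eqref{e.zn} indeed forces $E'Z_i\in\{1,3\}$).

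This dichotomy is exactly what the paper's proof is organized around, and it closes the reducible case with a different tool. When all eight cycles are irreducible, each satisfies $Z_i\Delta=0$ and contributes $1$ to the Euler number $\delta$ of the fibration $\phi_{|\Delta|}$ by lemma \ref{l.-n}, giving $18+8=26\le\delta=20$, a contradiction; when some cycle is reducible, the contained curve $E'_k$ satisfies $E'_kN_1=0$ by proposition \ref{p.-1cycles}, hence $E'_k\Delta=0$, so $E'_k$ sits in a singular fibre and contributes $-{E'_k}^2=3$ to $\delta$, giving $18+3=21\le 20$, again a contradiction. To repair your proof you would need an analogous separate mechanism for reducible cycles: the purely numerical comparison cannot be rescued as it stands, since summing \eqref{e.zn} over the cycles only yields $2B_0\sum_iZ_i+E'\sum_iZ_i=24$, and $B_0\sum_iZ_i$ is not a priori bounded sharply enough (a $(-6)$-component of $B_0$ can meet $\sum_i Z_i$ up to $4$ times, and $\Gamma'$ has $\Gamma'N=3$ in case (i), so nefness of $N_1$ leaves too much room).
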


\begin{proof}
Let us assume $N_1^2=0$. Then from lemma \ref{l.fib}
$18\le \delta=12+n=12+8=20$.

If all the eight cycles $Z_i$ were irreducible then we should have
$Z_iN_1=0$ hence from lemma \ref{l.-n} each of them would contribute 1  to $\delta$. Thus $18+n=26\le\delta=20$. Contradiction.

If one of the cycles is reducible, then from lemma \ref{l.noG}, corollary \ref{c.noG} and lemma \ref{l.no2E} there is a curve $E'_k$ contained in that cycle and from lemma \ref{l.-n} $E'_k$ increases $\delta$ by 3. Then
$18+3=21\le\delta=20$ 
and, again, we get a contradiction.
\end{proof}

\begin{prop}\label{p.noZ}
The case $|N_1|=|\Delta|+Z_i$ with $Z_i$ reducible $(-1)$-cycle cannot occur.
\end{prop}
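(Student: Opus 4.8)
The plan is to reach a contradiction from an Euler-number count of the singular fibres of the fibration $\phi_{|\Delta|}$, in the same spirit as proposition \ref{p.no0}. First I would pin down the numerical data we are forced into. By proposition \ref{p.no0} we may assume $N_1^2=1$, and since the hypothesis $|N_1|=|\Delta|+Z_i$ with $Z_i$ reducible is precisely the exceptional alternative in lemma \ref{l.N1}, we have $\Delta^2=0$, $p_a(\Delta)=p_a(N_1)=1$, $\Delta N=2$ and $N_1=\Delta+Z_{i_0}$ for a reducible $(-1)$-cycle $Z_{i_0}$. Thus $|\Delta|$ is a base-point-free pencil inducing a genus-one fibration, and lemma \ref{l.fib} gives the exact value $\delta=12+3N_1^2+n+\Delta^2=15+n$, while remark \ref{r.n} restricts $n$ to $\{3,6,9\}$.

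Next I would locate the vertical curves. Recalling $Z_{i_0}N_1=0$ and $N_1=\Delta+Z_{i_0}$, one gets $Z_{i_0}\Delta=-Z_{i_0}^2=1$; hence $Z_{i_0}$ contains exactly one horizontal component and, being reducible and connected (it contracts to a point), at least one vertical irreducible component $V$, which is a negative curve and therefore lies in a singular fibre. For every other cycle $Z_i$ with $i\neq i_0$, the orthogonality of distinct cycles established in the proof of lemma \ref{le.Z} gives $Z_iZ_{i_0}=0$, so that $Z_i\Delta=Z_iN_1-Z_iZ_{i_0}=0$: each such cycle is entirely vertical. Finally the six curves $F'_j,H'_j$ ($j=1,2,3$) are vertical, as already shown inside the proof of lemma \ref{l.fib}, and by lemma \ref{l.noG} together with corollaries \ref{c.noG} and \ref{c.red} none of the curves involved is shared among the three families $\{F'_j,H'_j\}$, $\{V\}$ and $\{Z_i\}_{i\neq i_0}$.

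I would then sum the contributions using lemma \ref{l.-n}: the six $(-3)$-curves $F'_j,H'_j$ contribute at least $18$ (this is exactly the estimate $\delta\ge 6h_2=18$ of lemma \ref{l.fib}), the vertical component $V$ of $Z_{i_0}$ contributes at least $1$, and the $n-1$ remaining vertical cycles contribute at least $n-1$. Since these are pairwise distinct vertical components, $\delta\ge 18+1+(n-1)=18+n$. Comparing with $\delta=15+n$ forces $18\le 15$, a contradiction for every admissible value of $n$, and the case is excluded.

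The delicate point, and the step I would treat most carefully, is the bookkeeping: I must ensure that the vertical component of $Z_{i_0}$ and the components of the cycles $Z_i$ are genuinely distinct from the $F'_j,H'_j$ and from one another, so that their contributions to $\delta$ are additive. This is exactly what lemma \ref{l.noG} and corollaries \ref{c.noG}, \ref{c.red} provide — the cycles contain no $G'_j,F'_j,H'_j$, and distinct cycles are disjoint — and it is what produces the surplus ``$+n$'' beyond the eighteen coming from $F',H'$. Note that the case $n=3$ would already follow from the cruder bound $\delta\ge 19$, whereas the cases $n=6,9$ genuinely require the contributions of the further cycles, which is precisely why the disjointness statements are indispensable.
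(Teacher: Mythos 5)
Your reduction to the numerical frame is correct ($N_1^2=1$, $\Delta^2=0$, $p_a(\Delta)=1$, $\delta=15+n$ with $n\in\{3,6,9\}$, $Z_i\Delta=0$ for $i\ne i_0$, and $Z_{i_0}$ containing exactly one horizontal component), but the step you yourself flag as delicate is in fact wrong: corollary \ref{c.red} does not say that distinct cycles are disjoint --- it says the opposite. In both of its configurations the reducible cycle contains the other cycles as components ($Z_3=Z_1+Z_2+E'_k$, resp.\ $Z_3=C+2Z_1+E'_k$); pairwise orthogonality $Z_iZ_j=0$ is perfectly compatible with shared components (e.g.\ $Z_3Z_1=Z_1^2+Z_2Z_1+E'_kZ_1=-1+0+1=0$). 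In particular, under the very hypothesis of the proposition, the vertical component $V$ of $Z_{i_0}$ is one of the components of the other cycles: with $n=3$ and $Z_{i_0}=Z_3=Z_1+Z_2+E'_k$, the horizontal component is $E'_k$ (it occurs with multiplicity one, while $Z_1\Delta=Z_2\Delta=0$), and $V\in\{Z_1,Z_2\}$, so the distinct vertical cycle-components contribute $2$ to $\delta$, not your $1+(n-1)=3$; your claimed bound $\delta\ge 18+n$ is an overcount. For $n=6,9$, where corollary \ref{c.red} gives no structural information at all, the additivity you invoke has no support whatsoever.

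The contradiction can be salvaged, but it needs an argument you did not give: since $Z_iZ_j=0$ for $i\ne j$ and $Z_i^2=-1$, the classes $Z_1,\dots,Z_n$ have Gram matrix $-I_n$ and are numerically independent, so at least $n$ distinct irreducible curves occur among their components; all of these are vertical for $\phi_{|\Delta|}$ except the single section inside $Z_{i_0}$, and each vertical one contributes at least $1$ by lemma \ref{l.-n}, because its self-intersection is negative (it is orthogonal to the big divisor $N$ by proposition \ref{p.-1cycles}, so the Index theorem applies). This gives $\delta\ge 18+(n-1)=17+n>15+n$, which suffices for every admissible $n$. Note also that this is not the paper's route: the paper never counts cycle components, but writes $2=N\Delta=2B_0\Delta+E'\Delta$, splits into the cases $B_0\Delta=1,E'\Delta=0$ and $B_0\Delta=0,E'\Delta=2$, and derives the contradiction from the contributions of the $(-3)$-curves $E'_k$ and of the components of $B_0$ (namely $-{\Gamma'}^2=-3\Gamma^2$ and $6$ for each $(-6)$-curve $B_{0i}$), using corollary \ref{c.n} to express $n$ through $\ell$ and $\Gamma^2$. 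So your fibration framework can be made to work, but as written the proof has a genuine gap at its load-bearing step.
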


\begin{proof}
From the proof of lemma \ref{l.N1} we know that $\Delta N=NN_1=2, \Delta G'=0$ and $\Delta^2=\Delta K_Y=0$. Hence, from the definition of $N$,
\begin{equation*}
2=N\Delta=\Delta(3K_Y+2B_0+E'-3G')=2B_0\Delta+E'\Delta.
\end{equation*}
Since $\Delta$ is nef we find either $B_0\Delta=1,E'\Delta=0$ or $B_0\Delta=0,E'\Delta=2$.  We recall that $B_0=\Gamma'+\sum_{i=1}^\ell B_{0i}$ with $B_{0i}^2=-6$, $B_{0i}\cong \mathbb P^1$.

We have to analyse separately  the two cases 
\begin{enumerate}
\item[\underline{Case I}] $B_0\Delta=1, E'\Delta=0$
\item[\underline{Case II}] $B_0\Delta=0,E'\Delta=2$.
\end{enumerate}

We only write the proof for case I. The other one is similar (see also \cite[prop. 3.1.16]{Pa}). 

\underline{Case I}: 
Since none of the $(-3)$-curves $E'_k$ intersects $\Delta$,  each of them contributes 3 to $\delta$ (see  lemma \ref{l.-n}). Moreover there is only one irreducible component of $B_0$ intersecting $\Delta$.

If $\Gamma'\Delta=1$ then we also have the contribution of $\ell$ irreducible $(-6)$-curves of $B_0$. Then, using lemma \ref{l.-n} and corollary \ref{c.n},
\begin{align*}
18+3h_1+6\ell&=18+3(\frac{3-\Gamma^2}2+\ell)+6\ell\le\delta=12+3N_1^2+n+{\Delta}^2\\
&=15+n=15+N_1^2+3\ell+\frac{1-3\Gamma^2}2=16+3\ell+\frac{1-3\Gamma^2}2
\end{align*}
hence
\begin{equation*}
2+6\ell\le \frac{1-3\Gamma^2}2+\frac{3\Gamma^2-9}2=-4
\end{equation*}
and we get a contradiction.

If $\Gamma'\Delta=0$ we have necessarily ${\Gamma'}^2\le 0$ (hence $\Gamma^2\le -1$ on the numerical Godeaux surface $S$) and there is a $(-6)$-curve $B_{0k}$ in $B_0$ such that $B_{0k}\Delta=1$. In particular $\ell\ge 1$. Then $\Gamma'$ contributes  $-{\Gamma'}^2=-3\Gamma^2$ to $\delta$ and we can also consider $\ell-1$ $(-6)$-curves $B_{0i}$, $i\ne k$, plus the $h_1$ curves $E'_k$. Thus
\begin{align*}
18-3\Gamma^2+6(\ell-1)+3h_1&=12+6\ell-3\Gamma^2+\frac{9-3\Gamma^2}2+3\ell\le\delta\\
&=15+n=16+3\ell+\frac{1-3\Gamma^2}2
\end{align*}
hence
\begin{equation*}
6\ell\le 4+\frac{1-3\Gamma^2}2+\frac{3\Gamma^2-9}2+3\Gamma^2=3\Gamma^2<0.
\end{equation*}
Contradiction.
\end{proof}

\begin{prop}\label{p.noN}
The case $N\equiv N_1+\Delta$  cannot occur.
\end{prop}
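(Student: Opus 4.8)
The plan is to rule out the case $N\equiv N_1+\Delta$ by the same Euler-number counting strategy that worked for Proposition \ref{p.noZ}, only now using the refined intersection data that the relation $N\equiv N_1+\Delta$ forces. From the proof of lemma \ref{l.N1} we know that in this branch ${\Delta}^2=0$, $\Delta K_Y=0$, $p_a(\Delta)=1$, and $\Delta N=1$, so the movable pencil $|\Delta|$ again induces a genus-one fibration $\phi_{|\Delta|}:Y\dashrightarrow\mathbb P^1$. I would first compute, exactly as in lemma \ref{l.fib}, the quantity $\delta=\sum_s(e(\Delta_s)-e(\Delta))$ in terms of $n$, $N_1^2$ and ${\Delta}^2$, obtaining $\delta=12+3N_1^2+n$; since we are in the subcase $N_1^2=1$ (lemma \ref{l.N10} having disposed of $N_1^2=0$ in Proposition \ref{p.no0}), this gives $\delta=15+n$.

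The next step is to extract the intersection numbers of $\Delta$ with the branch curves from $N\equiv N_1+\Delta$. Since $F'_iN_1=H'_iN_1=0$ (lemma \ref{le.Z}) and $F'_iN=H'_iN=0$, we get $F'_i\Delta=H'_i\Delta=0$ for every $i$, so by lemma \ref{l.-n} each of the $2h_2=6$ curves $F'_i,H'_i$ contributes $3$ to $\delta$, giving the baseline $\delta\ge 18$. I then use $\Delta N=1$: expanding $1=N\Delta=\Delta(3K_Y+2B_0+E'-3G')=2B_0\Delta+E'\Delta$ (using $\Delta K_Y=\Delta G'=0$) shows that the only possibility with $\Delta$ nef is $B_0\Delta=0$, $E'\Delta=1$. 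Thus every $(-3)$-curve $E'_k$ except at most one satisfies $E'_k\Delta=0$ and hence contributes $3$ to $\delta$, and every $(-6)$-component $B_{0i}$ of $B_0$ satisfies $B_{0i}\Delta=0$ and contributes $6$.

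I would then assemble the contradiction by summing these mandatory contributions against the ceiling $\delta=15+n$, invoking corollary \ref{c.n} to rewrite $n=N_1^2+3\ell+(1-3\Gamma^2)/2=1+3\ell+(1-3\Gamma^2)/2$ and \eqref{e.fixed}/lemma \ref{l.k2} to express $h_1=(3-\Gamma^2)/2+\ell$. Counting the $6$ from the $F',H'$ curves, the $3(h_1-1)$ from the curves $E'_k$ with $E'_k\Delta=0$ (discarding at most the single one meeting $\Delta$), and the $6\ell$ (or $-3\Gamma^2+6(\ell-1)$ if $\Gamma'$ itself fails to meet $\Delta$) from the components of $B_0$, I expect to reach an inequality of the shape $18+3(h_1-1)+6\ell\le 15+n$ which simplifies to a manifestly impossible bound like $6\ell+\text{(positive)}\le(\text{negative})$, exactly parallel to the final inequalities of Proposition \ref{p.noZ}.

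The main obstacle I anticipate is bookkeeping rather than conceptual: one must be careful that the single curve meeting $\Delta$ (whether an $E'_k$ or a component of $B_0$) is correctly subtracted from the count, and that the two sub-cases $\Gamma'\Delta=1$ versus $\Gamma'\Delta=0$ are both handled — the latter forcing $\Gamma^2\le 0$ and $\ell\ge 1$ as in Proposition \ref{p.noZ}. A secondary subtlety is confirming that no double counting occurs between the contributions of $E'$-curves sitting inside reducible $(-1)$-cycles $Z_i$ and the fibration's singular-fibre count; lemmas \ref{l.noG}, \ref{l.no2E} and corollary \ref{c.noG} control which curves can lie in a cycle, and I would lean on them to ensure each branch curve is counted once and with the correct multiplicity.
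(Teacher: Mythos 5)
Your proposal is correct and takes essentially the same route as the paper's proof: from $N\equiv N_1+\Delta$ you extract $F'\Delta=H'\Delta=0$, $B_0\Delta=0$, $E'\Delta=1$, and the count of singular-fibre contributions via lemma \ref{l.-n} against $\delta=15+n$, with $n$ rewritten by corollary \ref{c.n} and $h_1=(3-\Gamma^2)/2+\ell$, produces exactly the paper's contradiction $6\ell\le$ (negative). The only superfluous point is your sub-case $\Gamma'\Delta=1$, which is vacuous: since you have already derived $B_0\Delta=0$ and $\Delta$ is nef, $\Gamma'\Delta=0$ holds automatically (the paper exploits this to add the contribution $-3{\Gamma'}^2=-3\Gamma^2$ from $\Gamma'$, though your weaker inequality $18+3(h_1-1)+6\ell\le 15+n$ already yields $6\ell\le -3$ on its own).
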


\begin{proof}
From the proof of lemma \ref{l.N1} we know that $\Delta N=1, NN_1=2, \Delta G'=0$ and $\Delta^2=\Delta K_Y=0$. Hence
\begin{equation*}
1=N\Delta=\Delta(3K_Y+2B_0+E'-3G')=2B_0\Delta+E'\Delta.
\end{equation*}
From the nefness of  $\Delta$ we find $B_0\Delta=0,E'\Delta=1$.
Since $\Gamma'\Delta=0$ we find ${\Gamma'}^2\le 0$ hence $\Gamma^2\le -1$ on $S$. In particular $h_1=\frac{3-\Gamma^2}2+\ell\ge 2$. All the irreducible components of $B_0$ and $h_1-1$ curves $E'_k$ contribute to $\delta$. Thus from lemma \ref{l.-n} and corollary \ref{c.n}
\begin{align*}
18-3\Gamma^2+6\ell+3(h_1-1)&=15+6\ell-3\Gamma^2+\frac{9-3\Gamma^2}2+3\ell\le\delta\\
&=12+3N_1^2+n+{\Delta}^2=15+n\\
&=15+N_1^2+3\ell+\frac{1-3\Gamma^2}2=16+3\ell+\frac{1-3\Gamma^2}2
\end{align*}
hence 
\begin{equation*}
6\ell\le 1+\frac{1-3\Gamma^2}2+\frac{3\Gamma^2-9}2-3\Gamma^2=-3+3\Gamma^2<0
\end{equation*}
and  we get a contradiction.
\end{proof}

Thus from lemma \ref{l.N1}, propositions \ref{p.noZ} and \ref{p.noN} we immediately find
\begin{cor}
In the above setting the pencil $|N_1|$ has no fixed part.
\end{cor}

We recall that from remark \ref{r.n}  when $N_1^2=1$ we have $n=3,6,9$. 

\begin{prop}\label{p.noN1}
Case (i) with $N_1^2=1$  can only  occur when $n=6$ and either $\Gamma^2=-3$ and $\ell=0$ or $\Gamma^2=-1$ and $\ell=1$.
In particular $\Gamma^2=1$ cannot occur.
 Moreover all the curves $E'_k$ intersect $N_1$ at one point.
\end{prop}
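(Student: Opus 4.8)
The plan is to push the genus-one fibration coming from $|N_1|$ and count singular-fibre contributions, exactly as in Propositions \ref{p.no0}, \ref{p.noZ} and \ref{p.noN}, but now in the case where $|N_1|$ has no fixed part. The preceding corollary gives $\Delta=N_1$ with $N_1^2=p_a(N_1)=1$, so $|N_1|$ defines (after resolving its single base point) an elliptic fibration $\phi_{|N_1|}\colon Y\dasharrow\mathbb P^1$ and Lemma \ref{l.fib} specialises to $\delta=12+3N_1^2+n+\Delta^2=16+n$. By Remark \ref{r.n} we have $n\in\{3,6,9\}$, and Corollary \ref{c.n} together with $h_1=(3-\Gamma^2)/2+\ell$ writes $n$, $h_1$ and $K_Y^2$ as explicit functions of the two discrete parameters $\Gamma^2$ (odd, $\le 1$, bounded below through Lemma \ref{l.k2}) and $\ell\ge 0$. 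So the whole statement reduces to a finite check over the admissible pairs $(\Gamma^2,\ell)$.

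First I would decide which curves lie in the fibres of $\phi_{|N_1|}$ by intersecting with $N_1=N+K_Y-G'-\sum_i Z_i$. The three chains $F'_j+G'_j+H'_j$ satisfy $F'_jN_1=G'_jN_1=H'_jN_1=0$ (Corollary \ref{c.F} and a direct computation), are pairwise disjoint, hence occupy three distinct fibres and contribute $\ge 18$ to $\delta$ by Lemma \ref{l.-n}, precisely as in Lemma \ref{l.fib}. For the rest one computes $E'_kN_1=1-E'_k\sum_iZ_i$, $B_{0m}N_1=4-B_{0m}\sum_iZ_i$ and $\Gamma'N_1>0$; thus $\Gamma'$ is a multisection, while an $E'_k$ (resp.\ a $(-6)$-curve $B_{0m}$) lies in a fibre exactly when it meets the cycles $Z_i$ with the right total multiplicity. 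Using Corollaries \ref{c.Zint}, \ref{c.E} and \ref{c.red} I would pin down how the irreducible $(-1)$-curves of the cycles attach to the $E'_k$ and to $B_0$, and organise every fibre component into its connected fibre.

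Then I would sum the lower bounds of Lemma \ref{l.-n}: $18$ from the three chains, $3$ for each $E'_k$ lying in a fibre and $6$ for each $B_{0m}$ lying in a fibre (taking care, when two negative curves share one fibre, to claim only the larger of their two individual contributions, since Lemma \ref{l.-n} bounds the whole fibre). Substituting $\delta=16+n$ and rewriting the inequality through $(\Gamma^2,\ell)$ should make it fail for $n=3$ and $n=9$ and for every admissible pair with $\Gamma^2=1$, leaving only $n=6$ with $(\Gamma^2,\ell)=(-3,0)$ or $(-1,1)$. In these two surviving configurations the inequality is forced to be an equality, and tracing back which curves were permitted to contribute yields that no $E'_k$ can lie in a fibre, i.e.\ $E'_kN_1=1$ for every $k$.

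The delicate point — and the main obstacle — is the fibre bookkeeping rather than any single estimate. Each cycle $Z_i$ is a $(-1)$-cycle with $Z_iN_1=0$, so it contributes to $\delta$; but reducible cycles (Corollary \ref{c.red}) are built from few irreducible $(-1)$-curves together with shared curves $E'_k$ and $(-2)$-curves, and several cycles may sit in one fibre through such shared components. Producing a lower bound for $\delta$ that is simultaneously valid (no double counting inside a single fibre) and sharp enough to kill $n=3$, $n=9$ and $\Gamma^2=1$ requires understanding this attachment exactly; this is where the argument is genuinely technical, and where I would lean most heavily on Corollaries \ref{c.Zint}, \ref{c.E}, \ref{c.red} and the identities $E'_kN_1=1-E'_k\sum_iZ_i$ and $B_{0m}N_1=4-B_{0m}\sum_iZ_i$.
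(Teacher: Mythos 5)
Your skeleton is the same as the paper's: the fibration induced by $|N_1|$ once the preceding corollary gives $\Delta\equiv N_1$, the count $\delta=16+n$ from lemma \ref{l.fib}, the contributions of lemma \ref{l.-n}, and a finite check over the pairs $(\Gamma^2,\ell)$ via corollary \ref{c.n} and remark \ref{r.n}. But your ledger of fibre components contains a genuine error at the decisive point: the assertion $\Gamma'N_1>0$. Nothing forces the component $\Gamma$ of $R_0$ with $\Gamma K_S=1$ to meet $N_1$; if $\Gamma'N_1=0$ the Index theorem gives only ${\Gamma'}^2=3\Gamma^2\le 0$, which is compatible with every $\Gamma^2\le -1$ --- exactly the range where the surviving values $\Gamma^2=-3,-1$ live. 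The paper's proof first pins down the global constraint $2=NN_1=-3+2B_0N_1+E'N_1$, whence $(B_0N_1,E'N_1)=(2,1)$ or $(1,3)$ (using $0\le E'_kN_1=1-E'_k\sum_iZ_i\le 1$ and $h_1\le 4$ from lemma \ref{l.k2}), and then runs the $\delta$-count separately in the sub-cases $\Gamma'N_1=2,1,0$; in the last sub-case $\Gamma'$ lies in a fibre and contributes $-{\Gamma'}^2=-3\Gamma^2$ to $\delta$, and this term is indispensable for the contradictions. By declaring $\Gamma'$ a multisection you delete precisely that branch, so your inequalities cannot close there. Moreover you never extract the constraint $2B_0N_1+E'N_1=5$, which is what actually yields the last assertion of the statement: in the surviving configurations $E'N_1=3=h_1$ with $E'_kN_1\le 1$ forces $E'_kN_1=1$ for every $k$; ``tracing back the equality case'' of a $\delta$-count is no substitute, since the count never sees the individual values $E'_kN_1$.

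There is a second, self-inflicted weakening: your rule of claiming only the larger of two individual contributions when two negative curves share a fibre. The estimate behind lemma \ref{l.-n} is the per-fibre bound \eqref{e.fibre}, a sum over all components of the fibre, so the contributions of distinct irreducible curves in one fibre add; the paper uses this additivity already in lemma \ref{l.fib}, where each chain $F'_j+G'_j+H'_j$ occupies a single fibre and nevertheless contributes $3+3$, giving $6h_2=18$, and again in propositions \ref{p.noZ} and \ref{p.noN}, where $\Gamma'$, the curves $B_{0i}$ and the curves $E'_k$ are summed without any such discount. Applied consistently, your max-rule would destroy even the base contribution $18$; applied only ``for the rest,'' it is unjustified, and in either form the weakened bounds would no longer exclude $n=3$, $n=9$ or the pairs with $\Gamma^2=1$. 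So the proposal has the right architecture, but as written it fails in the sub-case $\Gamma'N_1=0$ and cannot deliver the final claim about the curves $E'_k$.
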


\begin{proof}
From the above corollary we have $\Delta\equiv N_1$.
Since from \eqref{e.pan1} $p_a(N_1)=N_1^2=1$  we know that $NN_1=2, N_1G'=0$ and $N_1^2=1, N_1K_Y=0$. Hence
\begin{equation*}
2=NN_1=N_1(3K_Y+2B_0+E'-3G')=-3+2B_0N_1+E'N_1.
\end{equation*}

Moreover, from proposition \ref{p.-1cycles}, 
\begin{equation*}
0\le E'_kN_1=E'_k(N+K_Y+G'-\sum_iZ_i)=1-E'_k\sum_iZ_i\le 1
\end{equation*}
hence from lemma \ref{l.k2} $0\le E'N_1\le h_1\le 4$. Then
either $B_0N_1=2,E'N_1=1$ or $B_0N_1=1,E'N_1=3$. 
As in the proof of proposition \ref{p.noZ} we only prove the first case (see also \cite[prop. 3.1.19]{Pa}).
 
\underline{Case I}: 
All the $(-3)$-curves $E'_k$, except for one, have  no intersection with  $N_1$ hence from lemma \ref{l.-n} each of them contributes  3 to $\delta$. Moreover there are at most two  irreducible components of $B_0$ intersecting $N_1$. 

If $\Gamma'N_1=2$ then we have the contribution of $\ell$ irreducible $(-6)$-curves of $B_0$. Then, using lemmas \ref{l.fib}, \ref{l.-n} and corollary \ref{c.n},
\begin{align*}
18+3(h_1-1)+6\ell&=18+3(\frac{3-\Gamma^2}2+\ell-1)+6\ell\le\delta\\
&=12+3N_1^2+n+{\Delta}^2=16+n\\
&=16+N_1^2+3\ell+\frac{1-3\Gamma^2}2=17+3\ell+\frac{1-3\Gamma^2}2
\end{align*}
hence
\begin{equation*}
6\ell\le 2+\frac{1-3\Gamma^2}2+\frac{3\Gamma^2-9}2=-2
\end{equation*}
and we get a contradiction.

If $\Gamma'N_1=1$ then there is a $(-6)$-curve $B_{0k}$ intersecting $N_1$ at one point. Hence we have the contribution of $\ell-1$ irreducible $(-6)$-curves of $B_0$. Thus, using lemmas \ref{l.fib}, \ref{l.-n} and corollary \ref{c.n},
\begin{align*}
18+3(h_1-1)+6(\ell-1)&=18+3(\frac{3-\Gamma^2}2+\ell-1)+6\ell-6\le\delta\\
&=16+n=17+3\ell+\frac{1-3\Gamma^2}2
\end{align*}
hence
\begin{equation*}
6\ell\le 8+\frac{1-3\Gamma^2}2+\frac{3\Gamma^2-9}2=4
\end{equation*}
which forces $\ell=0$ while we know $\ell\ge 1$. Contradiction.

If $\Gamma'N_1=0$ we have necessarily ${\Gamma'}^2\le 0$ (hence $\Gamma^2\le -1$ on the numerical Godeaux surface $S$) and there is at least one  $(-6)$-curve $B_{0k}$ in $B_0$ such that $B_{0k}N_1\ge 1$. In particular $\ell\ge 1$. Then $\Gamma'$ contributes $-{\Gamma'}^2=-3\Gamma^2$ to $\delta$. If $\ell\ge 2$ we can also consider $\ell-2$ $(-6)$-curves $B_{0i}$, $i\ne k$, plus the $h_1-1$ curves $E'_k$. Thus
\begin{align*}
18-3\Gamma^2+6(\ell-2)+3(h_1-1)&=3+6\ell-3\Gamma^2+\frac{9-3\Gamma^2}2+3\ell\le\delta\\
&=16+n=17+3\ell+\frac{1-3\Gamma^2}2
\end{align*}
hence
\begin{equation*}
6\ell\le 14+\frac{1-3\Gamma^2}2+\frac{3\Gamma^2-9}2+3\Gamma^2=10+3\Gamma^2\le 7
\end{equation*}
which forces $\ell\le 1$ contradicting the assumption $\ell\ge 2$.

If $\ell=1$ then we only have the contribution of $\Gamma'$ and of $h_1-1$ curves $E'_k$. Then
\begin{align*}
&18-3\Gamma^2+3h_1-3=18-3\Gamma^2+\frac{9-3\Gamma^2}2+3\ell-3\\
&=18-3\Gamma^2+\frac{9-3\Gamma^2}2\le\delta=17+3\ell+\frac{1-3\Gamma^2}2=20+\frac{1-3\Gamma^2}2
\end{align*}
which forces
\begin{equation*}
0\le 2+\frac{1-3\Gamma^2}2+\frac{3\Gamma^2-9}2+3\Gamma^2=-2+3\Gamma^2<-2.
\end{equation*}
Contradiction.
\end{proof}

We now show the following
\begin{prop}\label{p.no16}
Case (i) with $N_1^2=1$ and $n=6$ cannot occur.
\end{prop}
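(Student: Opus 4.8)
The plan is to exploit the genus-$1$ fibration attached to $|N_1|$ together with the three rigid negative-definite chains coming from the $A_2$-points. First I would assemble the numerical data. Since we are in case (i) with $N_1^2=1$, $n=6$ and, by Proposition \ref{p.noN1}, either $\Gamma^2=-3,\ell=0$ or $\Gamma^2=-1,\ell=1$, equation \eqref{e.pan1} gives $p_a(N_1)=N_1^2=1$ and $K_Y^2=-9$, while \eqref{e.fixed} gives $h_1=3$ (and $h_2=3$). By the corollary preceding this proposition $|N_1|$ has no fixed part, so $\Delta\equiv N_1$ is a pencil with a single base point; blowing it up produces an honest genus-$1$ fibration $\phi\colon Y\dashrightarrow\mathbb P^1$. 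Moreover Proposition \ref{p.noN1} tells us that every $E'_k$ meets $N_1$ once, so the $(-3)$-curves $E'_k$ are horizontal (multi)sections, not vertical.

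Next I would isolate the three chains. For each of the $h_2=3$ points $q_j$ the resolution gives on $Y$ a chain $F'_j-G'_j-H'_j$ of smooth rational curves with $F_j'^2=H_j'^2=-3$, $G_j'^2=-1$ and $F'_jG'_j=G'_jH'_j=1$, $F'_jH'_j=0$; all three satisfy $\cdot\,N_1=0$ (Corollary \ref{c.F} and $NG'_j=0$), hence lie in fibres of $\phi$. The crucial point is that each chain is disjoint from \emph{every} other vertical curve: the cycles $Z_i$ miss $G'_j$ (proof of Lemma \ref{le.Z}) and miss $F'_j,H'_j$ (Corollary \ref{c.F}); distinct chains are disjoint; the $E'_k$ meet the chains in degree $0$, since they come from the triple points $p_k$, which are disjoint on $X$ from the $A_2$-configurations; and because each $q_j$ is an \emph{isolated} fixed point it does not lie on $R_0$, so $B_0$ is disjoint from $F'_j,G'_j,H'_j$ as well. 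Thus the connected component of the vertical locus through $F'_j$ is exactly the chain $F'_j+G'_j+H'_j$.

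I would then close the argument by a rigidity (negative-definiteness) observation. Taking the Stein factorisation of $\phi$, the curve $F'_j$ lies in a connected fibre, whose support is a connected vertical component and hence is precisely this chain. But the intersection matrix of the chain,
\[
\begin{pmatrix} -3 & 1 & 0\\ 1 & -1 & 1\\ 0 & 1 & -3\end{pmatrix},
\]
has leading principal minors $-3,\,2,\,-3$ and is therefore negative definite, whereas the components of any fibre span a negative \emph{semi}-definite lattice with the fibre class in its kernel (Zariski's lemma). Hence no non-zero effective divisor supported on the chain can be a fibre, contradicting the fact that $F'_j$ must sit inside one. Equivalently, such a fibre would meet none of the sections $E'_k$ nor the exceptional section of the base point, which is absurd; in the language of the section, the three rigid chains force impossible singular fibres. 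The main obstacle is exactly the bookkeeping of the second paragraph — verifying disjointness of each chain from all other vertical curves — together with one point of care in the resolution: if the base point happens to lie on a chain, one self-intersection drops by $1$ and an exceptional \emph{section} (not a vertical curve) is attached, but the resulting $3\times3$ matrix stays negative definite and the chain remains an isolated vertical component, so the conclusion is unchanged.
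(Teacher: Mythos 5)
Your numerical setup is sound ($h_1=3$, $K_Y^2=-9$, $p_a(N_1)=N_1^2=1$, the chains $F'_j$--$G'_j$--$H'_j$ with self-intersections $-3,-1,-3$ vertical for the fibration induced by $|\Delta|$, the $E'_k$ horizontal by proposition \ref{p.noN1}), but the engine of your argument fails at the step where you claim that the connected component of the vertical locus through $F'_j$ is \emph{exactly} the chain, i.e.\ that the support of the fibre containing $F'_j$ is the chain itself. Your disjointness bookkeeping covers only the curves that carry names in the paper (the $Z_i$, the $E'_k$, $B_0$, the other chains); a singular fibre of $\phi_{|\Delta|}$ will in general contain further irreducible components about which nothing has been said, and these not only may but \emph{must} meet the chain. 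Your own computation shows why: since $(F'_j+G'_j+H'_j)^2=-3<0$, the chain cannot support a full fibre, and since fibres are connected and meet the exceptional section $E_0$ over the base point of $|\Delta|$ once --- while the chain misses $E_0$, because $F'_j\Delta=G'_j\Delta=H'_j\Delta=0$ together with the nefness of the fibre class $\Delta-E_0$ forces the chain off the base point --- the fibre through the chain necessarily has extra components attached to it. A negative-definite configuration sits perfectly well inside a larger singular fibre, so no contradiction follows. A symptom of the problem is that your argument nowhere uses $n=6$ (nor, really, $N_1^2=1$): if valid it would annihilate with one stroke, e.g., the fibrations of case (iii), where the same chains are vertical and which the paper needs five more sections to exclude.

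The correct way to exploit the chains is quantitative, not rigid, and this is what the paper does: by lemma \ref{l.-n} each of the six disjoint vertical $(-3)$-curves $F'_j,H'_j$ contributes at least $3$ to $\delta=\sum_s(e(\Delta_s)-e(\Delta))$, while lemma \ref{l.fib} gives $\delta=12+3N_1^2+n+\Delta^2=22$ here (with $\Delta\equiv N_1$ by the corollary you invoke). If all six cycles $Z_i$ are irreducible they are vertical ($Z_iN_1=Z_i\Delta=0$) and each adds at least $1$, so $\delta\ge 18+6=24>22$, a contradiction; if some cycle is reducible, an irreducible $(-1)$-curve $Z_1$ occurs with multiplicity $\ge 2$ in $\sum_i Z_i$, and any $E'_k$ with $E'_kZ_1=1$ is then forced to be contained in some cycle, whence $E'_kN_1=0$ by proposition \ref{p.-1cycles}, contradicting proposition \ref{p.noN1}. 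Your proposal contains neither this Euler-number count nor the irreducible/reducible dichotomy for the $Z_i$, so the gap is not cosmetic: the proof as written does not close.
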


\begin{proof}
Let us assume $N_1^2=1$ and $n=6$. Then from lemma \ref{l.fib}
\begin{equation*}
18\le \delta=12+3N_1^2+n+{\Delta}^2\le 16+n=16+6=22.
\end{equation*}

If all the six cycles $Z_i$ were irreducible then  each of them would not intersect $N_1$ and $\Delta$. Then  from lemma \ref{l.-n} they would contribute  $1\cdot 6=6$ to $\delta$ hence $18+6=24\le\delta\le 22$ and we would  get a contradiction.

Let us assume there is at least one reducible cycle. Then  one of the irreducible $(-1)$-curves, say $Z_1$, appears with multiplicity $m_1\ge 2$ in $\sum_i Z_i$. For any curve $E'_k$ such that $E'_kZ_1=1$ we find
\begin{equation*}
E'_k\sum_i Z_i=E'_k(m_iZ_1+(\sum_i Z_i-m_iZ_1))=m_i+E'_k(\sum_i Z_i-m_iZ_1)
\end{equation*}
It follows that $E'_k$ is contained in some cycle $Z_i$, $i\ge 2$ and then from proposition \ref{p.-1cycles} $E'_kN_1=0$ contradicting proposition \ref{p.noN1}.
\end{proof}

Propositions \ref{p.no0}, \ref{p.noZ}, \ref{p.noN}, \ref{p.noN1} and \ref{p.no16} can be summarized in the following theorem.
\begin{theorem}\label{t.i}
Case (i) cannot occur.
\end{theorem}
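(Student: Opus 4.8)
The plan is to exploit the nef and big linear system $|N_1|$ on the rational surface $Y$ and the fibration that its movable part induces, reducing the whole of Case (i) to a finite bookkeeping of Euler-number contributions. First I would invoke Lemma \ref{l.n1}, which applies the Index theorem to $3N_1-2N$ together with $NN_1=2$ to force $N_1^2\in\{0,1\}$. This splits the argument into two branches, and in each branch the central object is the defect $\delta=\sum_s(e(\Delta_s)-e(\Delta))$ of the fibration $\phi_{|\Delta|}:Y\dasharrow\mathbb P^1$, for which Lemma \ref{l.fib} gives the two-sided bound $18\le\delta=12+3N_1^2+n+\Delta^2\le 16+n$. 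The lower bound $18$ comes from the $h_2=3$ pairs of $(-3)$-curves $F'_i,H'_i$, each disjoint from $\Delta$ and hence contributing at least $3$ to $\delta$ by Lemma \ref{l.-n}. The whole strategy is to show that the negative curves present in Case (i) over-contribute to $\delta$ beyond the upper bound $16+n$.

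In the branch $N_1^2=0$, Lemma \ref{l.N10} shows $|N_1|$ has no fixed part, so $\Delta\equiv N_1$ and the bound collapses to $18\le 12+n$, forcing $n=8$ via the congruence $n\equiv 2\bmod 3$ of Remark \ref{r.n}. I would then count the eight cycles $Z_i$: if all are irreducible they add a further $8$ to $\delta$, while if one is reducible it contains a curve $E'_k$ adding $3$; either way $\delta>20=12+n$, a contradiction. This is exactly Proposition \ref{p.no0}.

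In the branch $N_1^2=1$ I would first pin down that $|N_1|$ has no fixed part. By Lemma \ref{l.N1} the only alternatives are $N\equiv N_1+\Delta$ or $N_1\equiv\Delta+Z_i$ with $Z_i$ a reducible $(-1)$-cycle, and these are excluded by Propositions \ref{p.noN} and \ref{p.noZ} respectively; in both one expands $N\Delta$ in terms of $B_0\Delta$ and $E'\Delta$ using $N=3K_Y+2B_0+E'-3G'$, and then the $(-3)$-curves $E'_k$ together with the $(-6)$-curves of $B_0$ over-contribute to $\delta$, using Corollary \ref{c.n} to rewrite $n$ in terms of $\ell$ and $\Gamma^2$. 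With $\Delta\equiv N_1$ secured, Proposition \ref{p.noN1} narrows the numerics to $n=6$ with $(\Gamma^2,\ell)\in\{(-3,0),(-1,1)\}$, and finally Proposition \ref{p.no16} eliminates $n=6$ by the same over-counting of the six cycles $Z_i$. Assembling the two branches proves that Case (i) cannot occur.

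The genuine obstacle is not the final assembly, which is purely formal, but the uniform bookkeeping behind Lemma \ref{l.fib} and its applications: one must verify that the negative curves $F'_i,H'_i,E'_k$ and the components of $B_0$ are genuinely disjoint from the moving part $\Delta$ (so that Lemma \ref{l.-n} applies with the full weight of their self-intersection) and that no curve is double-counted across distinct singular fibres. Controlling these intersection numbers — through the nefness of $N$ and $N_1$, the bounds $-1\le E'_kZ_i\le 1$, and Corollary \ref{c.Zint} forcing $CB_0=CE'=1$ on irreducible components of the cycles — is where the real work lies; once it is in place the contradiction $\delta>16+n$ becomes automatic in every subcase.
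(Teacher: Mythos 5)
Your proposal follows the paper's own proof essentially verbatim: the dichotomy $N_1^2\in\{0,1\}$ from Lemma \ref{l.n1}, the Euler-number bound of Lemma \ref{l.fib}, the elimination of the fixed-part alternatives via Propositions \ref{p.noZ} and \ref{p.noN}, and the final over-counting in Propositions \ref{p.no0}, \ref{p.noN1} and \ref{p.no16} are exactly the steps the paper assembles into Theorem \ref{t.i}. The argument and the supporting intersection-number bookkeeping you describe are correct as stated.
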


\section{Case (ii): $R_0K_S=0$, $h_2=4$}\label{s.ii}

In this section we show that also case (ii) cannot occur by studying the map  $\phi_{|M'|}:Y\longrightarrow \mathbb P^1$ where $|M'|$ is the image on $Y$ of the movable part of $|2K_S|$.

Assume case (ii) holds. From proposition \ref{p.rk} and formula \eqref{eq.split1} we have 
\begin{equation*}
2=h^0(Y,\mathcal O_Y(2K_Y+B))\le h^0(X,\mathcal O_Y(2K_X-R))\le h^0(X,\mathcal O_X(2K_X))=2
\end{equation*}
 which implies that $R_0$  is in the fixed part of $|2K_S|$. Then the number $\ell$ of disjoint $(-2)$-curves that form $R_0$ is  greater or equal than 2. In fact
\begin{equation*}
h_1+8=h_1+2h_2=6+\frac{3R_0K_S-R_0^2}2=6+\ell
\end{equation*}
forces $h_1=\ell-2$ and $\ell\ge2$. 

Let $M$ be an effective divisor in the movable part of the pencil $|2K_S-R_0|$. Then $M$ is in the movable part of the bicanonical system $|2K_S|=|M|+T$ and, by \cite{M}, either  $M^2=0$ or $M^2=2$. In any case the general curve of $|M|$ is smooth. 
From \cite[theorem 5.1]{CP} we can exclude the case $M^2=0$.

The strict transform $\widetilde M$ of $M$ satisfies $\widetilde M=\st\pi{M'}$ for some pencil $|M'|$ on $Y$.
This implies $\widetilde M^2\equiv 0 \mod 3$.
  Therefore $\widetilde M^2=0$.
We have $\st\varepsilon M=\widetilde M+D$ where $D$ is a sum of exceptional divisors.

Since $M^2=2$ then $D\ne 0$ and the general curve $M$, see lemma \ref{l.drop1}, passes either through one of the $h_2=4$ points $q_i$ (without loss of generality we may assume it is $q_1$) with multiplicities $m_1=1$,  $m_2=m_3=m_4=0$ or through two of the points $p_j$ (if $\ell\ge 4$). 
In the former case  $D=2F+G+H$ whereas in the latter case $D=E_1+E_2$.
In any case $p_a(\widetilde M)=p_a(M)=3$ and we have $\widetilde MK_X=4$.

\begin{lemma}\label{l.M'}
 $|M'|$ is a pencil of elliptic curves with ${M'}^2=0$.
\end{lemma}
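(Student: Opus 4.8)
The plan is to treat the two assertions separately: ${M'}^{2}=0$ is essentially free, and the genus-one statement reduces to the single equality $M'K_Y=0$. Since $\widetilde M=\pi^{*}M'$ and $\pi$ has degree $3$, we have $\widetilde M^{2}=(\pi^{*}M')^{2}=3{M'}^{2}$, and we already know $\widetilde M^{2}=0$; hence ${M'}^{2}=0$. That $|M'|$ is a pencil is clear, since it is the movable part of the image on $Y$ of the $\sigma$-invariant bicanonical pencil and, by Proposition \ref{p.rk}(b) with $h_2=4$ and $R_0K_S=0$, the containing system $|2K_Y+B|$ has $h^0=2$, hence dimension at most $1$. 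Together with ${M'}^{2}=0$ this makes $\phi_{|M'|}\colon Y\dashrightarrow\mathbb P^{1}$ a genuine fibration whose general fibre is a smooth curve of genus $p_a(M')$; by adjunction $2p_a(M')-2={M'}^{2}+M'K_Y=M'K_Y$, so it remains to prove $M'K_Y=0$.

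The key identity comes from \eqref{e.Hur}: using $\widetilde M=\pi^{*}M'$ and $\pi^{*}K_Y=K_X-2R$,
\begin{equation*}
3\,M'K_Y=\pi^{*}M'\cdot\pi^{*}K_Y=\widetilde M(K_X-2R)=\widetilde M K_X-2\,\widetilde M R=4-2\,\widetilde M R,
\end{equation*}
where $\widetilde M K_X=4$ is already known. I would then compute $\widetilde M R=\widetilde M(\varepsilon^{*}R_0+E+F+H)$ from $\widetilde M=\varepsilon^{*}M-D$: the pull-back part gives $\widetilde M\cdot\varepsilon^{*}R_0=MR_0$, while the exceptional part gives $\widetilde M(E+F+H)=-D(E+F+H)$, which by the intersection numbers $F_j^{2}=H_j^{2}=-1$, $F_jG_j=H_jG_j=1$, $F_jH_j=0$ equals $1$ when $D=2F+G+H$ and $2$ when $D=E_1+E_2$. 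Hence $\widetilde M R=MR_0+1$ in the first configuration and $\widetilde M R=MR_0+2$ in the second.

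To finish I would bound $MR_0$. From $p_a(M)=3$ and $M^{2}=2$, adjunction on $S$ gives $MK_S=2$, so with $M+T\equiv 2K_S$ one has $MT=2MK_S-M^{2}=2$; since $R_0\le T$ and the general (irreducible) $M$ is not a component of the fixed divisor, $0\le MR_0\le MT=2$. Consequently $\widetilde M R\le 3$ in the first case and $\widetilde M R\le 4$ in the second. On the other hand $M'K_Y=\frac13(4-2\widetilde M R)$ is an integer, which forces $\widetilde M R\equiv 2\pmod 3$; combined with the bounds this leaves only $\widetilde M R=2$, whence $M'K_Y=0$ and $p_a(M')=1$. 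Conceptually this is the statement that a general bicanonical curve $M$ — which is $\sigma$-invariant, because $h^0(Y,\mathcal O_Y(2K_Y+B))=2=h^0(S,\mathcal O_S(2K_S))$ — carries a faithful $\mathbb Z/3$-action with exactly $\widetilde M R$ fixed points, and Hurwitz, $4=3(2p_a(M')-2)+2\,\widetilde M R$, then forces genus one.

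The main obstacle is the ramification count $\widetilde M R$: one must carry the two admissible shapes of $D$ in parallel and compute the exceptional contributions correctly, and then rule out the only dangerous alternative, $p_a(M')=0$ (which would require $\widetilde M R=5$, i.e. $MR_0\ge 3$). This is excluded precisely by the bound $MR_0\le MT=2$. That $MT$ is forced to equal exactly $2$ — so that $S$ being a numerical Godeaux surface leaves no room for extra intersection of $M$ with the fixed locus — is the crucial numerical input, and once it is in place the integrality of $M'K_Y$ does the rest.
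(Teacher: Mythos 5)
Your proof is correct and takes essentially the same route as the paper: both compute $3M'K_Y=\widetilde M(K_X-2\varepsilon^*R_0-2E-2F-2H)$ from the Hurwitz formula, split into the two admissible shapes $D=2F+G+H$ and $D=E_1+E_2$ (your values $\widetilde MR=MR_0+1$ and $MR_0+2$ reproduce the paper's $2-2MR_0$ and $-2MR_0$), and then combine the bound $0\le MR_0\le MT=2$ with divisibility by $3$ to force $M'K_Y=0$, hence $p_a(M')=1$. Your congruence $\widetilde MR\equiv 2\pmod 3$ is exactly the paper's ``$MR_0\equiv 0,1\pmod 3$'' in the respective cases, so the arguments coincide.
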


\begin{proof}
We have 
\begin{align*}
3M'K_Y&=\st\pi{M'}\st\pi{K_Y}=\widetilde M(K_X-2\st\varepsilon{R_0}-2E-2F-2H)\\
&=\begin{cases}
4-2MR_0-2=2-2MR_0 \quad &\text{if}\  D=2F+G+H\\
4-2MR_0-4=-2MR_0 \quad &\text{if}\  D=E_1+E_2
\end{cases}
\end{align*}
and then $MR_0\equiv 0,1 \mod 3$. Since $0\le MR_0\le MT=2$ we get $MR_0=0,1$ hence 
$M'K_Y=0$. Since ${M'}^2=0$ this proves the lemma.
\end{proof}

\begin{theorem}\label{t.ii}
Case (ii) cannot occur.
\end{theorem}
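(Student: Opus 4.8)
The plan is to use the elliptic fibration $\phi_{|M'|}\colon Y\longrightarrow\mathbb P^1$ provided by Lemma \ref{l.M'} and to count, through Lemma \ref{l.-n}, the negative curves that are forced into its fibres. First I would compute $e(Y)$. From the splitting \eqref{e.split} and Lemma \ref{l.chili} one gets $\chi(\mathcal O_Y)=\chi(\mathcal O_X)=\chi(\mathcal O_S)=1$, so Noether's formula together with \eqref{e.KY2} (using $R_0K_S=0$, $h_2=4$ and $R_0^2=-2\ell$) gives
\begin{equation*}
e(Y)=12-K_Y^2=15+3\ell .
\end{equation*}
Since $M'$ is base-point free ($M'^2=0$) with general member an elliptic curve, the general fibre $F$ has $e(F)=0$, and the fibration formula \cite[Proposition III.11.4]{BPV} yields $\delta:=\sum_s\bigl(e(F_s)-e(F)\bigr)=e(Y)=15+3\ell$.

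Next I would decide which of the distinguished curves on $Y$ lie in fibres. The curves $E'_k$, $F'_j$, $H'_j$ are $(-3)$-curves and the components $B_{0i}$ of $B_0$ are $(-6)$-curves; moreover they are pairwise disjoint (their only intersections on $Y$ being $F'_jG'_j=H'_jG'_j=1$). As each of them lies in the ramification of $\pi$, one has $\st\pi{E'_k}=3E_k$, $\st\pi{F'_j}=3F_j$, $\st\pi{H'_j}=3H_j$ and $\st\pi{B_{0i}}=3\st\varepsilon{R_{0i}}$, so that $3\,M'C=\widetilde M\cdot\st\pi C$ reduces every product $M'C$ to a computation on $X$ via $\widetilde M=\st\varepsilon M-D$. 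For the $B_{0i}$ this gives $M'B_{0i}=MR_{0i}$ (the exceptional support of $D$ is disjoint from the strict transforms of the $(-2)$-curves $R_{0i}$, which carry no isolated fixed point), whence $\sum_i M'B_{0i}=MR_0\in\{0,1\}$ by the proof of Lemma \ref{l.M'}; thus at least $\ell-1$ of the $B_{0i}$ lie in fibres. For the remaining curves I would treat the two shapes of $D$ separately: if $D=2F+G+H$ one finds $M'F'_1=1$ while $M'H'_1=0$, $M'F'_j=M'H'_j=0$ for $j\ge2$ and $M'E'_k=0$ for every $k$, so $7$ of the curves $F'_j,H'_j$ and all $h_1=\ell-2$ curves $E'_k$ lie in fibres; if $D=E_1+E_2$ (which forces $h_1\ge2$, i.e. $\ell\ge4$) one finds $M'E'_1=M'E'_2=1$ and all other products zero, so all $8$ curves $F'_j,H'_j$ and the $\ell-4$ curves $E'_k$ ($k\ge3$) lie in fibres.

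Finally I would add up the contributions through Lemma \ref{l.-n}. The essential point is that the contributions are additive: since the distinguished curves are pairwise disjoint, even when several of them share a fibre the terms of \eqref{e.fibre} attached to each may be retained simultaneously, so every $(-n)$-curve contributes at least $n$. In the case $D=2F+G+H$ this yields
\begin{equation*}
15+3\ell=\delta\ \ge\ 3(\ell-2)+3\cdot7+6(\ell-1)=9\ell+9,
\end{equation*}
forcing $\ell\le1$ against $\ell\ge2$; in the case $D=E_1+E_2$ it yields
\begin{equation*}
15+3\ell=\delta\ \ge\ 3(\ell-4)+3\cdot8+6(\ell-1)=9\ell+6,
\end{equation*}
forcing $\ell\le1$ against $\ell\ge4$. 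Either way we reach a contradiction, so case (ii) cannot occur.

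The step I expect to be the main obstacle is justifying the additivity of the Euler-number contributions of Lemma \ref{l.-n} when several negative curves share a fibre (here $F'_j$ and $H'_j$ are joined through the $(-1)$-curve $G'_j$): this requires verifying that the distinguished curves are genuinely pairwise disjoint and that \eqref{e.fibre} can be summed over them, together with careful bookkeeping of the numbers $M'C$ — in particular the asymmetry $M'F'_1=1$, $M'H'_1=0$ and the dependence of the $B_{0i}$ on the value of $MR_0$.
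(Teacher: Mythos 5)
Your proof is correct and takes essentially the same route as the paper: the paper likewise computes $\delta=e(Y)=15+3\ell$ for the elliptic fibration given by $|M'|$ and applies Lemma \ref{l.-n}, though it counts only the six disjoint curves $F'_j,H'_j$ ($j\ge 2$) and $\ell-1$ components $B_{0i}$, which already give $12+6\ell\le 15+3\ell$ and hence the contradiction $\ell\le 1$ against $\ell\ge 2$. Your finer case split on $D$ and the extra contributions from the curves $E'_k$ and $H'_1$ only sharpen an inequality that was already sufficient, and the additivity of contributions from disjoint vertical curves that you flag as the delicate step is exactly the same implicit use of Lemma \ref{l.-n} made in the paper's proof.
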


\begin{proof}
Let us consider on $Y$ the fibration over $\mathbb P^1$ given by the elliptic pencil $|M'|$. From \cite[Proposition III.11.4]{BPV} we have
\begin{equation}\label{e.fibr1}
e(Y)=e(M')e(\mathbb P^1)+\sum_{s}(e(M'_s)-e(M'))=e(M')e(\mathbb P^1)+\delta
\end{equation}
where the sum runs over all the singular curves $M'_s$ in $|M'|$ and we set
$\delta:=\sum_{s}(e(M'_s)-e(M))$. 
Since $e(M')=0$ and from \eqref{e.ky} 
$e(Y)=12-K_Y^2=15+3\ell$ 
we find $\delta=15+3\ell$. 

 The general  $M$ on $S$ passes only through at most one of the points $q_j$. Then we have
$F'_2M'=F'_3M'=F'_4M'=H'_2M'=H'_3M'=H'_4M'=0$
and each of these disjoint curves contributes  3  to $\delta$ by lemma \ref{l.-n}. Moreover since $0\le MR_0\le 1$ (see the above proof) we have at least $\ell-1$ irreducible components $B_{0i}$ of $B_0$  not intersecting $M'$. Each curve $B_{0i}$ contributes  6 more nodes to $\delta$. Therefore
\begin{equation*}
6\cdot 3+6(\ell-1)=12+6\ell\le \delta=15+3\ell
\end{equation*}
which forces $\ell\le 1$ and we get a contradiction since we know $\ell\ge 2$.
\end{proof}

\section{Case (iii): $R_0K_S=0$, $h_2=1$}\label{s.iii}

In this case, from formula \eqref{e.fixed}, $h_1=4+\ell$ where $\ell$ is the number of irreducible components of $R_0$. Since $ h^0(Y,\mathcal O_Y(2K_Y+B))=0$, from Castelnuovo's theorem (see for example \cite{BPV}) it is immediate to see
\begin{prop}\label{p.rat}
If case (iii) holds then $Y$ is a rational surface.
\end{prop}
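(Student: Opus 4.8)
The plan is to apply Castelnuovo's rationality criterion: a smooth projective surface $Y$ is rational if and only if $q(Y)=0$ and $P_2(Y)=h^0(Y,\mathcal O_Y(2K_Y))=0$. The irregularity is the easy half. Since $\pi$ is finite, the splitting \eqref{e.split} yields $H^1(X,\mathcal O_X)=H^1(Y,\mathcal O_Y)\oplus H^1(Y,\mathcal O_Y(-L_1))\oplus H^1(Y,\mathcal O_Y(-L_2))$, and because $X$ is birational to the numerical Godeaux surface $S$ we have $h^1(X,\mathcal O_X)=h^1(S,\mathcal O_S)=0$; hence $q(Y)=h^1(Y,\mathcal O_Y)=0$. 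This vanishing is in fact already recorded in lemma \ref{l.chili}.

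The real content is to deduce $P_2(Y)=0$ from the vanishing $h^0(Y,\mathcal O_Y(2K_Y+B))=0$ that characterises case (iii). Here I would exploit the single $(-1)$-curve $G'$ coming from the unique $A_2$-point (recall $h_2=1$). By remark \ref{r.nef} we have $K_YG'=-1$, so $2K_Y\cdot G'=-2$; since $G'$ is irreducible with $(G')^2=-1$, any effective divisor $D\equiv 2K_Y$ satisfies $D\cdot G'=-2$, which forces $G'$ to appear in $D$ with multiplicity at least two. Consequently the $(-1)$-curve $G'$ lies in the fixed part of $|2K_Y|$ with multiplicity $\ge 2$, and $h^0(Y,\mathcal O_Y(2K_Y))=h^0(Y,\mathcal O_Y(2K_Y-2G'))$.

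It then remains to compare $2K_Y-2G'$ with $2K_Y+B$. Recall from the proof of proposition \ref{p.rk} that $B=B_0+E'-G'$, so that $(2K_Y+B)-(2K_Y-2G')=B_0+E'+G'$ is an effective divisor. Since adjoining an effective divisor cannot decrease the number of global sections, I obtain $h^0(Y,\mathcal O_Y(2K_Y-2G'))\le h^0(Y,\mathcal O_Y(2K_Y+B))=0$. Combining this with the previous paragraph gives $P_2(Y)=0$, and together with $q(Y)=0$ Castelnuovo's criterion delivers the rationality of $Y$.

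The only delicate point is the sign bookkeeping in the third paragraph: the twist $B=B_0+E'-G'$ carries the negative term $-G'$, so $2K_Y+B$ is \emph{not} visibly larger than $2K_Y$, and one genuinely needs the fact that the $(-1)$-curve $G'$ is pushed into the fixed part of $|2K_Y|$ in order to absorb that term. Everything else — the irregularity computation via \eqref{e.split} and the monotonicity of $h^0$ under addition of effective divisors — is routine.
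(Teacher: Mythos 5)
Your proof is correct and reaches the statement, but the step you single out as the ``only delicate point'' rests on a misreading of the paper's notation, and it sends you on a detour the paper does not need. In the paper $B$ denotes the branch divisor of $\pi$, namely $B=B_0+E'+F'+H'$ (so that $\st\pi{B}=3R$, cf.\ the displays before \eqref{e.split}); the divisor $2K_Y+B_0+E'-G'$ that you quote appears in the proof of proposition \ref{p.rk}(b) only as what remains of $2K_Y+B$ after removing the fixed components $F'+H'+G'$ of $|2K_Y+B|$ --- it is not a formula for $B$. Since the true $B$ is effective, the paper's argument is the one-liner implicit in its ``it is immediate'': in case (iii) one has $P_2(Y)=h^0(Y,\mathcal O_Y(2K_Y))\le h^0(Y,\mathcal O_Y(2K_Y+B))=0$, together with $q(Y)\le q(X)=q(S)=0$ (exactly as in the analogous proposition for case (i)), and Castelnuovo's criterion concludes. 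Your intermediate lemma --- that $K_YG'=-1$, $(G')^2=-1$ and the irreducibility of the single curve $G'$ (here $h_2=1$) force $2G'$ into the fixed part of $|2K_Y|$, whence $P_2(Y)=h^0(Y,\mathcal O_Y(2K_Y-2G'))$ --- is correct but superfluous, and your final monotonicity step survives your slip: with the correct $B$ the difference $(2K_Y+B)-(2K_Y-2G')=B_0+E'+F'+H'+2G'$ is still effective, so the chain $P_2(Y)=h^0(2K_Y-2G')\le h^0(2K_Y+B)=0$ remains valid. In short: your $q(Y)=0$ computation via \eqref{e.split} matches the paper, your conclusion is right, but the ``genuinely needed'' absorption of the term $-G'$ is an artifact of the misidentified $B$; once $B$ is recognized as effective, the proposition really is immediate.
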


We still have the pencil $|N|=|A'|+\Phi'$  which is composed of curves of arithmetic genus 3.
Off the ${A'}^2$ base points $\phi_{|A'|}$ is a fibration over $\mathbb P^1$ of curves of genus $0\le p_a(A')\le 2$. Computing Euler numbers we obtain
\begin{equation}\label{e.fibr}
e(Y)+{A'}^2=e(A')e(\mathbb P^1)+\sum_{s}(e(A'_s)-e(A'))=e(A')e(\mathbb P^1)+\delta
\end{equation}
where the sum is taken over all the singular curves $A'_s$ in $|A'|$ and we set, as before,
$\delta:=\sum_{s}(e(A'_s)-e(A'))$. 
From lemma \ref{l.-n} we have

\begin{lemma}\label{l.nodes}
In the above setting each of the exceptional curves $E'_k$, $F'$ and $H'$ which does not intersect $A'$ increases $\delta$ by 3. Moreover each component $B_{0i}$ of $B_0$ for which $B_{0i}A'=0$ increases $\delta$ by 6.
\end{lemma}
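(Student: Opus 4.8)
The plan is to read Lemma \ref{l.nodes} directly off Lemma \ref{l.-n}, applied to the fibration $\phi_{|A'|}$ appearing in \eqref{e.fibr}. The only ingredients one needs to supply are (i) the self-intersections on $Y$ of the exceptional curves in question and (ii) the fact that a curve meeting $A'$ trivially is forced into a fibre without any change to its self-intersection. Granting these, the numbers $3$ and $6$ are exactly the lower bounds produced by Lemma \ref{l.-n}.

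First I would record the self-intersections, which have in fact already been used earlier. Each of $E'_k$, $F'_i$, $H'_i$ is the $\pi$-image of a $(-1)$-curve ($E_k$, $F_i$, $H_i$) lying in the ramification $R$, so $\st\pi{E'_k}=3E_k$ and $(\st\pi{E'_k})^2=3(E'_k)^2=9E_k^2=-9$, whence $(E'_k)^2=(F'_i)^2=(H'_i)^2=-3$, in agreement with corollary \ref{c.F}. Since we are in case (iii), $R_0K_S=0$ forces every component of $R_0$ to be a $(-2)$-curve; each component $B_{0i}$ of $B_0$ is the image of such a curve $R_{0i}$, and $\st\pi{B_{0i}}=3R_{0i}$ gives $3(B_{0i})^2=9R_{0i}^2=-18$, i.e. $(B_{0i})^2=-6$, as noted in corollary \ref{c.E}. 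All of these curves are irreducible, as Lemma \ref{l.-n} requires.

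Next I would check the geometric input. The base points of $|A'|$ lie on every member, so if one of $E'_k,F'_i,H'_i,B_{0i}$ meets $A'$ in zero it cannot pass through a base point; otherwise it and a general $A'$ would share that point and their intersection number would be positive. Consequently, after blowing up the base locus to turn $|A'|$ into the genuine fibration of \eqref{e.fibr}, the curve and its self-intersection are untouched, and it becomes orthogonal to the fibre class, so $\phi_{|A'|}$ contracts it and it lies in a singular fibre. Lemma \ref{l.-n}, applied with self-intersection $-3$ for $E'_k,F'_i,H'_i$ and $-6$ for $B_{0i}$, then yields contributions of (at least) $3$ and $6$ to $\delta$, which is the assertion. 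There is no genuine obstacle here: the whole argument is bookkeeping of self-intersection numbers feeding Lemma \ref{l.-n}, and the single point that truly needs to be observed is that the hypothesis of trivial intersection with $A'$ forces the curve to avoid the base locus, so that the self-intersection computed on $Y$ is precisely the one relevant on the fibred blow-up.
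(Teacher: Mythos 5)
Your proposal is correct and takes essentially the same route as the paper, which states lemma \ref{l.nodes} as an immediate consequence of lemma \ref{l.-n}; you have simply made explicit the routine verifications the paper leaves implicit, namely that $E'_k$, $F'_i$, $H'_i$ are $(-3)$-curves and the $B_{0i}$ are $(-6)$-curves (computed via pull-back along the triple cover $\pi$), and that trivial intersection with $A'$ forces such a curve off the base locus of the pencil, hence into a singular fibre of $\phi_{|A'|}$ with its self-intersection unchanged.
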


\begin{lemma}\label{l.ab} 
In the above setting  we have
$\delta=14+3\ell+3{A'}^2+2A'K_Y$.
\end{lemma}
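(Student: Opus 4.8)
The plan is to feed three standard ingredients into the Euler-number relation \eqref{e.fibr} and then eliminate the genus of the fibre and the invariant $K_Y^2$. Writing $g:=p_a(A')$, the general fibre of $\phi_{|A'|}$ has $e(A')=2-2g$, and $e(\mathbb{P}^1)=2$; moreover, since $Y$ is rational by proposition \ref{p.rat}, Noether's formula gives $\chi(\mathcal O_Y)=1$ and hence $e(Y)=12-K_Y^2$. Substituting all of this into \eqref{e.fibr} and solving for $\delta$ yields the intermediate identity
\begin{equation*}
\delta=8-K_Y^2+{A'}^2+4g.
\end{equation*}

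Next I would remove the genus using adjunction on the smooth surface $Y$: the general curve $A'$ satisfies $2g-2={A'}^2+A'K_Y$, so that $4g=4+2{A'}^2+2A'K_Y$. Plugging this into the previous display gives the cleaner formula
\begin{equation*}
\delta=12-K_Y^2+3{A'}^2+2A'K_Y,
\end{equation*}
which already exhibits the two intersection numbers $\,{A'}^2\,$ and $\,A'K_Y\,$ appearing in the statement. It then only remains to evaluate $K_Y^2$ in case (iii) via formula \eqref{e.ky} (equivalently \eqref{e.KY2}) with the numerical data of the case, namely $K_S^2=1$, $h_2=1$, $R_0K_S=0$ and $h_1=4+\ell$.

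The single point that requires more than bookkeeping—and hence the only place worth dwelling on—is the value of $R_0^2$. Since $R_0K_S=0$ while $K_S$ is nef and big, each irreducible component of $R_0$ meets $K_S$ trivially, so by the Hodge index theorem it has negative self-intersection, and adjunction on the \emph{minimal} surface $S$ then forces it to be a $(-2)$-curve; the same rigidity of the $\sigma$-fixed locus shows distinct components cannot meet (two pointwise-fixed curves through a common point would force $d\sigma$ to be the identity there, contradicting $\operatorname{ord}\sigma=3$). Hence $R_0^2=-2\ell$, and substituting into \eqref{e.ky} gives $K_Y^2=\tfrac13(-6-9\ell)=-2-3\ell$. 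Feeding $-K_Y^2=2+3\ell$ into the intermediate formula produces exactly
\begin{equation*}
\delta=14+3\ell+3{A'}^2+2A'K_Y,
\end{equation*}
as claimed. I do not expect any genuine obstacle here: once $R_0^2=-2\ell$ is recorded, the remainder is a direct substitution.
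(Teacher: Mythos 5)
Your proof is correct and follows essentially the same route as the paper: both substitute $e(Y)=12-K_Y^2$ with $K_Y^2=-2-3\ell$ (formula \eqref{e.k2}, i.e.\ \eqref{e.KY2} evaluated with $K_S^2=1$, $h_2=1$, $R_0K_S=0$, $R_0^2=-2\ell$) and the adjunction relation $2p_a(A')-2={A'}^2+A'K_Y$ into the Euler-number identity \eqref{e.fibr}. The only difference is that you re-derive the fact that $R_0$ is a disjoint union of $\ell$ $(-2)$-curves, which the paper carries as standing context for case (iii).
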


\begin{proof}
Let us compute, using \eqref{e.KY2}, 
$e(Y)+{A'}^2=12-K_Y^2+{A'}^2=14+{A'}^2+3\ell$
while
$e(A')e(\mathbb P^1)=(2-2\cdot p_a(A'))2=2(-{A'}^2-A'K_Y)$.
Therefore we have
\begin{equation*}
\delta=\sum_{s}(e(A'_s)-e(A'))=14+3\ell+3{A'}^2+2A'K_Y
\end{equation*}
as wanted.
\end{proof}

\begin{prop}\label{p.0}
Assume ${A'}^2=0$. Then $0\le\ell\le 1$ and we have $\ell=0$ only when $(0a)$, $(0c)$, $(0f)$ or $(0g)$ holds and $\ell=1$ only when $(0d)$  holds. Moreover cases $(0b)$, $(0e)$ and $(0h)$  of the list of proposition \ref{p.list0} cannot occur.
\end{prop}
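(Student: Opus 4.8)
The plan is to exploit the genus-$\le 2$ fibration $\phi_{|A'|}\colon Y\dashrightarrow\mathbb P^1$ induced by the pencil $|A'|$, which is base-point-free because ${A'}^2=0$, and to compare the two expressions for its total Euler defect $\delta=\sum_s(e(A'_s)-e(A'))$. On one side, Lemma \ref{l.ab} gives the exact value
\begin{equation*}
\delta=14+3\ell+3{A'}^2+2A'K_Y=14+3\ell+2A'K_Y,
\end{equation*}
while on the other side every irreducible negative curve that does not meet $A'$ is vertical and, by Lemma \ref{l.-n} (cf. Lemma \ref{l.nodes}), contributes at least $|C^2|$ to $\delta$. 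Since the relevant exceptional curves $E'_k$, $F'$, $H'$ (all $(-3)$-curves) and the components $B_{0i}$ of $B_0$ (which are $(-6)$-curves) are pairwise disjoint, these contributions add up, producing a lower bound for $\delta$ that I then force against the value above.

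The first step is to decide, in each case of Proposition \ref{p.list0}, which exceptional curves actually meet $A'$. For the curves $E'_k$ I would use $\st\pi{E'_k}=3E_k$ together with the projection formula to get $A'E'_k=\widetilde A E_k=-DE_k$, so that $E'_k$ meets $A'$ precisely when $E_k$ occurs in $D$; the analogous computation with $F$ and $H$ gives $A'F'=-DF$ and $A'H'=-DH$, which one reads off directly from the shape of $D$ (for instance $D=2F+G+H$ yields $A'F'=1$, $A'H'=0$, whereas $D=3F+3G+3H$ yields $A'F'=A'H'=0$). Finally $A'B_0=AR_0$ by Proposition \ref{pr.fix}, so when $AR_0=0$ all $\ell$ curves $B_{0i}$ are vertical, while when $AR_0=1$ exactly one of them meets $A'$ and the remaining $\ell-1$ are vertical. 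Recalling that $h_1=4+\ell$ in case (iii), this tells me how many of the $4+\ell$ curves $E'_k$, of the two curves $F',H'$, and of the $\ell$ curves $B_{0i}$ feed into the lower bound.

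Carrying this out produces, in each case, an inequality of the form $[\text{lower bound}]\le 14+3\ell+2A'K_Y$. For example in case $(0e)$ only $E_1$ lies in $D$, so $3+\ell$ of the curves $E'_k$, both curves $F',H'$, and all $\ell$ curves $B_{0i}$ are vertical, giving $3(3+\ell)+6+6\ell\le 14+3\ell$, i.e. $6\ell\le-1$, which is absurd; the same bookkeeping rules out $(0h)$. In case $(0b)$ the count yields $9+9\ell\le 14+3\ell$, hence $\ell=0$, contradicting $AR_0=1$ (which forces $\ell\ge1$), so $(0b)$ is excluded as well. In case $(0d)$ the inequality reads $6+9\ell\le 14+3\ell$, i.e. $\ell\le1$, which combined with $\ell\ge1$ gives $\ell=1$; the remaining cases $(0a),(0c),(0f),(0g)$ each force $6\ell\le 2$ or $6\ell\le 0$, hence $\ell=0$. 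Collecting these conclusions gives exactly the statement, including the bound $0\le\ell\le1$.

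I expect the main obstacle to be purely organizational rather than conceptual: correctly extracting the intersection numbers $A'E'_k$, $A'F'$, $A'H'$, $A'B_{0i}$ from each $D$ and each value of $AR_0$, and justifying that the pairwise disjoint vertical curves genuinely contribute additively to $\delta$, so that the summed lower bounds are legitimate. Once these intersection data are pinned down, the eight numerical inequalities are immediate and the proposition follows at once.
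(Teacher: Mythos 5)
Your proposal is correct and follows essentially the same route as the paper's proof: the paper likewise computes $\delta=14+3\ell+2A'K_Y$ from lemma \ref{l.ab}, reads off which of $E'_k$, $F'$, $H'$, $B_{0i}$ are vertical in each case of proposition \ref{p.list0} (using corollary \ref{c.AH} and $A'B_0=AR_0$), and forces the summed contributions from lemma \ref{l.-n} against $\delta$ case by case, writing out $(0a)$, $(0b)$, $(0c)$ explicitly and treating the rest ``similarly''. Your bookkeeping in the remaining cases $(0d)$--$(0h)$ matches what that omitted computation yields.
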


\begin{proof}
We refer to the list of proposition \ref{p.list0}. From lemma \ref{l.ab} we find 
\begin{equation*}
\delta=14+3\ell+2A'K_Y
\end{equation*}

We have $A'K_Y=0$ in all cases of the list except for $(0g)$.

In case $(0a)$ we have to consider $F',H'$ and $h_1-2=2+\ell$ curves $E'_k$, plus all the components of $B_0$. Then
\begin{equation*}
6+3(2+\ell)+6\ell=12+9\ell\le\delta=14+3\ell
\end{equation*}
which forces $\ell=0$.

In case $(0b)$ we find $\ell\ge 1$ and we have the contribution of the curves $E'_k$, $H'$ (see corollary \ref{c.AH}) and of $\ell-1$ components of $B_0$. Then
\begin{equation*}
3(4+\ell)+3+6(\ell-1)=9+9\ell\le\delta=14+3\ell
\end{equation*}
which implies $6\ell\le 5$. Impossible.

In case $(0c)$  we have the contribution of $F',H'$ and $h_1-3=1+\ell$ of the curves $E'_k$, plus all the components of $B_0$. Then
\begin{equation*}
6+3(1+\ell)+6\ell=9+9\ell\le\delta=14+3\ell
\end{equation*}
which forces $\ell=0$.
The rest of the proof goes similarly.
\end{proof}

A similar argument to proposition \ref{p.0} shows (see also \cite[prop. 3.3.6, 3.3.7]{Pa})
\begin{prop}\label{p.1}
Assume ${A'}^2=1$. Then $0\le\ell\le 3$. 
If $(1f)$ holds then $\ell=0,1$. If $(1e)$ holds then $\ell=1,2,3$. If  $(1a)$ or $(1d)$ holds then $\ell=0$. 
Cases $(1b)$ and $(1c)$ of the list of proposition \ref{p.list1} cannot occur.
Moreover when case $(1e)$ holds for any irreducible component $B_{0k}$ of $B_0$ we find $B_{0k}A'\ge 1$.
\end{prop}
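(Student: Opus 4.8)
The plan is to follow closely the strategy of proposition \ref{p.0}, now specialising lemma \ref{l.ab} to the value ${A'}^2=1$. This gives $\delta=17+3\ell+2A'K_Y$, and since the value of $A'K_Y$ (hence of $\delta$) is recorded for each of the six cases of proposition \ref{p.list1}, the only remaining task is, case by case, to exhibit enough exceptional curves disjoint from $A'$ and to sum their contributions to $\delta$ via lemma \ref{l.nodes}, comparing the resulting lower bound with the exact value above. Recall the fibration $\phi_{|A'|}$ over $\mathbb P^1$ and the Euler-number balance \eqref{e.fibr} are already in place, so $\delta$ is both computable and bounded below.

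The numerical inputs I would use are $A'B_0=AR_0$ (proposition \ref{pr.fix}) together with the identities $A'E'_k=-DE_k$, $A'F'=-DF$ and $A'H'=-DH$. These follow from $\st\varepsilon A=\widetilde A+D$ and from the fact that $E_k,F,H$ lie in the branch locus, so that $\st\pi{E'_k}=3E_k$, $\st\pi{F'}=3F$ and $\st\pi{H'}=3H$; corollary \ref{c.AH} records $A'H'=0$ whenever $D=2F+G+H+\sum_i a_iE_i$. Since in cases $(1b)$ and $(1c)$ one has $D=0$, all $h_1=4+\ell$ curves $E'_k$ together with $F'$ and $H'$ are disjoint from $A'$ and already contribute $3(4+\ell)+6=18+3\ell$ to $\delta$; as there $\delta=15+3\ell$ and $\delta=11+3\ell$ respectively, this is an immediate contradiction and excludes both cases. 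In cases $(1a)$ and $(1d)$ the same bookkeeping (adding the $\ell$ curves $B_{0i}$, which are disjoint from $A'$ since $AR_0=0$) yields $18+9\ell\le 19+3\ell$ and $15+9\ell\le 19+3\ell$, forcing $\ell=0$; in case $(1f)$, where $A'F'=A'H'=1$ while $B_0A'=0$, one gets $12+9\ell\le 19+3\ell$ and hence $\ell\le 1$.

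I expect case $(1e)$ to be the delicate one, since there the estimate is tight rather than slack. Here $D=2F+G+H$ gives $A'E'_k=0$ for all $4+\ell$ curves $E'_k$ and $A'H'=0$ (corollary \ref{c.AH}), while $A'F'=1$, so these curves alone contribute exactly $3(4+\ell)+3=15+3\ell=\delta$. The whole point is that this leaves no room: every component $B_{0i}$ of $B_0$ must meet $A'$, for otherwise it would raise $\delta$ by $6$ past its actual value. This forces $B_{0k}A'\ge 1$ for each $k$, which is the last assertion; and since $\sum_{k=1}^{\ell}A'B_{0k}=A'B_0=AR_0=3$ with each summand positive, while $AR_0=3$ already forces $R_0\neq 0$, we conclude $1\le\ell\le 3$. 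Collecting the six cases then gives the global bound $0\le\ell\le 3$ together with the case-by-case restrictions on $\ell$ exactly as stated.
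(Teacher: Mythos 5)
Your proposal is correct and follows essentially the same route as the paper, which proves proposition \ref{p.1} by exactly the argument of proposition \ref{p.0}: specialising lemma \ref{l.ab} to get the exact value $\delta=17+3\ell+2A'K_Y$ in each case of proposition \ref{p.list1}, then comparing with the lower bounds from lemma \ref{l.nodes} obtained by listing the curves $E'_k$, $F'$, $H'$, $B_{0i}$ disjoint from $A'$ (read off from $D$ via $A'E'_k=-DE_k$, etc., and from $A'B_0=AR_0$). Your treatment of the tight case $(1e)$ — where the exact equality $\delta=15+3\ell$ leaves no room for a component $B_{0k}$ with $B_{0k}A'=0$, forcing $B_{0k}A'\ge 1$ and hence $1\le\ell\le 3$ from $A'B_0=3$ — is precisely the paper's mechanism for the final assertion.
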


From proposition \ref{p.list2} it cannot be ${A'}^2=2$. Then (see remark \ref{r.N}) we are left with the case $A'=N$.
\begin{prop}\label{p.3}
Assume $A'=N$. Then $0\le\ell\le 1$. 
\end{prop}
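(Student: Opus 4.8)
The plan is to exploit the fibration $\phi_{|A'|}\colon Y\dashrightarrow\mathbb P^1$ induced by the pencil $|A'|=|N|$ and to bound its Euler-number defect $\delta$ from below by collecting the contributions of the exceptional curves that $\phi_{|A'|}$ contracts, then to compare this lower bound with the exact value of $\delta$ coming from lemma \ref{l.ab}.

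First I would record the numerical data. Since $A'=N$, lemma \ref{le.N} and remark \ref{r.N} give ${A'}^2=N^2=3$, $A'K_Y=NK_Y=1$ and $p_a(A')=3$; substituting these into lemma \ref{l.ab} yields the exact value $\delta=14+3\ell+3\cdot 3+2\cdot 1=25+3\ell$. Here one should note that the genus-$3$ contribution $e(A')=2-2p_a(A')=-4$ entering \eqref{e.fibr} is perfectly legitimate, so the computation of lemma \ref{l.ab} applies verbatim even though the standing discussion there was phrased for $p_a(A')\le 2$.

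Next I would determine which exceptional curves meet $N$ trivially. Using $\st\pi N=\st\varepsilon{3K_S}$ together with the projection formula (equivalently, directly from $N=3K_Y+2B_0+E'-3G'$ and adjunction on the relevant rational curves) one checks $NE'_k=NF'=NH'=NB_{0i}=0$. In case (iii) there are $h_1=4+\ell$ curves $E'_k$, exactly one curve $F'$ and one curve $H'$ (since $h_2=1$), and $\ell$ components $B_{0i}$ of $B_0$, and all of them are therefore contracted by $\phi_{|A'|}$. By lemma \ref{l.nodes} each of the $(-3)$-curves $E'_k,F',H'$ raises $\delta$ by at least $3$, while each $(-6)$-curve $B_{0i}$ raises it by at least $6$, so
\begin{equation*}
\delta\ge 3(4+\ell)+3+3+6\ell=18+9\ell.
\end{equation*}
Comparing with the exact value gives $18+9\ell\le 25+3\ell$, i.e. $6\ell\le 7$, whence $\ell\le 1$; as $\ell\ge 0$ by definition, this is the assertion.

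The only point requiring care — the main, if modest, obstacle — is the additivity of the contributions to $\delta$: lemma \ref{l.-n} bounds the contribution of a single negative curve inside its singular fibre, so to sum the bounds one must observe that the curves $E'_k$, $F'$, $H'$, $B_{0i}$ are mutually disjoint. Curves lying in distinct fibres add directly, and disjoint curves forced into a common connected fibre can only increase the count, so summing the individual lower bounds is valid. This is exactly the bookkeeping already carried out in propositions \ref{p.0} and \ref{p.1} and in theorem \ref{t.ii}, so no new idea beyond the intersection computations is needed.
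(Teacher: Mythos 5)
Your proposal is correct and is essentially the paper's argument: proposition \ref{p.3} is stated as following by the same $\delta$-counting used for propositions \ref{p.0} and \ref{p.1} (via lemma \ref{l.ab} and lemmas \ref{l.nodes}/\ref{l.-n}), and your computation $\delta=25+3\ell\ge 3(4+\ell)+3+3+6\ell=18+9\ell$, hence $6\ell\le 7$ and $\ell\le 1$, is exactly that argument with the correct intersection numbers $NE'_k=NF'=NH'=NB_{0i}=0$. Your side remarks (validity of lemma \ref{l.ab} for $p_a(A')=3$ and additivity of the contributions for the mutually disjoint curves) are accurate and consistent with how the paper applies these lemmas elsewhere.
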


As a consequence of propositions \ref{p.list0}, \ref{p.list1}, \ref{p.list2}, \ref{p.0}, \ref{p.1}, \ref{p.3} and of remark \ref{r.N} we obtain
\begin{theorem}\label{t.iii}
Case (iii) of page \pageref{le.sub} can only occur  
when one of the following conditions is satisfied:
\begin{enumerate}
\item $\ell=0$: Cases $(0a)$, $(0c)$, $(0f)$, $(0g)$, $(1a)$, $(1d)$, $(1f)$, $A'=N$ 
\item $\ell=1$: Cases  $(0d)$, $(1e)$, $(1f)$, $A'=N$ 
\item $\ell=2,3$:  Case $(1e)$ 
\end{enumerate}

Moreover in cases $(0g)$, $(1f)$ and $A'=N$ we have $\Phi=0$, i.e. the invariant pencil $\Lambda\le|3K_S|$ has no fixed part.
\end{theorem}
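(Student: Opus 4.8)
The plan is to treat Theorem \ref{t.iii} as a bookkeeping statement: all the genuine content already sits in the preceding propositions, and what remains is to organize their conclusions according to the value of $\ell$. Throughout we are in case (iii), so $R_0K_S=0$ and $h_2=1$, and by proposition \ref{p.rat} the surface $Y$ is rational. Writing $|N|=|A'|+\Phi'$ for the decomposition into movable and fixed part, I would first pin down the possible self-intersections of $A'$. By proposition \ref{pr.fix}, if $|N|$ carries a fixed part then ${A'}^2\in\{0,1,2\}$, whereas if $|N|$ has no fixed part then $A'=N$ and ${A'}^2=N^2=3$. Thus the four alternatives ${A'}^2=0$, ${A'}^2=1$, ${A'}^2=2$ and $A'=N$ are mutually exclusive and exhaustive, and it suffices to analyze each one.

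First I would discard ${A'}^2=2$ outright by proposition \ref{p.list2}. For each of the three surviving alternatives I would then pair the classification of admissible configurations with the corresponding bound on $\ell$. Concretely, propositions \ref{p.list0} and \ref{p.0} together treat ${A'}^2=0$, leaving $(0a)$, $(0c)$, $(0f)$, $(0g)$ when $\ell=0$ and $(0d)$ when $\ell=1$ (the configurations $(0b)$, $(0e)$, $(0h)$ being ruled out); propositions \ref{p.list1} and \ref{p.1} treat ${A'}^2=1$, leaving $(1a)$, $(1d)$, $(1f)$ when $\ell=0$, then $(1e)$, $(1f)$ when $\ell=1$ and $(1e)$ alone when $\ell=2,3$ (with $(1b)$, $(1c)$ ruled out); and remark \ref{r.N} together with proposition \ref{p.3} treats $A'=N$, which is admissible only for $\ell=0,1$.

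The decisive step is then to re-sort this data by $\ell$ rather than by ${A'}^2$. Reading across the three analyses, the value $\ell=0$ collects $(0a)$, $(0c)$, $(0f)$, $(0g)$, $(1a)$, $(1d)$, $(1f)$ and $A'=N$; the value $\ell=1$ collects $(0d)$, $(1e)$, $(1f)$ and $A'=N$; and $\ell=2,3$ admits only $(1e)$. This is precisely the trichotomy claimed. For the final assertion I would note that the vanishing $\Phi=0$ has already been recorded at the source: proposition \ref{p.list0} gives it exactly in $(0g)$ (the competing possibility $(0h)$ being excluded), proposition \ref{p.list1} gives it in $(1f)$, and $A'=N$ means by definition that $|N|$ has no fixed part, hence $\Phi=0$ there too; collecting these yields $\Phi=0$ exactly in $(0g)$, $(1f)$ and $A'=N$.

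Since the argument is entirely a collation, I do not expect a genuine obstacle; the one point demanding care, and which I would verify explicitly, is that the four alternatives for $A'$ really are exhaustive and pairwise disjoint (so that $N^2=3$ is distinct from ${A'}^2=0,1,2$), guaranteeing that no configuration is dropped and that none is assigned two different values of $\ell$.
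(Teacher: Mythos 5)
Your proposal is correct and follows essentially the same route as the paper, whose proof of theorem \ref{t.iii} is precisely a collation of propositions \ref{p.list0}, \ref{p.list1}, \ref{p.list2}, \ref{p.0}, \ref{p.1}, \ref{p.3} and remark \ref{r.N}, re-sorted by the value of $\ell$, with the final assertion on $\Phi=0$ read off from the ``moreover'' clauses of propositions \ref{p.list0} and \ref{p.list1} and from the definition of the case $A'=N$. Your explicit verification that the alternatives ${A'}^2=0,1,2$ and $A'=N$ (so $N^2=3$) are exhaustive and pairwise disjoint, via proposition \ref{pr.fix} and remark \ref{r.N}, is exactly the implicit backbone of the paper's one-line deduction.
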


\section{More on the case $R_0K_S=0$, $h_2=1$}\label{s.more}

In the above setting from \eqref{e.fixed} $h_1=4+\ell$ and from \eqref{e.KY2} we have 
\begin{equation}\label{e.k2}
K_Y^2=\frac 1 3[K_S ^2-6-h_2+\frac 9 2R_0 ^2-\frac{11} 2R_0K_S]=
\frac 1 3[1-6-1-9\ell]=-2-3\ell
\end{equation}
From now on we refer to the formulas of proposition \ref{p.comp} when computing the arithmetic data of the curves in the linear systems $|N|$, $|N_1|$, $|N_2|$, $|N_3|$.

We start by computing $N_1^2$ and $p_a(N_1)$:
\begin{equation}\label{e.N12}
N_1^2=5-4R_0K_S+K_Y^2+n+h_2=5-2-3\ell+n+1=4-3\ell+n\ge 0
\end{equation}
\begin{equation}\label{e.paN1}
p_a(N_1)=4-3R_0K_S+K_Y^2+n+h_2=4-2-3\ell+n+1=N_1^2-1
\end{equation}

We have the following 
\begin{lemma}\label{l.n}
In the above setting we have $3\ell-4\le n\le 3\ell$.
\end{lemma}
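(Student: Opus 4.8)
The plan is to read both inequalities off a single bound on $N_1^2$. By \eqref{e.N12} one has $N_1^2=4-3\ell+n$, so the assertion $3\ell-4\le n\le 3\ell$ is exactly $0\le N_1^2\le 4$, and I would prove the two sides separately. The lower bound is essentially free: the divisor $N_1=N+K_Y-G'-\sum_{i=1}^n Z_i$ is nef by the adjunction reduction of Section~\ref{s.adj}, hence $N_1^2\ge 0$, which through \eqref{e.N12} reads $n\ge 3\ell-4$. (This is also lemma~\ref{le.Z} evaluated at $R_0K_S=0$, $h_2=1$, $R_0^2=-2\ell$.)

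For the upper bound I would first obtain $N_1^2\le 5$ from the Hodge Index Theorem applied to the nef and big divisor $N$. Since $N^2=3$ (lemma~\ref{le.N}) and $NN_1=4-2R_0K_S=4$ (proposition~\ref{p.comp}), the inequality $(NN_1)^2\ge N^2 N_1^2$ becomes $16\ge 3N_1^2$, so $N_1^2\le 5$ and $n\le 3\ell+1$. Everything up to here is routine intersection theory; the real content, and what I expect to be the main obstacle, is to discard the boundary value $N_1^2=5$, since the Hodge inequality alone permits it (note that it does \emph{not} help to iterate adjunction: for $N_1^2=5$ the whole chain $N_i=N+i(N_1-N)$ is perfectly consistent, with $N_i^2=2i+3$).

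The idea I would pursue is to translate $N_1^2=5$ into a fibration structure and contradict it by Euler-number bookkeeping. If $N_1^2=5$ then $M:=N_1-N$ satisfies $M^2=0$, $MK_Y=0$ (using $NK_Y=1$ and $N_1K_Y=N_1^2-4$) and $MN=1$, so $p_a(M)=1$ and $M$ is the class of an elliptic pencil $Y\dashrightarrow\mathbb P^1$ on the rational surface $Y$, with $N$ meeting each fibre once. All the negative curves orthogonal to $N$ — the $(-6)$-curves $B_{0i}$, the $(-3)$-curves $E'_k,F',H'$, the $(-1)$-curve $G'$ and the $n$ cycles $Z_i$ — then also satisfy $MC=0$ and hence sit inside fibres. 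Summing their contributions to the Euler defect $\delta$ through lemma~\ref{l.-n} ($6$ from each $B_{0i}$, $3$ from the relevant $E'_k$ and from $F',H'$, $1$ from $G'$ and from each irreducible cycle) already forces $\delta$ well above $e(Y)=12-K_Y^2=14+3\ell$ (a crude count gives roughly $12\ell+17$), which is the desired contradiction. Hence $N_1^2\le 4$, i.e. $n\le 3\ell$.

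The delicate points I would have to settle carefully are exactly those making the last paragraph rigorous: first, that the nefness relation $1=MN=2MB_0+ME'-3MG'$ forces $M$ to be orthogonal to all the $B_{0i}$ and to all but one $E'_k$ (which requires checking $Z_iG'=0$, so that $MG'=N_1G'=-\sum_i Z_iG'=0$); and second, that the contributions are counted without double-counting when some cycles $Z_i$ are reducible and share $E'_k$-components. In every admissible distribution, however, the enormous negativity carried by the $\ell$ curves $B_{0i}$ together with the $h_1=4+\ell$ curves $E'_k$ overwhelms the available defect $14+3\ell$, so the exclusion of $N_1^2=5$ should go through.
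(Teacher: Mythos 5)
Your lower bound and your Hodge--index step are both correct, and you have correctly located the real content of the lemma: Hodge index only gives $N_1^2\le 5$, i.e.\ $n\le 3\ell+1$, and the boundary value must be excluded by some further idea. But the mechanism you propose for that exclusion fails at its first move. By the very definition $N_1=N+K_Y-G'-\sum_{i=1}^n Z_i$, the class you want to fibre along is $M:=N_1-N=K_Y-G'-\sum_{i=1}^n Z_i$, that is, $K_Y$ \emph{minus} an effective divisor on the rational surface $Y$. Hence $h^0(Y,\mathcal O_Y(mM))\le h^0(Y,\mathcal O_Y(mK_Y))=0$ for every $m\ge 1$: neither $M$ nor any multiple of it is effective, there is no elliptic pencil $|M|$, no map $Y\dashrightarrow\mathbb P^1$, and the entire Euler-number bookkeeping of your last two paragraphs has no fibration to act on. The numerics $M^2=MK_Y=0$, $p_a(M)=1$ cannot conjure effectivity: Riemann--Roch does give $\chi(\mathcal O_Y(M))=1$, but the Euler characteristic is carried by $h^2(\mathcal O_Y(M))=h^0(\mathcal O_Y(G'+\sum_i Z_i))\ge 1$, not by $h^0$.

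Ironically, Riemann--Roch applied to the \emph{opposite} class $N-N_1$ settles the matter in two lines, and this is in substance the paper's proof. One has $h^0(Y,\mathcal O_Y(N-N_1))=0$, since otherwise $3=h^0(Y,\mathcal O_Y(N_1))\le h^0(Y,\mathcal O_Y(N))=2$ (alternatively: $(N-N_1)N=3-4=-1<0$ against the nef $N$), and $h^2(Y,\mathcal O_Y(N-N_1))=h^0(Y,\mathcal O_Y(2K_Y-G'-\sum_i Z_i))=0$ by the rationality of $Y$; therefore $0\ge\chi(\mathcal O_Y(N-N_1))=1+\frac{(N-N_1)(N-N_1-K_Y)}2=N_1^2-4=n-3\ell$, which is $n\le 3\ell$ outright, with no Hodge index and no case split. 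The paper packages exactly this in the restriction sequence $0\to\mathcal O_Y(N-N_1)\to\mathcal O_Y(N)\to\mathcal O_{N_1}(N)\to 0$: the vanishings above make $H^0(Y,\mathcal O_Y(N))\to H^0(N_1,\mathcal O_{N_1}(N))$ injective with $h^1(N_1,\mathcal O_{N_1}(N))=0$, and Riemann--Roch on the $1$-connected curve $N_1$ gives $h^0(N_1,\mathcal O_{N_1}(N))=1+NN_1-p_a(N_1)=2+3\ell-n$, whence $2\le 2+3\ell-n$. So the missing ingredient in your proposal is this cohomological step; the fibration route cannot be repaired, because if $N_1^2=5$ then $\chi(\mathcal O_Y(N-N_1))=1$ with $h^0=h^2=0$ is already absurd, while your $M$ points the wrong way.
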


\begin{proof}
Let us consider the short exact sequence of sheaves
\begin{equation}\label{e.n}
0 \longrightarrow \mathcal O_Y(N-N_1) \longrightarrow \mathcal O_Y(N) \longrightarrow \mathcal O_{N_1}(N) \longrightarrow 0
\end{equation}
Then, since $Y$ is a rational surface, from the definition of $N_1$,
\begin{align*}
&h^2(Y,\mathcal O_Y(N-N_1))=h^0(Y,\mathcal O_Y(K_Y-N+N_1))\\
&=h^0(Y,\mathcal O_Y(2K_Y-G'-\sum_{i=1}^n Z_i))
\le h^0(Y,\mathcal O_Y(2K_Y))=0
\end{align*}
The divisor $N-N_1$ cannot be effective, otherwise
$3=h^0(Y,\mathcal O_Y(N_1))\le h^0(Y,\mathcal O_Y(N))=2$.
The long exact sequence of \eqref{e.n} yields therefore
\begin{equation*}
0 \to H^0(Y,\mathcal O_Y(N)) \longrightarrow H^0(N_1,\mathcal O_{N_1}(N)) \longrightarrow H^1(Y,\mathcal O_Y(N-N_1)) \to 0
\end{equation*}
\begin{equation*}
0 \longrightarrow H^1(N_1,\mathcal O_{N_1}(N)) \longrightarrow 0 
\end{equation*}
This forces $H^1(N_1,\mathcal O_{N_1}(N))=0$. Since $N_1$ is big and nef, hence 1-connected,
\begin{equation*}
h^0(N_1,\mathcal O_{N_1}(N))=\chi(\mathcal O_{N_1}(N))=1+NN_1-p_a(N_1)
=2+3\ell-n
\end{equation*}
Then 
$2=h^0(Y,\mathcal O_Y(N))\le h^0(N_1,\mathcal O_{N_1}(N))=2+3\ell-n$
and $3\ell-4\le n\le 3\ell$  as wanted (see formula \eqref{e.N12}).
\end{proof}

\begin{remark}\label{r.0}
When $\ell=0$ one can easily see that $n=3\ell=0$ is the only possibility for $n$.
\end{remark}

We will see in the following sections that a deeper study of the adjoint linear systems $|N_i|$ to the pencil $|N|$ on $Y$ allows us to collect the cases listed in theorem \ref{t.iii} into two main groups 

\begin{definition}\label{d.rul}
We call {\bf ruled cases} those for which one of the linear systems $|N_i|$ induce a  morphism $Y\longrightarrow\mathbb F_a$ for some $a\ge 0$.
\end{definition}

\begin{definition}\label{d.DP}
We  call {\bf Del Pezzo cases} those which are not ruled cases.
\end{definition}

In section \ref{s.other} we will show that not all the cases listed in theorem \ref{t.iii} can actually occur.

\subsection{$n=3\ell-4$}

\begin{prop}\label{p.no4}
In the case $n=3\ell-4$ the net $|N_1|$ has no fixed part and we have $|N_1|=|2\Theta|$ where $|\Theta|$ is a pencil of rational curves with $\Theta^2=0$.
\end{prop}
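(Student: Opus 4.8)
The plan is to read off the numerical character of $|N_1|$ when $n=3\ell-4$ and then identify the system geometrically. Substituting $n=3\ell-4$ into \eqref{e.N12} gives $N_1^2=4-3\ell+n=0$, and \eqref{e.paN1} yields $p_a(N_1)=N_1^2-1=-1$, so that $N_1K_Y=2p_a(N_1)-2-N_1^2=-4$. To see that $|N_1|$ is genuinely a net I would go back to section \ref{s.adj}, where we computed $h^0(Y,\mathcal O_Y(K_Y+N))=p_a(N)=3-R_0K_S=3$: the divisor $N_1$ is obtained from $K_Y+N$ by stripping off only the forced fixed $(-1)$-curves $G'$ and $\sum_i Z_i$, each of which meets $K_Y+N$ in $-1$ and hence appears with multiplicity exactly one in the fixed part. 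Removing fixed components does not change $h^0$, so $h^0(Y,\mathcal O_Y(N_1))=3$ and $\dim|N_1|=2$.

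Next I would analyse the fixed/movable decomposition. Writing $|N_1|=|\Delta|+T$ with $\Delta$ movable and $T$ fixed, one has $\dim|\Delta|=2$. Since $N_1$ is nef and $\Delta,T$ are effective, $0=N_1^2=N_1\Delta+N_1T$ forces $N_1\Delta=N_1T=0$; as $\Delta$ is itself nef, the relations $\Delta^2+\Delta T=0$ and $\Delta T+T^2=0$ give $\Delta^2=\Delta T=T^2=0$. Thus $|\Delta|$ is a net whose general member has self-intersection $0$, so the image of $\phi_{|\Delta|}$ is a curve and $|\Delta|$ is \emph{composed with a rational pencil} $|\Theta|$ on the rational surface $Y$; hence $\Theta^2=0$ and $\Delta\equiv k\Theta$ for some $k\ge 2$ (a net pulled back from $\mathbb P^1$ needs $k+1\ge 3$).

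It then remains to pin down $k=2$, to show $T=0$, and to verify that $\Theta$ is rational; this last identification is the crux. From $\Delta\equiv k\Theta$ and $\Delta T=0$ we get $\Theta T=0$, whence $N_1\Theta=k\Theta^2+T\Theta=0$; together with $N_1^2=\Theta^2=0$ the Index theorem forces $N_1\equiv c\Theta$ for some $c>0$. Now $\Theta^2=0$ makes $\Theta K_Y=2p_a(\Theta)-2$ an even integer which is $\ge -2$ because $p_a(\Theta)\ge 0$, while $c\,\Theta K_Y=N_1K_Y=-4<0$ forces $\Theta K_Y<0$. The only even value in $[-2,0)$ is $-2$, so $\Theta K_Y=-2$, $p_a(\Theta)=0$ and $c=2$. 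Therefore $\Theta$ is a pencil of rational curves and $N_1\equiv 2\Theta$. Finally $T\equiv N_1-\Delta\equiv(2-k)\Theta$ must be effective, which together with $k\ge 2$ forces $k=2$ and $T\equiv 0$, i.e.\ $T=0$ since $Y$ is rational. Hence $|N_1|=|\Delta|=|2\Theta|$, as claimed.

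I expect the main obstacle to be the two structural steps: asserting that a net with $\Delta^2=0$ is composed with a pencil, and then controlling the multiplicity $k$ and the vanishing of $T$. The clean way around the latter is the parity argument above, exploiting that $\Theta^2=0$ forces $\Theta K_Y$ to be even and that $N_1K_Y=-4$ together with $N_1\equiv c\Theta$ leaves $-2$ as the only admissible value of $\Theta K_Y$.
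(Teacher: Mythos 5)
Your proposal is correct, and its first half coincides with the paper's own argument: the same decomposition $|N_1|=|\Delta|+T$, the same chain $0=N_1^2=N_1\Delta+N_1T$ yielding $\Delta^2=\Delta T=T^2=0$, and the same structural observation that a two-dimensional system with null self-intersection on the regular surface $Y$ is composed with a rational pencil $|\Theta|$. Where you genuinely diverge is the endgame. The paper invokes $NN_1=4-2R_0K_S=4$ from proposition \ref{p.comp} and plays the Index theorem against $N$ (using $N^2=3>0$): writing $4=2N\Theta+NT$, it excludes $N\Theta=0$ (else $\Theta\equiv 0$) and $N\Theta=1$ (else $\Delta\equiv T$, absurd since one part moves and the other is fixed), concluding $NT=0$, hence $T=0$ by the Index theorem, and only then reads off $\Theta K_Y=-2$ from $p_a(2\Theta)=p_a(N_1)=-1$. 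You instead use the light-cone case of the Index theorem --- two nonzero mutually orthogonal isotropic classes are proportional --- to get $N_1\equiv c\Theta$, then the parity of $\Theta K_Y=2p_a(\Theta)-2$ together with $N_1K_Y=-4$ to force $\Theta K_Y=-2$, $p_a(\Theta)=0$, $c=2$, and finally effectivity of $T\equiv(2-k)\Theta$ with $k\ge 2$ to kill $T$ and fix $k=2$. Your route avoids the case analysis on $N\Theta$ and never needs $NN_1=4$; the paper's route avoids the proportionality-of-isotropic-classes step and settles $T=0$ before identifying $\Theta$, simply asserting $\Delta=2\Theta$ from $\dim|\Delta|=2$ at the outset. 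Two points you should tighten. First, the parity step needs $p_a(\Theta)\ge 0$, i.e.\ that the composing pencil has irreducible general member; secure this by Stein factorization (the paper implicitly makes the same normalization when it computes $p_a(2\Theta)$ and concludes that $|\Theta|$ is a pencil of rational curves). Second, your justification that $h^0(Y,\mathcal O_Y(N_1))=3$ --- that each cycle ``appears with multiplicity exactly one in the fixed part'' because it meets $K_Y+N$ in $-1$ --- is not the right mechanism, since a negative intersection number alone does not bound the multiplicity in the fixed part; what the iterative construction from \cite[lemma 2.2]{CCM2} actually provides is that $G'$ and the $Z_i$ lie in the fixed part of $|N+K_Y|$ and that subtracting each once (this is the definition of $N_1$) leaves a nef divisor, whence $h^0(Y,\mathcal O_Y(N_1))=h^0(Y,\mathcal O_Y(N+K_Y))=p_a(N)=3$, exactly as the paper uses in the proof of lemma \ref{l.n}. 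Neither issue breaks your argument.
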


\begin{proof}
Assume $n=3\ell-4$. Then $|N_1|$ is a net of curves with $N_1^2=0$. Let us write $|N_1|=|\Delta|+T$ where $T$  and $|\Delta|$ are the fixed and  the movable part of $|N_1|$ respectively. Then $0\le \Delta^2\le\Delta N_1\le N_1^2$ hence $\Delta N_1=0$ and
\begin{eqnarray*}
&0=N_1\Delta=\Delta^2+T\Delta\\
&0=N_1T=\Delta T+T^2
\end{eqnarray*}
It follows $\Delta^2=\Delta T=T^2=0$. Therefore there exists a pencil $|\Theta|$ such that $|\Delta|=|2\Theta|$. Then
$4=NN_1=2N\Theta+NT$
and $N\Theta\ge 1$ otherwise by the Index theorem and the rationality of $Y$ we have $\Theta\equiv 0$.

If $N\Theta=1$, $NT=N\Delta=2$ then 
$(\Delta-T)^2=\Delta^2+T^2-2\Delta T=0$
hence $\Delta\equiv T$ which is impossible.  Thus $N\Theta=2$ and $NT=0$ which forces $T=0$  by the Index theorem.
Therefore we have $N_1=\Delta$ and
\begin{equation*}
-1=p_a(N_1)=p_a(\Delta)=p_a(2\Theta)=1+\frac{2\Theta(2\Theta+K_Y)}2=1+\Theta K_Y
\end{equation*}
forces $\Theta K_Y=-2$. Then $|\Theta|$ is a pencil of rational curves. 
\end{proof}

\begin{theorem}\label{t.no4}
The case $n=3\ell-4$ cannot occur.
\end{theorem}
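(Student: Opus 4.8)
The plan is to exploit the very rigid structure that Proposition~\ref{p.no4} imposes in this case, and to turn it into an Euler--number count of the kind already used in Theorems~\ref{t.i} and~\ref{t.ii}. By Proposition~\ref{p.no4} we have $|N_1|=|2\Theta|$, where $|\Theta|$ is a base-point-free pencil of rational curves with $\Theta^2=0$ and $\Theta K_Y=-2$. Thus $\phi_{|\Theta|}\colon Y\to\mathbb P^1$ is an honest fibration whose general fibre is a smooth rational curve, so $e(\Theta)=2$. Since $Y$ is rational (Proposition~\ref{p.rat}), equation~\eqref{e.k2} gives $e(Y)=12-K_Y^2=14+3\ell$, and the formula of \cite[Proposition III.11.4]{BPV} yields
\[
\delta:=\sum_s\bigl(e(\Theta_s)-e(\Theta)\bigr)=e(Y)-e(\Theta)\,e(\mathbb P^1)=14+3\ell-4=10+3\ell .
\]
The goal is then to exhibit inside the fibres of $\phi_{|\Theta|}$ more negativity than $\delta=10+3\ell$ can accommodate.

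The key step is to show that \emph{every} curve $E'_k$, $k=1,\dots,h_1$, is contained in a fibre of $\phi_{|\Theta|}$. The $E'_k$ are pairwise disjoint $(-3)$-curves with $E'_kN=0$ and $E'_kG'=0$; hence, from $N_1=N+K_Y-G'-\sum_i Z_i$, adjunction ($E'_kK_Y=1$) and Corollary~\ref{c.E}, a short computation (exactly as in the proof of Proposition~\ref{p.noN1}) gives $0\le E'_kN_1=1-E'_k\sum_i Z_i\le 1$. On the other hand $N_1\equiv 2\Theta$, so $E'_kN_1=2\,E'_k\Theta$ is even; the only even integer in $\{0,1\}$ is $0$, whence $E'_k\Theta=0$ and $E'_k$ lies in a fibre. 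This parity argument is the one point I expect to be delicate: everything else is bookkeeping, but it is precisely the evenness of $E'_kN_1$ forced by $N_1=2\Theta$ that rules out the value $1$ and pins all the $E'_k$ into the fibres.

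Once this is established the conclusion is immediate. By Lemma~\ref{l.-n} each of the $h_1=4+\ell$ pairwise disjoint $(-3)$-curves $E'_k$ sitting in a fibre contributes at least $3$ to $\delta$, so
\[
12+3\ell=3(4+\ell)\le\delta=10+3\ell,
\]
a contradiction. (If one wants additional slack, Corollary~\ref{c.F} shows that $F'_1,H'_1$ also lie in fibres, contributing $3$ each, but this is not needed.) Hence the case $n=3\ell-4$ cannot occur. Note finally that $n=3\ell-4\ge0$ already forces $\ell\ge2$, so the situation is not vacuous and the count above is genuinely obstructed.
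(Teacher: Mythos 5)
Your proof is correct, but it takes a genuinely different route from the paper's. The paper first invokes theorem \ref{t.iii} to see that $n=3\ell-4\ge 0$ forces $\ell\ge 2$, so that case $(1e)$ holds and $Y$ carries an elliptic pencil $|A'|$ with ${A'}^2=1$; it then computes $2A'\Theta=A'N_1=2-A'\sum_{i}Z_i$, uses the Index theorem to get $A'N_1\ge 1$ (a parity step of the same flavour as yours, applied to $A'$ rather than to the $E'_k$, forcing $A'N_1=2$ and $A'\Theta=1$), and derives the contradiction from the curve $A'$ itself: since $\Theta^2=0$ the pencil $|\Theta|$ has no base points, so it would cut out on the smooth elliptic curve $A'$ a moving divisor of degree one, impossible because $h^0(A',\mathcal O_{A'}(D))=1$ for any degree-one divisor $D$ on an elliptic curve. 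You instead bypass the classification of the invariant pencil entirely: you use only the case-(iii) numerics $h_1=4+\ell$, $K_Y^2=-2-3\ell$, proposition \ref{p.no4}, and the singular-fibre Euler-number count of lemma \ref{l.-n} applied to the rational fibration $\phi_{|\Theta|}$ --- exactly the technique of theorems \ref{t.i} and \ref{t.ii} --- after the parity observation that $E'_kN_1=2E'_k\Theta$ is even while $0\le E'_kN_1=1-E'_k\sum_i Z_i\le 1$ (nefness of $N_1$ plus corollary \ref{c.E}), which pins all $4+\ell$ pairwise disjoint $(-3)$-curves $E'_k$ into fibres and yields $12+3\ell\le\delta=10+3\ell$. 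Your additive use of lemma \ref{l.-n} for disjoint curves in (possibly common) fibres matches the paper's own practice in theorem \ref{t.ii} and propositions \ref{p.noZ}--\ref{p.0}, so the argument is sound by the paper's standards. What each approach buys: the paper's proof is shorter once theorem \ref{t.iii} is available, whereas yours is independent of propositions \ref{pr.fix} and \ref{p.list1} and of theorem \ref{t.iii} (in particular of knowing that case $(1e)$ holds), is uniform in $\ell$, and keeps the whole subsection within the Euler-number toolkit already set up for cases (i) and (ii).
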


\begin{proof}
If $n=3\ell-4\ge 0$ we have  $\ell\ge 2$ and case $(1e)$ of proposition \ref{p.list1} holds (see also theorem \ref{t.iii}).
Therefore we have a pencil of elliptic curves $|A'|$ for which ${A'}^2=1$.
Then, from propositions \ref{pr.fix} and \ref{p.no4}, lemma \ref{le.sub}  and corollary \ref{c.AG}, 
$2A'\Theta=A'N_1=2-A'\sum_{i=1}^n Z_i\le 2$
and $A'N_1\ge 1$ by the Index theorem, whence $A'N_1=2$ and $A'\Theta=1$.

We have $ h^0(A',\mathcal O_{A'}(\Theta))=2$ since otherwise 
 the point $A'\cap \Theta$ should be a base point for the pencil $|\Theta|$, whereas $\Theta^2=0$. Then we get a contradiction since for any divisor $D$ of degree 1 on the smooth elliptic curve $A'$  $h^0(A',\mathcal O_{A'}(D))=1$. 
\end{proof}

\subsection{$n=3\ell-3$}\label{s.3l-3}

In this case we have $1\le\ell\le 3$ and from equations \eqref{e.N12}  and \eqref{e.paN1} we find $N_1^2=1$ and $p_a(N_1)=0$.

\begin{lemma}
If $n=3\ell-3$ then $|N_1|$ has no fixed part. Then the general element of $|N_1|$ is a smooth rational curve.
\end{lemma}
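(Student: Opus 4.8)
The plan is to run a movable-versus-fixed-part analysis for $|N_1|$, entirely analogous to lemmas \ref{l.N10} and \ref{l.N1}, but now exploiting that $|N_1|$ is a \emph{net}. First I would record the numerical input: by \eqref{e.N12} and \eqref{e.paN1} the hypothesis $n=3\ell-3$ gives $N_1^2=1$ and $p_a(N_1)=0$. Recall that $N_1$ is nef by its very construction and big since $N_1^2>0$; moreover, as $G'+\sum_{i=1}^nZ_i$ lies in the fixed part of $|N+K_Y|$, one has $h^0(Y,\mathcal O_Y(N_1))=h^0(Y,\mathcal O_Y(N+K_Y))=p_a(N)=3$, so $|N_1|$ is a net. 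I would then write $|N_1|=|\Delta|+T$ with $|\Delta|$ the movable part (hence $\Delta$ is nef, its base locus being finite) and $T$ the fixed part, and split $1=N_1^2=N_1\Delta+N_1T$ into its two nonnegative summands.

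The case $N_1\Delta=0$ is excluded at once: since $N_1$ is big and nef, the Index theorem would force $\Delta^2\le 0$, while $\Delta$ is the movable part of a net and cannot be numerically trivial. Hence $N_1\Delta=1$ and $N_1T=0$. Applying the Index theorem to $T$ (using $N_1^2>0$ and $N_1T=0$) yields $T^2\le 0$, with equality if and only if $T\equiv 0$; as $Y$ is rational this gives $T=0$ and the first assertion. It therefore remains to rule out $T^2<0$.

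The main obstacle is precisely this last subcase. If $T^2<0$, then from $1=\Delta^2+\Delta T$ and $0=\Delta T+T^2$ together with $\Delta^2\ge 0$ one finds $\Delta^2=0$, $\Delta T=1$, $T^2=-1$. Now $|\Delta|$ is a net with $\Delta^2=0$ and no fixed part, so it is composed with a (rational, since $Y$ is regular) pencil $|\Theta|$ with $\Theta^2=0$; because the image of $\phi_{|\Delta|}$ is a nondegenerate plane curve of degree at least $2$, the general member satisfies $\Delta\equiv k\Theta$ with $k\ge 2$. But then $1=N_1\Delta=k(N_1\Theta)$ is impossible for an integer $k\ge 2$ and $N_1\Theta\ge 0$. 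Thus $T^2<0$ cannot occur, and $|N_1|$ has no fixed part.

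Finally, with $N_1=\Delta$ a net with $N_1^2=1$ and no fixed part, the map $\phi_{|N_1|}\colon Y\dashrightarrow\mathbb P^2$ is generically finite and $\deg(\phi_{|N_1|})\cdot\deg(\mathrm{im})=N_1^2=1$ forces it to be birational onto $\mathbb P^2$ (its single base point being resolved by one blow-up). Hence the general member of $|N_1|$ is the irreducible strict transform of a line; being irreducible of arithmetic genus $p_a(N_1)=0$, it is a smooth rational curve.
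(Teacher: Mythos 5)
Your proof is correct and follows essentially the same route as the paper's, which simply invokes the argument of lemma \ref{l.N1} to reach the exceptional configuration $\Delta^2=0$, $\Delta T=1$, $T^2=-1$ and then kills it by noting that a net with $\Delta^2=0$ is composed with a pencil, $\Delta\equiv 2\Theta$, so that $1=\Delta T=2\Theta T$ is impossible --- your variant $1=N_1\Delta=k\,N_1\Theta$ with $k\ge 2$ is an interchangeable form of the same divisibility contradiction. One parenthetical slip: $|N_1|$ is in fact base point free (a base point would, after one blow-up, yield a base-point-free net of self-intersection $\le 0$ mapping generically finitely onto $\mathbb P^2$, which is absurd; the paper asserts that $\phi_{|N_1|}$ is a birational \emph{morphism} right after the lemma), but your conclusion stands anyway, since irreducibility of the general member (Bertini, the net not being composed with a pencil) together with $p_a(N_1)=0$ already forces it to be a smooth rational curve.
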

 
\begin{proof}
We can use the same argument as in  lemma \ref{l.N1} and we find that $|N_1|$ has no fixed part unless $|N_1|=|\Delta|+T$ with ${\Delta}^2=0, \Delta N_1=\Delta T=1$, $T^2=-1$. Since $\Delta^2=0$ and $|\Delta|$ is a net, there exists a pencil $|\Theta|$ such that $\Delta\equiv 2\Theta$. But then
$1=\Delta T=2\Theta T$
and we get a contradiction.
\end{proof}

In this setting $|N_1|$ 
is base point free and  $\phi_{|N_1|}:Y\longrightarrow \mathbb P^2$  is a birational morphism. 
\vskip .3 cm

When $n\ge 3\ell-2$ it makes sense to consider $|N_2|=|N_1+K_Y-\sum_{i=1}^n Z_i-G'-\sum_{j=1}^{n'} Z'_j|$ which is a linear system of dimension $3-3\ell+n=p_a(N_1)\le 3$ and $3-3\ell+n\ge 1$ and from proposition \ref{p.comp}
\begin{align}\label{e.N22}
N_2^2&=7-8R_0K_S+4K_Y^2+4n+4h_2+n'
=3-12\ell+4n+n'\notag\\
p_a(N_2)&=5-5R_0K_S+3K_Y^2+3n+3h_2+n'
=2-9\ell+3n+n'\\
N_1N_2&=2p_a(N_1)-2=2(3-3\ell+n)-2=4-6\ell+2n\notag
\end{align}

\subsection{$n=3\ell-2$}\label{s.3l-2}

In this case $1\le\ell\le 3$ and we have $N_1N_2=0$ and by the Index theorem  we infer $N_2\equiv 0$. Then from \eqref{e.N22} $n'=N_2^2-3+12\ell-4n=5$ and 
\begin{eqnarray}\label{e.3l-2}
&N_1\equiv \sum_{i=1}^nZ_i+G'+\sum_{j=1}^{5} Z'_j-K_Y\notag\\
&N\equiv 2\sum_{i=1}^nZ_i+2G'+\sum_{j=1}^{5} Z'_j-2K_Y\\
&2B_0+E'\equiv 2\sum_{i=1}^nZ_i+5G'+\sum_{j=1}^{5} Z'_j-5K_Y\notag
\end{eqnarray}

\subsection{$n=3\ell-1$}\label{s.3l-1}

This case can occur for $1\le\ell\le 3$. We have 
 $N_1N_2=2$, $N_1^2=3$ (cf. equations \eqref{e.N12}, \eqref{e.paN1} and \eqref{e.N22}) and since $N_1(3N_2-2N_1)=0$ then
\begin{equation*}
(3N_2-2N_1)^2=9N_2^2+4N_1^2-12N_1N_2=9N_2^2+12-24=9N_2^2-12\le 0
\end{equation*}
Hence from \eqref{e.N22} $0\le N_2^2=n'-1\le 1$ and $n'=1,2$.

\begin{lemma}\label{l.nofix}
If $N_2^2=0$ (i.e. $n'=1$) then $|N_2|$ has no fixed part. In particular the general member of $|N_2|$ is a smooth rational curve with self-intersection 0.
\end{lemma}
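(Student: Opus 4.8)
The plan is to transcribe the proof of Lemma~\ref{l.N10}, with the nef and big divisor $N$ there replaced by $N_1$ (recall that in this subcase $N_1^2=3>0$ and $N_1$ is nef, hence nef and big) and the self-intersection-zero system $|N_1|$ replaced by $|N_2|$. The relevant numerics are already available: $N_1N_2=2$, the divisor $N_2$ is nef with $N_2^2=0$, and $p_a(N_2)=0$ (so that $N_2K_Y=-2$); moreover $|N_2|$ is a pencil, since by adjunction $h^0(\mathcal O_Y(N_2))=h^0(\mathcal O_Y(K_Y+N_1))=p_a(N_1)=2$, and $Y$ is rational by Proposition~\ref{p.rat}, so a numerically trivial divisor is linearly trivial.

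First I would split $|N_2|=|\Delta|+T$ into its movable part $|\Delta|$ and its fixed part $T$. Since $\Delta$ is nef and $N_2=\Delta+T$ is nef with $N_2^2=0$, the inequalities $N_2\Delta\ge0$ and $N_2T\ge0$ force $N_2\Delta=N_2T=0$, whence $\Delta^2=\Delta T=T^2=0$, exactly as in Lemma~\ref{l.N10}. Pairing with the big divisor $N_1$ and using $N_1\Delta+N_1T=N_1N_2=2$, I would first rule out $N_1\Delta=0$: the Index theorem would give $\Delta^2\le0$ with equality only if $\Delta\sim0$, forcing $\Delta\equiv0$ by rationality and contradicting $h^0(\mathcal O_Y(\Delta))=h^0(\mathcal O_Y(N_2))\ge2$. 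The delicate case, and the step I expect to be the main obstacle (just as it is in Lemma~\ref{l.N10}), is $N_1\Delta=N_1T=1$: here $(\Delta-T)^2=0$ and $N_1(\Delta-T)=0$, so the Index theorem yields $\Delta-T\sim0$ and hence, by rationality, $\Delta\equiv T$; but $|\Delta|$ is a positive-dimensional moving system while $T$ is fixed, which is absurd. The only surviving possibility is $N_1\Delta=2$, $N_1T=0$, and then the Index theorem gives $T^2\le0$ with equality only if $T\sim0$; since $T^2=0$ we obtain $T\sim0$, hence $T\equiv0$ and $T=0$ as $T$ is effective. Thus $|N_2|$ has no fixed part.

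For the final assertion, the absence of a fixed part together with $N_2^2=0$ means the pencil $|N_2|$ is base point free, so it defines a fibration $\phi_{|N_2|}\colon Y\to\mathbb P^1$ whose general fibre is the general member of $|N_2|$, which is smooth by Bertini. The one point that needs a little care is its connectedness: were the general fibre disconnected, the Stein factorization $Y\to B\to\mathbb P^1$ would exhibit its $d\ge2$ components as distinct fibres of $Y\to B$, all of one and the same genus $g$, so that $p_a(N_2)=1-d(1-g)$; since $p_a(N_2)=0$ this would force $d(1-g)=1$, which is impossible for $d\ge2$. Hence the general member is connected, and being smooth with $p_a(N_2)=0$ it is a smooth rational curve, of self-intersection $N_2^2=0$, as claimed.
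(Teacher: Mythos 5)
Your proof is correct and follows essentially the same route as the paper: the paper's own argument is exactly the decomposition $|N_2|=|\Delta|+T$ with $\Delta^2=\Delta T=T^2=0$, the elimination of $N_1\Delta=0$ and $N_1\Delta=N_1T=1$ via the Index theorem and the rationality of $Y$, and the conclusion $T\equiv 0$ from $N_1\Delta=2$. Your added justifications (that $h^0(\mathcal O_Y(N_2))=p_a(N_1)=2$, and the Stein-factorization argument for connectedness of the general member) are details the paper leaves implicit, and they check out.
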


\begin{proof}
Let us write $|N_2|=|\Delta|+T$ with $|\Delta|$ and $T$ the movable and the fixed part of $N_2$ respectively.
We have $0=N_2^2=N_2\Delta+N_2T$ or, equivalently,
\begin{eqnarray*}
0=N_2\Delta={\Delta}^2+\Delta T\\
0=N_2T=\Delta T+T^2
\end{eqnarray*}
which implies ${\Delta}^2=\Delta T=T^2=0$.

We know that $N_1N_2=2$. Then $0\le N_1\Delta\le N_1N_2=2$. It cannot be $N_1\Delta=0$ otherwise, by the index theorem and the rationality of $Y$, $\Delta=0$.

It cannot  be $N_1\Delta=1=N_1T$: we obtain by the Index theorem
\begin{equation*}
0\ge(\Delta-T)^2={\Delta}^2+T^2-2\Delta T=0 \quad \Rightarrow \quad \Delta\sim T.
\end{equation*}
Since $Y$ is a rational surface, this implies $\Delta\equiv T$ which is impossible.

So $N_1\Delta=N_1N_2=2$ and then
$0\ge (N_2-\Delta)^2=T^2=0$.
Again, by the rationality of $Y$ we have $T\equiv 0$ and $|N_2|$ has no fixed part.
\end{proof}

 Then if $n'=1$ there exists a morphism $Y\longrightarrow \mathbb F_a$ for some $a\ge 0$.

If $n'=2$ then $|N_2|$ is a pencil of curves with arithmetic genus 1 and therefore $N_3\equiv 0$.
But now from proposition \ref{p.comp}
\begin{equation*}
N_3^2=9-12R_0K_S +9K_Y^2+9h_2+9n+4n'+n''
=n''-1=0
\end{equation*}
Then, recalling the definition of $N:=3K_Y+2B_0+E'-3G'$,
\begin{eqnarray*}
&N_2\equiv G'+\sum_{i=1}^nZ_i+\sum_{i=1}^2Z'_i+Z''-K_Y\\
&N_1\equiv 2G'+2\sum_{i=1}^nZ_i+2\sum_{i=1}^2Z'_i+Z''-2K_Y\\
&N\equiv 3G'+3\sum_{i=1}^nZ_i+2\sum_{i=1}^2Z'_i+Z''-3K_Y\\
&2B_0+E'
\equiv 6G'+3\sum_{i=1}^nZ_i+2\sum_{i=1}^2Z'_i+Z''-6K_Y\\
\end{eqnarray*}

\subsection{$n=3\ell$}\label{s.3l}

In this case we have $0\le\ell\le 3$. Now $N_1^2=4=N_1N_2$ and
$N_2^2=n'+3\ge 3$.
By the Index theorem
$(N_1-N_2)^2=n'-1\le 0$.
Moreover if $n'=1$ we have $N_1\equiv N_2$ and then
$K_Y\equiv G'+\sum_{i=1}^nZ_i+Z'$
which is impossible since $K_Y$ is not effective. This implies $n'=0$, $N_2^2=3$ and $p_a(N_2)=2$.

If we look at $N_3$ we have (see also proposition \ref{p.comp})
\begin{equation*}
N_3^2=9-12R_0K_S +9K_Y^2+9h_2+9n+4n'+n''
=n'' 
\end{equation*}
\begin{equation*}
p_a(N_3)=6-7R_0K_S+6K_Y^2+6h_2+6n+3n'+n''
=n''
\end{equation*}
Since $N_2N_3=2p_a(N_2)-2=2$ we have
$(3N_3-2N_2)^2=9n''-12\le 0$
hence $n''=0,1$. In the former case $|N_3|$ is a pencil of rational curves of self-intersection 0 (see also proposition \ref{p.comp}), whereas in the latter case we have a pencil of curves with arithmetic genus one.
 Again we infer 
\begin{equation*}
N_4=N_3+K_Y-G'-\sum_{i=1}^{n}Z_i-Z''-\sum_{i=1}^{n'''}Z_i'''\equiv 0.
\end{equation*}
Then 
$N_4^2=N_3^2+K_Y^2+2N_3K_Y+1+n+n'+n''+n'''=n'''-1=0$. Therefore
\begin{eqnarray*}
&N_3\equiv G'+\sum_{i=1}^{n}Z_i+Z''+Z'''-K_Y\\
&N_2\equiv 2G'+2\sum_{i=1}^{n}Z_i+2Z''+Z'''-2K_Y\\
&N_1\equiv 3G'+3\sum_{i=1}^{n}Z_i+2Z''+Z'''-3K_Y\\
&N\equiv 4G'+4\sum_{i=1}^{n}Z_i+2Z''+Z'''-4K_Y\\
&2B_0+E'\equiv 7G'+4\sum_{i=1}^{n}Z_i+2Z''+Z'''-7K_Y\\
\end{eqnarray*}

In  case $|N_3|$ is a pencil of rational curves  we can show arguing as in lemma \ref{l.nofix} that $|N_3|$ has no fixed part. 
Therefore we have a map $Y\longrightarrow \mathbb F_a$ for some $a\ge 0$.

\section{Further results}\label{s.other}
\begin{prop}\label{p.1e}
Case $(1e)$ of proposition \ref{p.list1} cannot occur.
\end{prop}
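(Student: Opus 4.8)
The plan is to exploit the elliptic pencil $|A'|$ attached to case $(1e)$. Here $p_a(A')=g=1$, ${A'}^2=1$ and $A'K_Y=-1$, so off its single base point $|A'|$ induces a genus-one fibration on $Y$, and lemma \ref{l.ab} gives $\delta=14+3\ell+3{A'}^2+2A'K_Y=15+3\ell$. First I would sort the exceptional curves into vertical and horizontal ones. Since in $(1e)$ we have $D=2F+G+H$, and $\widetilde A=\pi^{*}A'=\varepsilon^{*}A-D$, the projection formula $3A'E'_i=\pi^{*}A'\cdot\pi^{*}E'_i=\widetilde A\,(3E_i)$ gives $A'E'_i=\widetilde A E_i=(\varepsilon^{*}A-D)E_i=0$ for every $i$; the same type of computation gives $A'F'=1$, while $A'H'=0$ by corollary \ref{c.AH} and $A'G'=0$ by corollary \ref{c.AG}. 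By proposition \ref{p.1}, moreover, $A'B_{0k}\ge1$ for all $k$, with $\sum_kA'B_{0k}=A'B_0=AR_0=3$. Thus the $5+\ell$ pairwise disjoint $(-3)$-curves $E'_1,\dots,E'_{4+\ell},H'$ are vertical, whereas $F'$ is a section and the $B_{0k}$ are multisections.

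By lemma \ref{l.-n} each of these $5+\ell$ vertical $(-3)$-curves contributes at least $3$ to $\delta$, so they already account for $3(5+\ell)=15+3\ell=\delta$. Hence the basic node count is an \emph{equality}: no contradiction comes from it directly, every inequality must be tight, and these are the only singular fibres. The decisive extra input I would add is the relatively minimal model. Blowing up the base point yields a genuine elliptic fibration $\hat Y\to\mathbb P^1$ with $e(\hat Y)=e(Y)+1=15+3\ell$; contracting the vertical $(-1)$-curves produces a rational (proposition \ref{p.rat}), relatively minimal elliptic surface $Y_{\min}$, for which $e(Y_{\min})=12$. The number of contractions is therefore $c=e(\hat Y)-12=3+3\ell$. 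On the other hand each vertical $(-3)$-curve must become a $(-2)$-component of a fibre of $Y_{\min}$; tightness forces every singular fibre to carry exactly one such curve and to be an $I_2$ blown up once (its internal $(-1)$-curve being $G'$ for the fibre of $H'$ and a new $C_i$ for each $E'_i$), i.e.\ exactly one contraction per fibre, so $c=5+\ell$. Comparing, $5+\ell=3+3\ell$, that is $\ell=1$, which already rules out $\ell=2,3$.

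It remains to exclude $\ell=1$. Here $B_0=B_{01}$ is a single $(-6)$-curve and a $3$-section ($A'B_{01}=3$). Pull-back computations of the form $3B_{01}E'_i=9\,\varepsilon^{*}R_0\cdot E_i$ show that $B_{01}$ is disjoint from every $E'_i$, from $H'$, from $G'$ and from $F'$, since the isolated fixed points do not lie on $R_0$. As $B_{01}$ is an irreducible rational curve meeting the general fibre in $3$ points, its image on $Y_{\min}$ is a rational $3$-section whose self-intersection is pinned down by adjunction with $K_{Y_{\min}}=-\text{(fibre)}$; I would combine this with the fact that $B_{01}$ avoids all the named vertical curves to constrain, and ultimately contradict, the way $B_{01}$ can meet the contracted $(-1)$-curves sitting inside the $I_2$-fibres.

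The hard part will be the control of the singular fibres. The tight node count does not by itself prevent several of the disjoint $(-3)$-curves from sharing one fibre — four of them inside an $I_0^{*}$ blown up suitably still contribute exactly $3\cdot4$ to $\delta$ — and such degenerate configurations would reopen $\ell=2$. Proving that the $E'_i$ must lie in distinct $I_2$-type fibres (so that $c=5+\ell$ genuinely holds) is the central technical point; once it is in place the Euler comparison $5+\ell=3+3\ell$ forces $\ell=1$, and the analysis of the $3$-section $B_{01}$ disposes of the last case, so that case $(1e)$ cannot occur.
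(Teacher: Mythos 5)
Your setup is a genuinely different route from the paper (the paper never fibres $Y$ by $|A'|$ in case $(1e)$), and your preliminary computations are all correct: $A'E'_i=0$, $A'F'=1$, $A'H'=A'G'=0$, $A'B_0=AR_0=3$, and $\delta=15+3\ell$ from lemma \ref{l.ab}. But the decisive step fails, essentially where you flag it. The Euler-number comparison with the relatively minimal model is by itself vacuous: writing $c$ for the number of blow-downs $\hat Y\to Y_{\min}$, one always has $c=e(\hat Y)-12=3+3\ell$, and since every contracted curve is vertical, $\sum_s e_{\min,s}=\delta-c=12$, which holds automatically for any relatively minimal rational elliptic surface. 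So the entire content sits in your claim $c=5+\ell$, i.e.\ that each of the $5+\ell$ disjoint vertical $(-3)$-curves lies in its own fibre of type $I_2$ (or $III$) blown up exactly once. The tight count $\delta=3(5+\ell)$ does not force this, because the contributions of lemma \ref{l.-n} are not additive inside one fibre: blow up one vertex of an $I_3$ fibre, and the two strict transforms become \emph{disjoint} $(-3)$-curves in a fibre of Euler number $4$ (four rational components, four intersection points), not $6$. Two of your curves can thus share a fibre ``for the price of four,'' with the leftover $\delta$ absorbed by $I_1$ fibres that require no contraction, so the equation $5+\ell=3+3\ell$ is unsupported and $\ell=2,3$ are not excluded. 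Moreover, your disposal of $\ell=1$ via the $3$-section $B_{01}$ is only announced (``constrain, and ultimately contradict''), not carried out; no contradiction is actually derived there.

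For comparison, the paper avoids all fibre-type bookkeeping and stays with the adjoint systems of section \ref{s.adj}. From \cite[lemma 2.2]{CCM2} and the Index theorem the divisor $A_1=A'+K_Y-G'-\sum_{i=1}^nZ_i-\sum_{j=1}^mC_j$ is numerically trivial, which pins down $m=3\ell-n+2$; the inequalities $B_{0k}A'\ge 1$ of proposition \ref{p.1} together with $\Phi'N=0$ force $B_0\le\Phi'$; then $n=3\ell$ dies on $N_1\equiv 2A'$ against $4=NN_1=2NA'=6$, the identity $0=A_1N_1\le n-(3\ell-2)$ kills $n=3\ell-3$, the case $n=3\ell-2$ dies on $0=F'N_1=F'(A'+Z'_5)=1$, and $n=3\ell-1$ on $A'\equiv N_2$, which gives $2=\Phi'N_2\ge B_0N_2=3$. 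To salvage your approach you would need an a priori classification of which blown-up minimal fibres can simultaneously carry several of the disjoint $(-3)$-curves together with the vertical $(-1)$-curve $G'$ (which meets both the section $F'$ and the vertical curve $H'$), plus a real argument for $\ell=1$ --- at which point the fibration route is no shorter than the paper's Index-theorem computation.
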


\begin{proof}
Assume case $(1e)$ holds. Then $Y$ has an elliptic pencil $|A'|$ with ${A'}^2=1$. From \cite[lemma 2.2]{CCM2}  if $A'+K_Y$ is not nef then there exist $(-1)$-cycles $D_j$ such that $D_jA'=0$.  By the Index theorem   we have 
\begin{equation}\label{e.a10}
A_1=A'+K_Y-\sum_{j}D_j\equiv 0
\end{equation}

We now look at the intersection number $s:=A'N_1$: we have (cf. corollary \ref{c.AG})
$A'N_1=A'(N+K_Y-G'-\sum_{i=1}^n Z_i)=2-A'\sum_{i=1}^n Z_i\ge 1$
otherwise ${N_1}\equiv 0$.  Moreover from theorem \ref{t.no4} we have $1\le N_1^2\le 4$ (cf. sections \ref{s.3l-3}, \ref{s.3l-2}, \ref{s.3l-1}  and  \ref{s.3l}).
Then by the Index theorem   we have $A'(N_1-sA')=0$ and
$(N_1-sA')^2=N_1^2-s^2\le 0$. 

If it was  $s=1$ we should have $N_1^2=1$ and, from the rationality of $Y$, $N_1\equiv A'$ which is impossible since $|A'|$ is a pencil whereas $|N_1|$ is a net.
Hence $A'N_1=2$ and $A'\sum_{i=1}^n Z_i=0$.
Moreover, when $N_1^2=4$ (or, equivalently, $n=3\ell$) we get, because of  the rationality of $Y$, 
$N_1\equiv 2A'$
which is impossible since then (see lemma \ref{le.N} and proposition \ref{p.list1}) 
$4=NN_1=2NA'=6$.

Hence in case $(1e)$ we can only have $n=3\ell-1$, $n=3\ell-2$ or $n=3\ell-3$.

Since $A'N_1=2$,  none of the curves $Z_i$ intersects $A'$. Therefore from \eqref{e.a10}
\begin{equation*}
A_1=A'+K_Y-G'-\sum_{i=1}^n Z_i-\sum_{j=1}^m C_j\equiv 0
\end{equation*}
and
\begin{equation}\label{e.a12}
0=A_1^2={A'}^2+K_Y^2+2A'K_Y+1+n+m
=n-2-3\ell+m
\end{equation}
hence $m=3\ell-n+2$.

We also know that in case $(1e)$ for any irreducible component $B_{0k}$ of $B_0$ we have  $B_{0k}A'\ge 1$ (see  proposition \ref{p.1}). Moreover $A'N=3, \Phi'N=0$. Then
\begin{equation*}
0=B_{0k}N=B_{0k}(A'+\Phi')\ge 1+B_{0k}\Phi'
\end{equation*}
for any $k=1,\dots,\ell$ forces $B_{0}\le \Phi'$. 
We recall that 
$N_1K_Y=(N+K_Y-G'-\sum_{i=1}^n Z_i)K_Y=n-3\ell$.
Then we find
\begin{align*}
0&=A_1N_1=N_1(A'+K_Y-G'-\sum_{i=1}^n Z_i-\sum_{j=1}^m C_j)\\
&=2+n-3\ell-N_1\sum_{j=1}^m C_j\le n-(3\ell-2)
\end{align*}

This excludes $n=3\ell-3$.

When $n=3\ell-2$ none of the $m=3\ell-n+2=4$ curves $C_j$ intersects $N_1$. Hence they are 4 of the $n'=5$ curves $Z'_i$. Since $N_2\equiv A_1\equiv 0$ we find
\begin{equation*}
N_1\equiv \sum_{i=1}^n Z_i+G'+\sum_{j=1}^{5} Z'_j-K_Y\equiv A'+Z'_5.
\end{equation*} 
Thus we get a contradiction since $0=F'N_1=F'A'+F'Z'_5=1$.

When $n=3\ell-1$ we have  $m=3\ell-n+2=3$ and, using proposition \ref{p.comp},
\begin{equation*}
0=A_1N_1=N_1(A'+K_Y-G'-\sum_{i=1}^n Z_i-\sum_{j=1}^m C_j)
=1-N_1\sum_{j=1}^m C_j.
\end{equation*}
Thus there is exactly one curve $C_1$ with $C_1N_1=1$ whereas the remaining two have to be chosen among the $n'\le 2$ curves $Z'_i$. This also excludes the case $n'=1$.

When $n'=2$ we have $A'N_2=A'(N_1+K_Y-G'-\sum_{i=1}^n Z_i-\sum_{j=1}^{n'}Z'_j)=1$
and $(A'-N_2)^2=0$.
Thus $A'\equiv N_2$ but we get a contradiction since
\begin{equation*}
\Phi'N_2=\Phi'A'=(N-A')A'=3-1=2
\end{equation*}
and $2\ge B_{0}N_2=B_{0}A'=3$.
\end{proof}

\begin{prop}\label{p.no0d}
The case $(0d)$ of proposition \ref{p.list0} cannot occur.
\end{prop}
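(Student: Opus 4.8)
The plan is to study the elliptic fibration attached to the movable pencil $|A'|$ together with its first adjoint system, in the spirit of the proof of proposition \ref{p.1e}.

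First I would fix the numerical setup. By proposition \ref{p.0} case $(0d)$ forces $\ell=1$, so $R_0$ is a single irreducible $(-2)$-curve, $B_0$ is an irreducible $(-6)$-curve, $h_1=5$, and by \eqref{e.k2} one has $K_Y^2=-5$. Since ${A'}^2=0$, $p_a(A')=1$ and $A'K_Y=0$, the system $|A'|$ is base point free and $\phi_{|A'|}\colon Y\to\mathbb P^1$ is an elliptic fibration, with $\delta=\sum_s(e(A'_s)-e(A'))=14+3\ell=17$ by lemma \ref{l.ab}. The next task is to decide which exceptional curves are vertical. Using $\st\pi{A'}=\widetilde A=\st\varepsilon A-D$ with $D=E_1+2F+G+H$, together with proposition \ref{pr.fix} and corollaries \ref{c.AG}, \ref{c.AH}, I would check that $A'E'_1=A'F'=A'B_0=1$ while $A'E'_k=A'G'=A'H'=0$ for $2\le k\le5$. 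Thus $E'_2,\dots,E'_5,G',H'$ are vertical, whereas $E'_1,F',B_0$ are $1$-sections.

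Then I would analyse the adjoint $A'+K_Y$. Applying \cite[lemma 2.2]{CCM2}, after contracting a set of disjoint $(-1)$-cycles $D_1,\dots,D_m$ with $D_jA'=0$ the divisor $A_1=A'+K_Y-\sum_{j=1}^m D_j$ becomes nef. Since $A_1A'=0$ and $A_1$ is nef, it is vertical, hence numerically $A_1\equiv cA'$ for some integer $c\ge0$; computing $A_1^2=(A'+K_Y)^2+m=-5+m=0$ gives $m=5$. Moreover $c=0$: the value $c=1$ would force $K_Y\equiv\sum_j D_j$ to be effective, impossible on the rational surface $Y$, while $c\ge2$ is excluded by $F'\sum_j D_j=2-c\ge F'G'=1$. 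Therefore $A'+K_Y\equiv\sum_{j=1}^5 D_j$, and since $G'(A'+K_Y)=-1$ one of the cycles, say $D_1$, equals $G'$.

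The contradiction would then come from intersecting this relation with $E'=\sum_k E'_k$ and with $B_0$. Using $K_YE'_k=1$, $K_YB_0=4$ and the intersection numbers above I find $\sum_{j=1}^5 D_jE'=(A'+K_Y)E'=6$ and $\sum_{j=1}^5 D_jB_0=(A'+K_Y)B_0=5$; as $G'E'=G'B_0=0$ these become $\sum_{j=2}^5 D_jE'=6$ and $\sum_{j=2}^5 D_jB_0=5$ for the four remaining cycles. On the other hand, by corollary \ref{c.Zint} every irreducible $(-1)$-curve occurring in these cycles meets $B_0$ and $E'$ each exactly once. The heart of the matter, and the step I expect to be the main obstacle, is to show that no other component contributes to either intersection: that no $E'_k$ can be a component of $D_2,\dots,D_5$ (since $E'_k\le D_{j_0}$ together with the disjointness of the cycles would give $\sum_{j\ge2}D_jE'_k=D_{j_0}E'_k\le -1$, against the value $1$), and that the auxiliary $(-2)$-curves arising when the reducible cycles are resolved meet neither $B_0$ nor $E'$. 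Granting this, both sums equal the number of irreducible $(-1)$-curves appearing in $D_2,\dots,D_5$, whence $6=5$, a contradiction. Carrying out this last structural analysis of the reducible cycles, via lemmas \ref{l.noG} and \ref{l.no2E} and corollary \ref{c.red}, is where the real work lies; the remaining steps are routine bookkeeping.
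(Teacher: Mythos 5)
Your numerical setup is correct, and your derivation of $A_1\equiv 0$ (hence $A'+K_Y\equiv\sum_{j=1}^{5}D_j$ with $D_1=G'$) is sound, though by a different route than the paper: you use proportionality of the two isotropic nef classes $A_1$ and $A'$ plus the section $F'$ to pin down $c$, where the paper sets $s=A_1N$, rules out $1\le s\le 4$ via the Index theorem and $A'B_0=1$, and concludes $A_1\equiv 0$ directly. The gap is exactly where you suspect it, and it cannot be filled by the results you cite. Corollary \ref{c.Zint}, like lemmas \ref{l.noG}, \ref{l.no2E} and corollary \ref{c.red}, is proved for $(-1)$-cycles $Z$ with $ZN=0$: the whole mechanism rests on the relation $0=CN=-3+2CB_0+CE'-3CG'$, which forces $(CB_0,CE')=(1,1)$. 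Your cycles only satisfy $D_jA'=0$, and in case $(0d)$ the pencil $|N|=|A'|+\Phi'$ has a nonzero fixed part (by remark \ref{r.R0}, $A\Phi=9-A^2-3\Phi K_S=6$), so for an irreducible vertical $(-1)$-curve $C$ one only gets $2CB_0+CE'=3+C\Phi'+3CG'$ with $C\Phi'\ge 0$ uncontrolled; since $\Phi'A'=(N-A')A'=3$, the fixed part has horizontal components that a vertical $C$ may well meet. The dichotomy thus fails — for instance $(CB_0,CE')=(1,2)$ with $C\Phi'=1$ is numerically consistent — and then your two totals $6$ and $5$ do not count the same quantity, so no contradiction follows. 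What you call routine bookkeeping is in fact the unproved heart of the argument.

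The paper avoids this entirely by using only the $B_0$-count, which you already computed. From $A_1\equiv 0$ one gets $B_0\sum_{j=1}^{4}C_j=5$ (your second total, over the four cycles other than $G'$). Since $B_0A'=1>0$, the curve $B_0$ is horizontal, hence not a component of any cycle, so $B_0C_j\ge 0$ for all $j$; pigeonhole over four cycles then yields some $C_1$ with $C_1B_0\ge 2$. But $C_1A'=0$ forces $C_1\le A'$, i.e. the cycle sits inside a fibre, and a fibre meets the $1$-section $B_0$ exactly once, so $C_1B_0\le A'B_0=1$ — contradiction. No structural analysis of the reducible cycles, and no control of $E'$-intersections, is needed: replacing your double count with this pigeonhole on the $B_0$-count closes the proof with the data you already have.
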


\begin{proof}
Assume case $(0d)$ holds. Then we have $\ell=1$ (see theorem \ref{t.iii}) and $K_Y^2=-2-3\ell=-5$. Moreover $|A'|$ is a pencil of elliptic curves such that ${A'}^2=0=A'K_Y$. If we look at the adjoint system $A'+K_Y$ we find $A'(A'+K_Y)=0$. From \cite[lemma 2.2]{CCM2} there are $(-1)$-cycles $D_j$ such that
$A_1=A'+K_Y-\sum_{j=1}^mD_j$
is nef and $D_jA'=0$. Since $A'A_1=A'(A'+K_Y)=0$ we necessarily have $A_1^2=0$. Hence (recall that $G'A'=0$ from corollary \ref{c.AG})
\begin{equation*}
0=A_1^2=(A'+K_Y-G'-\sum_{j}C_j)^2=0+K_Y^2+0+1+m=-4+m
\end{equation*}
and 
\begin{equation*}
A_1K_Y=K_Y(A'+K_Y-G'-\sum_{j=1}^mC_j)=0-5+1+m=A_1^2=0.
\end{equation*}

Since from proposition \ref{pr.fix} $A'N=3$ we can write
\begin{equation*}
0\le A_1N=N(A'+K_Y-G'-\sum_{j=1}^mC_j)=3+1-N\sum_{j=1}^mC_j\le 4.
\end{equation*}

Assume now $1\le A_1N=s\le 4$. Then we have $N(3A_1-sA')=0$ and by the Index theorem   and the rationality of $Y$ $(3A_1-sA')^2=0$
and $sA'\equiv 3A_1$. 
Thus
$1\le s=sA'B_0=3A_1B_0$
forces $s=3$ and $A'\equiv A_1$ which is impossible since otherwise $K_Y$ would be effective. Thus $A_1N=0$ hence $A_1\equiv 0$ and 
\begin{equation}\label{e.a1}
0=A_1B_0=B_0(A'+K_Y-G'-\sum_{j=1}^4C_j)
=5-B_0\sum_{j=1}^4C_j. 
\end{equation}

We note that $B_0$ cannot be contained in any singular fibre of $|A'|$ since $B_0A'=1>0$. In particular it is not contained in any of the $(-1)$-cycles $C_j$. 
Then $B_0C_j\ge 0$ for any $j=1,\dots,4$ and from \eqref{e.a1} there exists a cycle $C_j$, say $C_1$ such that $C_1B_0\ge 2$.
But $C_1A'=0$ forces $C_1\le A'$ and
$2\le C_1B_0\le A'B_0=1$.  Contradiction.
\end{proof}

\begin{prop}\label{p.l0}
Case $(0a)$, $(0c)$ and $(0f)$ of proposition \ref{p.list0} cannot occur.
\end{prop}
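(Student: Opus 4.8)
The plan is to realise $|A'|$ as an elliptic fibration on the rational surface $Y$ and to derive a contradiction by passing to its relatively minimal model, where one of the curves $E'_i$ plays the role of a section. First I would record the numerics. By Proposition \ref{p.0} each of the cases $(0a)$, $(0c)$, $(0f)$ forces $\ell=0$, so that $R_0=0$, $B_0=0$, $h_1=4$ and, by \eqref{e.k2}, $K_Y^2=-2$. In all three cases ${A'}^2=0$, $A'K_Y=0$ and $p_a(A')=g=1$; hence $|A'|$ is base point free and defines an elliptic fibration $\phi_{|A'|}\colon Y\to\mathbb P^1$ with smooth general fibre (Proposition \ref{pr.fix}), and $Y$ is rational by Proposition \ref{p.rat}.

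Next I would locate a section among the exceptional curves. Using $A'E'_i=\widetilde A\,E_i=(\st\varepsilon A-D)E_i$ with the explicit $D$ from Proposition \ref{p.list0}, one finds a curve $E'_k$ with $A'E'_k=1$: take $E'_1$ in cases $(0a)$ and $(0c)$, and $E'_2$ in case $(0f)$ (where instead $A'E'_1=2$). Thus $E'_k$ is a section of $\phi_{|A'|}$, and it is a $(-3)$-curve. On the other hand $G'$ is a $(-1)$-curve (Remark \ref{r.nef}); it is vertical, since $A'G'=0$ by Corollary \ref{c.AG} (as $A^2\neq 9$), and it is disjoint from $E'_k$, because $G$ and $E_k$ contract to the distinct fixed points $q_1$ and $p_k$, so that $3G'E'_k=G\cdot 3E_k=0$.

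The heart of the argument is a blow-down count. Contracting the vertical $(-1)$-curves of $\phi_{|A'|}$ produces a relatively minimal rational elliptic surface $Y_m$, and the number of contractions is $K_{Y_m}^2-K_Y^2=0-(-2)=2$. Since $E'_k$ is a section, $Y_m$ carries no multiple fibre, whence $K_{Y_m}\equiv-\bar A'$ and the image $\bar E'_k$ of $E'_k$ satisfies $(\bar E'_k)^2=-1$ by adjunction. Now $G'$, being a $(-1)$-curve inside a fibre, is one of the two contracted curves; as $G'E'_k=0$, contracting it leaves $(E'_k)^2=-3$ unchanged. Exactly one contraction then remains, of some $(-1)$-curve $\gamma$, and it raises the self-intersection of the section to its value on $Y_m$, giving
\begin{equation*}
-1=(\bar E'_k)^2=-3+(E'_k\gamma)^2,\qquad\text{i.e.}\qquad (E'_k\gamma)^2=2 .
\end{equation*}
This is impossible for an integer $E'_k\gamma$, and the contradiction rules out all three cases.

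The main obstacle I anticipate is the bookkeeping guaranteeing that exactly two vertical $(-1)$-curves are contracted, that $G'$ is necessarily one of them, and that $G'$ is disjoint from the chosen section; once this is secured, the non-square value $2=(\bar E'_k)^2-(E'_k)^2$ yields the contradiction with no further case distinction. A secondary point to verify carefully is the identity $(\bar E'_k)^2=-1$, which rests on the absence of multiple fibres on an elliptic surface admitting a section, together with the canonical bundle formula $K_{Y_m}\equiv-\bar A'$ for the rational case $\chi(\mathcal O_{Y_m})=1$.
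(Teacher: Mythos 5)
Your proof is correct, and it takes a genuinely different route from the paper's. The paper argues purely numerically with the adjoint systems it has already built: since $\ell=0$ forces $n=n'=0$, it computes $A'N_1$, $\Phi'N_1$, $A'N_2$, $\Phi'N_2$ and observes that $E'_i\Phi'=E'_i(N-A')\le -1$ forces the curves $E'_i$ with $A'E'_i=1$ into the fixed part $\Phi'$; in case $(0a)$ this yields $4=(E'_1+E'_2)N_2\le \Phi'N_2=3$, and in cases $(0c)$, $(0f)$ it yields $\Phi'N_1\ge 2$ against $\Phi'N_1=1$, both contradictions resting on the nefness of $N_1$ and $N_2$. You instead exploit the elliptic fibration $\phi_{|A'|}$ directly: with $\ell=0$ one has $K_Y^2=-2$ by \eqref{e.k2}, so reaching the relatively minimal model costs exactly two blow-downs of vertical $(-1)$-curves; $G'$ may be taken as the first and is disjoint from the $(-3)$-section $E'_k$, so a single remaining contraction must raise $(E'_k)^2$ from $-3$ to the value $-1$ that adjunction with $K_{Y_m}\equiv-\bar A'$ dictates for a section of a relatively minimal rational elliptic surface, which is impossible. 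All your inputs check out: $A'E'_k=1$ from the explicit $D$ of proposition \ref{p.list0}, $A'G'=0$ from corollary \ref{c.AG} (as $A^2\ne 9$), $G'E'_k=0$ because $G$ and $E_k$ are disjoint on $X$, connectedness of the fibres because $A'E'_k=1$ rules out a composed pencil, and absence of multiple fibres because of the section. One small point of presentation: the step $-1=-3+(E'_k\gamma)^2$ tacitly assumes $\bar E'_k$ smooth, since adjunction with $p_a=0$ fails if $E'_k\gamma\ge 2$ and the image acquires a singular point; but this is harmless, because $\gamma$ is a fibre component and the section meets each fibre exactly once, whence $E'_k\gamma\le 1$ and $(\bar E'_k)^2\in\{-3,-2\}\ne -1$, which closes the argument even more cleanly. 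What your approach buys is a uniform, more geometric treatment of all three cases at once, at the price of invoking the structure theory of relatively minimal elliptic surfaces; the paper's computation is less conceptual but stays entirely within the adjoint-system machinery it reuses throughout the classification.
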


\begin{proof}
Let us begin with case $(0a)$ of proposition \ref{p.list0}. Then $n=3\ell=n'=0$, $A'N=2$ and
$A'N_1=A'(N+K_Y-G')=A'N=2$
hence $\Phi'N_1=(N-A')N_1=2$.

 Moreover $A'N_2=A'(N_1+K_Y-G')=A'N_1=2$ and $NN_2=N(N_1+K_Y-G')=4+1=5$ which implies $\Phi'N_2=3$. But we know, from proposition \ref{p.list0},
$E'_i\Phi'=E'_i(N-A')=-1$
for $i=1,2$ which forces $E'_1+E'_2\le \Phi'$. Moreover
$E'_kN_2=E'_k(N+2K_Y-2G')=2$
for all $k=1,\dots,h_1$. 

Then we get a contradiction since $4=(E'_1+E'_2)N_2\le \Phi'N_2=3$.

Assume now that either  case $(0c)$ or case $(0f)$ of proposition \ref{p.list0} holds.  Then $n=3\ell=0$, $A'N=3, \Phi'N=0$ and $A'N_1=A'N=3$.
Then
$\Phi'N_1=(N-A')N_1=1$.
Since $N_1$ is nef there exists exactly one irreducible component $D$ of $\Phi'$ such that $DN_1=1$.
We also know, from proposition \ref{p.list0},
$E'_i\Phi'=E'_i(N-A')\le -1$
for $i=1,2$ and then since
$E'_kN_1=1$
for all $k=1,\dots,h_1$ we get a contradiction.
\end{proof}

From theorem \ref{t.iii} 
and propositions \ref{p.1e}, \ref{p.no0d} and \ref{p.l0} we obtain the following 
\begin{theorem}\label{t.iii2}
Case (iii) of page \pageref{le.sub} can only occur  
when one of the following conditions is satisfied:
\begin{enumerate}
\item $\ell=0$: Cases $(0g)$, $(1a)$, $(1d)$, $(1f)$, $A'=N$  
\item $\ell=1$:   Cases  $(1f)$,  $A'=N$ 
\end{enumerate}

Moreover in cases $(0g)$, $(1f)$ and $A'=N$ we have $\Phi=0$, i.e. the invariant pencil $\Lambda\le|3K_S|$ has no fixed part.
\end{theorem}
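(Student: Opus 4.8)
The plan is to start from the classification already obtained in Theorem~\ref{t.iii} and simply delete from each of its three $\ell$-branches the cases that have meanwhile been ruled out by the exclusion results of this section. Recall that Theorem~\ref{t.iii} lists, for $\ell=0$, the eight possibilities $(0a)$, $(0c)$, $(0f)$, $(0g)$, $(1a)$, $(1d)$, $(1f)$ and $A'=N$; for $\ell=1$, the four possibilities $(0d)$, $(1e)$, $(1f)$ and $A'=N$; and for $\ell=2,3$, only $(1e)$. Since these are the only configurations compatible with case (iii), it suffices to intersect each branch with the set of cases not yet excluded.

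First I would invoke Proposition~\ref{p.l0} to remove $(0a)$, $(0c)$ and $(0f)$ from the $\ell=0$ branch, leaving exactly $(0g)$, $(1a)$, $(1d)$, $(1f)$ and $A'=N$, which is the first line of the asserted list. Next I would apply Proposition~\ref{p.no0d} to delete $(0d)$ and Proposition~\ref{p.1e} to delete $(1e)$ from the $\ell=1$ branch; after these deletions only $(1f)$ and $A'=N$ survive, matching the second line. Finally, since Proposition~\ref{p.1e} also eliminates $(1e)$, the whole $\ell=2,3$ branch of Theorem~\ref{t.iii} --- whose sole member was $(1e)$ --- disappears, so no configuration with $\ell\ge 2$ can occur. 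The closing assertion that $\Phi=0$ precisely in cases $(0g)$, $(1f)$ and $A'=N$ is inherited verbatim from the last clause of Theorem~\ref{t.iii}, since each of these three cases survives the above pruning and already carries that property there.

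The combination is purely bookkeeping and presents no real obstacle: the entire mathematical content has already been discharged in the cited Propositions~\ref{p.1e}, \ref{p.no0d}, \ref{p.l0} and in Theorem~\ref{t.iii} itself. The only point deserving a moment's care is to verify that the three exclusion propositions together remove from Theorem~\ref{t.iii}'s list exactly the cases that must be removed, and that no surviving case is accidentally dropped; this one checks directly by comparing the two lists branch by branch, as done above.
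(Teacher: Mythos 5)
Your proposal is correct and coincides with the paper's own argument: the paper derives Theorem~\ref{t.iii2} exactly by combining Theorem~\ref{t.iii} with Propositions~\ref{p.1e}, \ref{p.no0d} and \ref{p.l0}, pruning the three branches as you do and inheriting the $\Phi=0$ clause from Theorem~\ref{t.iii}. Nothing further is needed.
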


\section{Ruled cases}\label{s.rul}

\subsection{$n=3\ell$}

By definition \ref{d.rul} and the results of section \ref{s.3l} we know that $|\bar N_3|$ induces a morphism  $g:W \longrightarrow \mathbb F_a$ for some $a\ge 0$ with $K_W^2=K_Y^2+1+3\ell=-1$. Then we have 
\begin{equation*}
K_{\mathbb F_a}=-2c-(a+2)f, \quad \st g f=\bar N_3 
\end{equation*}
where $c$ is the $(-a)$-section of $\mathbb F_a$ and $f$ is a fibre of the ruling $\mathbb F_a\longrightarrow \mathbb P_1$. Then $K_W=-2\st g c-(a+2)\bar N_3+\Delta$  where $\Delta$ is the exceptional divisor of $g$.

Therefore
\begin{eqnarray*}
&\bar N_2=\bar N_3-K_W=2\st g c+(a+3)\bar N_3-\Delta\\
&\bar N_1=\bar N_2-K_W=4\st g c+(2a+5)\bar N_3-2\Delta\\
&\bar N=\bar N_1-K_W=6\st g c+(3a+7)\bar N_3-3\Delta
\end{eqnarray*}

\begin{lemma}\label{l.a2}
In the above setting $0\le a\le 2$.
\end{lemma}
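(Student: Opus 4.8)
The goal is to bound the invariant $a$ of the Hirzebruch surface $\mathbb{F}_a$ by $0\le a\le 2$. I would start from the divisor class identities already established: on the surface $W$ (obtained from $Y$ by contracting the relevant $(-1)$-curves) we have the morphism $g:W\to\mathbb{F}_a$ with $\st{g}{f}=\bar{N}_3$, and the expressions

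\begin{align*}
&\bar N=6\st g c+(3a+7)\bar N_3-3\Delta.
\end{align*}

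The plan is to extract numerical constraints by pairing $\bar N$ (or one of the $\bar N_i$) against the section $\st{g}{c}$ and against $\bar N_3$, and then to invoke nefness and the known intersection data. Recall $c^2=-a$, $cf=1$, $f^2=0$ on $\mathbb{F}_a$, so $(\st g c)^2=-a+(\text{corrections from }\Delta)$, $\st g c\,\bar N_3=\st g c\,\st g f=1$, and $\bar N_3^2=0$.

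**The main computation.** The key is that $\bar N$ is the proper transform (or pushforward image) of $N=3K_Y+2B_0+E'-3G'$, and its intersection numbers are pinned down by Lemma \ref{le.N}: $N^2=3$ and $NK_Y=1-2R_0K_S=1$ (since $R_0K_S=0$ in case (iii)). I would compute $\bar N\,\st{g}{c}$ using the expression above. Writing $\bar N\,\st g c = 6(\st g c)^2+(3a+7)\st g c\,\bar N_3 - 3\Delta\st g c$, and using $\st g c\,\bar N_3=1$ together with $(\st g c)^2=-a$ (on $\mathbb{F}_a$, before accounting for $\Delta$, which is $g$-exceptional and orthogonal to pullbacks), I expect something like $\bar N\,\st g c = -6a+(3a+7)=7-3a$ up to contributions from $\Delta\st g c$. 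The decisive step is that $\bar N$ must be nef (it is a nonnegative combination related to the nef divisor $N$, or equivalently $\st\pi N=\st\varepsilon{3K_S}$ is nef and big), so $\bar N\,\st g c\ge 0$ because $\st g c$ is an effective irreducible curve. This yields an inequality of the form $7-3a\ge 0$, i.e.\ $a\le 2$.

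**The lower bound and the obstacle.** The bound $a\ge 0$ is automatic since $\mathbb{F}_a$ is only defined for $a\ge 0$, so the real content is $a\le 2$. The main obstacle I anticipate is correctly handling the exceptional divisor $\Delta$ of $g$: the raw identity $(\st g c)^2=-a$ holds only for the pullback, and I must verify that the terms $\Delta\st g c$ and $\Delta\bar N_3$ either vanish or contribute with the right sign so as not to destroy the inequality. Since $\bar N_3=\st g f$ is a pullback it satisfies $\bar N_3\Delta=0$, which is clean; the term $\Delta\st g c$ requires knowing how the blown-up points sit relative to the section $c$, and one must argue that $\Delta\st g c\ge 0$ so that $-3\Delta\st g c\le 0$ only helps (or is controlled). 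I would also double-check by pairing against $\bar N_3$ itself: $\bar N\,\bar N_3=6\st g c\,\bar N_3=6$, which is forced and serves as a consistency check. Assembling $\bar N\,\st g c\ge 0$ with the computed value then gives $a\le 2$, completing the proof.
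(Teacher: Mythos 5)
Your proof is correct and takes essentially the same route as the paper, whose entire argument is that nefness of $\bar N$ gives $\st g c\,\bar N=7-3a\ge 0$, hence $0\le a\le 2$. Your residual worry about the term $\Delta\,\st g c$ is vacuous: $\st g c$ denotes the total pullback $g^*(c)$, which is orthogonal to the $g$-exceptional divisor $\Delta$ by the projection formula (as your own parenthetical already observes), so $\st g c\,\bar N=6(-a)+(3a+7)\cdot 1=7-3a$ exactly.
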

\begin{proof}
Since $\bar N$ is nef we find 
$\st g c\bar N=7-3a\ge 0$ and then $0\le a\le 2$.
\end{proof}

We look at $2\bar B_0+\bar E'$: from the definition of $N$ on the surface $Y$ one has
\begin{equation}\label{e.rul}
2\bar B_0+\bar E'=\bar N-3K_W=12\st g c+(6a+13)\bar N_3-6\Delta
\end{equation}
whence $g(2\bar B_0+\bar E')=12c+(6a+13)f$. 
Furthermore 
\begin{equation*}
\st g{c}(2\bar B_0+\bar E')=\st g c(12\st g c+(6a+13)\bar N_3-6\Delta)=-12a+13+6a=13-6a
\end{equation*}

From theorem \ref{t.iii2} we have $\ell=0,1$.

\subsubsection{\underline{$\ell=0$}}
 
Let us assume $\ell=0$. Then $2\bar B_0+\bar E'=\bar E'$ and we can write 
 $\bar E'_i=\alpha_i\st g c+\beta_i\bar N_3+\sum_j \gamma_{ji}\Delta_j$. We also recall that $h_1=4+\ell=4$ from \eqref{e.fixed}. Then
\begin{equation*}
3=\bar N_3\bar E'_i=\bar N_3[\alpha_i\st g c+\beta_i\bar N_3+\sum_j \gamma_{ji}\Delta_j]=\alpha_i
\end{equation*}
This implies 
\begin{equation*}
\st g c\bar E'_i=\st g c[\alpha_i\st g c+\beta_i\bar N_3+\sum_j \gamma_{ji}\Delta_j]=-a\alpha_i+\beta_i=\beta_i-3a
\end{equation*}
\begin{lemma}\label{l.beta}
In the above setting we have $\st g c\bar E_i'\ge 0$. In particular we have  $\beta_i\ge 3a$ for all $i=1,\dots,4$.
\end{lemma}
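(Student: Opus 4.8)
The plan is to avoid computing $\beta_i$ directly and instead to deduce the inequality from the effectivity of the pullback $\st g c$ via the projection formula. Note first that the two assertions are equivalent: writing $\bar E'_i=\alpha_i\st g c+\beta_i\bar N_3+\sum_j\gamma_{ji}\Delta_j$ with $\alpha_i=3$ and using $(\st g c)^2=c^2=-a$, $\st g c\,\bar N_3=\st g c\,\st g f=cf=1$ and $\st g c\,\Delta_j=c\,g_*\Delta_j=0$ (the $\Delta_j$ being $g$-exceptional), one gets $\st g c\,\bar E'_i=\beta_i-3a$. So it suffices to prove $\st g c\,\bar E'_i\ge 0$.

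First I would observe that $\st g c$ is effective, being the total transform under the birational morphism $g$ of the irreducible curve $c\subset\mathbb F_a$, and that by the projection formula $\st g c\,\bar E'_i=c\,g_*(\bar E'_i)$. Thus the sign is controlled entirely by the image of $\bar E'_i$. Next I would rule out that $\bar E'_i$ is $g$-exceptional: since $\bar N_3=\st g f$, the projection formula gives $3=\bar N_3\,\bar E'_i=f\,g_*(\bar E'_i)$, so $g_*(\bar E'_i)\neq 0$. As $g$ is birational, $\bar E'_i$ maps birationally onto its reduced image, whence $g_*(\bar E'_i)=g(\bar E'_i)$ and $f\,g(\bar E'_i)=3$.

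It then remains to note that $g(\bar E'_i)\neq c$: indeed $f\,c=1\neq 3=f\,g(\bar E'_i)$, so the image is a genuine multisection, distinct from the negative section $c$. Two distinct irreducible curves on the smooth surface $\mathbb F_a$ meet non-negatively, hence $c\,g(\bar E'_i)\ge 0$, and therefore $\st g c\,\bar E'_i=c\,g(\bar E'_i)\ge 0$, giving $\beta_i\ge 3a$. The only point requiring care is the passage $g_*(\bar E'_i)=g(\bar E'_i)$ together with the exclusion of the value $c$ for the image; both are settled at once by the fibre-degree identity $f\,g(\bar E'_i)=3$ coming from $\bar N_3\,\bar E'_i=3$, so I expect no real obstacle, the argument being a direct application of the projection formula to the effective class $\st g c$.
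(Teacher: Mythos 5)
Your proof is correct and in substance the same as the paper's: both arguments turn on the single fibre-degree computation $\bar E'_i\bar N_3=3$ versus $cf=1$, which is exactly how the paper rules out the only way the intersection could be negative. You merely transport the argument down to $\mathbb F_a$ via the projection formula (the image of $\bar E'_i$ is distinct from $c$, and distinct irreducible curves on a smooth surface meet non-negatively), whereas the paper argues upstairs on $W$, noting that $\st g c\,\bar E'_i<0$ with $\bar E'_i$ irreducible would force $\bar E'_i\le \st g c$, hence $\bar E'_i=\bar c$, contradicting $\bar c\,\bar N_3=1$.
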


\begin{proof}
It is obvious since $\st g c\bar E_i'<0$ and the irreducibility of $\bar E'_i$ would imply $\bar E'_i\le \st g c$ and therefore $\bar E'_i=\bar c$ the strict transform of $c$. 
Then we get a contradiction since 
$3=\bar E'_i\bar N_3=\bar c\bar N_3=\st g c\bar N_3=1$. 
\end{proof}

Moreover from equation \eqref{e.rul} we have $\sum_{i=1}^4 \beta_i=13+6a$.

\begin{lemma}\label{l.-1}
Each irreducible component $C$ in the singular fibres of $\phi_{|\bar N_3|}:W\longrightarrow \mathbb P^1$ is a rational curve  with $C^2=-1,-2$.  
\end{lemma}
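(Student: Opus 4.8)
The plan is to extract the two inequalities $C^2\le 0$ and $K_WC\le 0$ from the two nef divisors that are already available, namely the fibre class $\bar N_3$ and its adjoint $\bar N_2=\bar N_3-K_W$, and then to feed them into the adjunction formula. First I would record the geometry coming from section \ref{s.3l}: the pencil $|\bar N_3|$ is base point free with $\bar N_3^2=0$ and $p_a(\bar N_3)=0$, so $\phi_{|\bar N_3|}\colon W\longrightarrow\mathbb P^1$ is a genuine fibration with connected fibres whose general member is a smooth rational curve. Any irreducible component $C$ of a fibre then satisfies $C\bar N_3=0$. Since $\bar N_3$ is nef (it is the pullback of a ruling fibre of $\mathbb F_a$) and $\bar N_3^2=0$, the Index theorem forces $C^2\le 0$, with equality only if $C$ is numerically proportional to $\bar N_3$. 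A proper component of a reducible fibre cannot be proportional to $\bar N_3$, for then it would be orthogonal to, hence disjoint from, the remaining components, contradicting the connectedness of the fibre; and a genus $0$ fibration has no irreducible singular fibre because $p_a(\bar N_3)=0$ forces an irreducible fibre to be a smooth $\mathbb P^1$. Hence $C^2\le -1$ for every component $C$ of a singular fibre.

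Next I would invoke the adjoint divisor. From the relations established above we have $\bar N_2=\bar N_3-K_W$, and $\bar N_2$ is nef because it is induced by the nef divisor $N_2$ on $Y$ (its pullback to $Y$ is $N_2$). Therefore
\begin{equation*}
0\le \bar N_2 C=(\bar N_3-K_W)C=-K_WC,
\end{equation*}
so $K_WC\le 0$. Substituting $C^2\le -1$ and $K_WC\le 0$ into adjunction gives $2p_a(C)-2=C^2+K_WC\le -1$, whence $p_a(C)=0$ and $C$ is a smooth rational curve. Finally $C^2=2p_a(C)-2-K_WC=-2-K_WC\ge -2$ because $K_WC\le 0$, and combined with $C^2\le -1$ this yields $C^2\in\{-1,-2\}$, as claimed.

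The one genuinely delicate point is the nefness of $\bar N_2$ on $W$: everything else is a one-line combination of the Index theorem, adjunction, and the numerical data already assembled in section \ref{s.3l}. I would justify it by observing that the contraction $Y\to W$ only contracts $(-1)$-cycles that are $N_2$-trivial, so that $N_2$ is the pullback of $\bar N_2$ and nefness is preserved. The remaining work is pure bookkeeping — verifying that a singular fibre of a genus $0$ fibration is necessarily reducible, which is exactly what excludes the spurious value $C^2=0$ and pins the self-intersection to $-1$ or $-2$.
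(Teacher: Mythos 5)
Your proposal is correct and takes essentially the same route as the paper, whose entire proof is the one-line instruction to use the nefness of $\bar N_3$ and $\bar N_2=\bar N_3-K_W$ together with Zariski's lemma and the Index theorem --- exactly your combination of $C\bar N_3=0$ and the signature argument to get $C^2\le -1$, nefness of $\bar N_2$ (correctly justified via $N_2=\mu^*\bar N_2$, since the contracted cycles $G'$ and $Z_i$ are $N_2$-trivial) to get $K_WC\le 0$, and adjunction to conclude $p_a(C)=0$ and $C^2\in\{-1,-2\}$. The only micro-point worth noting is that your exclusion of $C^2=0$ tacitly assumes there are no multiple fibres (a fibre $mC$ with $m\ge 2$ has irreducible support and $C^2=0$), but this is immediate here: either by parity, since $mK_WC=-2$ forces $m=2$ and $K_WC=-1$, contradicting adjunction on the rational curve $C$, or simply because $\bar N_3=\st g f$ pulls back the ruling of $\mathbb F_a$, so every fibre contains the strict transform of $f$ with multiplicity one.
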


\begin{proof}
Use  that $\bar N_3$ and $\bar N_2=\bar N_3-K_W$ are nef divisors and apply Zariski's lemma and the Index theorem.
\end{proof}

\begin{remark}\label{r.-2}
For any $(-2)$-curve $C'$ which is contained in a singular fibre we have 
$C'\bar E'=C'(\bar N_3-6K_W)=0$
so $C'$ does not intersect any of the curves $\bar E'_k$. Therefore the intersection of $\bar E'_k$ with the singular fibres is only given by the points of intersection with the $(-1)$-curves.
\end{remark}

\begin{lemma}\label{l.-1m}
Any singular fibre contains  two irreducible $(-1)$-curves with multiplicity 1. 
\end{lemma}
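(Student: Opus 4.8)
The plan is to combine the numerical identity $FK_W=-2$, valid for every fibre $F$ of $\phi_{|\bar N_3|}$, with the fact that each curve $\bar E'_i$ is a $3$-section of the fibration meeting no $(-2)$-component of a fibre.

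First I would observe that, since $|\bar N_3|$ is a pencil of rational curves with $\bar N_3^2=0$, every fibre $F$ is numerically equivalent to $\bar N_3$, so $F^2=0$ and, from $p_a(\bar N_3)=0$, also $FK_W=-2$. Writing $F=\sum_i m_iC_i$ and invoking lemma \ref{l.-1}, each component $C_i$ is either a $(-1)$-curve (so $C_iK_W=-1$) or a $(-2)$-curve (so $C_iK_W=0$). Hence
\[
-2=FK_W=-\sum_{i:\,C_i^2=-1} m_i,
\]
so the multiplicities of the $(-1)$-curves appearing in $F$ add up to exactly $2$. Only two configurations are therefore possible: either $F$ contains two distinct $(-1)$-curves, each of multiplicity $1$ (which is the assertion), or $F$ contains a single $(-1)$-curve $C$ of multiplicity $2$, all remaining components being $(-2)$-curves.

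The crux is to rule out the second configuration, and for this I would use a parity argument. Since $h_1=4\ge 1$ in the case $\ell=0$, there is at least one curve $\bar E'_i$; as $\bar E'_i\bar N_3=3>0$ it is horizontal and meets $F$ in $\bar E'_i F=3$. By remark \ref{r.-2} no $(-2)$-curve of a fibre meets $\bar E'$, so in the hypothetical configuration $F=2C+\sum_j m_jD_j$ (with the $D_j$ being $(-2)$-curves) we would obtain
\[
3=\bar E'_i F=2(\bar E'_i C)+\sum_j m_j(\bar E'_i D_j)=2(\bar E'_i C),
\]
an even number, which is absurd. This contradiction forces the first configuration, proving that every singular fibre contains two irreducible $(-1)$-curves of multiplicity $1$.

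I do not expect a genuine obstacle here: once lemma \ref{l.-1} and the relation $\bar E'=\bar N_3-6K_W$ (equivalently, remark \ref{r.-2} together with $\bar E'_i\bar N_3=3$) are available, the whole argument is purely numerical. The only points to verify carefully are that the $(-1)$-curve bookkeeping exhausts the contribution of all fibre components to $FK_W$, and that $\bar E'_i$ is transverse to the fibration rather than contained in a fibre; both are immediate from $\bar E'_i\bar N_3=3$ and the list of admissible components in lemma \ref{l.-1}.
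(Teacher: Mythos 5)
Your proposal is correct and follows essentially the same route as the paper: both arguments first use $\bar N_3\bar K_W=-2$ together with lemma \ref{l.-1} to conclude that the $(-1)$-components of a singular fibre have total multiplicity $2$, leaving only the two configurations you list. The exclusion of a single $(-1)$-curve of multiplicity $2$ is also the paper's: since $\bar E'_k\bar N_3=3$ is odd and, by remark \ref{r.-2}, $\bar E'_k$ meets no $(-2)$-component of a fibre, such a fibre would force an even intersection number, a contradiction.
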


\begin{proof}
 Assume that there are $m$ irreducible $(-1)$-curves appearing with multiplicity $b_i\ge 1$ $i=1,\dots,m$. Then from lemma \ref{l.-1} and  the rationality of $|\bar N_3|$ we find
$-2=\bar N_3\bar K_W=-\sum_{i=1}^m b_i$
hence either $m=1, b_1=2$ or $m=2, b_1=b_2=1$.

Since $\bar E'_k\bar N_3=3$ for any $k=1,\dots,4$ and since the curves $\bar E'_k$ cannot intersect the $(-2)$-curves in each singular fibre (see remark \ref{r.-2}) there cannot be a fibre with only one $(-1)$-curve of multiplicity 2. 
\end{proof}

\begin{lemma}\label{l.2fibre}
For any singular fibre the curve $\bar E'$ intersects the exceptional curves of that fibre.
\end{lemma}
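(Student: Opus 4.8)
The plan is to reduce the statement to a one-line intersection computation resting on the same identity that already proved Remark \ref{r.-2}. Since we are in the case $\ell=0$, the ramification divisor is $R_0=0$, so $\bar B_0=0$ and equation \eqref{e.rul} collapses to $\bar E'=\bar N-3K_W$. Combining this with the chain $\bar N=\bar N_1-K_W$, $\bar N_1=\bar N_2-K_W$, $\bar N_2=\bar N_3-K_W$ recorded just above, which gives $\bar N=\bar N_3-3K_W$, I would first isolate the clean expression
\begin{equation*}
\bar E'=\bar N_3-6K_W.
\end{equation*}
This is exactly the formula underlying the computation in Remark \ref{r.-2}, and it is the crux of the whole lemma.

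Next I would fix an arbitrary singular fibre $\Psi$ of the fibration $\phi_{|\bar N_3|}\colon W\to\mathbb P^1$ and feed in the structural results obtained so far. By Lemma \ref{l.-1} every component of $\Psi$ is a smooth rational curve of self-intersection $-1$ or $-2$, and by Lemma \ref{l.-1m} the fibre contains exactly two $(-1)$-curves $C_1,C_2$, each of multiplicity one. For each such $C_i$ one has $\bar N_3C_i=0$, since $C_i$ lies in a fibre, while adjunction for a smooth rational $(-1)$-curve gives $K_WC_i=-1$. Substituting into the displayed identity yields
\begin{equation*}
\bar E'\,C_i=\bar N_3C_i-6\,K_WC_i=0-6(-1)=6>0\qquad(i=1,2),
\end{equation*}
so $\bar E'$ meets each of the two exceptional curves $C_1,C_2$ of the fibre, which is precisely the assertion.

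As an internal check (and an alternative route avoiding adjunction on individual components) I would also observe that every fibre is linearly equivalent to $\bar N_3$, whence $\bar E'\cdot\Psi=\bar E'\cdot\bar N_3=\sum_{k=1}^{4}\bar E'_k\bar N_3=4\cdot 3=12$; since $\bar E'$ is disjoint from all $(-2)$-components of $\Psi$ by Remark \ref{r.-2} and the two $(-1)$-curves appear with multiplicity one, this forces $\bar E'C_1+\bar E'C_2=12$, in agreement with the computation above. There is essentially no serious obstacle here: the only point requiring care is to verify that the simplification $\bar E'=\bar N_3-6K_W$ genuinely holds, i.e.\ that $\bar B_0=0$ in the case $\ell=0$, after which the conclusion is immediate from adjunction together with Lemma \ref{l.-1m}.
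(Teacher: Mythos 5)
Your argument is correct, and it takes a genuinely different route from the paper's. The paper proceeds by contradiction: if $\bar E'$ missed every $g$-exceptional component of a singular fibre $\psi$, the full intersection number $\bar E'\bar N_3=12$ would fall on the strict transform $\Gamma$ of the corresponding fibre $f_1$ of $\mathbb F_a$ (the unique component of $\psi$ not contracted by $g$, occurring with multiplicity $1$); expanding $\bar E'=12g^*(c)+(6a+13)\bar N_3-6\Delta$ and using $\Gamma\Delta\ge 1$ together with the parity $\Gamma\Delta\equiv 0\bmod 2$ forces $\Gamma\,g^*(c)\ge 2$, contradicting $f_1c=1$. You instead use the same identity in the cleaner form $\bar E'=\bar N_3-6K_W$ (valid because $\bar B_0=0$ when $\ell=0$, exactly as in remark \ref{r.-2}) to compute directly that every $(-1)$-component $C$ of a singular fibre satisfies $\bar E'C=-6K_WC=6$, while every $(-2)$-component is disjoint from $\bar E'$; this avoids the geometry of the ruling (the section $c$, the parity trick) entirely and yields a sharper conclusion, namely that $\bar E'$ meets each of the two $(-1)$-curves of every singular fibre with multiplicity exactly $6$, consistent with your check $6+6=12=\bar E'\bar N_3$. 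The one point you should make explicit: in the paper ``the exceptional curves of that fibre'' means the components contracted by $g:W\longrightarrow\mathbb F_a$, and one of your two $(-1)$-curves may be the strict transform of $f_1$, hence not exceptional (this happens, for instance, when $g$ blows up a single point of $f_1$, so that $\psi$ consists of two $(-1)$-curves of which only one is contracted). Since a fibre has exactly one component not contracted by $g$, at least one of the two $(-1)$-curves you hit is $g$-exceptional, so the lemma as stated (and as used in the proof of lemma \ref{l.a1}) follows; adding that one sentence closes the only gap, which is terminological rather than mathematical.
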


\begin{proof}
Assume there is a singular fibre $\psi$ of $g:W\longrightarrow \mathbb F_a$ such that $\bar E'$ does not intersect any of the exceptional curves of that fibre.
Then there exists a curve $\Gamma$ in $\psi$ such that $\Gamma\bar E'=\bar E'\bar N_3=12$ and $\Gamma$ is not contracted by $g:W\longrightarrow \mathbb F_a$. Hence
\begin{equation*}
12=\Gamma\bar E'=\Gamma(12\st g c+(13+6a)\bar N_3-6\Delta)=12\Gamma\st g c-6\Gamma\Delta.
\end{equation*}
Since $\Gamma\Delta\ge 1$ and $\Gamma\Delta\equiv 0 \mod 2$ we find
$12\le 12\Gamma \st g c-12$
hence $\Gamma\st g c\ge 2$. Let $g(\Gamma)=f_1$ be the  fibre of the ruling of $\mathbb F_a$ obtained by $\Gamma$. Then
$1=f_1c=\st g {f_1}\st g c=\Gamma\st g c\ge 2$
and we get  a contradiction.
\end{proof}

\begin{lemma}\label{l.a1}
In the above setting we can reduce to the case $a=1$ unless $a=2$ and $\phi_{|\bar N_3|}:W\longrightarrow \mathbb P^1$ has at most two singular fibres.
\end{lemma}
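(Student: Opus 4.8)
The strategy is to present the passage $\mathbb F_2\rightsquigarrow\mathbb F_1$ as an elementary transformation of the ruling, carried out by re-contracting the $(-1)$-curves of a single singular fibre of $\phi_{|\bar N_3|}$. The relevant object is the strict transform $\bar c$ of the $(-2)$-section $c$: since $\bar c\,\bar N_3=c\cdot f=1$ it is a section of $\phi_{|\bar N_3|}$, and $g$ exhibits $\mathbb F_2$ as the relatively minimal model on which $\bar c$ becomes the $(-2)$-section. Any other way of contracting the fibre $(-1)$-curves down to a relatively minimal model yields some $\mathbb F_b$, and since contracting a fibre $(-1)$-curve $C$ raises the self-intersection of the section by $\bar c\,C$, reaching $\mathbb F_1$ amounts to performing exactly one more contraction of a $(-1)$-curve meeting $\bar c$ than $g$ does. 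Using Lemmas \ref{l.-1} and \ref{l.-1m} to describe each singular fibre as a tree of $(-1)$- and $(-2)$-curves, I would show that such an extra section-meeting contraction exists --- so that we can reduce to $a=1$ --- unless every singular fibre is \emph{pinned}, meaning that the component met by $\bar c$ is a $(-2)$-curve which no re-contraction can turn into a $(-1)$-curve still met by $\bar c$.

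Next I would determine exactly what a pinned fibre looks like. Because $\bar c$ meets a fibre only in the single point lying over $c\cap f_t$, a pinned fibre must carry a blow-up centred at $c\cap f_t\in c$ whose exceptional curve becomes a $(-2)$-curve met by $\bar c$; an extra off-section blow-up would re-route $\bar c$ onto a $(-1)$-curve and unpin the fibre, so this configuration is essentially forced. By Lemma \ref{l.2fibre} the divisor $\bar E'$ meets the exceptional curves of that fibre, and by Remark \ref{r.-2} it meets only the $(-1)$-components; the only exceptional $(-1)$-curve present lies (infinitely near) over $c\cap f_t$, so $g(\bar E'_i)\equiv 3c+\beta_i f$ must pass through $c\cap f_t$ for some $i$, costing one unit of intersection with $c$. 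The numerical data make this resource extremely scarce: from $\st g c\,\bar E'_i=\beta_i-3a$, Lemma \ref{l.beta}, and $\sum_{i=1}^4\beta_i=13+6a=25$ for $a=2$, one gets $\sum_i g(\bar E'_i)\cdot c=\sum_i(\beta_i-6)=1$, so $\bar E'$ meets $c$ in a single point.

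Combining the two steps yields the count: each pinned fibre consumes a point of $\bar E'\cap c$, and there is essentially only one such point, so together with the global budget of nine blow-ups forced by $K_W^2=-1$ one concludes that at most two singular fibres can be pinned. Hence if $\phi_{|\bar N_3|}$ has three or more singular fibres then at least one is unpinned, the elementary transformation applies, and we obtain a model over $\mathbb F_1$, i.e. we reduce to $a=1$; the only alternative is $a=2$ with at most two singular fibres, which is the stated exception. The delicate point, and the main obstacle, is the exhaustive classification of the pinned configurations: one must verify that no auxiliary blow-up secretly frees the section (which would unpin the fibre), and keep careful track of the infinitely near centres and of the multiplicities of the $\bar E'_i$ there, so as to certify that the single available intersection point of $\bar E'$ with $c$ cannot support three pinned fibres simultaneously.
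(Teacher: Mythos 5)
Your strategy is essentially the paper's own: reduce to $\mathbb F_1$ by re-contracting a singular fibre (an elementary transformation), and control the obstruction through lemma \ref{l.2fibre}, remark \ref{r.-2}, and the budget $\st g c\,\bar E'=13-6a$ obtained from $\st g c\,\bar E'_i=\beta_i-3a$, lemma \ref{l.beta} and $\sum_{i=1}^4\beta_i=13+6a$. Your ``pinned'' fibre is exactly the paper's fibre all of whose exceptional curves contract to a single point lying on the $(-2)$-section $c$. But the step you single out as the ``main obstacle'' --- certifying that the single point of $\st g c\cdot\bar E'$ cannot serve several pinned fibres --- requires no exhaustive classification of configurations, and this is precisely where the paper closes in one line. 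Distinct singular fibres lie over distinct points of $\mathbb P^1$, so their points $c\cap f_t$ are distinct points of $c$; for each pinned fibre, lemma \ref{l.2fibre} together with remark \ref{r.-2} forces $\bar E'$ to meet an exceptional $(-1)$-curve lying over $c\cap f_t$, and since $c$ passes through that centre, this contributes at least $1$ to $\st g c\,\bar E'$; all other contributions are non-negative because the $\bar E'_i$ are not contracted by $g$. As the total is $13-12=1$ for $a=2$, at most one fibre can be pinned. Note that your asserted count ``at most two pinned fibres'' does not follow from anything you wrote: with budget $1$ and one unit consumed per pinned fibre the bound is one, and the ``global budget of nine blow-ups forced by $K_W^2=-1$'' plays no role in the argument (it is a red herring). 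The bound of one is stronger than what the lemma needs, so the conclusion ``three or more singular fibres implies reduction to $a=1$'' still holds, with slack.

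Two further points. First, the statement also asserts that one can always reduce from $a=0$ to $a=1$, which your proposal never addresses; the paper handles $a=0$ (and the case $a=2$ with the relevant point off $c$) by first choosing $g$ so that the chain formed by $\bar F'$, $\bar H'$ and an adjacent $(-1)$-curve contracts to a point $P_0$ on a fibre $f_0$, and then performing the elementary transformation at $P_0$ --- your proposal never uses $F'$ and $H'$ at all. This is easily repaired (for $a=0$, blowing up any centre $P$ of $g$ gives a surface dominating both $\mathbb F_0$ and $\mathbb F_1$), but as written it is missing. Second, your claim that an extra section-meeting contraction lands on $\mathbb F_1$ deserves a word: a section of self-intersection $-1$ on $\mathbb F_b$ only forces $b=1$ once one knows $b$ is small, which follows from the nefness of $\bar N$ exactly as in lemma \ref{l.a2}; this should be said rather than assumed.
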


\begin{proof}
We know that $\bar F'\bar N_3=\bar H'\bar N_3=0$ and the two $(-2)$-curves are contained in a singular fibre  of $\phi_{|\bar N_3|}$. We can choose the map $g$ so that it contracts these curves to a point which is now on a nonsingular fibre $f_0$ of the map $\mathbb F_a\longrightarrow \mathbb P^1$. 

If $a=0$ and  we blow up the above point and we consider the section $c$ intersecting $f_0$ at that point, the strict transform of $c$ is a $(-1)$-curve. By contracting  the strict transform of $f_0$ the exceptional divisor becomes a curve with self-intersection 0. Therefore the surface now obtained is $\mathbb F_1$.

We can do the same for $a=2$ if the point  is not  the intersection point between $f_0$ and the $(-2)$-section $c$ on $\mathbb F_2$. 

Assume now  that $a=2$ and $c$ passes through the above point $P_0$. We can reduce to $a=1$ if we find a singular fibre $f_1$ such that $c$ does not pass through the point obtained by contracting all the exceptional curves of $g:W\longrightarrow \mathbb F_2$ in that fibre.

Let us suppose such a fibre $f_1$ does not exist. Then for any singular fibre $f'$ the $(-2)$-section $c$ passes through the point $P'$ which is the contraction of all the exceptional curves in that fibre.
From lemma \ref{l.2fibre} we can deduce that $P$ must be a point in  $\bar E'$. 
Since $\st g c \bar E'=13-6a=1$ there can be at most one such fibre. Thus, if the number of singular fibres is at least 3 we are done.
 \end{proof}

\begin{lemma}\label{l.neg}
For any $i=1,\dots,4$ and $j=1,\dots,9$ we have
\begin{eqnarray*}
&\Delta_k\sum_{j}\gamma_{ji}\Delta_j=0 \quad \text{if}\ \Delta_k^2=-2\\
&0\le \Delta_k\sum_{j}\gamma_{ji}\Delta_j\le 3 \quad \text{if}\ \Delta_k^2=-1.
\end{eqnarray*}
\end{lemma}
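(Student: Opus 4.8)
The plan is to reduce the quantity $\Delta_k\sum_j\gamma_{ji}\Delta_j$ to a single intersection number $\Delta_k\bar E'_i$ and then to read off the two bounds from the geometry of the fibration $\phi_{|\bar N_3|}\colon W\to\mathbb P^1$. First I would use the decomposition $\bar E'_i=3\st g c+\beta_i\bar N_3+\sum_j\gamma_{ji}\Delta_j$ to write $\sum_j\gamma_{ji}\Delta_j=\bar E'_i-3\st g c-\beta_i\bar N_3$. Since $\Delta_k$ is contracted by $g$ and both $\st g c$ and $\bar N_3=\st g f$ are pullbacks from $\mathbb F_a$, the projection formula gives $\Delta_k\st g c=\Delta_k\bar N_3=0$, so that
\begin{equation*}
\Delta_k\sum_j\gamma_{ji}\Delta_j=\Delta_k\bar E'_i.
\end{equation*}
This identity turns the lemma into a statement about how the contracted curves meet the images $\bar E'_i$.

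Next I would exploit that, for $\ell=0$, equation \eqref{e.rul} reads $\bar E'=\bar N_3-6K_W$, where $\bar E'=\sum_{i=1}^4\bar E'_i$. Since each $\Delta_k$ is a smooth rational curve, adjunction gives $\Delta_kK_W=-2-\Delta_k^2$, hence $\Delta_kK_W=0$ when $\Delta_k^2=-2$ and $\Delta_kK_W=-1$ when $\Delta_k^2=-1$. Combining with $\Delta_k\bar N_3=0$ this yields $\Delta_k\bar E'=-6\Delta_kK_W$, that is $\Delta_k\bar E'=0$ in the first case and $\Delta_k\bar E'=6$ in the second. For the $(-2)$-case I would then note that $\bar E'_i$ is irreducible and distinct from $\Delta_k$ (they have different $\bar N_3$-degree, namely $3$ versus $0$), so each summand $\Delta_k\bar E'_i\ge 0$; since they sum to $\Delta_k\bar E'=0$ they must all vanish, which is exactly the first assertion.

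For the $(-1)$-case the sum condition $\sum_{i}\Delta_k\bar E'_i=6$ only gives $\Delta_k\bar E'_i\le 6$, so the real work — and the main obstacle — is the sharper bound $\Delta_k\bar E'_i\le 3$. Here I would use that $\Delta_k\bar N_3=0$ together with $\Delta_k^2=-1<0$ force $\Delta_k$ to be a proper component of a singular fibre $F$ of $\phi_{|\bar N_3|}$, and that by lemma \ref{l.-1m} it occurs in $F$ with multiplicity $1$. Writing $F=\Delta_k+F'$ with $F'$ effective and not containing $\bar E'_i$ (again because $\bar E'_i\bar N_3=3\neq 0$ while every fibre component has $\bar N_3$-degree $0$), one gets $F'\bar E'_i\ge 0$ and therefore
\begin{equation*}
0\le\Delta_k\bar E'_i\le\Delta_k\bar E'_i+F'\bar E'_i=F\bar E'_i=\bar N_3\bar E'_i=3,
\end{equation*}
which is the second assertion. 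The delicate point to get right is precisely the multiplicity-one statement of lemma \ref{l.-1m} together with remark \ref{r.-2} (that the $(-2)$-curves in a fibre do not meet $\bar E'$), since without controlling the singular fibre structure one cannot improve the crude bound $6$ to the required $3$.
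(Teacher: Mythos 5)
Your proof is correct and takes essentially the same route as the paper: the reduction $\Delta_k\sum_j\gamma_{ji}\Delta_j=\Delta_k\bar E'_i$ via the decomposition of $\bar E'_i$, the vanishing in the $(-2)$ case via $\bar E'=\bar N_3-6K_W$ (which is precisely the computation behind remark \ref{r.-2}, cited by the paper), and the bound $0\le\Delta_k\bar E'_i\le\bar E'_i\bar N_3=3$ from the fibre structure, which the paper states in a single line. One small remark: the multiplicity-one statement of lemma \ref{l.-1m} that you flag as the delicate point is not actually needed, since once $\Delta_k\bar E'_i\ge 0$ is known, a fibre component of any multiplicity $b\ge 1$ still satisfies $\Delta_k\bar E'_i\le b\,\Delta_k\bar E'_i\le F\bar E'_i=3$.
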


\begin{proof} 
Since $\bar E'_i=3\st g c+\beta_i\bar N_3+\sum_{j}\gamma_{ji}\Delta_j$ we have 
\begin{equation*}
0\le \bar E'_i\bar \Delta_k=\Delta_k\sum_j \gamma_{ji}\Delta_j\le \bar E'_i\bar N_3=3.
\end{equation*}
From lemma \ref{l.-1} the curves $\Delta_k$ have self-intersection $-2\le\Delta_k^2\le -1$. 

Furthermore, from remark \ref{r.-2}, if $\Delta_k^2=-2$ then 
$\bar E'_i\Delta_k=0$. 
Hence
$0=\bar E'_i\Delta_k=\Delta_k \sum_j\gamma_{ji}\Delta_j$
which proves the lemma.
\end{proof}

\begin{cor}\label{c.sum}
For any $i=1,\dots,4$ we have 
$\Delta\sum_{j}\gamma_{ji}\Delta_j\le 6r$
where $r$ is the number of singular fibres of $g:W\longrightarrow \mathbb F_a$.
\end{cor}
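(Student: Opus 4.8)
The plan is to sum the inequalities of Lemma \ref{l.neg} over all irreducible exceptional curves of $g$ and then to bound how many of them can be $(-1)$-curves. Writing $\Delta=\sum_k\Delta_k$ for the reduced exceptional divisor of $g$, I would first record that $\Delta\sum_j\gamma_{ji}\Delta_j=\sum_k\Delta_k\sum_j\gamma_{ji}\Delta_j$, and that each summand equals $\bar E'_i\Delta_k$: indeed, since $\bar E'_i=3\st g c+\beta_i\bar N_3+\sum_j\gamma_{ji}\Delta_j$ and both $\st g c$ and $\bar N_3=\st g f$ are pulled back from $\mathbb F_a$, they meet every exceptional curve in $0$. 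By Lemma \ref{l.neg} the term $\Delta_k\sum_j\gamma_{ji}\Delta_j$ vanishes when $\Delta_k^2=-2$ and is at most $3$ when $\Delta_k^2=-1$, and by Lemma \ref{l.-1} these are the only two possibilities for components lying in the singular fibres. Hence $\Delta\sum_j\gamma_{ji}\Delta_j\le 3\nu$, where $\nu$ denotes the number of irreducible $(-1)$-curves occurring among the $\Delta_k$.

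The next step is to see that every exceptional curve $\Delta_k$ is vertical for the fibration $\phi_{|\bar N_3|}:W\longrightarrow\mathbb P^1$. Since this fibration factors as the ruling $\mathbb F_a\longrightarrow\mathbb P^1$ composed with $g$, and $\Delta_k$ is contracted by $g$, its image in $\mathbb P^1$ is a single point; being contracted, $\Delta_k$ cannot lie in a general fibre, so it sits inside one of the $r$ singular fibres of $\phi_{|\bar N_3|}$. Thus all the $(-1)$-curves contributing to $\nu$ are distributed among these $r$ singular fibres.

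Finally I would invoke Lemma \ref{l.-1m}, which states that each singular fibre contains exactly two irreducible $(-1)$-curves (the alternative of a single $(-1)$-curve of multiplicity $2$ having been ruled out there). Consequently each singular fibre contributes at most two curves to the count $\nu$, giving $\nu\le 2r$ and therefore
\[
\Delta\sum_j\gamma_{ji}\Delta_j\le 3\nu\le 3\cdot 2r=6r,
\]
which is the asserted bound.

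This corollary is essentially a bookkeeping consequence of the two preceding lemmas, so I do not expect a serious obstacle. The only point that needs a little care is the fibrewise control of the number of $(-1)$-curves: one must check that every exceptional component of $g$ genuinely lies in a singular fibre (so that Lemma \ref{l.-1m} applies uniformly) and that the $(-2)$-components, which Lemma \ref{l.neg} shows contribute nothing, can simply be discarded from the sum.
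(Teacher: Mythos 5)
Your proof is correct and follows essentially the same route as the paper: discard the $(-2)$-components and bound each $(-1)$-component's contribution by $3$ via Lemma \ref{l.neg}, then bound the number of such $(-1)$-curves by $2r$ via Lemma \ref{l.-1m}. The only difference is that you spell out the (implicitly assumed) facts that $\Delta_k\sum_j\gamma_{ji}\Delta_j=\bar E'_i\Delta_k$ and that every exceptional curve of $g$ lies in a singular fibre of $\phi_{|\bar N_3|}$, which the paper takes for granted.
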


\begin{proof}
Let us set 
$V:=\{v\,|\,\Delta_v^2=-1\}$.
From the above proposition we have 
\begin{equation*}
\Delta\sum_{j}\gamma_{ji}\Delta_j=\sum_{v\in V}\Delta_v \gamma_{ji}\Delta_j\le 3|V|
\end{equation*}
where $|V|$ is the cardinality of the set $V$. From lemma \ref{l.-1m} there are two simple $(-1)$-curves in each of the $r$ singular fibres then $|V|\le 2r$ as wanted.
\end{proof}

We are now ready to show that the reduction to $a=1$ it is always possible. 

\begin{prop}\label{p.no2}
The case $a=2$ cannot occur with $r\le 2$ singular fibres.
\end{prop}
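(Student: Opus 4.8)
The plan is to push the class $\bar E'$ down to the relatively minimal model and read off a contradiction from the number $r$ of singular fibres. By the discussion of Section \ref{s.3l} we are in the situation $\ell=0$, $n=3\ell=0$, where $g\colon W\longrightarrow \mathbb F_2$ is birational with $K_W^2=-1$; since $K_{\mathbb F_2}^2=8$, the map $g$ is the blow-up of $\mathbb F_2$ at nine points, so the exceptional divisor $\Delta$ satisfies $\Delta^2=-9$. As $\ell=0$ we have $\bar B_0=0$, and formula \eqref{e.rul} with $a=2$ becomes $\bar E'=2\bar B_0+\bar E'=12\,\st g c+25\,\bar N_3-6\Delta$.

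The heart of the argument is to estimate $\bar E'\Delta$ from two sides. On the one hand, from this class and the relations $\st g c\cdot\Delta=\bar N_3\cdot\Delta=0$ one computes $\bar E'\Delta=-6\Delta^2=54$. On the other hand, writing $\bar E'=\sum_{i=1}^4\bar E'_i$ and recalling from Lemma \ref{l.neg} that $\bar E'_i$ meets only the $(-1)$-components of the singular fibres (and each in at most $3$ points, since it is disjoint from the $(-2)$-curves by Remark \ref{r.-2}), Corollary \ref{c.sum} gives $\bar E'_i\Delta\le 6r$ for every $i$. Summing over the four curves yields $\bar E'\Delta\le 24r$. Comparing, $54\le 24r$, hence $r\ge 3$, contradicting the hypothesis $r\le 2$; this rules out $a=2$ with at most two singular fibres. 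After contracting, the remaining possibility is thus always reducible to $a=1$.

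The step I expect to be the main obstacle is making the equality $\bar E'\Delta=54$ and the bound $\bar E'\Delta\le 24r$ genuinely compatible as statements about the \emph{same} divisor. Because $r\le 2$ forces at least one fibre to absorb five or more of the nine blow-ups, while each singular fibre may contain only two $(-1)$-curves (Lemma \ref{l.-1m}), these blow-ups are necessarily infinitely near and create $(-2)$-curves among the exceptional components. One must therefore keep careful track of which of the nine exceptional curves are $(-1)$-curves and which are $(-2)$-curves, and verify that the contribution of $\Delta$ to $\bar E'\Delta$ is controlled by the $(-1)$-components counted in $|V|\le 2r$ feeding Corollary \ref{c.sum}, while the total self-intersection $\Delta^2=-9$ is preserved. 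It is precisely this bookkeeping of the exceptional configuration — the interplay between the fixed total $\Delta^2=-9$ and the sensitivity of Corollary \ref{c.sum} to the number of singular fibres — that produces the numerically tight inequality $r\ge 3$ and hence the contradiction.
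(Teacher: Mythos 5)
Your proof is correct, and it takes a genuinely (if mildly) different route from the paper's. The paper argues curve by curve: expanding $\bar E'_i=3\st g c+\beta_i\bar N_3+\sum_j\gamma_{ji}\Delta_j$ and intersecting with $\bar N_2=2\st g c+(a+3)\bar N_3-\Delta$, the condition $\bar E'_i\bar N_2=2$ gives $\bar E'_i\Delta=2\beta_i+7-3a$, which combined with lemma \ref{l.beta} (i.e.\ $\beta_i\ge 3a$) and corollary \ref{c.sum} yields $7+3a\le 6r$, hence $r\ge 3$ precisely when $a=2$. You instead compute the aggregate intersection number: from \eqref{e.rul} with $\bar B_0=0$ (we are in the subsection $\ell=0$, so $h_1=4$) one has $\bar E'=12\,\st g c+(6a+13)\bar N_3-6\Delta$, and since $\st g c\,\Delta=\bar N_3\Delta=0$ and $\Delta^2=K_W^2-K_{\mathbb F_a}^2=-1-8=-9$, it follows that $\bar E'\Delta=54$ for every $a$; summing corollary \ref{c.sum} over the four curves gives $\bar E'\Delta\le 24r$, whence $54\le 24r$ and $r\ge 3$. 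Both arguments rest on the same key inputs (lemmas \ref{l.-1}, \ref{l.-1m}, \ref{l.neg} packaged into corollary \ref{c.sum}); what yours buys is independence of lemma \ref{l.beta} and of the adjoint $\bar N_2$, plus a slightly stronger conclusion, namely $r\ge 3$ for all $0\le a\le 2$ rather than only for $a=2$. This stronger statement is in fact consistent with the paper's own formulas: summing its identities $\bar E'_i\Delta=2\beta_i+7-3a$ over $i$ and using $\sum_{i=1}^4\beta_i=13+6a$ returns exactly $54$, independently of $a$. What the per-curve version buys in exchange is the $a$-sensitive inequality $7+3a\le 6r$, which however is not used elsewhere in the paper. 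One remark: the cautionary final paragraph of your write-up is unnecessary. The quantities $\bar E'\Delta=54$ and $\bar E'\Delta\le 24r$ are intersection numbers of one and the same divisor class, so no compatibility issue can arise; all the bookkeeping you worry about (which exceptional components are $(-1)$-curves versus $(-2)$-curves, their multiplicities in the singular fibres, infinitely near base points) is already encapsulated in lemma \ref{l.neg}, lemma \ref{l.-1m} and corollary \ref{c.sum}, which you correctly cite as black boxes, exactly as the paper does.
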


\begin{proof}
We know from the formulas of page \pageref{s.rul} that
$\bar N_2=2\st g c+(a+3)\bar N_3-\Delta$.
Then
\begin{align*}
2&=E'_iN_2=\bar E'_i\bar N_2=(3\st g c+\beta_i\bar N_3+\sum_{j}\gamma_{ji}\Delta_j)(2\st g c+(a+3)\bar N_3-\Delta)\\
&=-6a+3a+9+2\beta_i-\Delta\sum_j\gamma_{ji}\Delta_j
\end{align*}
hence
$\Delta\sum_j\gamma_{ji}\Delta_j=2\beta_i+7-3a$.
Thus, from lemma \ref{l.beta} and corollary \ref{c.sum},
\begin{equation*}
7+3a\le 2\beta_i+7-3a\le 6r
\end{equation*}
where $r$ is the number of singular fibres of $g:W\longrightarrow \mathbb F_a$. When 
$a=2$ we get $6r\ge 13$ and then $r\ge 3$ as wanted.
\end{proof}

From now on we assume $a=1$.
The pencil $|\bar N_3|$ is mapped to the pencil of lines of $\mathbb P^2$ through a  point $P$.
Then $|\bar N_2|$ maps to the net of quartics with 1 double point and 9 simple base points, $|\bar N_1|$ to the net of curves of degree 7 with 1 triple point and 9 double points (with no other simple base points), and
$|\bar N|$ to the pencil of curves of degree 10 with one quadruple point, 9 triple points and no other base points.

\begin{theorem}\label{t.no0}
The case  $n=3\ell=0,n'=n''=0$ cannot occur.
\end{theorem}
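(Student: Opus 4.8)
The plan is to push the plane-curve dictionary of the reduced case $a=1$ to its logical end, using the four exceptional trisections. Since $\ell=0$ we have $h_1=4$ and $\bar B_0=0$, so all the information is carried by the four disjoint smooth rational $(-3)$-curves $\bar E'_i$, each of which is a trisection of $\phi_{|\bar N_3|}$ (recall $\bar E'_i\bar N_3=3$). Writing $\bar E'_i=3\st g c+\beta_i\bar N_3+\sum_j\gamma_{ji}\Delta_j$ with $\beta_i\ge 3$ (lemma \ref{l.beta}) and $\sum_{i=1}^4\beta_i=13+6a=19$ (from \eqref{e.rul}), I would set $m_{ji}:=-\gamma_{ji}=\bar E'_i\Delta_j\ge 0$, so that $m_{ji}\le\bar E'_i\bar N_3=3$.

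First I would rigidify the configuration of exceptional curves. The computation of $\bar E'_i\bar N_2$ carried out in proposition \ref{p.no2} gives, for $a=1$, the identity $\Delta\sum_j\gamma_{ji}\Delta_j=2\beta_i+4$; summing over $i$ and reading the $\Delta$-coefficients of \eqref{e.rul} (with $\bar B_0=0$, so $\sum_i\gamma_{ji}=-6$) yields $-6\Delta^2=\sum_i(2\beta_i+4)=54$, hence $\Delta^2=-9$. Since any infinitely near blow-up would strictly increase $\Delta^2$, the nine exceptional curves lie over nine \emph{distinct} points, are disjoint $(-1)$-curves, and (using lemma \ref{l.-1}, as a fibre component has self-intersection $\ge -2$) each fibre of $\phi_{|\bar N_3|}$ contains at most two of them. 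Because $\sum_{i,j}m_{ji}^2=\sum_i(6\beta_i-6)=90$ while at each point $\sum_i m_{ji}=6$ with every $m_{ji}\le 3$, each point contributes at least $10$ (the minimum of a partition of $6$ into four parts $\le 3$, attained only by $(2,2,1,1)$); the average being exactly $10$ forces the multiplicity pattern at \emph{every} point to be $(2,2,1,1)$. Consequently $m_{ji}\in\{1,2\}$ throughout, $a_i:=\#\{j:m_{ji}=2\}=2\beta_i-5$, and the disjointness relation $\sum_j m_{ji}m_{jk}=3\beta_i+3\beta_k-9$ (from $\bar E'_i\bar E'_k=0$) rewrites as $\#\{j:m_{ji}=m_{jk}=2\}=\beta_i+\beta_k-8\ge 0$. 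Thus every pair obeys $\beta_i+\beta_k\ge 8$, and together with $\sum\beta_i=19$ and $3\le\beta_i\le 7$ only the partitions $\{3,5,5,6\}$, $\{4,4,4,7\}$, $\{4,4,5,6\}$, $\{4,5,5,5\}$ remain.

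It then remains to eliminate these finitely many partitions, and here the triple-cover data enters. Evaluating the building relation $3L_1\equiv \bar E'_++2\bar E'_-+F'+2H'$ of the abelian cover against $K_Y$ (using that $\bar E'_i,F',H'$ are $(-3)$-curves, so each meets $K_Y$ in $1$) forces $11-h_{11}\equiv 0\pmod 3$, hence the eigenvalue split is $h_{11}=h_{12}=2$. I would pair each admissible partition with each way of labeling the four trisections by the eigenvalues $\omega,\omega^2$, and test it against the integrality of $L_1,L_2$ evaluated on $\st g c$ and the $\Delta_j$, against the vanishing $h^0(Y,\mathcal O_Y(-L_i))=0$, and against $L_i^2+L_iK_Y=-2$ (lemma \ref{l.chili}); equivalently one confronts the rigid nine-point configuration with the explicit plane models of the movable part $|A'|$ of $|N|$. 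The main obstacle is precisely this last step: the intersection numbers and even the individual invariants of $L_1,L_2$ are all self-consistent, so no single numerical identity closes the case, and one must track the interaction between the forced $(2,2,1,1)$ plane configuration and the $\mathbb Z/3\mathbb Z$-eigenvalues of the four branch/ramification trisections in order to reach the contradiction.
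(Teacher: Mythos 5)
Your derivation of the necessary conditions is largely sound, but the proposal does not prove the theorem, and you concede as much in your closing paragraph: the eigenvalue split $h_{11}=h_{12}=2$ and the four surviving partitions $\{3,5,5,6\}$, $\{4,4,4,7\}$, $\{4,4,5,6\}$, $\{4,5,5,5\}$ are constraints that you yourself verify to be mutually consistent with lemma \ref{l.chili} and with the integrality of $L_1,L_2$, so ``tracking the interaction between the $(2,2,1,1)$ configuration and the eigenvalues'' is not a routine final check but precisely the missing argument, and nothing in your set-up guarantees it terminates in a contradiction. One intermediate step is also unjustified as stated: the identity $\Delta^2=-9$ is automatic and carries no information, because $\Delta=K_W-g^{*}K_{\mathbb F_1}$ is orthogonal to $g^{*}K_{\mathbb F_1}$, whence $\Delta^2=K_W^2-K_{\mathbb F_1}^2=-1-8=-9$ for \emph{any} proximity structure of the nine centres; it therefore cannot show that the nine points are distinct. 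Your subsequent counting survives this, since it only uses the orthogonal total-transform classes, but then the $m_{ji}$ are virtual multiplicities at possibly infinitely near points, so the forced per-point pattern $(2,2,1,1)$ does not by itself produce an honest planar configuration, and proximity inequalities would have to enter before any geometric conclusion could be drawn.

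The decisive input in the paper's proof, which your framework never touches, is the movable part $|A'|$ of the invariant pencil $|N|$. In every case left open by theorem \ref{t.iii2} with $\ell=0$ (cases $(0g)$, $(1a)$, $(1d)$, $(1f)$ and $A'=N$) one computes $A'N_3=A'(N+3K_Y-3G')=6$, so the plane image of $\bar A'$ is a curve of degree $d$ with a point of multiplicity $d-6$ at $P$; the remaining case-specific invariants $A'N=3$, $\bar A'{}^2$ and $p_a(\bar A')$ then yield equations on the multiplicities $s_j$ at $P_1,\dots,P_9$ which force $(d-6)^2=0$, i.e.\ $d=6$, after which $\sum_j js_j=19$ and $\sum_j j^2s_j=35$ require $s_1+s_2=11+2s_4$, i.e.\ more than the nine available base points --- a contradiction --- while for $A'=N$ the forced $d=6$ is incompatible with $\deg\bar N=10$. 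Note that this endgame exploits data that vary with the case of propositions \ref{p.list0} and \ref{p.list1} ($\bar A'{}^2=1$ and $p_a(\bar A')=2$, versus $\bar A'{}^2=3$, $p_a=3$ for $A'=N$), whereas all of your constraints are case-independent, which is structurally why they stall: your configuration analysis of the four trisections would at best have to be confronted with exactly this plane model of $|A'|$ --- the step your proposal explicitly defers --- and as it stands the statement is not proved.
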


\begin{proof}
We compute the plane image of $|\bar A'|$. From theorem \ref{t.iii2} we know that $\ell=0$ can only occur in case $(0g)$ of proposition \ref{p.list0}, in cases $(1a)$, $(1d)$ or $(1f)$ of proposition \ref{p.list1} and when $A'=N$. Then, using also proposition \ref{pr.fix} and corollary \ref{c.AG}, ${A'}^2=0,A'K_Y=2, A'N=3, A'G'=1$ in the former case while we have  $A'K_Y=1, A'N=3, A'G'=0$ in the  latter cases with ${A'}^2=1$ unless $A'=N$. Hence we find
\begin{equation*}
A'N_3=A'(N+3K_Y-3G')=3+3A'K_Y-3A'G'=6
\end{equation*}
in all the above cases.
Then $A'$ is mapped onto a plane curve of degree $d$ with a point of multiplicity $d-6$ at $P$ and, denoting by $s_j$ the number of points of multiplicity $j$ among $P_1,\dots,P_9$,
\begin{equation*}
3=A'N=\bar A'\bar N=10d-3\sum_j js_j-4(d-6)
\end{equation*}
hence
\begin{equation}\label{e.ja}
\sum_j js_j=2d+7
\end{equation}

We also have $\bar{A'}^2=1, p_a(\bar A')=2$ in all the above cases except for $A'=N$. Then, if $A'\ne N$,
\begin{equation*}
1=\bar {A'}^2=d^2-\sum_j j^2s_j
\end{equation*}
hence 
\begin{equation}\label{e.j2a}
\sum_j j^2s_j=d^2-1
\end{equation}
and 
\begin{align*}
2=p_a(\bar A')&=\frac{(d-1)(d-2)}2-\sum_js_j\frac{j(j-1)}2-\frac{(d-6)(d-7)}2\\
&=\frac{d^2-3d+2-d^2+13d-42}2-\sum_js_j\frac{j(j-1)}2
\end{align*}
hence
\begin{equation}\label{e.jdiffa}
\sum_js_j j(j-1)=10d-44.
\end{equation}

Then comparing \eqref{e.ja}, \eqref{e.j2a} and \eqref{e.jdiffa} we get
\begin{equation*}
10d-44=d^2-1-(2d+7)=d^2-2d-8
\end{equation*}
hence
\begin{equation*}
d^2-12d+36=(d-6)^2=0
\end{equation*}
which forces $d=6$. In this case \eqref{e.ja} and \eqref{e.j2a} become
\begin{eqnarray*}
&\sum_j js_j=19\\
&\sum_j j^2s_j=35.
\end{eqnarray*}

We now easily infer $j\le 5$. Subtracting the first equation from the second one we find
\begin{align*}
16&=35-19=(25s_5+16s_4+9s_3+4s_2+s_1)+\\
&-(5s_5+4s_4+3s_3+2s_2+s_1)=20s_5+12s_4+6s_3+2s_2.
\end{align*}
Hence $s_5=0,s_4\le 1$.
Then we find 
$6s_3+2s_2=16-12s_4$
or equivalently, 
$3s_3+s_2=8-6s_4$
and substituting in \eqref{e.ja}
\begin{equation*}
s_1+s_2=19-4s_4-(3s_3+s_2)=19-4s_4-(8-6s_4)=11+2s_4
\end{equation*}
which gives a contradiction since $\sum_j s_j\le 9$.

We now discuss the case $A'=N$. Then $\bar {A'}^2=\bar N^2=3$ and $p_a(\bar A')=3$. Then
\begin{equation*}
3=\bar {A'}^2=d^2-\sum_j j^2s_j
\end{equation*}
forces 
\begin{equation}\label{e.j2n}
\sum_j j^2s_j=d^2-3
\end{equation}
and 
\begin{align*}
3=p_a(\bar A')&=\frac{(d-1)(d-2)}2-\sum_js_j\frac{j(j-1)}2-\frac{(d-6)(d-7)}2\\
&=\frac{d^2-3d+2-d^2+13d-42}2-\sum_js_j\frac{j(j-1)}2
\end{align*}
hence
\begin{equation}\label{e.jdiffn}
\sum_js_j j(j-1)=10d-46
\end{equation}

Thus comparing \eqref{e.ja}, \eqref{e.j2n} and \eqref{e.jdiffn} we find
\begin{equation*}
10d-46=d^2-3-(2d+7)=d^2-2d-10
\end{equation*}
hence
$d^2-12d+36=(d-6)^2=0$
forces $d=6$ while we know $d=10$ since $\bar A'=\bar N$.
\end{proof}

\subsubsection{\underline{$\ell=1$}}

Let us now assume $B_0\ne 0$.
For any curve $E'_i$ on the rational surface $Y$ there is at most one of the curves $Z_j$ intersecting $E'_i$ (see corollary \ref{c.Zint}). 

Assume $n=3\ell=3$. 
If all the cycles $Z_j$ are irreducible then there are exactly 3 of the 5 curves $E'_i$ (we can suppose they are $E'_{3},E'_{4}$ and $E'_5$)  intersected by one (and only one) of the curves $Z_j$: we have $E'_iN_3=0$ for each of them. Hence  they are contained in singular fibres of the map $\phi_{|N_3|}:Y\longrightarrow \mathbb P_1$. 

If one of the cycles $Z_j$ is reducible then, from corollary \ref{c.red}, either $Z_1,Z_2$ are irreducible and $Z_3=Z_1+Z_2+E'_k$ for some $1\le k\le h_1=5$ or  $Z_1$ is irreducible, $Z_2=Z_1+C$ (with $C$ a $(-2)$-curve) and  $Z_3=Z_1+Z_2+E'_k$ for some $k\le 5$. In any case we have $E'_kN_1=0$ (see proposition \ref{p.-1cycles}) hence $E'_kN_3=0$. Moreover since $\ell=1$  from lemma \ref{l.-3Z} and corollary \ref{c.Zint}
$B_0\sum_{i=1}^nZ_i=2B_0Z_1+2B_0Z_2=4$
hence $B_0N_1=0$ forces $B_0N_3=0$. 

We now can show the following
\begin{theorem}\label{t.no1rul}
The case $n=3\ell,n'=n''=0$ with $\ell=1$ cannot occur.
\end{theorem}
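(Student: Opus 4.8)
The plan is to carry over, almost verbatim, the plane-model analysis used in the proof of theorem \ref{t.no0}. By theorem \ref{t.iii2} the only configurations that survive with $\ell=1$ are case $(1f)$ of proposition \ref{p.list1} and the case $A'=N$, and the relevant computations for both of these are already performed in the proof of theorem \ref{t.no0}; the only genuinely new feature here is the presence of the $(-6)$-curve $\bar B_0$. So the strategy is: produce a morphism onto $\mathbb F_1$, read off the plane model of $|\bar N|$ and of $|\bar A'|$, and recover the same degree equation and the same numerical contradiction as in the $\ell=0$ theorem.

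First I would produce the morphism $g\colon W\longrightarrow\mathbb F_1$. Since $\ell=1$ gives $K_Y^2=-2-3\ell=-5$ and hence $K_W^2=K_Y^2+1+3\ell=-1$, exactly as when $\ell=0$, the surface $W$ is $\mathbb F_1$ blown up at nine points; equivalently $W$ is $\mathbb P^2$ blown up at one point $P$ (coming from the $(-1)$-section) and at nine further points $P_1,\dots,P_9$. Consequently $|\bar N_3|$ maps to the pencil of lines through $P$ and, just as recorded before theorem \ref{t.no0}, $|\bar N|$ maps to plane curves of degree $10$ with a quadruple point at $P$ and triple points at $P_1,\dots,P_9$. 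The normalisation $a=1$, however, must be re-established: the ``negative'' class is now $2\bar B_0+\bar E'=\bar N-3K_W$ rather than $\bar E'$, so the singular-fibre count of $\phi_{|\bar N_3|}$ and the exclusion of $a=2$ have to be redone. Here one uses that $B_0N_3=0$ when one of the cycles $Z_j$ is reducible, and that three of the five curves $E'_i$ satisfy $E'_iN_3=0$ otherwise (both facts established just above), so that $\phi_{|\bar N_3|}$ still possesses at least three singular fibres and the arguments of lemma \ref{l.a1} and proposition \ref{p.no2} again rule out $a=2$.

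Next I would compute the plane image of $|\bar A'|$. On $W$ one has $\bar N_3=\bar N+3K_W$, so $\bar A'\bar N_3=\bar A'\bar N+3\bar A'K_W$, where $\bar A'\bar N=A'N=3$ by proposition \ref{pr.fix}. Since the curves contracted by $Y\longrightarrow W$ do not meet $A'$ (they lie in the part of $N$ of zero intersection, and $A'G'=0$ by corollary \ref{c.AG}), the image $\bar A'$ retains the invariants of $A'$: namely $\bar{A'}^2=1,\ p_a(\bar A')=2$ in case $(1f)$, and $\bar{A'}^2=3,\ p_a(\bar A')=3$ when $A'=N$. In either case $\bar A'K_W=2p_a(\bar A')-2-\bar{A'}^2=1$, whence $\bar A'\bar N_3=6$ and $\bar A'$ is a plane curve of degree $d$ with a point of multiplicity $d-6$ at $P$ and prescribed multiplicities $s_j$ at the $P_j$. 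Substituting into the degree, genus and self-intersection relations exactly as in theorem \ref{t.no0} forces $(d-6)^2=0$, so $d=6$; then in case $(1f)$ the multiplicity bookkeeping gives $\sum_j s_j>9$, impossible since there are only nine points $P_1,\dots,P_9$, while for $A'=N$ the value $d=6$ contradicts the fact that $\bar A'=\bar N$ has degree $10$. Either way the case $n=3\ell$, $n'=n''=0$ with $\ell=1$ is excluded.

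I expect the main obstacle to be exactly this reduction step onto $\mathbb F_1$: in contrast with the situation $\ell=0$, the extra $(-6)$-curve $\bar B_0$ enters the key relation $2\bar B_0+\bar E'=\bar N-3K_W$, so the counting of singular fibres and the elimination of the possibility $a=2$ are the delicate part and must be carried out with $\bar B_0$ taken into account. Once $a=1$ has been secured, the plane-curve computation is formally identical to that of theorem \ref{t.no0}, and the numerical contradiction follows at once.
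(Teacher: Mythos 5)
Your proposal takes a genuinely different route from the paper, and the route as sketched has real gaps. The paper never reduces the $\ell=1$ ruled case to $\mathbb F_1$ at all: its proof of theorem \ref{t.no1rul} is a direct Euler-number count for the fibration $\phi_{|N_3|}\colon Y\longrightarrow\mathbb P^1$ on $Y$ itself. Since $K_Y^2=-2-3\ell=-5$ by \eqref{e.k2}, one has $\delta=e(Y)-e(N_3)e(\mathbb P^1)=12-K_Y^2-4=13$, while by lemma \ref{l.-n} the curves forced into singular fibres already contribute at least $15$: the $(-3)$-curves $F'$ and $H'$ always, plus either the three curves $E'_3,E'_4,E'_5$ with $E'_iN_3=0$ when all the $Z_j$ are irreducible (total $3\cdot 5=15$), or $E'_k$ together with the $(-6)$-curve $B_0$ with $B_0N_3=0$ when some $Z_j$ is reducible (total $3+3+3+6=15$). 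Note that the two facts you invoke merely to guarantee ``at least three singular fibres'' (namely $E'_iN_3=0$ for three of the five $E'_i$, and $B_0N_3=0$ in the reducible case) are exactly the input of this count; pushed one line further they close the proof, with no need of the morphism to $\mathbb F_a$, of the normalisation $a=1$, or of any plane model.

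Beyond being roundabout, your plan has concrete unproved steps. First, the reduction to $a=1$ is asserted, not re-established: lemmas \ref{l.beta}, \ref{l.-1m}, \ref{l.2fibre}, \ref{l.a1} and proposition \ref{p.no2} are proved under the hypothesis $\ell=0$ and rest on $\bar E'_i\bar N_3=3$ for \emph{every} $i$ (whence $\alpha_i=3$, $\beta_i\ge 3a$, $\sum_i\beta_i=13+6a$, and $\bar E'_i\bar N_2=2$ in proposition \ref{p.no2}); for $\ell=1$ three of the five curves satisfy $\bar E'_i\bar N_3=\bar E'_i\bar N_2=0$ and $\bar B_0$ (with $\bar B_0\bar N_3=3$ or $0$) enters every intersection budget, so none of those computations carries over verbatim and you supply no replacement. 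Second, the disjointness of $A'$ from the contracted locus needs a proof: in case $(1f)$ it does follow, because $\Phi'=F'+2G'+H'$ and $NZ_i=NG'=0$ give $A'Z_i=(N-\Phi')Z_i=0$ via corollaries \ref{c.F} and \ref{c.noG}, but ``they lie in the part of $N$ of zero intersection'' is not an argument. Third, the endgame is not ``formally identical'' and safe: since $\bar A'$ has multiplicity $d-6$ at $P$, the self-intersection relation should read $\bar{A'}^2=d^2-(d-6)^2-\sum_j j^2s_j$; with that term included, the degree, self-intersection and genus relations are no longer independent (the genus relation is just adjunction given the other two), so they do not by themselves force $(d-6)^2=0$ --- for instance $d=9$ with $s_3=7$, $s_2=5-3=2$ satisfies all three numerically, and for $A'=N$ the actual degree-$10$ model is consistent with the corrected system. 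So the contradiction you aim at does not follow by the proposed substitution, and the paper's short fibre count is the argument to use here.
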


\begin{proof}
Let us now consider the fibration given by the rational pencil $|N_3|$. If we set $\delta:=\sum_s (e(N_{3s}-e(N_3))$ from \cite[Proposition III.11.4]{BPV} we have
\begin{equation*}
\delta=e(Y)-e(N_3)e(\mathbb P_1)=12-K_Y^2-4\overset{\eqref{e.k2}}{=}12+2+3\ell-4=13.
\end{equation*}

From lemma \ref{l.-n} every curve $C$ in a singular fibre contributes  $-C^2$ to $\delta$.
If all the cycles $Z_i$ are irreducible then  the $(-3)$-curves $F',H',E'_3,E'_{4},E'_{5}$ are disjoint and they are contained in singular fibres.

If $Z_3$ is reducible then the $(-3)$-curves $F',H',E'_k$ and the $(-6)$-curve $B_0$ are contained in singular fibres. 
In any case  we find a contradiction since $13=\delta \ge 15$.
\end{proof}

\subsection{$n=3\ell-1$}\label{s.rul1}

From section \ref{s.3l-1} we know that $|\bar N_2|$ gives a morphism  $g:W \longrightarrow \mathbb F_a$ for some $a\ge 0$ with $K_W^2=K_Y^2+1+3\ell-1+1=-1$. Then we have 
\begin{equation*}
K_{\mathbb F_a}=-2c-(a+2)f, \quad \st g f=\bar N_2 
\end{equation*}
and $K_W=-2\st g c-(a+2)\bar N_2+\Delta$  where $\Delta$ is the exceptional divisor of $g$.

Therefore
\begin{eqnarray*}
&\bar N_1=\bar N_2-K_W=2\st g c+(a+3)\bar N_2-\Delta\\
&\bar N=\bar N_1-K_W=4\st g c+(2a+5)\bar N_2-2\Delta\\
\end{eqnarray*}

Then similarly to the case $n=3\ell=0$ one can show (see also \cite[section 5.1.2]{Pa})
\begin{theorem}\label{t.no1}
The case $n=3\ell-1=2,n'=1$ cannot occur.
\end{theorem}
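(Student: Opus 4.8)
The plan is to reproduce, step by step, the argument of Theorem \ref{t.no0}. Since $n=3\ell-1=2$ forces $\ell=1$, from \eqref{e.fixed} we get $h_1=4+\ell=5$ and a single $(-6)$-curve $B_0\neq 0$ on $Y$, while \eqref{e.k2} gives $K_Y^2=-2-3\ell=-5$. By Section \ref{s.rul1} the base-point-free genus-zero pencil $|\bar N_2|$ induces a morphism $g:W\to\mathbb F_a$ with $K_W^2=-1$, $\bar N_1=2\st g c+(a+3)\bar N_2-\Delta$ and $\bar N=4\st g c+(2a+5)\bar N_2-2\Delta$, where $c$ is the $(-a)$-section of $\mathbb F_a$ and $\Delta$ (with $\Delta^2=-9$) is the exceptional divisor of $g$. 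First I would bound $a$ as in Lemma \ref{l.a2}: nefness of $\bar N$ gives $\st g c\,\bar N=5-2a\geq 0$, hence $0\leq a\leq 2$.

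Next I would reduce to $a=1$. The curves $\bar F'$, $\bar H'$, those among the five $\bar E'_k$, and the curve $B_0$ that do not meet $\bar N_2$ are pushed into the singular fibres of $\phi_{|\bar N_2|}$, so the fibre analysis of Lemmas \ref{l.-1}, \ref{l.-1m} and \ref{l.2fibre} applies verbatim to the rational pencil $|\bar N_2|$; together with the analogue of Proposition \ref{p.no2} this allows the usual blow-up/blow-down of Lemma \ref{l.a1} to replace $\mathbb F_a$ by $\mathbb F_1$. Once $a=1$, the pencil $|\bar N_2|$ maps to the lines through a point $P\in\mathbb P^2$, the system $|\bar N_1|$ to plane quartics with a double point at $P$ and nine simple base points $P_1,\dots,P_9$, and $|\bar N|$ to curves of degree $7$ with a triple point at $P$ and nine double points.

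The core of the argument is the plane image of the movable pencil $|\bar A'|$, which I would compute for the two possibilities left by Theorem \ref{t.iii2}, namely case $(1f)$ and $A'=N$. Using Proposition \ref{pr.fix} and Corollary \ref{c.AG} one first evaluates $A'N_2=\bar A'\,\bar N_2$, the number $d-m_P$ of residual intersections of the degree-$d$ image with a general line through $P$, which fixes the multiplicity $m_P$ at $P$. Letting $s_j$ count the base points $P_i$ at which $\bar A'$ has multiplicity $j$, the three identities coming from $d-m_P=A'N_2$, from $\bar{A'}^2$, and from $p_a(\bar A')$ give, exactly as in \eqref{e.ja}, \eqref{e.j2a} and \eqref{e.jdiffa}, two expressions for $\sum_j js_j$ and $\sum_j j^2s_j$; eliminating them forces a relation $(d-c)^2=0$. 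The resulting $d$ should then be incompatible with the geometry, either with $\sum_j s_j\leq 9$ or, for $A'=N$, with the actual shape of $\bar N$, exactly as $d=6$ contradicted $d=10$ in Theorem \ref{t.no0}.

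The step I expect to be the real obstacle is the numerical bookkeeping behind these identities. Unlike the $\ell=0$ case of Theorem \ref{t.no0}, here $A'$, and already $N$ itself, meets the contracted $(-1)$-cycles $Z_i,Z'_j$; one sees this concretely in that $\bar N^2=4$ on $W$ although $N^2=3$ on $Y$. Consequently $\bar{A'}^2$ and $p_a(\bar A')$, read off on $W$, differ from ${A'}^2$ and $p_a(A')$ by correction terms $(A'\cdot Z)^2$ and $\binom{A'\cdot Z}{2}$ over the relevant cycles. Omitting these corrections produces formally consistent but spurious output (for instance $A'=N$ naively reproduces the true degree $7$), so they must be tracked exactly for the eliminated relation to be a genuine perfect square and for the base-point count to overflow $9$. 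A secondary, more routine, difficulty is making the reduction to $\mathbb F_1$ watertight in the presence of $B_0$ and the five curves $\bar E'_k$, i.e. checking, as in Lemma \ref{l.a1}, that a singular fibre missing the relevant contraction point always exists.
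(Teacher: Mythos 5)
Your framework is faithful to the route the paper intends (the paper itself only says the proof runs ``similarly to the case $n=3\ell=0$'' and delegates to \cite[section 5.1.2]{Pa}), and your setup numerics are all correct: $K_W^2=-1$, $\Delta^2=-9$, $g^*(c)\,\bar N=5-2a$ hence $0\le a\le 2$, the plane systems of degrees $1,4,7$, and the observation $\bar N^2=4=N^2+(NZ')^2$. The genuine gap is that the pivotal elimination, which you leave as a plan, actually degenerates once you include the correction terms you rightly insist on. Structurally: $\bar N=\bar N_2-2K_W$, so $\bar A'K_W=\tfrac12(\bar A'\bar N_2-\bar A'\bar N)$ is already fixed by your first two intersection numbers, and the genus identity is then the adjunction consequence of the self-intersection identity --- any honest curve on $W$ satisfies adjunction, so the three equations analogous to \eqref{e.ja}, \eqref{e.j2a}, \eqref{e.jdiffa} are linearly dependent and yield $0=0$, never a relation $(d-c)^2=0$. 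Concretely, for $A'=N$ one gets $m_P=d-4$, $\sum_j js_j=2d+4$, $\sum_j j^2s_j=8d-20$, $\sum_j j(j-1)s_j=6d-24$, a dependent system satisfied by the true data $d=7$, $m_P=3$, $s_2=9$; likewise in case $(1f)$ (with $A'Z_1=A'Z_2=A'Z'=0$ the quadruple $(\bar A'^2,p_a,\bar A'\bar N_2,\bar A'\bar N)=(1,2,5,3)$ is adjunction-consistent). Note also that in theorem \ref{t.no0} the determinate outcome $(d-6)^2=0$ arises precisely because \eqref{e.j2a} is written without the term $m_P^2=(d-6)^2$, while \eqref{e.jdiffa} includes the corresponding genus correction; so ``tracking the corrections exactly,'' far from making the relation a genuine perfect square as you predict, removes the very asymmetry that made the model computation conclusive. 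Your expected endgame therefore does not eliminate either surviving subcase.

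To close the argument you need input of a different kind, which your proposal does not contain: the actual determination of $A'Z_1$, $A'Z_2$, $A'Z'$ by nefness and Index-theorem arguments as in proposition \ref{p.1e}; the bookkeeping for $2\bar B_0+\bar E'=\bar N-3K_W$ together with the images of $B_0$, the five $E'_k$, $F'$ and $H'$ (as in the tables of the Del Pezzo sections), exploited through integrality, $\sum_j s_j\le 9$, and the mod-$3$ multiplicity and eigenvalue constraints on the branch divisor as in propositions \ref{p.3l-1}--\ref{p.3l-13}; this is the content of the thesis section the paper cites. A secondary but real omission: the reduction to $a=1$ is only asserted --- the analogue of proposition \ref{p.no2} must be re-derived with $\alpha_i=\bar E'_i\bar N_2$ no longer all equal to $3$, and with $g^*(c)(2\bar B_0+\bar E')=11-5a$ replacing $13-6a$ in the analogue of lemma \ref{l.a1}.
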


\section{Del Pezzo cases}

We now treat separately those cases with $\ell=0$ from those with $\ell=1$. We refer then to the list of theorem \ref{t.iii2}. Since the ideas of most proofs are quite similar we only write here two of them, i.e. we fix $\ell=1$ and we study the cases $n=3\ell$ and $n=3\ell-1$.

When $\ell=1$ from theorem \ref{t.iii2} we always have $\Phi=0$ and either case $(1f)$ of proposition \ref{p.list1} or $A'=N$ holds. Moreover 
we have $0=3\ell-3\le n\le 3\ell=3$.

\subsection{$n=3\ell$}

From the results of section \ref{s.3l} when we contract the $(-1)$-cycles $G', Z_1$, $Z_2$, $Z_3$, $Z''$, $Z'''$ we get a rational surface $W$ which is isomorphic to the projective plane $\mathbb P^2$ blown up at eight points $P_1,\dots, P_8$.
\vskip .3 cm

\underline{Case I}: The cycles $Z_1,Z_2,Z_3$ are irreducible.

In this case $B_0N_1=B_0(N+K_Y-G'-Z_1-Z_2-Z_3)=1$ and $B_0N_2=B_0(N+2K_Y-2G'-2\sum_{i=1}^3Z_i)=2$.

In particular, since $Z''N_2=0$ and $Z'''N_2=1$, from the nefness of $N_2$ one can see that $B_0$ cannot be an irreducible component of any of these cycles, i.e. $B_0$ is not contracted on $W$.
Let us compute  (recall that $N_4\equiv 0$)
\begin{equation*}
0=B_0N_4=B_0(N+4K_Y-4G'-4\sum_{i=1}^3Z_i-2Z''-Z''')
=4-B_0(2\sum_{i=1}^2Z'_i+Z'')
\end{equation*}
hence $0\le B_0Z''\le 2$ and we have
\begin{equation*}
\bar B_0\bar N_3=B_0N_3=B_0(N+3K_Y-3G'-3\sum_{i=1}^2Z_i-Z'')
=3-B_0Z''\ge 1.
\end{equation*}

Thus we can write the following table
\begin{equation*}
\begin{tabular}{c|cccccc}
&$B_0Z_1$  &$B_0Z_2$ &$B_0Z_3$  &$B_0Z''$ &$B_0Z'''$ &$\bar B_0\bar N_3$ \\
\hline
a) &1 &1 &1 &2 &0 &1 \\
b) &1 &1 &1 &1 &2 &2 \\
c) &1 &1 &1 &0 &4 &3 \\
\end{tabular}
\end{equation*}

We now apply the  Index theorem.  Since $\bar N_3^2=1$ 
and  $\bar B_0\bar N_3=s\ge 1$ we find $\bar B_0^2\le s^2$ which excludes case c) and forces $\bar B_0\equiv \bar N_3$ in case a).  We also note that in case b) we have $\bar B_0^2=2$.

\begin{lemma}\label{l.noa}
Case a) cannot occur.
\end{lemma}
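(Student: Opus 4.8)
The plan is to exploit the numerical identity $\bar B_0\equiv\bar N_3$ produced by the Index theorem just before the statement, translate it back to intersection data on $Y$, and then feed that data into the triple cover $\pi$ to manufacture an impossible curve upstairs. First I would unwind $\bar B_0\equiv\bar N_3$. Since $W$ is $\mathbb P^2$ blown up at eight points while $K_Y^2=-5$ by \eqref{e.k2}, the morphism $Y\to W$ is a composition of exactly six blow-downs, so each of the six contracted cycles $G',Z_1,Z_2,Z_3,Z'',Z'''$ is an irreducible $(-1)$-curve; in particular $Z''$ is irreducible and the six cycles are pairwise disjoint. As $\bar N_3=-K_W$, the equivalence $\bar B_0\equiv\bar N_3$ yields on $Y$ the relation $B_0\equiv -K_Y+G'+Z'''-Z''$, which I would check reproduces the table in case a): $B_0Z''=2$, $B_0Z'''=B_0G'=0$, $B_0Z_i=1$, together with $B_0N=0$ and (after verifying the $(-3)$-curves are disjoint from $B_0$) $B_0E'=0$. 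Using $Z''N_2=0$ and the disjointness from $G'$ and the $Z_i$ I would also record $Z''N=2$, whence $Z''E'=1$ and $Z''F'=Z''H'=0$; thus $Z''$ is an irreducible $(-1)$-curve meeting the branch curve $B_0$ twice and $E'$ once.

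Next I would analyse the preimage $\pi^*Z''$ exactly along the lines of the proof of lemma \ref{l.-3Z}. One has $(\pi^*Z'')^2=3Z''^2=-3$, and, using $\pi^*B_0=3R_0$, $\pi^*E'=3E$ together with Hurwitz \eqref{e.Hur}, one computes $\pi^*Z''\cdot R=Z''B_0+Z''E'=3$ and $\pi^*Z''K_X=3Z''K_Y+2\,\pi^*Z''\cdot R=3$, so $p_a(\pi^*Z'')=1$. Because $Z''$ is not disjoint from the branch locus, $\pi^*Z''$ cannot split into three components; the three totally ramified points (two over $B_0$, one over $E'$) then force, by Hurwitz, that $\pi^*Z''$ is a smooth $\sigma$-invariant curve of genus one.

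The contradiction is meant to arise by pushing this invariant elliptic curve down to $S$ and confronting it with the order-three action and the minimality of $S$, and this is the step I expect to be the main obstacle. The delicate point is to control the intersection of $\pi^*Z''$ with the full exceptional configuration $E+F+G+H$ of $\varepsilon$ — in particular the number $\pi^*Z''\cdot G$ — so as to determine $K_S\,\varepsilon(\pi^*Z'')$ and $\varepsilon(\pi^*Z'')^2$ exactly and to locate the three fixed points of $\sigma$ on the image among the recorded isolated fixed points $p_i,q_j$; the goal is to show, as in lemma \ref{l.-3Z}, that the resulting $\sigma$-invariant genus-one curve is incompatible with $S$ being minimal of general type. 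Should this direct route resist (the naive numerical invariants of the image are not by themselves absurd), the fallback I would pursue is to substitute $\bar B_0=-K_W$ into the cover relation $3L_1\equiv B_0+\bar E'_++2\bar E'_-+F'+2H'$ of proposition \ref{p.h0li1} and extract a contradiction from the divisibility by $3$ of the corresponding class on $W$, using that the $(-2)$-curves $\bar F',\bar H'$ are forced into fibres of the anticanonical pencil $|\bar N_3|=|-K_W|$ and hence are disjoint from $\bar B_0$.
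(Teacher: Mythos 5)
Your opening reduction is where the proof breaks. From $K_W^2-K_Y^2=6$ you infer that the six contracted cycles $G',Z_1,Z_2,Z_3,Z'',Z'''$ are irreducible $(-1)$-curves and pairwise disjoint, but the blow-down count cannot distinguish this from the nested reducible configurations of corollary \ref{c.red}: each cycle in the adjunction process contributes exactly one new exceptional curve whether it is irreducible or contains earlier cycles and $(-3)$-curves among its components (e.g.\ a configuration with $Z'''\ge Z''$ still gives six blow-downs in total). Worse, in case a) the paper's own proof establishes the opposite of your claim: since $\bar B_0\equiv\bar N_3$ and $B_0Z_i=1=E'_kZ_i$ pairs each of $E'_3,E'_4,E'_5$ with $\bar B_0$ on $W$, one gets $1\le \bar E'_k\bar B_0=E'_kN_3=-E'_kZ''$ unless $E'_k$ is contracted; hence $E'_3,E'_4,E'_5$ \emph{must} be components of the cycles $Z''$, $Z'''$, which are therefore reducible. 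So your formula $B_0\equiv -K_Y+G'+Z'''-Z''$, the values $Z''N=2$ and $Z''E'=1$, and the entire analysis of $\pi^*Z''$ (which needs $Z''$ irreducible rational and disjoint from the other cycles) rest on an invalid step, and the numbers you would feed into Hurwitz are artifacts of it.

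Even setting that aside, you candidly note that your main route does not terminate: the invariant genus-one curve you produce on $S$ has $\Gamma^2=-2$, $K_S\Gamma=2$, $p_a(\Gamma)=1$, which is perfectly compatible with $S$ minimal of general type --- the mechanism of lemma \ref{l.-3Z} only bites when the image ends up with $K_S\cdot\varepsilon(\pi^*C)=0$. Your fallback via $3L_1\equiv B_0+E'_++2\bar E'_-+F'+2H'$ is likewise only a plan: testing divisibility by $3$ of $-K_W+\bar E'_++2\bar E'_-+\bar F'+2\bar H'$ in ${\rm Pic}(W)$ requires the classes of $\bar E'_i,\bar F',\bar H'$ on $W$, which you never compute (that is precisely the heavy multiplicity bookkeeping the paper reserves for the later Del Pezzo tables), so no contradiction is actually exhibited. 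The paper's proof instead stays on $W$ and is short and purely numerical: having forced $E'_3,E'_4,E'_5$ to be contracted with $E'_kZ''=E'_kZ'''=0$, it gets $(E'_1+E'_2)Z'''=6$ from $Z'''N=3$ and $B_0Z'''=0$, while $N_4\equiv 0$ gives $2E'_kZ''+E'_kZ'''=4$ for $k=1,2$; parity then forces some $E'_k$ to have $E'_kZ'''=4$, whence $\bar E'_k\bar N_3=3$ and $\bar E{}'^2_k=-3+16=13$, contradicting the Index-theorem bound $\bar E{}'^2_k\le 9$. You would need to rebuild your argument along these lines rather than through the triple cover.
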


\begin{proof}
Assume case a) holds. 
Since $\bar B_0\equiv \bar N_3$, $B_0Z_1=B_0Z_2=B_0Z_3=1$ and $E'_5Z_1=E'_4Z_2=1=E'_3Z_3$, if $E'_3,E'_4,E'_5$ were not contracted on $W$ we should have 
\begin{equation*}
1\le \bar E'_k\bar B_0=\bar E'_k\bar N_3=E'_kN_3=E'_k(N+3K_Y-3G'-3\sum_{i=1}^2Z_i-Z'')=-E'_kZ''
\end{equation*}
 for $k=3,4,5$. Then we should have $E'_k\le Z''$  and $0\le E'_kN_3\le Z''N_3=0$. Contradiction. 
Thus  $E'_k$ is contracted on $W$ and, from the nefness of $N_3$, $E'_kZ''=E'_kZ'''=0$ ($k=3,4,5$).
From the definition of $N$ and from $Z'''N=3$ we find
\begin{equation}\label{e.z4}
6=2B_0Z'''+E'Z'''=E'Z'''=(E'_1+E'_2)Z'''.
\end{equation}

Moreover, for  $k=1,2$ 
\begin{equation*}
0=E'_kN_4=E'_k(N+4K_Y-4G'-4\sum_{i=1}^2Z_i-2Z''-Z''')=4-E'_k(2Z''+Z''')
\end{equation*}
and
\begin{equation*}
E'_kN_3=E'_k(N+3K_Y-3G'-3\sum_{i=1}^2Z_i-Z'')
=3-E'_kZ''\ge 0
\end{equation*}

Since $E'_kN_2=2$ while $Z''N_2=0,Z'''N_2=1$, it cannot be $E'_k\le Z'', Z'''$ for $k=1,2$. In particular $E'_kZ'',E'_kZ'''\ge 0$.
Thus 
\begin{equation}\label{e.z5}
2E'_kZ''+E'_kZ'''=4
\end{equation}
 forces $0\le E'_kZ'''\le 4$
 and  from \eqref{e.z4} there should be at least one of the curves $E'_k$ (say $E'_2$) such that $E'_kZ'''=4, \bar E'_k\bar N_3=3$. We get a contradiction since from the Index theorem   we should have
$(\bar E'_2-3\bar N_2)^2\le 0$
or equivalently $\bar {E'}_2^2\le 9$.
\end{proof}

We now study case b). Let us denote by $d_0$ the degree of the plane image of $\bar B_0$. Since $2\bar B_0+\bar E'=\bar N-3K_W$ the curve $2\bar B_0+\bar E'$ is sent to an element of $|-7K_{\mathbb P^2}|$. 

We note that a quadratic transformation leaves the plane image of $|2\bar B_0+\bar E'|$ unchanged. In particular even after any quadratic transformation the equation
\begin{equation}\label{e.deg1a}
2d_0+\sum_{i=1}^5d_i=21
\end{equation}
holds, where $d_0$ is the degree of the image of $\bar B_0$ while $d_i,i=1,\dots,5$ are the degrees of the plane images of the curves $\bar E'_i$. In particular we have $d_0\le 10$.

The curve $\bar B_0$ satisfies the linear system 
\begin{equation}\label{e.sys}
\begin{cases}
\sum_j j^2s_j=d_0^2-\bar B_0^2=d_0^2-2\\
\sum_j js_j=3d_0-\bar B_0\bar N_2=3d_0-2\\
\sum_j s_j\le 8
\end{cases}
\end{equation}
where $s_j$ is the number of points among $P_1,\dots,P_8$ of multiplicity $j$ for $\bar B_0$.
By an easy computation one can see that $d_0\ge 3$ and we have the following list of solutions:
\begin{equation*}
\begin{array}{clcl}
1) & d_0=3, s_1=7  &5) & d_0=7, s_3=3, s_2=5 \\
2) & d_0=4, s_2=2, s_1=6 &6) & d_0=8, s_3=6, s_2=2 \\
3) &d_0=5, s_2=5, s_1=3   &7) &d_0=9, s_4=1, s_3=7 \\
4) & d_0=6, s_3=1, s_2=6, s_1=1 &\\
\end{array}
\end{equation*}

\begin{prop}\label{p.equiv0}
All the above solutions are equivalent up to a finite number of Cremona quadratic transformations of $\mathbb P^2$ based at $P_1,\dots,P_8$.
\end{prop}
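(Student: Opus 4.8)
The plan is to realise the quadratic Cremona transformations of the statement as changes of the blow-down structure of the fixed surface $W$, and to show that each of the seven listed solutions except the minimal one can be brought to strictly smaller degree $d_0$ by a single such transformation; since $d_0$ is bounded below and the transformed datum is again a solution of \eqref{e.sys}, a finite chain reduces every solution to the one with $d_0=3$. First I would record the transformation law: a quadratic transformation based at three of the points, those carrying $\bar B_0$-multiplicities $m_a\ge m_b\ge m_c$, replaces $d_0$ by $d_0'=2d_0-m_a-m_b-m_c$ and the three multiplicities by $d_0-m_b-m_c,\ d_0-m_a-m_c,\ d_0-m_a-m_b$, leaving the remaining five unchanged. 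The crucial point is that \eqref{e.sys} is invariant under this operation: its right-hand sides depend only on the intersection numbers $\bar B_0^2=2$ and $\bar B_0(-K_W)=2$ on the fixed surface $W$ (note $\sum_j js_j=3d_0+\bar B_0K_W$), together with the number $8$ of blown-up points fixed by $K_W^2=1$; none of these change when $W$ is re-expressed as a blow-up of $\mathbb P^2$, because $-K_W$ is preserved by every such transformation. Hence the transformed datum $(d_0';s_j')$ is again a solution, i.e. it occurs in the list.

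Next I would check that the degree genuinely drops, which happens precisely when $m_a+m_b+m_c>d_0$. Taking in each case the three largest multiplicities one sees this at once: for $d_0=4$ we have $2+2+1=5>4$, for $d_0=5$ we have $2+2+2=6>5$, for $d_0=6$ we have $3+2+2=7>6$, for $d_0=7$ we have $3+3+3=9>7$, for $d_0=8$ we have $3+3+3=9>8$, and for $d_0=9$ we have $4+3+3=10>9$. A short computation with the transformation law then identifies the outcome of each reduction, giving the chains $9\to 8\to 7\to 5\to 4\to 3$ and $6\to 5$; thus every solution is joined to the one with $d_0=3$ by finitely many quadratic transformations based at $P_1,\dots,P_8$. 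Since $d_0$ strictly decreases at each step and cannot go below $3$ (the minimal solution $d_0=3,\ s_1=7$), the process terminates, which is the assertion.

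The main obstacle is not the degree bookkeeping, which is routine, but the justification that a quadratic transformation indeed corresponds to another realisation of the same $W$ as $\mathbb P^2$ blown up at eight points, so that the transformed multiplicity data still satisfy the same numerical constraints. Concretely, I would need to ensure that the three chosen base points admit such a transformation (taking the appropriate degenerate or infinitely near version when the points are not in general position), and that the new multiplicities stay non-negative along the chain; both follow from the transformation law once the invariance of \eqref{e.sys} recorded above is in place.
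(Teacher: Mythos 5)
Your overall strategy is the same as the paper's: descend through the list of solutions by quadratic transformations based at points of highest multiplicity of $\bar B_0$, terminating at the cubic with seven simple points. The bookkeeping is packaged a bit more uniformly than in the paper: rather than identifying the image solution of each step by explicit recomputation (the paper runs the single chain $9\to 8\to 7\to 6\to 5\to 4\to 3$, while your choice of the three largest multiplicities gives $9\to 8\to 7\to 5\to 4\to 3$ and $6\to 5$ --- a harmless difference), you observe once and for all that the system \eqref{e.sys} only encodes $\bar B_0^2$, $\bar B_0 K_W$ and the number of blown-up points, all intrinsic to the pair $(W,\bar B_0)$ and hence unchanged under any change of blow-down structure; this observation is correct, and your identity $\sum_j js_j=3d_0+\bar B_0K_W$ is the right intrinsic reading of the second equation (with $-K_W=\bar N_3$ in this subsection).

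There is, however, one genuine soft spot: your closing claim that the admissibility of the three chosen base points --- i.e.\ the existence of the quadratic transformation --- ``follows from the transformation law once the invariance of \eqref{e.sys} is in place'' is circular. No numerical identity can guarantee that a quadratic transformation based at $P_a,P_b,P_c$ exists; this requires geometric input, and it is precisely what the paper's proof supplies. Two facts are needed at each step. First, the three points must not be collinear: your own degree-drop inequality $m_a+m_b+m_c>d_0$ rescues you here, since a line through the three points would meet the irreducible curve $\bar B_0$ of degree $d_0$ in more than $d_0$ points, contradicting B\'ezout --- but this must be said explicitly. Second, infinitely near points must be controlled: by the proximity inequalities for the irreducible curve $\bar B_0$, not both of the remaining base points can be proximate to the first (e.g.\ $4\ge 3+3$ fails for the nonic), the point of maximal multiplicity is a proper point of $\mathbb P^2$, and then the (possibly degenerate) quadratic transformation based at the triple is well defined, as in \cite{Ca}. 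With these two arguments inserted --- both present in the paper's proof --- your reduction is complete and agrees with the paper's in substance.
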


\begin{proof}
Let us consider a curve of degree 9 as in 7) and let us take the quadruple point $Q_1$ and two of the seven triple points $Q_2,Q_3$. Then they are not collinear otherwise there should be a line meeting the above curve at 10 points.
Moreover $Q_1$ is on $\mathbb P^2$, i.e. it is not infinitely near to any other point, since is the unique point of maximal multiplicity for the curve.
Since $\bar E'_i$ is an irreducible curve if both $Q_2,Q_3$ were proximate to $P_1$  from the proximity inequalities (see \cite{Ca}) we should have 
$4=m_{P_1}\ge m_{P_2}+m_{P_3}=3+3=6$.

If $Q_2$ or $Q_3$ are  not infinitely near to $Q_1$ we can choose them to be on $\mathbb P^2$.
Then a quadratic transformation (see \cite{Ca}) based at $Q_1,Q_2,Q_3$ is well-defined and takes the curve of degree 9 onto an octic as in 6).

Let us consider the octic in 6) and let us take three of the six triple points $Q_1$, $Q_2$, $Q_3$. Then they are not collinear otherwise there should be a line meeting the octic at 9 points.
Moreover we can choose the points in such a way that one of them, say $Q_1$, is on $\mathbb P^2$, i.e. it is not infinitely near to any other point.
Since $\bar E'_i$ is an irreducible curve if both $Q_2,Q_3$ were proximate to $P_1$ from the proximity inequalities we should have 
$3\ge 3+3=6$.

If $Q_2$ or $Q_3$ are  not infinitely near to $Q_1$ we can choose them to be on $\mathbb P^2$.
Then a quadratic transformation  based at $Q_1,Q_2,Q_3$ is well-defined and takes the octic onto a septic as in 5).

With a similar argument one can see that we can choose two triple points and one double point for the septic such that there exists a quadratic transformation based at those points sending the septic to a sextic as in 4). To get 3) we consider the triple point $Q_1$ and two double points $Q_2,Q_3$ of the sextic. Then we can consider  three double points for the quintic such that a quadratic transformation based at those points sends the quintic onto a quartic as in 2). Eventually, if we base a quadratic transformation at the two double points of the quartic and at one of the six simple points, we can take the quartic onto the cubic in 1).
The result is then proved.
\end{proof}

From the above proposition, up to Cremona transformations, we can set  $d_0=9$. In particular we can assume that the quadruple point of the curve $B_0$ is $P_8$. Then we find 
\begin{equation*}
\sum_{i=1}^5d_i=21-2d_0=3
\end{equation*}
and $d_i\le 3$ for any $i=1,\dots,5$.

\begin{prop}\label{p.no3lirr}
In the above setting case I cannot occur.
\end{prop}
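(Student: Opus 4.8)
The plan is to turn the normal form for $\bar B_0$ supplied by proposition \ref{p.equiv0} against the linear equivalence $2\bar B_0+\bar E'\equiv -7K_W$ coming from the adjoint chain of section \ref{s.3l}. After the Cremona reductions we may assume the plane model of $\bar B_0$ has degree $d_0=9$ with a quadruple point at $P_8$ and a triple point at each of $P_1,\dots,P_7$, so that $\sum_{i=1}^5 d_i=3$ and $d_i\le 3$. The idea is that this single high multiplicity of $B_0$ is already incompatible with the anticanonical bookkeeping on $W$.

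First I would fix the basis $H,e_1,\dots,e_8$ of $\mathrm{Pic}(W)$, where $W=\mathrm{Bl}_{P_1,\dots,P_8}\mathbb P^2$, so that $K_W=-3H+\sum_{k=1}^8 e_k$. From the last display of section \ref{s.3l} one has $2B_0+E'\equiv 7G'+4\sum_i Z_i+2Z''+Z'''-7K_Y$ on $Y$; pushing this forward under the contraction $Y\to W$, which kills exactly the exceptional terms, gives $2\bar B_0+\bar E'\equiv -7K_W=21H-7\sum_{k=1}^8 e_k$. Thus the curve $2\bar B_0+\bar E'$ has degree $21$ and multiplicity exactly $7$ at every blown-up point $P_k$, while $\bar B_0=9H-4e_8-3\sum_{k=1}^7 e_k$.

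Then I would compare multiplicities point by point. Writing $m_{ik}$ for the multiplicity at $P_k$ of the plane image of $\bar E'_i$ (with $m_{ik}=0$ whenever $\bar E'_i$ is among the curves contracted by $W\to\mathbb P^2$), the class identity forces $2m_{B_0,k}+\sum_{i=1}^5 m_{ik}=7$ for each $k$. At the quadruple point $P_8$ this reads $2\cdot 4+\sum_i m_{i8}=7$, that is $\sum_i m_{i8}=-1$. Since all the $m_{i8}$ are non-negative this is absurd, and case I cannot occur.

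The only delicate point is the passage from $Y$ to $W$: one must know that $B_0$ is not among the blown-down curves, which was already verified at the start of the Case~I discussion, so that $\phi_*$ removes precisely the terms in $G',Z_i,Z'',Z'''$ and leaves $2\bar B_0+\bar E'\equiv -7K_W$. Note also that if some $\bar E'_i$ happens to be contracted this only lowers the left-hand multiplicity at $P_8$, so the contradiction $\sum_i m_{i8}=-1$ persists in every configuration. I expect no computational obstacle beyond this bookkeeping; the whole force of the argument is concentrated in the single multiplicity inequality $2m_{B_0,8}=8>7$ at the quadruple point, which is exactly what the reduction to $d_0=9$ in proposition \ref{p.equiv0} was engineered to produce.
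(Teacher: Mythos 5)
There is a genuine gap, and it sits exactly where you concentrate ``the whole force of the argument'': the non-negativity of the numbers $m_{i8}$. The relation $2\bar B_0+\bar E'\equiv -7K_W=21H-7\sum_{k=1}^8 e_k$ is an identity of classes in $\mathrm{Pic}(W)$, so the quantities entering $2m_{B_0,k}+\sum_i m_{ik}=7$ are the coefficients of $-e_k$ in the classes of the curves $\bar E'_i$ on $W$, not honest plane multiplicities. For a curve not contracted by $W\to\mathbb P^2$ these coefficients are indeed $\ge 0$, but for a component of the exceptional configuration of $W\to\mathbb P^2$ they are not: such a curve has class of the form $e_k$ or $e_k-\sum e_{k'}$, i.e.\ degree $0$ with coefficient $-1$ at the relevant point. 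Your parenthetical stipulation that $m_{ik}=0$ whenever $\bar E'_i$ is contracted by $W\to\mathbb P^2$ is therefore false, and with it the conclusion: $\sum_i m_{i8}=-1$ is not absurd, it merely says that some $\bar E'_i$ must be an exceptional component over $P_8$ (for instance $\bar E'_i=e_8$; nothing in the class identity forbids the $(-3)$-curve $E'_i$ from reaching self-intersection $-1$ on $W$ after meeting the contracted cycles twice). The paper's own tables in the parallel case $n=3\ell-1$ display exactly such rows of degree $0$ with entries $-1$, confirming that these contributions occur and must be accounted for in the multiplicity bookkeeping.

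To close the gap one must exclude, by geometry rather than by the class identity alone, that the curves $E'_i$ can disappear in this way, and that is precisely what the paper's proof of proposition \ref{p.no3lirr} does. It isolates the two curves $E'_1,E'_2$ not met by the irreducible cycles $Z_1,Z_2,Z_3$, shows from $E'_iN_2=2$ together with $Z''N_2=0$, $Z'''N_2=1$ and the nefness of $N_2$ that neither can lie in $Z''$ or $Z'''$ (so neither is contracted on $W$), and then solves $2E'_iZ''+E'_iZ'''=4$: the option $E'_iZ''=0$, $E'_iZ'''=4$ is excluded by the Index theorem as in lemma \ref{l.noa}, and each of the two surviving options forces the plane image of $\bar E'_i$ to be a cubic, $d_i=3$. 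Since $\sum_{i=1}^5 d_i=21-2d_0=3$, the two cubics already overshoot the degree budget, which is the actual contradiction. So your reduction via proposition \ref{p.equiv0} and the equation $2d_0+\sum_i d_i=21$ are sound and coincide with the paper's starting point, but the one-line multiplicity count at the quadruple point cannot replace this finer analysis.
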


\begin{proof}
Let us consider the curves $E'_i$, $i=1,2$. Then
$E'_iN_2=E'_i(N+2K_Y-2G'-2\sum_{i=1}^3Z_i)=2$
while $Z''N_2=0,Z'''N_2=1$.
In particular, from the nefness of $N_2$, $E'_1$ and $E'_2$ cannot be contained  in $Z''$ or $Z'''$ and  they are not
contracted on $W$. Thus
\begin{equation}\label{e.12a}
E'_iZ_j=0,\quad  2E'_iZ''+E'_iZ'''=4 \quad (i=1,2, j=1,2,3).
\end{equation}

Since \eqref{e.12a} holds we have a priori three possibilities. In the former case $E'_iZ''=2, E'_iZ'''=0$ and then $\bar {E'}_i^2=1$ hence  $\bar E'_i\bar B_0=2$ and $d_i=3,s_1=8$. In the second case  $E'_iZ''=1, E'_iZ'''=2$ and then $\bar {E'}_i^2=2$ hence  $\bar E'_i\bar B_0=5$ and $d_i=3,s_1=7$.  In the latter case  $E'_iZ''=0, E'_iZ''=4$ hence $\bar E'_i\bar N_3=3$ and $\bar {E'}^2_i=13$ contradicting the Index theorem   as in the proof of lemma \ref{l.noa}.

Thus, since $\sum_{i=1}^5 d_i=3$, one among $E'_1$ and $E'_2$ is necessarily contracted on $W$ and we get a contradiction. 
\end{proof}
\vskip .3 cm

\underline{Case II}: At least one of the cycles $Z_i$ is reducible.

We know from corollary \ref{c.red} that either $Z_1,Z_2$ are irreducible and $Z_3=Z_1+Z_2+E'_k$ for a suitable $1\le k\le 5$  or $Z_1$ is irreducible $Z_2=Z_1+C$, $Z_3=2Z_1+C+E'_k$
for a suitable $1\le k\le 5$ where $C$ is a $(-2)$-curve.

Let us look at the $(-6)$-curve $B_0$. In any case we have $B_0Z_1=B_0Z_2=1$, $B_0Z_3=2$. 
\begin{prop}\label{p.3lred}
In the above setting case II cannot occur.
\end{prop}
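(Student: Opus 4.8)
The plan is to show that in Case II the curve $B_0$ becomes orthogonal to a big and nef divisor while keeping a non-negative self-intersection after contraction, which is impossible by the Index theorem. The whole point will be that the reducible cycle $Z_3$ absorbs one extra unit of $B_0$, so that $B_0\cdot N_2=0$ here, in contrast with Case I where $B_0 N_2=2$.

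First I would record the intersections of $B_0$ with the contracted curves. Since $\ell=1$, the divisor $R_0$ is the irreducible $(-2)$-curve of Proposition \ref{p.h0li1}, so $B_0^2=-6$; from \eqref{e.BKY} and $R_0K_S=0$ we get $B_0K_Y=-2R_0^2=4$, while $\pi^*N=\varepsilon^*3K_S$ together with $\pi^*B_0=3\varepsilon^*R_0$ gives $B_0N=3R_0K_S=0$ (and hence $B_0E'=0$), and clearly $B_0G'=0$. Using the data stated before the proposition, $B_0Z_1=B_0Z_2=1$ and $B_0Z_3=2$, so $B_0\sum_iZ_i=4$. Substituting into the chain of linear equivalences of Section \ref{s.3l} (with $n'=0$, so $N_1=N+K_Y-G'-\sum_iZ_i$ and $N_2=N_1+K_Y-G'-\sum_iZ_i$) I find
\begin{equation*}
B_0N_1=0+4-0-4=0,\qquad B_0N_2=0+4-0-4=0,\qquad B_0N_3=0,
\end{equation*}
which moreover forces $B_0Z''=B_0Z'''=0$.

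Next I would pass to $W$. Let $\sigma\colon Y\to W$ be the contraction of $G',Z_1,Z_2,E'_k,Z'',Z'''$ (the distinct irreducible components of the six cycles of Corollary \ref{c.red}(1)). Since $N_2$ meets none of these curves it descends, $N_2=\sigma^*\bar N_2$, so $\bar B_0\bar N_2=B_0N_2=0$ and $\bar N_2^2=N_2^2=3>0$; thus $\bar N_2$ is big and nef, and the Index theorem gives $\bar B_0^2<0$ unless $\bar B_0\equiv0$. It then remains to compute $\bar B_0^2$ directly. Among the contracted curves $B_0$ meets only $Z_1$ and $Z_2$, each once; but $Z_1Z_2=0$ and $E'_kZ_1=E'_kZ_2=1$, so in blowing down the cycle $Z_3=Z_1+Z_2+E'_k$ the proper transform of $B_0$ acquires contact of multiplicity $2$ with $E'_k$. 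Contracting $Z_1,Z_2$ raises the self-intersection by $1+1$, and contracting the resulting $(-1)$-curve $E'_k$ raises it by $2^2=4$, so $\bar B_0^2=-6+2+4=0$. Hence $\bar B_0^2=0$ while $\bar B_0$ is a nonzero effective curve, contradicting the equality case of the Index theorem. The configuration of Corollary \ref{c.red}(2) is treated identically: again $B_0$ meets only $Z_1$ among the contracted curves (one checks $B_0C=B_0E'_k=0$), and following the proper transform through the blow-downs of $Z_1$, $C$ and $E'_k$ restores $\bar B_0^2$ to $0$, giving the same contradiction.

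The main obstacle is precisely this self-intersection bookkeeping: one must follow the proper transform of $B_0$ through the successive blow-downs of the reducible cycle and verify, in both sub-cases of Corollary \ref{c.red}, that the multiplicities accumulated along $E'_k$ (and, in the second configuration, along $C$) push $\bar B_0^2$ back up to $0$. Everything else — the vanishings $B_0N_i=0$ and the big-and-nefness of $\bar N_2$ — is immediate from the formulas of Section \ref{s.3l} and Lemma \ref{le.N}.
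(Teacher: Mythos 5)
Your strategy is essentially the paper's: in Case II one has $B_0\sum_{i=1}^3Z_i=4$, hence $B_0N_1=B_0N_2=0$; the image of $B_0$ on $W$ has self-intersection $-6+1^2+1^2+2^2=0$ (your step-by-step blow-down bookkeeping, in both configurations of corollary \ref{c.red}, is correct and agrees with the paper's ${B'}_0^2=B_0^2+1+1+4=0$); and an irreducible curve of self-intersection $0$ orthogonal to a big and nef class is impossible on the rational surface $W$ by the Index theorem. But there are two concrete gaps. The serious one: you never exclude that $B_0$ is a \emph{component} of $Z''$ or $Z'''$, i.e.\ that $B_0$ itself is contracted on $W$. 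Your deduction ``$B_0N_3=0$, which moreover forces $B_0Z''=B_0Z'''=0$'' is circular: since $B_0N_2=0$, the adjunction chain gives $B_0N_3=4-4-B_0Z''=-B_0Z''$ identically, and nefness of $N_3$ only yields $B_0Z''\le 0$; the opposite inequality $B_0Z''\ge 0$ presupposes that $B_0$ is not a component of $Z''$, which is exactly what must be proved. If $B_0\le Z''$ or $B_0\le Z'''$, then $\bar B_0=0$ and your final contradiction disappears. This is why the paper \emph{opens} its proof with the computation ${B'}_0^2=0$ after contracting $Z_1,Z_2,Z_3$: an irreducible curve of self-intersection $0$ cannot be contracted by a birational morphism of smooth surfaces, hence cannot be a component of one of the later $(-1)$-cycles. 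You have this computation in hand, but you deploy it only at the end; it must be used \emph{before} asserting $B_0Z''=B_0Z'''=0$.

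Second, your claim that $N_2$ descends to $W$ (``since $N_2$ meets none of these curves, $N_2=\sigma^*\bar N_2$'') is false: from the equivalences of section \ref{s.3l}, $N_2Z'''=Z'''^2-2K_YZ'''=1$, and indeed the paper records $Z'''N_2=1$ in its Case I. Consequently $\bar N_2^2=N_2^2=3$ is unjustified as stated (one only gets $\bar N_2^2\ge N_2^2$ from $\sigma^*\sigma_*N_2=N_2+\Delta$ with $\Delta$ exceptional). This is precisely why the paper works with $N_3$ instead, which satisfies $N_3G'=N_3Z_i=N_3Z''=N_3Z'''=0$, genuinely is a pullback with $\bar N_3^2=n''=1>0$, and gives $\bar B_0\bar N_3=B_0N_3=0$ directly. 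Your argument is repairable: either replace $\bar N_2$ by $\bar N_3$, or note that $B_0Z'''=0$ with $B_0$ irreducible and not a component forces $B_0$ to be disjoint from the support of $Z'''$, so that the exceptional correction $\Delta$ does not meet $B_0$ and $\bar B_0\bar N_2=0$, $\bar N_2^2\ge 3$ still hold; but as written the justification is wrong, and together with the first point it leaves the proposal incomplete.
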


\begin{proof}
If $B_0$ was contracted on $W$, then it should be contained either in $Z''$ or in $Z'''$. But  when we contract the cycles $Z_1,Z_2,Z_2$ the self-intersection of the image $B'_0$ of $B_0$ is
${B'}_0^2=B_0^2+1+1+4=0$.
Since $B_0$, hence $B'_0$, is irreducible, it cannot be a component of a $(-1)$-cycle. 

 Computing $B_0N_3$ and $B_0N_4$ and recalling that $N_3$ is nef whereas $N_4\equiv 0$, one can easily see that $B_0Z''=B_0Z'''=0$. Thus the image $\bar B_0$ of $B_0$ on the rational surface $W$ is a curve of self-intersection 0 having a node or a cusp (depending on the structure of the cycles $Z_i$) at the point obtained by contracting $Z_1,Z_2,Z_3$.

In particular we note that $\bar B_0\bar N_3=B_0N_3=0$. 
Hence by the Index theorem   and the rationality of $W$ we infer  $\bar B_0=0$. Contradiction.
\end{proof}

Hence from propositions \ref{p.no3lirr} and \ref{p.3lred} we obtain
\begin{theorem}\label{t.no3lDP1}
The case  $n=3\ell,n'=0,n''=n'''=1$ cannot occur with $\ell=1$.
\end{theorem}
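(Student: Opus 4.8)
The plan is to recognize that this theorem is the formal combination of the two preceding propositions, so the real work has already been done. Since $\ell=1$ forces $n=3\ell=3$, the $(-1)$-cycles $Z_1,Z_2,Z_3$ that are contracted (together with $G'$, $Z''$ and $Z'''$) to reach the rational surface $W\cong\mathbb P^2$ blown up at eight points $P_1,\dots,P_8$ fall into exactly two mutually exclusive classes: either all three of $Z_1,Z_2,Z_3$ are irreducible $(-1)$-curves (Case I), or at least one of them is reducible (Case II). By corollary \ref{c.red}, a reducible cycle with $n=3$ must have one of the two shapes listed there, so these two cases are genuinely exhaustive. Thus it suffices to exclude each of them separately, and the theorem follows at once.

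For Case I, I would invoke proposition \ref{p.no3lirr}, whose argument first normalizes the plane image $\bar B_0$ of the $(-6)$-curve $B_0$ to a curve of degree $9$ by a sequence of quadratic Cremona transformations (proposition \ref{p.equiv0}), so that the degrees $d_i$ of the images of the curves $E'_i$ satisfy $\sum_{i=1}^5 d_i=3$; the Index theorem applied to $\bar E'_i$ and $\bar N_3$ (exactly as in the proof of lemma \ref{l.noa}) then leaves no room for both $E'_1$ and $E'_2$ to survive, yielding a contradiction. For Case II, I would invoke proposition \ref{p.3lred}: there one shows that $B_0$ cannot be contracted on $W$, since its image would be irreducible of self-intersection $0$ and hence could not be a component of a $(-1)$-cycle; a computation of $B_0N_3$ and $B_0N_4$ using the nefness of $N_3$ and $N_4\equiv 0$ forces $\bar B_0\bar N_3=0$, whence the Index theorem on the rational surface $W$ gives $\bar B_0\equiv 0$, again a contradiction.

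Combining propositions \ref{p.no3lirr} and \ref{p.3lred}, neither configuration can occur, which proves the theorem. The genuine difficulty has already been absorbed into proposition \ref{p.no3lirr}, where the Cremona reduction of $B_0$ and the subsequent degree bookkeeping for the exceptional curves $E'_i$ are carried out; once that is in place the present statement is immediate. Consequently I expect the only care needed here is to confirm that the dichotomy Case I / Case II is exhaustive via corollary \ref{c.red}, and that the numerical hypotheses $n=3\ell$, $n'=0$, $n''=n'''=1$ with $\ell=1$ are precisely those under which the reduction to $W\cong\mathbb P^2$ blown up at eight points (established in section \ref{s.3l}) is valid.
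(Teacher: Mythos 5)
Your proposal matches the paper's proof exactly: the paper derives theorem \ref{t.no3lDP1} precisely by combining proposition \ref{p.no3lirr} (all cycles $Z_1,Z_2,Z_3$ irreducible, excluded via the Cremona normalization $d_0=9$, the degree count $\sum_{i=1}^5 d_i=3$, and the Index theorem) with proposition \ref{p.3lred} (a reducible cycle, excluded since $\bar B_0\bar N_3=0$ forces $\bar B_0\equiv 0$), with the dichotomy's exhaustiveness resting on corollary \ref{c.red} just as you say. Your summaries of both propositions and of the numerical setup from section \ref{s.3l} are accurate, so there is nothing to add.
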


\subsection{$n=3\ell-1$}

From the results of section \ref{s.3l-1} when we contract the $(-1)$-cycles $G', Z_1, Z_2, Z'_1$, $Z'_2, Z''$ we get a rational surface $W$ which is isomorphic to the projective plane $\mathbb P^2$ blown up at eight points $P_1,\dots, P_8$.

We also recall that from corollary \ref{c.red} the cycles $Z_1$ and $Z_2$ are irreducible $(-1)$-curves and
$B_0N_1=B_0(N+K_Y-G'-Z_1-Z_2)=2$. 
In particular, since $Z'_1N_1=Z'_2N_1=0$ and $Z''N_1=1$, from the nefness of $N_1$ one can see that $B_0$ cannot be an irreducible component of any of these cycles, i.e. $B_0$ is not contracted on $W$.
Let us compute  (recall that $0\equiv N_3\equiv N+3K_Y-3G'-3\sum_{i=1}^2Z_i-2\sum_{i=1}^2Z'_i-Z''$)
\begin{align*}
0=B_0N_3=6-B_0(2\sum_{i=1}^2Z'_i+Z'')
\end{align*}
hence $0\le B_0\sum_{i=1}^2Z'_i\le 3$ and we have
\begin{equation*}
\bar B_0\bar N_2=B_0N_2=B_0(N+2K_Y-2G'-2\sum_{i=1}^2Z_i-\sum_{i=1}^2Z'_i)
=4-B_0\sum_{i=1}^2Z'_i\ge 1.
\end{equation*}

Thus we can write the following table
\begin{equation*}
\begin{tabular}{c|cccccc}
&$B_0Z_1$  &$B_0Z_2$ &$B_0Z'_1$  &$B_0Z'_2$ &$B_0Z''$ &$\bar B_0\bar N_2$ \\
\hline
a) &1 &1 &3 &0 &0 &1 \\
b) &1 &1 &2 &1 &0 &1 \\
c) &1 &1 &1 &1 &2 &2 \\
d) &1 &1 &1 &0 &4 &3 \\
e) &1 &1 &0 &0 &6 &4 \\
\end{tabular}
\end{equation*}

We now apply the  Index theorem. Since $\bar N_2^2=1$ 
and  $\bar B_0\bar N_2=s\ge 1$ we find $\bar B_0^2\le s^2$ which excludes cases a), d), e) and forces $\bar B_0\equiv \bar N_2$ in case b). We also note that in case c) we have $\bar B_0^2=2$.

\begin{lemma}\label{l.nob}
Case b) cannot occur.
\end{lemma}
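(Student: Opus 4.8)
The plan is to mirror closely the proof of Lemma \ref{l.noa}, exploiting that case b) is the exact analogue of case a) there: the Index theorem forces $\bar B_0\equiv\bar N_2$ (so $\bar B_0^2=1$), just as $\bar B_0\equiv\bar N_3$ in that lemma. Throughout I would work with the numerical relations of section \ref{s.3l-1} for $n=3\ell-1=2$, $n'=2$, $n''=1$, namely $N_2=N+2K_Y-2G'-2\sum_{i=1}^2 Z_i-\sum_{j=1}^2 Z'_j$ and $N_3\equiv 0$, together with the identity $2B_0+E'=N-3K_Y+3G'$ and the standard facts $E'_kN=0$, $E'_kK_Y=1$, $(E'_k)^2=-3$, $E'_kG'=0$. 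The key inequality will be the Index-theorem bound $(\bar E'_k)^2\le(\bar E'_k\bar N_2)^2$, valid since $\bar N_2^2=1>0$, for every curve $E'_k$ not contracted by $g:W\to\mathbb P^2$; note that $\bar E'_k\bar N_2=E'_kN_2\le 2$ here, so the ceiling is low.

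First I would show that the (unique, by corollary \ref{c.Zint}) curves $E'_a,E'_b$ meeting $Z_1,Z_2$ must be contracted on $W$. Indeed, if $E'_a$ with $E'_aZ_1=1$ survived, then contracting $Z_1$ would force $\bar E'_a$ and $\bar B_0$ to pass through the common image point, giving $\bar E'_a\bar B_0\ge 1$; but $\bar E'_a\bar B_0=\bar E'_a\bar N_2=E'_aN_2=2-2E'_a\sum_i Z_i-E'_a\sum_j Z'_j=-2E'_aZ_2-E'_a\sum_j Z'_j\le 0$, a contradiction. Hence $E'_a,E'_b$ are contracted; since $Z_1,Z_2,G'$ are irreducible $(-1)$-curves containing no $E'$, these must lie inside the cycles $Z'_1,Z'_2,Z''$, which are therefore reducible in the manner described by corollary \ref{c.red}.

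Next I would feed the anomalous value $B_0Z'_1=2$ into $2B_0+E'=N-3K_Y+3G'$. Intersecting with $Z'_1$ and using $Z'_1N_1=0$ (so $Z'_1N=1+Z'_1G'+Z'_1\sum_i Z_i$), $K_YZ'_1=-1$, and the constraints $Z'_1G',\,Z'_1Z_i\in\{0,1\}$ coming from $N_1^{\perp}$ being negative definite ($N_1^2=3$), I get $E'Z'_1=4\,Z'_1G'+Z'_1\sum_i Z_i$, which I would use to locate a curve $E'_k$ meeting a contracted cycle with multiplicity at least $3$. If that $E'_k$ is not itself contracted, then $(\bar E'_k)^2\ge -3+9=6>(\bar E'_k\bar N_2)^2$, contradicting the Index theorem exactly as in the final step of Lemma \ref{l.noa} and in proposition \ref{p.no3lirr}; if it is contracted, the bookkeeping on $B_0$ (whose image must have $\bar B_0^2=1$, a total increase of $7$ from $B_0^2=-6$, entirely accounted for by the contributions $1,1,4,1$ of $Z_1,Z_2,Z'_1,Z'_2$) leaves no room for the required configuration.

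The main obstacle will be precisely this bookkeeping for the possibly reducible cycles $Z'_1,Z'_2,Z''$: unlike case a) of Lemma \ref{l.noa}, here $Z'_1N\ne 0$, so the clean bound $-1\le E'_kZ'_1\le 1$ is unavailable and the $E'$ curves may meet $Z'_1$ with large multiplicity. Controlling how the contracted $E'_a,E'_b$ are distributed among $Z'_1,Z'_2,Z''$ (through corollary \ref{c.red} and the nefness of $N_1$ and $N_2$) is the delicate point. The lever that breaks every configuration is the single anomalous intersection $B_0Z'_1=2$: it both fixes $\bar B_0\equiv\bar N_2$ and overloads the self-intersection of some $E'_k$ beyond the Index-theorem ceiling, whence case b) cannot occur.
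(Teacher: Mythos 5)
Your opening is exactly the paper's: the Index theorem forces $\bar B_0\equiv\bar N_2$, and your argument that the two curves meeting $Z_1,Z_2$ (unique by corollary \ref{c.Zint}; in the paper's labelling $E'_4,E'_5$) must be contracted on $W$ --- the common image point gives $\bar E'_k\bar B_0\ge 1$ while $\bar E'_k\bar B_0=\bar E'_k\bar N_2=E'_kN_2\le 0$ --- is precisely the paper's first step, correctly executed.

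The gap is in the concluding contradiction, where you pivot on the wrong cycle. A non-contracted $E'_k$ can never meet $Z'_1$ with multiplicity $\ge 3$: nefness of $N_2$ gives $E'_k\sum_{j=1}^2 Z'_j\le 2$, and $N_3\equiv 0$ gives the relation $2E'_k\sum_{j=1}^2Z'_j+E'_kZ''=3$, which cuts this down further once $E'_kZ''\ge 0$. So your identity $E'Z'_1=4Z'_1G'+Z'_1\sum_iZ_i$ (correct as a computation, but whose left-hand side can also contain negative contributions from the contracted $E'_4,E'_5$) cannot ``locate'' the overloaded curve, and your fallback --- ``the bookkeeping on $B_0$ leaves no room'' --- is an assertion, not an argument: the tally $-6+1+1+4+1=1=\bar B_0^2$ is exactly self-consistent and produces no contradiction. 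The actual engine, which your sketch never derives, sits on $Z''$, the cycle with $B_0Z''=0$: intersecting $N$ with $Z''$ (using $Z''N=2$, $Z''K_Y=-1$, $G'Z''=0$) yields $5=2B_0Z''+E'Z''=E'Z''$, and since $E'_4Z''=E'_5Z''=0$ after those curves are contracted, $(E'_1+E'_2+E'_3)Z''=5$; as each $E'_kZ''=3-2E'_k\sum_jZ'_j$ is odd with $-1\le E'_kZ''\le 3$, some $E'_k$ must have $E'_kZ''=3$ and then $\bar E'_k\bar N_2=2$ while $\bar{E'}_k^2\ge -3+9=6$, contradicting $(\bar E'_k-2\bar N_2)^2\le 0$, i.e. $\bar{E'}_k^2\le 4$. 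Note finally that your diagnosis of $B_0Z'_1=2$ as ``the lever'' is slightly off: case a), with $B_0Z'_1=3$, dies by the Index theorem on $\bar B_0$ alone, and case c), with $B_0Z''=2$ (hence $E'Z''=1$), survives this lemma entirely; what singles out case b) is $B_0Z''=0$, not the value of $B_0Z'_1$.
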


\begin{proof}
Assume case b) holds. 
Since $\bar B_0\equiv \bar N_2$, $B_0Z_1=B_0Z_2=1$ and $E'_5Z_1=E'_4Z_2=1$, if $E'_4,E'_5$ were not contracted on $W$ we should have 
\begin{equation*}
1\le \bar E'_k\bar B_0=E'_kN_2=E'_k(N+2K_Y-2G'-2\sum_{i=1}^2Z_i-\sum_{i=1}^2Z'_i)=-E'_k\sum_{i=1}^2Z'_i
\end{equation*}
 for $k=4,5$. Then we should have $E'_k\le Z'_i$ for some $i$ and $0\le E'_kN_2\le Z'_iN_2=0$. Contradiction. 

Thus $E'_k$ is contracted on $W$ and, from the nefness of $N_2$, 
$E'_k\sum_iZ''_i=E'_kZ''=0$ ($k=4,5$).
From the definition of $N$ and from $Z''N=2$ we find
\begin{equation}\label{e.z2}
5=2B_0Z''+E'Z''=E'Z''=(E'_1+E'_2+E'_3)Z''.
\end{equation}

Moreover, for  $k=1,2,3$, since $N_3=N+3K_Y-3G'-3\sum_{i=1}^2Z_i-2\sum_{i=1}^2Z'_i-Z''$, 
$0=E'_kN_3=3-E'_k(2\sum_{i=1}^2Z'_i+Z'')$
and
\begin{equation*}
E'_kN_2=E'_k(N+2K_Y-2G'-2\sum_{i=1}^2Z_i-\sum_{i=1}^2Z'_i)=2-E'_k\sum_{i=1}^2Z'_i\ge 0.
\end{equation*}

Thus 
\begin{equation}\label{e.z3}
2E'_k\sum_{i=1}^2Z'_i+E'_kZ''=3
\end{equation}
and $E'_k\sum_{i=1}^2Z'_i\le 2$ forces $-1\le E'_kZ''\le 3$.

Thus $E'_kZ''$ is odd and  from \eqref{e.z2} there should be at least one of the curves $E'_k$ (say $E'_3$) such that $E'_kZ''=3, \bar E'_k\bar N_2=2$. We get a contradiction since from the Index theorem   we should have
$(\bar E'_3-2\bar N_2)^2\le 0$
or equivalently $\bar {E'}_3^2\le 4$.
\end{proof}

We now study case c). Let us denote by $d_0$ the degree of the plane image of $\bar B_0$. Since $2\bar B_0+\bar E'=\bar N-3K_W$ the curve $2\bar B_0+\bar E'$ is sent to an element of $|-6K_{\mathbb P^2}|$. 

We note that a quadratic transformation leaves the plane image of $|2\bar B_0+\bar E'|$ unchanged. In particular even after any quadratic transformation the equation
\begin{equation}\label{e.deg1}
2d_0+\sum_{i=1}^5d_i=18
\end{equation}
holds, where $d_0$ is the degree of the image of $\bar B_0$ while $d_i,i=1,\dots,5$ are the degrees of the plane images of the curves $\bar E'_i$. In particular we have $d_0\le 9$.

The curve $\bar B_0$ satisfies the linear system 
\begin{equation*}
\begin{cases}
\sum_j j^2s_j=d_0^2-\bar B_0^2=d_0^2-2\\
\sum_j js_j=3d_0-\bar B_0\bar N_2=3d_0-2\\
\sum_j s_j\le 8
\end{cases}
\end{equation*}
where $s_j$ is the number of points among $P_1,\dots,P_8$ of multiplicity $j$ for $\bar B_0$.
Then, as for \eqref{e.sys}, $d_0\ge 3$ and we have the following list of solutions:
\begin{equation*}
\begin{array}{clcl}
1) & d_0=3, s_1=7  &5) & d_0=7, s_3=3, s_2=5 \\
2) & d_0=4, s_2=2, s_1=6 &6) & d_0=8, s_3=6, s_2=2 \\
3) &d_0=5, s_2=5, s_1=3   &7) &d_0=9, s_4=1, s_3=7 \\
4) & d_0=6, s_3=1, s_2=6, s_1=1 &\\
\end{array}
\end{equation*}

\begin{prop}
All the above solutions are equivalent up to a finite number of Cremona quadratic transformations of $\mathbb P^2$ based at $P_1,\dots,P_8$.
\end{prop}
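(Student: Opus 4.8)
The plan is to observe that the linear system constraining $\bar B_0$ in this case is literally the same as the system \eqref{e.sys} treated in case I of $n=3\ell$, namely $\sum_j j^2s_j=d_0^2-2$, $\sum_j js_j=3d_0-2$, $\sum_j s_j\le 8$. Hence the seven solutions listed here coincide with those of Proposition \ref{p.equiv0}, and I would establish the claim by the identical chain of Cremona quadratic transformations. Concretely, I would reduce the degree $d_0$ of the plane image of $\bar B_0$ step by step, from $d_0=9$ in solution 7) down to $d_0=3$ in solution 1), applying at each stage a quadratic transformation based at three suitably chosen points among $P_1,\dots,P_8$.

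At each step the three base points are taken among the points of highest multiplicity for $\bar B_0$: the quadruple point and two triple points for the nonic 7); three triple points for the octic 6); two triple points and one double point for the septic 5); and so on down to the quartic 2), where the transformation is based at its two double points and one simple point, landing on the cubic 1). Using the degree transformation law $d_0\mapsto 2d_0-m_1-m_2-m_3$ together with the rule $m_i\mapsto d_0-m_j-m_k$ for the multiplicities at the three fundamental points, a direct computation shows that each transformation carries one entry of the list precisely onto the next (for example $2\cdot 9-4-3-3=8$, and the three fundamental points acquire multiplicities $3,2,2$, giving exactly $s_3=6,s_2=2$).

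The admissibility of each transformation rests on the two facts already exploited in Proposition \ref{p.equiv0}, and I would verify them in the same manner. First, the three chosen base points are never collinear, since $\bar B_0$ is an irreducible curve of degree $d_0$ and a line through three of its multiple points would meet it in more than $d_0$ points. Second, the proximity inequalities (see \cite{Ca}) allow the base points to be taken on $\mathbb P^2$ rather than all infinitely near: at the nonic 7), for instance, two triple points cannot both be proximate to the quadruple point $P$, since irreducibility of $\bar B_0$ would then force $4=m_P\ge 3+3=6$, and the analogous inequality excludes the bad configurations at each subsequent step.

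Since the multiplicity data are identical to those handled in Proposition \ref{p.equiv0}, no genuinely new difficulty arises; the only point requiring attention is precisely the verification above that at every step the base points can be chosen non-collinear and not all infinitely near. With that in hand, the chain of transformations reduces each solution to the cubic 1), so that all seven are Cremona-equivalent, as asserted.
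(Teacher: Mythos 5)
Your proposal is correct and coincides with the paper's approach: the paper's proof of this proposition is simply a reference back to Proposition \ref{p.equiv0}, since the constraining system is the same as \eqref{e.sys}, and your chain of quadratic transformations based at the highest-multiplicity points, with non-collinearity ruled out by the degree count and infinitely-near configurations excluded via the proximity inequalities (e.g.\ $4\ge 3+3$ at the nonic), is exactly the argument given there. Your explicit verification of the degree and multiplicity bookkeeping ($2\cdot 9-4-3-3=8$, new multiplicities $3,2,2$) is a correct, slightly more detailed rendering of the same steps.
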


\begin{proof}
See the proof of proposition \ref{p.equiv0}.
\end{proof}

From the above proposition, up to Cremona transformations, we can set  $d_0=8$. In particular we can assume that the two double points of the octic are $P_7,P_8$. Then we find 
\begin{equation*}
\sum_{i=1}^5d_i=18-2d_0=2
\end{equation*}
and $d_i\le 2$ for any $i=1,\dots,5$.
If one of the curves $E'_i$ is contracted on $W$ then it has $d_i=0$ and the multiplicity at each of the points $P_1,\dots,P_8$ is 0. 

If $E'_i$ is not contracted on $W$ we have two different numerical possibilities:
\begin{equation}\label{e.45}
E'_i\sum_{j=1}^2Z_j=1, E'_iZ'_1=E'_iZ'_2=0, E'_iZ''=0 \quad (i=4,5)
\end{equation}
\begin{equation}\label{e.123}
E'_iZ_1=E'_iZ_2=0,2E'_i\sum_{j=1}^2Z'_i+E'_iZ''=3 \quad (i=1,2,3).
\end{equation}

When \eqref{e.45} holds we find $\bar {E'}_i^2=-2$ hence, since $d_i\le 2$,  either $d_i=1,s_1=3$ or $d_i=2,s_1=6$. Moreover $\bar E'_i\bar B_0=1$.

When \eqref{e.123} holds we a priori have two possibilities. In the former case $E'_i\sum_{j=1}^2Z'_j=1, E'_iZ''=1$   and then $\bar {E'}_i^2=-1$ hence  $\bar E'_i\bar B_0=3$ and either $d_i=1,s_1=2$ or $d_i=2,s_1=5$. In the latter case  $E'_i\sum_{j=1}^2Z'_j=0, E'_iZ''=3$ hence $\bar E'_i\bar N_2=2$ and $\bar {E'}^2_i=6$ contradicting the Index theorem   as in the proof of lemma \ref{l.nob}.

We now fix $i=4,5$. Then since $\bar E'_i\bar N_2=0$ we have $\sum_{j=1}^8 m_j=3d_i$ where $m_j$ is the multiplicity of $\bar E'_i$ at $P_j$. Moreover
\begin{align*}
1&=\bar E'_i\bar B_0=8d_i-3(m_1+\dots+m_6)-2(m_7+m_8)\\
&=8d_i-3\sum_{j=1}^8 m_j+(m_7+m_8)=-d_i+m_7+m_8
\end{align*}
hence 
\begin{equation}\label{e.m45}
d_i+1=m_7+m_8\le 2
\end{equation}
since there are no singular points among $P_1,\dots,P_8$. Hence $d_i\le 1$ and when $d_i=1$ the line $\bar E'_i$ must pass through $P_7$ and $P_8$. 

This excludes the 6-tuple $(d_0,d_1,d_2,d_3,d_4,d_5)=(8,0,0,0,1,1)$ since both $\bar E'_4$ and $\bar E'_5$ would be lines through the points $P_7$ and $P_8$. 

Then we have the following list of 6-tuples $(d_0,d_1,d_2,d_3,d_4,d_5)$:
\begin{equation*}
(8,2,0,0,0,0), \quad (8,1,1,0,0,0), \quad (8,1,0,0,1,0).
\end{equation*}

For $i=1,2,3$, since $\bar E'_i\bar N_2=1$ (hence $\sum_{j=1}^8m_j=3d_i-1$), we have 
\begin{align*}
3&=\bar E'_i\bar B_0=8d_i-3(m_1+\dots+m_6)-2(m_7+m_8)\\
&=8d_i-3\sum_{j=1}^8 m_j+(m_7+m_8)=-d_i+3+m_7+m_8
\end{align*}
hence 
\begin{equation}\label{e.m123}
m_7+m_8=d_i\le 2.
\end{equation}

We now study the 6-tuple of degrees $(8,2,0,0,0,0)$.
Using \eqref{e.m45}, \eqref{e.m123} and the fact that $2\bar B_0+\bar E'$ has total multiplicity 6 at each of the points $P_1,\dots, P_8$ we find the following configuration
\begin{equation*}
\begin{tabular}{c|cccccccc}
  &$P_1$  &$P_2$ &$P_3$ &$P_4$ &$P_5$ &$P_6$ &$P_7$ &$P_8$\\
\hline
8 &3 &3 &3 &3 &3 &3 &2 &2\\
2 &1 &1 &1 &0 &0 &0 &1 &1\\
0 &-1 &0 &0 &0 &0 &0 &0 &0\\
0 &0 &0 &0 &0 &0 &0 &0 &0\\
0 &0 &-1 &0 &0 &0 &0 &1 &0\\
0 &0 &0 &-1 &0 &0 &0 &0 &1\\
\end{tabular}
\end{equation*}

For the 6-tuple $(8,1,1,0,0,0)$ we find
\begin{equation*}
\begin{tabular}{c|cccccccc}
  &$P_1$  &$P_2$ &$P_3$ &$P_4$ &$P_5$ &$P_6$ &$P_7$ &$P_8$\\
\hline
8 &3 &3 &3 &3 &3 &3 &2 &2\\
1 &1 &0 &0 &0 &0 &0 &1 &0\\
1 &0 &1 &0 &0 &0 &0 &0 &1\\
0 &0 &0 &0 &0 &0 &0 &0 &0\\
0 &-1 &0 &0 &0 &0 &0 &1 &0\\
0 &0 &-1 &0 &0 &0 &0 &0 &1\\
\end{tabular}
\end{equation*}

For the 6-tuple $(8,1,0,0,1,0)$ we have 
\begin{equation*}
\begin{tabular}{c|cccccccc}
  &$P_1$  &$P_2$ &$P_3$ &$P_4$ &$P_5$ &$P_6$ &$P_7$ &$P_8$\\
\hline
8 &3 &3 &3 &3 &3 &3 &2 &2\\
1 &1 &0 &0 &0 &0 &0 &1 &0\\
0 &-1 &0 &0 &0 &0 &0 &0 &0\\
0 &0 &0 &0 &0 &0 &0 &0 &0\\
1 &0 &1 &0 &0 &0 &0 &1 &1\\
0 &0 &-1 &0 &0 &0 &0 &0 &1\\
\end{tabular}
\end{equation*}

\begin{remark}
The conditions \eqref{e.m45} and \eqref{e.m123}, the total multiplicity 6 of $2\bar B_0+\bar E'$ at each of the points $P_1,\dots,P_8$ and the computation of the intersection numbers
\begin{equation*}
\bar E'_i\bar E'_j=\begin{cases}
0 &i=4,5,\ i\ne j\\
\ge 1 &i\ne j,\ 1\le i,j\le 3\\
-1 &1\le i=j \le 3
\end{cases}
\end{equation*}
are sufficient to uniquely determine  the configuration of points for each 6-tuple of degrees $(d_0,d_1,d_2,d_3,d_4,d_5)$.
\end{remark}

\begin{lemma}
The three above configurations are equivalent up to a finite number of quadratic transformations.	
\end{lemma}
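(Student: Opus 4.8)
The plan is to connect the three tabulated configurations by explicit standard quadratic Cremona transformations of $\mathbb{P}^2$ based at triples of the points $P_1,\dots,P_8$, in the same spirit as proposition \ref{p.equiv0}. Recall that the standard quadratic transformation based at three non-collinear points $Q_1,Q_2,Q_3$ (none infinitely near another) sends a curve of degree $d$ with multiplicities $\mu_1,\mu_2,\mu_3$ at $Q_1,Q_2,Q_3$ to a curve of degree $2d-\mu_1-\mu_2-\mu_3$, whose multiplicities at the three transformed base points are $d-\mu_2-\mu_3$, $d-\mu_1-\mu_3$, $d-\mu_1-\mu_2$, the multiplicities at the remaining $P_j$ being unchanged. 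Since $\bar B_0$ has degree $8$ and multiplicity vector $(3,3,3,3,3,3,2,2)$ in all three configurations, a transformation based at two of its triple points and one of its double points, where the three multiplicities add up to $8$, fixes $\bar B_0$ entirely and thus keeps us in the degree-$8$ model.

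Concretely, I would first apply the transformation based at $P_1,P_2,P_7$. By the formula above $\bar B_0$ is unchanged, the degree-$2$ curve $\bar E'_1$ drops to degree $1$, and one of the contracted curves acquires degree $1$, so the degree vector $(d_1,\dots,d_5)=(2,0,0,0,0)$ of the first configuration is carried to $(1,1,0,0,0)$; after relabelling this is the second configuration. Next I would apply the transformation based at $P_1,P_2,P_8$, which again fixes $\bar B_0$ and carries $(1,1,0,0,0)$ to $(1,0,0,1,0)$, moving one degree-$1$ curve between the two groups of the $\bar E'_i$ and producing the third configuration. By the preceding remark a configuration is determined by its intersection numbers together with the total multiplicity $6$ of $2\bar B_0+\bar E'$ at each $P_j$, so it suffices to match the transformed degrees and multiplicities against the tables, which is routine using \eqref{e.deg1}.

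The main obstacle, and the point where the argument departs from proposition \ref{p.equiv0}, is to verify that the two chosen triples are admissible base points. Here Bézout applied to $\bar B_0$ alone is not enough: at a triple of its points with multiplicities $3,3,2$ a line would meet $\bar B_0$ in exactly $8=\deg\bar B_0$ points, so collinearity is not excluded. Instead I would use the auxiliary curves: for $P_1,P_2,P_7$ the irreducible conic $\bar E'_1$ passes through all three with multiplicity $1$, and a line through three collinear points would meet this conic in $\ge 3>2$ points, which is impossible; for $P_1,P_2,P_8$ the irreducible line $\bar E'_2$ passes through $P_2,P_8$ but has multiplicity $0$ at $P_1$, so the three cannot be collinear. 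The remaining condition, that no chosen point be infinitely near another, is excluded exactly as in proposition \ref{p.equiv0} by the proximity inequalities (see \cite{Ca}) applied to the irreducible curves $\bar B_0$ and $\bar E'_i$. Chaining the two admissible transformations then shows that the three configurations are Cremona-equivalent.
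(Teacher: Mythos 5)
Your combinatorial bookkeeping is sound, and your chain of transformations is essentially the paper's: your second transformation, based at $P_1,P_2,P_8$, is literally the first one in the paper's proof, and the degree computations ($\bar B_0$ fixed, the conic dropping to a line, a contracted curve acquiring degree $1$) all check out. The genuine error is in your admissibility step, where you assert that the condition ``no chosen point be infinitely near another'' can be ``excluded exactly as in proposition \ref{p.equiv0} by the proximity inequalities''. This cannot work, because in these configurations some of the base points you choose really \emph{are} infinitely near: the degree-$0$ rows with entries $-1$ and $+1$ record curves contracted on $\mathbb P^2$, i.e.\ (strict transforms of) exceptional curves, so in the configuration $(8,2,0,0,0,0)$ the curve $\bar E'_4$ has class $e_2-e_7$, which forces $P_7$ to be infinitely near $P_2$ (and likewise $P_8$ infinitely near $P_3$), while in $(8,1,1,0,0,0)$ the curves $\bar E'_4,\bar E'_5$ force $P_7$ infinitely near $P_1$ and $P_8$ infinitely near $P_2$ --- the paper says so explicitly (``$P_8$ is proximate to $P_2$\dots\ $P_7$ is proximate to $P_1$''). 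Consequently an \emph{ordinary} quadratic transformation based at $P_1,P_2,P_7$, or at $P_1,P_2,P_8$, does not exist, and no proximity inequality can rule the infinitely-near relations out, since they are part of the data of the configuration.

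The paper's proof handles exactly this point by using \emph{degenerate} quadratic transformations based at two proper points and one infinitely near point (see \cite{Ca}): it verifies that $P_1$ and $P_2$ are proper via the maximal-multiplicity argument, and that the transformation is well defined because the line $\bar E'_2$, which realizes the direction $P_8$ at $P_2$, does not pass through $P_1$; the proximity inequalities are invoked only for the second transformation (based at $P_2,P_3,P_8$), to guarantee that one of $P_3,\dots,P_6$ is a planar point. Your argument is repairable along the same lines: your B\'ezout checks are in substance the correct direction conditions for the degenerate transformations (e.g.\ if the direction $P_7$ at $P_2$ lay along the line $P_1P_2$, that line would meet the irreducible conic $\bar E'_1$ with multiplicity at least $3>2$), and the degree formulas you quote persist for degenerate transformations. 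But as written, the step claiming the base triples consist of pairwise non-infinitely-near points is false and would make both of your transformations ill-defined in the stated (non-degenerate) sense; it must be replaced by the infinitely-near-base-point formalism that the paper actually uses.
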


\begin{proof}
We consider the 6-tuple $(8,1,1,0,0,0)$.
Let us  apply a quadratic transformation based at $P_1,P_2,P_8$. Since $P_1$ is a point of maximal multiplicity for both $\bar B_0$ and $\bar E'_1$ while  $P_2$ is a point of maximal multiplicity for both $\bar B_0$ and $\bar E'_2$, they cannot be infinitely near to any other point. Moreover $P_8$ is proximate to $P_2$ since the line $E'_2$ joins the two points and does not pass through any of the other points. Hence a quadratic transformation based at $P_1,P_2,P_8$ is well-defined and we obtain $(8,1,0,0,0,1)$.

We now show that $(8,1,1,0,0,0)$ is equivalent to $(8,2,0,0,0,0)$. We know that $P_8$ is proximate to $P_2$. A similar argument shows that $P_7$ is proximate to $P_1$. Let us now consider the points $P_3,P_4,P_5,P_6$. We claim that none of them can be proximate to $P_1$ or to $P_2$. If this was the case, in fact, the octic should satisfy the proximity inequalities  (see \cite{Ca})
$3\ge 3+2=5$
and we get a contradiction. Hence at least one of them, say $P_3$, has to be a planar point and we can perform a quadratic transformation based at $P_2,P_3,P_8$ obtaining the 6-tuple $(8,2,0,0,0,0)$.
\end{proof}

Thus all the 6-tuples are equivalent up to quadratic transformations and we can reduce to one of them, say $(8,2,0,0,0,0)$. We also note that  the curve $E'_3$ is contracted on $W$. In particular  we have 
$E'_3\sum_{j=1}^2Z'_j=2, E'_3Z''=-1$. 

Moreover since $E'_3\le Z''$ we find $E'_3N_2=0$ and then from the Index theorem   we have
$(E'_3+Z'_j)^2=-3-1+2E'_3Z'_j<0$ 
 (recall that $Z'_jN_2=0$ for any $j=1,2$)
hence $E'_3Z'_1=E'_3Z'_2=1$.

We now look at the surface $Y$ which is isomorphic to the plane blown up at 14 points. Let us denote by $P_9$ the point obtained by contracting $Z''$, $P_{10}$ and $P_{11}$ the points  obtained by contracting the cycles $Z'_j$, $P_{12}$ and $P_{13}$ the contractions of $Z_1$ and $Z_2$ and, finally, $P_{14}$ the contraction of $G'$. From section \ref{s.3l-1} 
\begin{equation*}
2B_0+E'\equiv 6G'+3\sum_{i=1}^nZ_i+2\sum_{i=1}^2Z'_i+Z''-6K_Y
\end{equation*}
hence the total multiplicity of $2B_0+E'$ at $P_9$ is 5, at $P_{10}$ and $P_{11}$ is 4, at $P_{12}$ and $P_{13}$ is 3 and it is 0 at $P_{14}$. 
Therefore we can write the following table 
\begin{equation*}
\begin{tabular}{c|cccccccccccccc}
  &$P_1$  &$P_2$ &$P_3$ &$P_4$ &$P_5$ &$P_6$ &$P_7$ &$P_8$ &$P_9$ &$P_{10}$ &$P_{11}$ &$P_{12}$ &$P_{13}$ &$P_{14}$\\
\hline
8 &3 &3 &3 &3 &3 &3 &2 &2 &2 &1 &1 &1 &1 &0\\
2 &1 &1 &1 &0 &0 &0 &1 &1 &1 &1 &0 &0 &0 &0\\
0 &-1 &0 &0 &0 &0 &0 &0 &0 &1 &0 &1 &0 &0 &0\\
0 &0 &0 &0 &0 &0 &0 &0 &0 &-1 &1 &1 &0 &0 &0\\
0 &0 &-1 &0 &0 &0 &0 &1 &0 &0 &0 &0 &1 &0 &0\\
0 &0 &0 &-1 &0 &0 &0 &0 &1 &0 &0 &0 &0 &1 &0\\
\end{tabular}
\end{equation*}

Let us now see what the images of $F'$ and $H'$ are. We know that $F'G'=H'G'=1$ hence their images pass through the point $P_{14}$. We also know they have no intersection with $B_0$ and with any of the curves $E'_i$.

Let us now consider $F'$. The computation for $H'$ is similar. Its plane image is a curve of degree $d$ with multiplicities $m_1,\dots,m_{14}$ at the points $P_1,\dots, P_{14}$. From the above remarks we find the following relations
\begin{equation}\label{e.f}
\begin{cases}
m_{14}=1\\
3\sum_{i=1}^6m_i+2\sum_{i=7}^9m_i+\sum_{i=10}^{13}m_i=8d\\
\sum_{i=1}^3m_i+\sum_{i=7}^{10}m_i=2d\\
-m_1+m_9+m_{11}=0\\
-m_9+m_{10}+m_{11}=0\\
-m_2+m_7+m_{12}=0\\
-m_3+m_8+m_{13}=0\\
\end{cases}
\end{equation}

One can easily see that  $F'$ (hence $H'$) is not contracted on $W$. Since $F'N_1=F'N_2=0$ we have $F'\sum_{i=1}^2Z_i=F'\sum_{i=1}^2Z'_i=0=F'Z''$. This forces $F'Z_i=F'Z'_i=0$ for $i=1,2$, hence 
$m_i=0$ for $9\le i\le 13$.
We can then rewrite \eqref{e.f} as
\begin{equation}\label{e.f2}
\begin{cases}
m_{14}=1\\
m_1=0\\
m_2+m_3=d\\
m_7=m_2\\
m_8=m_3\\
m_4+m_5+m_6=d\\
\end{cases}
\end{equation}

Since $F'$ is a $(-3)$-curve from \eqref{e.f2} we have
\begin{equation*}
-3=d^2-\sum_{i=1}^{14}m_i^2=-(d-2m_2)^2-m_4^2-m_5^2-m_6^2-1
\end{equation*}
hence
$(d-2m_2)^2+m_4^2+m_5^2+m_6^2=2$.
First of all we note that
\begin{equation*}
2\ge m_4^2+m_5^2+m_6^2\ge m_4+m_5+m_6=d
\end{equation*}
forces $d\le 2$. If $d=2$ we find $m_2=1$ and $(m_4,m_5,m_6)=(1,1,0),(1,0,1)$ or $(0,1,1)$. 
Using \eqref{e.f2} we find $m_2=m_3=m_7=m_8=m_{14}=1$. Hence $F'$ and $H'$ cannot be both sent to conics, since otherwise they should have at least 5 common points while $F'H'=0$ on $Y$.

When $d=1$ we find $-1\le 1-2m_2\le 1$ hence either $m_2=m_7=0,m_3=m_8=1$ or $m_2=m_7=1,m_3=m_8=0$.
 $F'$ and $H'$ cannot be sent to a conic and a line respectively, since they should have at least 3 common points ($P_2,P_7,P_{14}$ or $P_3,P_8,P_{14}$). Contradiction.

If $d=0$ then $-1\le -2m_2\le 1$ forces $m_2=0$. Hence from \eqref{e.f2} we get $m_2=m_3=m_7=m_8=0, m_4+m_5+m_6=0$. Thus
$\{m_4,m_5,m_6\}=\{1,0,-1\}$.

From the above analysis either $F'$ and $H'$ are both sent to lines or one of them is contracted on $\mathbb P^2$. In the former case we have the configuration
\begin{equation*}
\begin{tabular}{c|cccccccccccccc}
  &$P_1$  &$P_2$ &$P_3$ &$P_4$ &$P_5$ &$P_6$ &$P_7$ &$P_8$ &$P_9$ &$P_{10}$ &$P_{11}$ &$P_{12}$ &$P_{13}$ &$P_{14}$\\
\hline
8 &3 &3 &3 &3 &3 &3 &2 &2 &2 &1 &1 &1 &1 &0\\
2 &1 &1 &1 &0 &0 &0 &1 &1 &1 &1 &0 &0 &0 &0\\
0 &-1 &0 &0 &0 &0 &0 &0 &0 &1 &0 &1 &0 &0 &0\\
0 &0 &0 &0 &0 &0 &0 &0 &0 &-1 &1 &1 &0 &0 &0\\
0 &0 &-1 &0 &0 &0 &0 &1 &0 &0 &0 &0 &1 &0 &0\\
0 &0 &0 &-1 &0 &0 &0 &0 &1 &0 &0 &0 &0 &1 &0\\
1 &0 &1 &0 &1 &0 &0 &1 &0 &0 &0 &0 &0 &0 &1\\
1 &0 &0 &1 &0 &1 &0 &0 &1 &0 &0 &0 &0 &0 &1\\
\end{tabular}
\end{equation*}

In the latter case we can assume that the contracted curve (resp. one of the contracted curves) has $m_4=1,m_5=-1,m_6=0$. If the second curve is a conic we find the configuration
\begin{equation*}
\begin{tabular}{c|cccccccccccccc}
  &$P_1$  &$P_2$ &$P_3$ &$P_4$ &$P_5$ &$P_6$ &$P_7$ &$P_8$ &$P_9$ &$P_{10}$ &$P_{11}$ &$P_{12}$ &$P_{13}$ &$P_{14}$\\
\hline
8 &3 &3 &3 &3 &3 &3 &2 &2 &2 &1 &1 &1 &1 &0\\
2 &1 &1 &1 &0 &0 &0 &1 &1 &1 &1 &0 &0 &0 &0\\
0 &-1 &0 &0 &0 &0 &0 &0 &0 &1 &0 &1 &0 &0 &0\\
0 &0 &0 &0 &0 &0 &0 &0 &0 &-1 &1 &1 &0 &0 &0\\
0 &0 &-1 &0 &0 &0 &0 &1 &0 &0 &0 &0 &1 &0 &0\\
0 &0 &0 &-1 &0 &0 &0 &0 &1 &0 &0 &0 &0 &1 &0\\
0 &0 &0 &0 &1 &-1 &0 &0 &0  &0 &0 &0 &0 &0 &1\\
2 &0 &1 &1 &0 &1 &1 &1 &1 &0 &0 &0 &0 &0 &1\\
\end{tabular}
\end{equation*}

If it is a line we find
\begin{equation*}
\begin{tabular}{c|cccccccccccccc}
  &$P_1$  &$P_2$ &$P_3$ &$P_4$ &$P_5$ &$P_6$ &$P_7$ &$P_8$ &$P_9$ &$P_{10}$ &$P_{11}$ &$P_{12}$ &$P_{13}$ &$P_{14}$\\
\hline
8 &3 &3 &3 &3 &3 &3 &2 &2 &2 &1 &1 &1 &1 &0\\
2 &1 &1 &1 &0 &0 &0 &1 &1 &1 &1 &0 &0 &0 &0\\
0 &-1 &0 &0 &0 &0 &0 &0 &0 &1 &0 &1 &0 &0 &0\\
0 &0 &0 &0 &0 &0 &0 &0 &0 &-1 &1 &1 &0 &0 &0\\
0 &0 &-1 &0 &0 &0 &0 &1 &0 &0 &0 &0 &1 &0 &0\\
0 &0 &0 &-1 &0 &0 &0 &0 &1 &0 &0 &0 &0 &1 &0\\
0 &0 &0 &0 &1 &-1 &0 &0 &0 &0 &0 &0 &0 &0 &1\\
1 &0 &0 &1 &0 &1 &0 &0 &1 &0 &0 &0 &0 &0 &1\\
\end{tabular}
\end{equation*}
while if they are both contracted we have
\begin{equation*}
\begin{tabular}{c|cccccccccccccc}
  &$P_1$  &$P_2$ &$P_3$ &$P_4$ &$P_5$ &$P_6$ &$P_7$ &$P_8$ &$P_9$ &$P_{10}$ &$P_{11}$ &$P_{12}$ &$P_{13}$ &$P_{14}$\\
\hline
8 &3 &3 &3 &3 &3 &3 &2 &2 &2 &1 &1 &1 &1 &0\\
2 &1 &1 &1 &0 &0 &0 &1 &1 &1 &1 &0 &0 &0 &0\\
0 &-1 &0 &0 &0 &0 &0 &0 &0 &1 &0 &1 &0 &0 &0\\
0 &0 &0 &0 &0 &0 &0 &0 &0 &-1 &1 &1 &0 &0 &0\\
0 &0 &-1 &0 &0 &0 &0 &1 &0 &0 &0 &0 &1 &0 &0\\
0 &0 &0 &-1 &0 &0 &0 &0 &1 &0 &0 &0 &0 &1 &0\\
0 &0 &0 &0 &1 &-1 &0 &0 &0  &0 &0 &0 &0 &0 &1\\
0 &0 &0 &0 &0 &1 &-1 &0 &0  &0 &0 &0 &0 &0 &1\\
\end{tabular}
\end{equation*}

\begin{lemma}
The  configurations given by the 6-tuples $(8,2,0,0,0,0,1,1)$ and $(8,2,0,0,0,0,0,1)$ are equivalent up to quadratic transformations.
\end{lemma}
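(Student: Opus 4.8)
The plan is to establish the equivalence by producing a single quadratic Cremona transformation $\tau$ of $\mathbb{P}^2$ carrying the configuration $(8,2,0,0,0,0,1,1)$ to $(8,2,0,0,0,0,0,1)$. The starting observation is that the two configurations coincide in the curves $\bar B_0, \bar E'_1, \bar E'_2, \bar E'_3, \bar E'_4, \bar E'_5$ and $\bar H'$, and differ only in $\bar F'$, which is a line in the first and a contracted $(-3)$-curve in the second. Hence I look for a transformation based at three of the points $P_1, \dots, P_{14}$ that fixes the octic, the conic $\bar E'_1$, the line $\bar H'$ and the four contracted curves $\bar E'_2, \dots, \bar E'_5$, while lowering the degree of $\bar F'$ from $1$ to $0$.

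To locate the base points I would use the standard rule that a quadratic transformation based at $P_a, P_b, P_c$ changes the degree of a curve from $d$ to $2d - m_a - m_b - m_c$ and replaces the multiplicities $m_a, m_b, m_c$ by $d - m_b - m_c$, $d - m_a - m_c$, $d - m_a - m_b$, leaving all others fixed. Requiring that $\bar B_0$ remain an octic, $\bar E'_1$ a conic, $\bar H'$ a line and $\bar F'$ become contracted amounts to asking that the multiplicities of these four curves at the chosen base points sum to $8$, $2$, $1$ and $2$ respectively; in addition, to keep each $\bar E'_k$ ($k=2,\dots,5$) of degree $0$ the base points must avoid the three points in the support of each of these curves. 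Working through the multiplicity table, these requirements single out the triple $P_2, P_5, P_7$. (The closely related choice $P_2, P_6, P_7$ would instead raise $\bar H'$ to a conic and reproduce the configuration $(8,2,0,0,0,0,0,2)$, which explains its appearance in the list.)

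It then remains to confirm that $\tau$ based at $P_2, P_5, P_7$ is admissible and to read off its effect. All three are proper points of $\mathbb{P}^2$, since the only infinitely near points of the configuration are those occurring as satellites in the contracted curves $\bar E'_2,\dots,\bar E'_5$, namely $P_8,\dots,P_{14}$. Moreover $P_2, P_5, P_7$ are not collinear, because the line joining $P_2$ and $P_7$ is exactly $\bar F'$, which does not pass through $P_5$. A direct substitution into the transformation formula then fixes $\bar B_0, \bar E'_1, \bar H'$ and each $\bar E'_k$ ($k=2,\dots,5$) and sends $\bar F'$ to the degree-$0$ class with multiplicities $1, -1, 1$ at $P_4, P_5, P_{14}$, which is precisely the $\bar F'$ of $(8,2,0,0,0,0,0,1)$.

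The main obstacle is the combinatorial bookkeeping that pins down the base points, and in particular the positivity constraint on the contracted curves. This is a genuine restriction: a triple such as $P_2, P_4, P_8$ satisfies all four degree conditions on $\bar B_0, \bar E'_1, \bar H', \bar F'$, yet it is inadmissible because $\bar E'_4$ passes through $P_8$ and would be forced to negative degree. Once the correct triple $P_2, P_5, P_7$ is isolated, and its properness and non-collinearity are read off from the incidences already tabulated, the verification reduces to a routine application of the quadratic transformation formula to all eight curves.
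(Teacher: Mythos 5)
Your computational core is right and in fact mirrors the paper's own proof: the paper establishes the equivalence by a single quadratic transformation based at the triple $P_3,P_4,P_8$, which contracts $\bar H'$, and your triple $P_2,P_5,P_7$ is precisely the symmetric choice (under the evident $F'\leftrightarrow H'$ symmetry of the configuration) that contracts $\bar F'$ instead and lands verbatim on the second table; the degree and multiplicity bookkeeping in your third paragraph checks out. However, your admissibility argument contains a genuine error: $P_7$ is \emph{not} a proper point of $\mathbb P^2$. The contracted curve $\bar E'_4$ has degree $0$ and multiplicities $-1$ at $P_2$ and $+1$ at $P_7$ and $P_{12}$, i.e.\ its class is $e_{P_2}-e_{P_7}-e_{P_{12}}$; for this to be an irreducible effective curve, $P_7$ and $P_{12}$ must be infinitely near (proximate to) $P_2$, with $\bar E'_4$ the strict transform of the exceptional divisor over $P_2$ --- and the paper states explicitly that $P_7$ is proximate to $P_2$. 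Your blanket claim that ``the only infinitely near points are $P_8,\dots,P_{14}$'' is inconsistent with your own data: $P_7$ sits in the support of the contracted curve $\bar E'_4$ exactly as $P_8$ sits in that of $\bar E'_5$. (Relatedly, your assertion that the base points ``must avoid the three points in the support'' of each contracted $\bar E'_k$ is contradicted by your own triple, since $P_2,P_7\in\operatorname{supp}\bar E'_4$; the correct condition is that the multiplicity sum over the base points vanish, which your substitution implicitly verifies.)

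The error is repairable without changing your transformation: a quadratic transformation based at a planar point $P_2$, a free first-order infinitely near point $P_7$ over $P_2$, and a planar point $P_5$ is still well-defined provided the line through $P_2$ in the direction of $P_7$ --- which is exactly $\bar F'$, as you note --- does not pass through $P_5$; this is the same kind of degenerate quadratic transformation the paper itself uses (e.g.\ the one based at $P_1,P_2,P_8$ in the preceding lemma, where $P_8$ is proximate to $P_2$). But the planarity of $P_2$ and of $P_5$ cannot be waved through either: the paper derives it from maximal-multiplicity considerations and the proximity inequalities for the octic (this is the content of its observation that $P_1$ cannot be proximate to $P_2$ or $P_3$, since $3\ge 3+2$ would fail), and some such argument is needed in your write-up before the transformation formula may be applied.
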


\begin{proof}
Let us study the configuration with two lines. One can easily see that $P_2$ and $P_3$ are planar points since they are of maximal multiplicity for the octic, the conic and one of the two lines simultaneously. Moreover $P_7$ is proximate to $P_2$ while $P_3$ is proximate to $P_8$. Thus $P_1$ cannot be proximate to $P_2$ or to $P_3$ since otherwise the octic would contradict the proximity inequalities  (see \cite{Ca})
$3\ge 3+2=5$.

With a similar argument one can show that $P_4$ and $P_5$ are planar points too.
If we base a quadratic transformation at $P_3,P_4,P_8$ we obtain the configuration with a line and a contracted curve.
\end{proof}

We look at the eigenvalues of the curves $B_0$, $E'_i, 1\le i\le 5$, $F'$ and $H'$ for the action of the automorphism of order 3. We know that $F'$ and $H'$ correspond to different eigenvalues since they come from the blow-up of a singularity of type $A_2$ (see \cite{Ca, T}). From now on let us set $\omega:=e^{2\pi i/3}$. If $B_0$ corresponds to the eigenvalue $\omega$ then it appears with multiplicity 1 in the branch locus of the simple triple cover associated to $X\longrightarrow Y=X/(\mathbb Z/3\mathbb Z)$. Let us assume that $E'_i$ corresponds to the eigenvalue $\omega^{\nu_i}$, $F'$ corresponds to the eigenvalue $\omega^{\nu_F}$ and $H'$ to $\omega^{2\nu_F}$. 

\begin{prop}\label{p.3l-1}
The case $n=3\ell-1,n'=2,n''=1$ cannot occur with multi-degrees $(d_0,d_1,d_2,d_3,d_4,d_5,d_F,d_H)=(8,2,0,0,0,0,1,1)$ and $(8,2,0,0,0,0,1)$.
\end{prop}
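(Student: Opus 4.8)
The plan is to exploit the eigenvalue data of the $\mathbb{Z}/3\mathbb{Z}$-cover together with the integrality of the building divisors $L_1,L_2$. Recall from the theory of abelian triple covers (as used in the proof of proposition \ref{p.h0li1}) that if each irreducible branch component $C$ carries an inertia eigenvalue $\omega$ or $\omega^2$, then
\begin{equation*}
3L_1\equiv \sum_{C:\,\omega}C+2\!\sum_{C:\,\omega^2}\!C,\qquad 3L_2\equiv 2\!\sum_{C:\,\omega}\!C+\sum_{C:\,\omega^2}C,
\end{equation*}
where the sums run over $B_0,E'_1,\dots,E'_5,F',H'$, and $L_1,L_2$ are genuine divisor classes on $Y$; in particular $3L_1$ must be divisible by $3$ in $\mathrm{Pic}(Y)$. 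First I would record the constraints on the eigenvalues: we may assume $B_0\leftrightarrow\omega$; by proposition \ref{p.h0li1} exactly two of $E'_1,\dots,E'_5$ carry $\omega$ and three carry $\omega^2$; and since $F',H'$ arise from an $A_2$-singularity they carry conjugate eigenvalues, so exactly one of them is $\omega$.

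Next I would pass to $\mathrm{Pic}(Y)=\langle \ell,e_1,\dots,e_{14}\rangle$, where $Y$ is $\mathbb{P}^2$ blown up at $P_1,\dots,P_{14}$, writing each branch curve as $d\ell-\sum_j m_je_j$ with the degrees and (signed) multiplicities read off the table preceding the statement, e.g.\ $B_0=8\ell-3e_1-\cdots-3e_6-2e_7-2e_8-2e_9-e_{10}-\cdots-e_{13}$. Writing $\epsilon_C\in\{1,2\}$ for the coefficient of $C$ in $3L_1$, I would impose that every coordinate of $3L_1$ is $\equiv 0\pmod 3$. The coefficient of $\ell$ gives $11+2\epsilon_1\equiv 0$, forcing $\epsilon_1=2$; the coefficients of $e_1,e_2,e_3,e_4$ in turn force $\epsilon_2=2$, $\epsilon_F=1$ (hence $\epsilon_H=2$), $\epsilon_4=1$ and $\epsilon_5=1$, which together with the counting constraint also pins $\epsilon_3=2$. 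Thus divisibility determines the eigenvalue assignment completely and compatibly with proposition \ref{p.h0li1}. The contradiction then surfaces at the next coordinate: only $B_0$ and $H'$ pass through $P_5$, so the coefficient of $e_5$ in $3L_1$ equals $-3-\epsilon_H=-5\not\equiv 0\pmod 3$, and $L_1$ cannot be an integral class. Since $L_1+L_2\equiv B_0+\sum_i E'_i+F'+H'$ is integral, $L_2$ fails as well, excluding the multi-degree $(8,2,0,0,0,0,1,1)$.

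For the remaining multi-degree $(8,2,0,0,0,0,1)$ I would run the identical argument on its multiplicity table(s), again reducing the integrality of $3L_1$ to a $\bmod\,3$ incompatibility among the coordinates; as a cross-check one may test the forced $L_1$ against $L_1^2+L_1K_Y=-2$ from lemma \ref{l.chili}. The conceptual content is that \emph{no} eigenvalue assignment makes the building data of the cover integral, and this is what forces the contradiction. I expect the only delicate point to be bookkeeping: correctly translating the signed entries of the tables into classes in $\mathrm{Pic}(Y)$ and tracking the coefficients $\epsilon_C$, where a single sign or index slip would corrupt the $\bmod\,3$ computation.
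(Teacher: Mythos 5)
Your proposal is correct and rests on the same mechanism as the paper's own proof: the paper's conditions --- total degree of the branch divisor $\equiv 0 \pmod 3$, and total multiplicity $\equiv 0 \pmod 3$ at the points $P_2,P_3$ --- are precisely the $\ell$-, $e_2$- and $e_3$-coordinates of your requirement that $3L_1\equiv\sum_{C:\,\omega}C+2\sum_{C:\,\omega^2}C$ be $3$-divisible in $\mathrm{Pic}(Y)$, combined with the conjugacy $\epsilon_H\equiv 2\epsilon_F\pmod 3$. Your packaging is nevertheless a genuine streamlining: since $\mathrm{Pic}(Y)=\mathbb{Z}\ell\oplus\bigoplus_j\mathbb{Z}e_j$ is free regardless of whether the $P_j$ are infinitely near, the congruence may be imposed at \emph{every} coordinate, so you dispense entirely with the paper's proximity and planarity analysis (which the paper needs in order to know at which points its multiplicity condition applies). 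In fact your argument closes faster than you claim: in the $(8,2,0,0,0,0,1,1)$ table the $P_4$-column meets only $B_0$ (multiplicity $3$) and $F'$ (multiplicity $1$), so the $e_4$-coefficient of $3L_1$ is $-3-\epsilon_F\not\equiv 0\pmod 3$ for either value $\epsilon_F\in\{1,2\}$ --- no forcing chain, and not even the eigenvalue count of proposition \ref{p.h0li1}, is needed; the same happens at $e_5$ with $\epsilon_H$, and in the second table (where $F'$ is contracted, with class $e_5-e_4-e_{14}$) again at $e_4$. Be warned, though, that your intermediate forcing chain contains exactly the bookkeeping slips you feared: the $e_2$-coordinate is $-3-\epsilon_1+\epsilon_4-\epsilon_F$ (the entry $-1$ of $E'_4$ at $P_2$ contributes $+\epsilon_4$ to the class), which with $\epsilon_1=2$ forces $(\epsilon_4,\epsilon_F)=(1,2)$ rather than your $\epsilon_F=1$, $\epsilon_4=1$; and the $e_4$-coordinate involves $\epsilon_F$ only, not $\epsilon_4$, since $E'_4=e_2-e_7-e_{12}$ does not appear there. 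None of this affects the conclusion --- the contradiction at $e_4$ (or $e_5$) holds for every admissible assignment, which is stronger than what the paper's chain through $\nu_1,\nu_F,\nu_4,\nu_5$ extracts --- but as written your list of ``forced'' values is inconsistent with the table and should be corrected before the proof is recorded.
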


\begin{proof}
We have already shown that the two configurations in the statement 
are equivalent up to quadratic transformations.
Let us consider 
\begin{equation*}
\begin{tabular}{c|cccccccccccccc}
  &$P_1$  &$P_2$ &$P_3$ &$P_4$ &$P_5$ &$P_6$ &$P_7$ &$P_8$ &$P_9$ &$P_{10}$ &$P_{11}$ &$P_{12}$ &$P_{13}$ &$P_{14}$\\
\hline
8 &3 &3 &3 &3 &3 &3 &2 &2 &2 &1 &1 &1 &1 &0\\
2 &1 &1 &1 &0 &0 &0 &1 &1 &1 &1 &0 &0 &0 &0\\
0 &-1 &0 &0 &0 &0 &0 &0 &0 &1 &0 &1 &0 &0 &0\\
0 &0 &0 &0 &0 &0 &0 &0 &0 &-1 &1 &1 &0 &0 &0\\
0 &0 &-1 &0 &0 &0 &0 &1 &0 &0 &0 &0 &1 &0 &0\\
0 &0 &0 &-1 &0 &0 &0 &0 &1 &0 &0 &0 &0 &1 &0\\
1 &0 &1 &0 &1 &0 &0 &1 &0 &0 &0 &0 &0 &0 &1\\
1 &0 &0 &1 &0 &1 &0 &0 &1 &0 &0 &0 &0 &0 &1\\
\end{tabular}
\end{equation*}

Since the total degree of the branch curve on $\mathbb P^2$ has to be a  multiple of 3 and since the two lines correspond to different eigenvalues, the conic appears with multiplicity 2 in the branch divisor, hence $\nu_1\equiv 2\mod 3$.

The points $P_2$, $P_{3}$ are not infinitely near to any other point since they are the only points which are triple for the octic and simple for both the conic and one of the two lines. 
The total multiplicity at $P_3$ of the branch divisor has to be a multiple of 3. Then we obtain the  equation 
\begin{equation*}
3+\nu_1+\nu_5+\nu_H\equiv 3+2+\nu_5+\nu_H \equiv 0\mod 3
\end{equation*}
which forces $\nu_H+\nu_5\equiv 2\nu_F+\nu_5\equiv 1 \mod 3$.

On the other hand the same computation for $P_2$ gives us
\begin{equation*}
3+\nu_1+\nu_4+\nu_F\equiv 3+2+\nu_4+\nu_F \equiv 0\mod 3
\end{equation*}
which forces $\nu_F+\nu_4\equiv 1 \mod 3$.
Then since $\nu_i,\nu_F\equiv 1,2 \mod 3$ we find
$\nu_F\equiv \nu_4\equiv 2\mod 3$
hence $\nu_5\equiv 0\mod 3$. Contradiction.
\end{proof}

\begin{prop}\label{p.3l-12}
The case $n=3\ell-1,n'=2,n''=1$ cannot occur with degrees $(d_0,d_1,d_2,d_3,d_4,d_5,d_F,d_H)=(8,2,0,0,0,0,0,2)$. 
\end{prop}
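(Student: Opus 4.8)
The plan is to run the eigenvalue argument of the proof of Proposition \ref{p.3l-1} on the remaining configuration, namely the one with multi-degrees $(d_0,d_1,d_2,d_3,d_4,d_5,d_F,d_H)=(8,2,0,0,0,0,0,2)$, in which $F'$ is contracted on $W$ and $H'$ is sent to a conic. Recall from the theory of abelian triple covers (cf. Proposition \ref{p.h0li1}) that, with $\omega=e^{2\pi i/3}$ and with $\omega^{\nu_1},\omega^{\nu_F},\omega^{2\nu_F}$ the eigenvalues of $E'_1,F',H'$ respectively (so $\nu_1,\nu_F\in\{1,2\}$ and $\nu_H\equiv 2\nu_F$, whence $\nu_F+\nu_H\equiv 0 \pmod 3$ and $\nu_H\in\{1,2\}$), one has
\[
3L_1\equiv B_0+\nu_1E'_1+\cdots+\nu_FF'+\nu_HH'.
\]
Since $L_1\in\mathrm{Pic}(Y)$, the divisor on the right must be divisible by $3$ in $\mathrm{Pic}(Y)$; and as $Y$ is $\mathbb P^2$ blown up at $P_1,\dots,P_{14}$, this means both that the total degree of the branch divisor is $\equiv 0 \pmod 3$ and that, at each $P_j$, the multiplicity of $3L_1$ read off from the configuration table is $\equiv 0 \pmod 3$.

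I would first record the degree condition. Only $B_0$ and the two conics $E'_1$ and $H'$ contribute to the degree, so it reads $8+2\nu_1+2\nu_H\equiv 0 \pmod 3$, i.e. $\nu_1+\nu_H\equiv 2 \pmod 3$; since $\nu_1,\nu_H\in\{1,2\}$ this already forces $\nu_1=\nu_H=1$.

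The decisive step will be the multiplicity congruence at the point $P_6$. In this configuration the conic $H'$ passes simply through $P_6$, which is a triple point of $B_0$, and a glance at the table shows that \emph{no other} component of the branch divisor passes through $P_6$. Hence the multiplicity of $3L_1$ at $P_6$ equals $3\cdot 1+1\cdot\nu_H\equiv\nu_H \pmod 3$, and divisibility by $3$ forces $\nu_H\equiv 0 \pmod 3$. This contradicts $\nu_H=1$ (and in any case $\nu_H\in\{1,2\}$), so the configuration cannot occur.

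The main obstacle will be purely the bookkeeping around $P_6$: one must verify against the configuration table that $P_6$ is genuinely a point through which only $B_0$ and the conic $H'$ pass, so that the automatic contribution $3$ of $B_0$ drops out modulo $3$ and isolates $\nu_H$; and, as in the multiplicity tables themselves, one must keep track of the infinitely near points so that the entries used are the correct orthogonal-basis coefficients on $Y$. Once this is checked, the contradiction is immediate, in exact analogy with the congruences at $P_2$ and $P_3$ that closed out Proposition \ref{p.3l-1}; the novelty here is simply that the single point $P_6$, where the conic meets the triple point of $B_0$, suffices.
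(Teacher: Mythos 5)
Your proof is correct, but it takes a genuinely shorter route than the paper's. The paper does not apply the congruence at $P_6$ of the given configuration directly: it first performs a quadratic transformation based at $P_1,P_2,P_3$ (after checking these are planar points of maximal multiplicity lying on an irreducible conic), passes to a septic configuration with triple points at $P_4,P_5,P_6$, and then argues by dichotomy --- if $P_6$ is planar it invokes exactly your congruence $3+\nu_H\equiv 0\pmod 3$, while if instead $P_6$ is proximate to the planar point $P_5$ it derives a contradiction from proximity, since the exceptional curve $F'$ of the blow-up of $P_5$ would then have to pass through $P_6$, which its class forbids. The reason for the paper's caution is that it phrases the mod-$3$ condition as a statement about geometric multiplicities of the plane branch curve, which it only ever uses at points explicitly verified to be planar. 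You bypass all of this by reading $3L_1\equiv B_0+\nu_1E'_1+\cdots+\nu_FF'+\nu_HH'$ as a relation in $\mathrm{Pic}(Y)$, which is free on $h$ and the total transforms $e_1,\dots,e_{14}$ regardless of which points are infinitely near; divisibility by $3$ of the $e_6$-coefficient is then legitimate as it stands, and since the table shows that only $B_0$ (with multiplicity $3$, whose contribution $3\mu_0$ vanishes mod $3$ whatever coefficient $\mu_0\in\{1,2\}$ the curve $B_0$ carries in the branch divisor) and $H'$ (simply) pass through $P_6$, you get $\nu_H\equiv 0\pmod 3$, impossible for an irreducible component of the branch locus. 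This is sound; note that your preliminary degree computation forcing $\nu_1=\nu_H=1$ is in fact redundant, since the $P_6$ congruence alone contradicts $\nu_H\in\{1,2\}$. What your approach buys is the elimination of the Cremona transformation and the planar versus infinitely-near case analysis; what the paper's buys is that it never has to justify the orthogonal-basis congruence at a possibly infinitely near point, at the price of extra configuration bookkeeping.
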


\begin{proof}
Let us consider the points $P_1,P_2,P_3$. They are of maximal multiplicity for both the octic and one of the two conics hence they cannot be infnitely near to any of the points $P_j, j\ge 4$. Since there is an irreducible  conic passing through all the three points, we can perform a quadratic transformation based at $P_1,P_2,P_3$ and we obtain the following configuration
\begin{equation*}
\begin{tabular}{c|cccccccccccccc}
  &$P_1$  &$P_2$ &$P_3$ &$P_4$ &$P_5$ &$P_6$ &$P_7$ &$P_8$ &$P_9$ &$P_{10}$ &$P_{11}$ &$P_{12}$ &$P_{13}$ &$P_{14}$\\
\hline
7 &2 &2 &2 &3 &3 &3 &2 &2 &2 &1 &1 &1 &1 &0\\
1 &0 &0 &0 &0 &0 &0 &1 &1 &1 &1 &0 &0 &0 &0\\
1 &0 &1 &1 &0 &0 &0 &0 &0 &1 &0 &1 &0 &0 &0\\
0 &0 &0 &0 &0 &0 &0 &0 &0 &-1 &1 &1 &0 &0 &0\\
1 &1 &0 &1 &0 &0 &0 &1 &0 &0 &0 &0 &1 &0 &0\\
1 &1 &1 &0 &0 &0 &0 &0 &1 &0 &0 &0 &0 &1 &0\\
0 &0 &0 &0 &1 &-1 &0 &0 &0  &0 &0 &0 &0 &0 &1\\
2 &0 &1 &1 &0 &1 &1 &1 &1 &0 &0 &0 &0 &0 &1\\
\end{tabular}
\end{equation*}

We now show that this new configuration cannot occur.

Let us consider the points $P_4,P_5,P_6$. Since they are triple points for the septic they cannot be infinitely near to any other point $P_j$, $j\le 3$ or $j\ge 7$. Moreover the conic $H'$ passes through $P_5$ and $P_6$ but not through $P_4$. Hence one among $P_5$ and $P_6$ has to be a planar point. 

If $P_6$ was planar, then the total multiplicity of $P_6$ in the branch divisor of the simple triple cover has to be a multiple of 3. Thus
$3+\nu_H\equiv 0 \mod 3$ 
which forces $\nu_H\equiv 0 \mod 3$. We get a contradiction since $H'$ is an irreducible component of the branch divisor.
Thus $P_5$ is a planar point and $P_6$ is proximate to $P_5$. But then when we blow up $P_5$ the exceptional divisor $F'$ should pass through $P_6$. Contradiction. 
\end{proof}

\begin{prop}\label{p.3l-13}
The case $n=3\ell-1,n'=2,n''=1$ cannot occur with degrees $(d_0,d_1,d_2,d_3,d_4,d_5,d_F,d_H)=(8,2,0,0,0,0,0,0)$. 
\end{prop}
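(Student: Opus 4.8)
The plan is to exclude this configuration by the same eigenvalue count used in Propositions \ref{p.3l-1} and \ref{p.3l-12}, applied to the table above in which both $F'$ and $H'$ are contracted on $W$. First I would isolate the honest plane branch curve: since $d_F=d_H=0$, only the octic $\bar B_0$ (of weight $1$) and the conic $\bar E'_1$ (of weight $\nu_1$) survive as genuine curves on $\mathbb P^2$, all of $E'_2,\dots,E'_5,F',H'$ being exceptional. Requiring the total degree of the branch locus to be divisible by $3$ gives $8+2\nu_1\equiv 0\pmod 3$, hence $\nu_1\equiv 2\pmod 3$, exactly as before. The new feature is that, because $F'$ and $H'$ are now contracted, the congruences at the evident planar points $P_1,P_2,P_3$ (triple for the octic, simple for the conic) involve only $\nu_1$ and the weights $\nu_2,\nu_4,\nu_5$ of the exceptional curves over them, and produce no contradiction; so the decisive information must be extracted from the chain $P_4,P_5,P_6$ that carries $F'$ and $H'$.

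Reading the virtual multiplicities, the entries of $F'$ ($+1$ at $P_4$, $-1$ at $P_5$) and of $H'$ ($+1$ at $P_5$, $-1$ at $P_6$) display $P_5$ as infinitely near $P_4$ and $P_6$ as infinitely near $P_5$, with $F'$ the exceptional curve over $P_4$; thus $P_4$ is a genuine planar point through which, apart from the octic of multiplicity $3$, only the branch curve $F'$ passes. I would then invoke the integrality relation $3L_1\equiv B_0+\nu_1E'_1+\dots+\nu_FF'+\nu_HH'$ in $\mathrm{Pic}(Y)$ coming from the theory of triple covers (the relation already exploited in Proposition \ref{p.h0li1}): its right-hand side, being $3L_1$, is divisible by $3$, so every coordinate in the basis $H,e_{P_1},\dots,e_{P_{14}}$ is $\equiv 0\pmod 3$. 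The coordinate at $e_{P_4}$, to which only $B_0$ (multiplicity $3$) and $F'$ (multiplicity $1$, weight $\nu_F$) contribute, equals $3+\nu_F$ up to sign, whence $\nu_F\equiv 0\pmod 3$. This contradicts the fact that $F'$ is an irreducible component of the branch divisor, so $\nu_F\in\{1,2\}$; reading the coordinate at $e_{P_6}$ instead forces $\nu_H\equiv 0$ in the same way.

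The hard part will be the bookkeeping that legitimates this last congruence: one has to be sure that $P_4$ is really a planar point and not itself infinitely near one of $P_1,P_2,P_3$ (all of which also carry octic-multiplicity $3$), and that the exceptional curve $F'$ enters with its eigenvalue $\nu_F$ and nothing else. This is precisely where the proximity inequalities, and if necessary a normalizing Cremona transformation based at three suitable planar points, are needed, as in Proposition \ref{p.3l-12}; an alternative is to use such a quadratic transformation to turn $F'$ (or $H'$) into an honest line or conic through a planar point, thereby reducing the situation to the mechanism already carried out in Propositions \ref{p.3l-1} and \ref{p.3l-12}. Either way, once the planarity at the base of the $A_2$ chain is secured, the eigenvalue congruence is forced and the case cannot occur.
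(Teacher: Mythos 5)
Your central mechanism is correct, and it is a genuinely different -- and shorter -- route than the paper's. The paper first performs a quadratic transformation based at $P_1,P_2,P_3$ (checking via proximity that these are planar points), then analyzes the chain $P_4,P_5,P_6$ to locate the planar point at its base, and only then applies the mod-$3$ condition on the total multiplicity of the branch divisor at that planar point, obtaining $\nu_H\equiv 0\pmod 3$, a contradiction. You instead apply the triple-cover relation $3L_1\equiv B_0+\bar E'_++2\bar E'_-+F'+2H'$ of Proposition \ref{p.h0li1} directly in the original configuration: since $H$ together with the total transforms $e_{P_1},\dots,e_{P_{14}}$ is a $\mathbb Z$-basis of $\mathrm{Pic}(Y)$, every coordinate of the right-hand side must be divisible by $3$; at $e_{P_4}$ only $B_0$ (entry $3$, weight $1$ or $2$) and $F'$ (entry $1$, weight $\nu_F$) contribute, forcing $\nu_F\equiv 0\pmod 3$, absurd since $F'$ is a component of the branch divisor. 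This is valid, and it is where your hesitation in the last paragraph is unfounded: the coordinate congruence is a statement in $\mathrm{Pic}(Y)$, not about actual multiplicities of plane curves at actual plane points, so it holds at every $e_{P_j}$ whether $P_j$ is planar or infinitely near; no proximity analysis and no normalizing Cremona transformation are needed. What the paper's longer route buys is that it stays within the plane-curve multiplicity formalism used throughout the section and simultaneously verifies the proximity consistency of the configuration, which is the engine of the neighbouring propositions; what your route buys is the elimination of exactly those reductions.

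Two corrections, though. First, your reading of the infinitely-near structure is reversed: with the tables' convention that a row $(d;m_1,\dots,m_{14})$ denotes the class $dH-\sum_j m_j e_{P_j}$, the $-1$ entries show that $F'=e_{P_5}-e_{P_4}-e_{P_{14}}$ is the exceptional curve over $P_5$ and $H'=e_{P_6}-e_{P_5}-e_{P_{14}}$ the one over $P_6$; hence $P_4$ is proximate to $P_5$, $P_5$ to $P_6$, and the planar point at the base of the chain is $P_6$, exactly as the paper states -- not $P_4$, and $F'$ is not the exceptional curve over $P_4$. Your primary argument is immune to this misreading only because it never uses the proximity data; but your proposed fallback (securing planarity of $P_4$ and arguing as in Propositions \ref{p.3l-1} and \ref{p.3l-12}) would have sent you to the wrong point. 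Second, your remark that the coordinate at $e_{P_6}$ forces $\nu_H\equiv 0$ is right, but mind the sign: the entry of $H'$ at $P_6$ is $-1$, so that coordinate is $3-\nu_H$, again giving $\nu_H\equiv 0\pmod 3$; this is, in substance, the congruence the paper extracts at $P_6$ after its Cremona transformation.
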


\begin{proof}
Let us consider the points $P_1,P_2,P_3$. They are of maximal multiplicity for both the octic and one of the two conics hence they cannot be infnitely near to any of the points $P_j,j\ge 4$. Since there is an irreducible  conic passing through all the three points, we can perform a quadratic transformation based at $P_1,P_2,P_3$ and we obtain the following configuration
\begin{equation*}
\begin{tabular}{c|cccccccccccccc}
  &$P_1$  &$P_2$ &$P_3$ &$P_4$ &$P_5$ &$P_6$ &$P_7$ &$P_8$ &$P_9$ &$P_{10}$ &$P_{11}$ &$P_{12}$ &$P_{13}$ &$P_{14}$\\
\hline
7 &2 &2 &2 &3 &3 &3 &2 &2 &2 &1 &1 &1 &1 &0\\
1 &0 &0 &0 &0 &0 &0 &1 &1 &1 &1 &0 &0 &0 &0\\
1 &0 &1 &1 &0 &0 &0 &0 &0 &1 &0 &1 &0 &0 &0\\
0 &0 &0 &0 &0 &0 &0 &0 &0 &-1 &1 &1 &0 &0 &0\\
1 &1 &0 &1 &0 &0 &0 &1 &0 &0 &0 &0 &1 &0 &0\\
1 &1 &1 &0 &0 &0 &0 &0 &1 &0 &0 &0 &0 &1 &0\\
0 &0 &0 &0 &1 &-1 &0 &0 &0  &0 &0 &0 &0 &0 &1\\
0 &0 &0 &0 &0 &1 &-1 &0 &0 &0 &0 &0 &0 &0 &1\\
\end{tabular}
\end{equation*}

We now show that this new configuration cannot occur.
Let us consider the points $P_4,P_5,P_6$. Since they are triple points for the septic they cannot be infinitely near to any other point $P_j$, $j\le 3$ or $j\ge 7$. Moreover  $P_4$ is proximate to $P_5$ which is also proximate to $P_6$. In particular $P_6$ is a planar point.

Since  $P_6$ is planar, the total multiplicity of $P_6$ in the branch divisor of the simple triple cover has to be a multiple of 3. Thus
$3+\nu_H\equiv 0 \mod 3$ 
which forces $\nu_H\equiv 0 \mod 3$. We get a contradiction since $H'$ is an irreducible component of the branch divisor.
\end{proof}

Hence we obtain
\begin{theorem}\label{t.3l-1}
The case  $n=3\ell-1,n'=2,n''=1$ cannot occur.
\end{theorem}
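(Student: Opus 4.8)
The plan is to reduce Theorem~\ref{t.3l-1} to the three preceding propositions, so the essential content is to check that Propositions~\ref{p.3l-1}, \ref{p.3l-12} and~\ref{p.3l-13} together exhaust every configuration arising in the case $n=3\ell-1$, $n'=2$, $n''=1$. Here $\ell=1$ forces $n=2$ and $h_1=4+\ell=5$, and, contracting the $(-1)$-cycles $G',Z_1,Z_2,Z'_1,Z'_2,Z''$ to a surface $W$, one realizes $Y$ as $\mathbb P^2$ blown up at fourteen points $P_1,\dots,P_{14}$, with $W=\mathbb P^2$ blown up at eight. I would work throughout with the plane images of the ramification curve $B_0$, the five exceptional curves $E'_i$, and the two curves $F',H'$ coming from the $A_2$-singularity, recording for each its degree and its multiplicities at the fourteen points.

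First I would pin down the intersection numbers $B_0Z_i$, $B_0Z'_i$ and $B_0Z''$ by testing $B_0$ against $N_1$, $N_2$ and $N_3\equiv 0$ and using the nefness of $N_2,N_3$; this produces the five rows a)--e) of the table. The Index theorem with $\bar N_2^2=1$ gives $\bar B_0^2\le(\bar B_0\bar N_2)^2$, which eliminates rows a), d), e), reduces row b) to $\bar B_0\equiv\bar N_2$ (killed by Lemma~\ref{l.nob}), and leaves row c) with $\bar B_0^2=2$. From here the numerical equations for the plane image of $B_0$ yield the short list $d_0=3,\dots,9$, all Cremona-equivalent, so I may take $d_0=8$. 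The condition $2\bar B_0+\bar E'\in|-6K_{\mathbb P^2}|$ then forces $\sum_{i=1}^5 d_i=2$ with each $d_i\le 2$, and a further chain of quadratic transformations normalizes $(d_0,d_1,\dots,d_5)$ to $(8,2,0,0,0,0)$.

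The next step is to determine $F'$ and $H'$. Since $F'G'=H'G'=1$ their images pass through $P_{14}$, and since they meet neither $B_0$ nor any $E'_i$ the relations~\eqref{e.f2} hold; the $(-3)$-condition bounds both degrees by $2$, and $F'H'=0$ rules out the line--conic and conic--conic combinations. What survives are exactly the four tables $(8,2,0,0,0,0,1,1)$, $(8,2,0,0,0,0,0,1)$, $(8,2,0,0,0,0,0,2)$ and $(8,2,0,0,0,0,0,0)$, pairwise linked by quadratic transformations; these are precisely the inputs of Propositions~\ref{p.3l-1}, \ref{p.3l-12} and~\ref{p.3l-13}. Hence every possibility is excluded and the theorem follows.

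The main obstacle is not the final assembly but verifying that the enumeration is genuinely exhaustive and that each quadratic transformation is legitimate. Concretely, before each Cremona move one must certify, via the proximity inequalities, that the three chosen base points are \emph{planar} (not infinitely near one another); and the decisive contradictions in the three propositions rest on a delicate congruence, tracking how each irreducible branch component $B_0,E'_i,F',H'$ contributes its eigenvalue $\nu\equiv 1,2\pmod 3$ so that the total multiplicity of the branch divisor at every planar point is divisible by $3$. Keeping the fourteen-point multiplicity tables consistent under these moves, and extracting the impossible relation $\nu_5\equiv 0$ or $\nu_H\equiv 0\pmod 3$, is where the real work lies.
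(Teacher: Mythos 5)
Your proposal follows essentially the same route as the paper: the same contraction to $W=\mathbb P^2$ blown up at eight points, the same table a)--e) for the intersections of $B_0$ with the cycles, the same Index-theorem elimination with row b) killed by Lemma~\ref{l.nob}, the same Cremona normalization to $d_0=8$ and $(d_0,\dots,d_5)=(8,2,0,0,0,0)$, the same analysis of $F'$ and $H'$ yielding the four configurations, and the same eigenvalue-congruence contradictions of Propositions~\ref{p.3l-1}, \ref{p.3l-12} and~\ref{p.3l-13}. The only slight imprecision is your claim that the four final tables are pairwise Cremona-equivalent: the paper only identifies $(8,2,0,0,0,0,1,1)$ with $(8,2,0,0,0,0,0,1)$, treating $(8,2,0,0,0,0,0,2)$ and $(8,2,0,0,0,0,0,0)$ by separate (though analogous) arguments, which does not affect the correctness of your outline.
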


With analogous computations we can prove (cf. \cite[sections 5.2.1, 5.2.2]{Pa})
\begin{theorem}\label{t.no3lDP}
The cases $n=3\ell,n'=0,n''=n'''=1$ with $\ell=0$,  $n=3\ell-2,n'=5$ and  $n=3l-3$ cannot occur.
\end{theorem}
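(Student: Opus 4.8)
The plan is to dispose of all three cases by the geometric method already used for Theorems \ref{t.3l-1} and \ref{t.no3lDP1}: contract the appropriate $(-1)$-cycles so that $Y$ becomes birational to a blow-up of $\mathbb P^2$, translate the statement into a finite Diophantine problem about plane curves, and close it off by combining proximity inequalities with the mod-$3$ eigenvalue bookkeeping of the branch divisor of $\pi\colon X\longrightarrow Y$.

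First, for each case I would identify the surface $W$ obtained after contraction, using the explicit relations recorded in sections \ref{s.3l-3}, \ref{s.3l-2} and \ref{s.3l}. For $n=3\ell,\ \ell=0,\ n'=0,\ n''=n'''=1$ one has $n=0$ and $B_0=0$, $h_1=4$, and (as in Theorem \ref{t.no3lDP1}) contracting $G',Z'',Z'''$ gives $K_W^2=1$, so $W$ is $\mathbb P^2$ blown up at eight points with $\bar N_3\equiv -K_W$ and $\bar N\equiv -4K_W$. For $n=3\ell-2,\ n'=5$ one has $N_2\equiv 0$, and contracting $G',Z_1,\dots,Z_n,Z'_1,\dots,Z'_5$ gives $K_W^2=2$, so $W$ is $\mathbb P^2$ blown up at seven points with $\bar N_1\equiv -K_W$ and $\bar N\equiv -2K_W$. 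For $n=3\ell-3$ one uses directly the birational morphism $\phi_{|N_1|}\colon Y\longrightarrow\mathbb P^2$ of section \ref{s.3l-3} (recall $N_1^2=1$, $p_a(N_1)=0$), which exhibits $Y$ as $\mathbb P^2$ blown up at $9-K_Y^2=11+3\ell$ points with $\bar N_1$ the line class $H$. In each case, from $N=3K_Y+2B_0+E'-3G'$ one gets $\overline{2B_0+E'}=\bar N-3K_W$ as an explicit divisor class on $W$, yielding one linear relation among the degrees $d_0,d_1,\dots$ of the plane images (of the type \eqref{e.deg1}, \eqref{e.deg1a}).

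Next, exactly as in \eqref{e.sys}, for each relevant curve I would write the system coming from its self-intersection and its intersection with the system defining the map to $\mathbb P^2$. An irreducible $(-3)$-curve $E'_k=dH-\sum_j m_jE_j$ on $W$ gives $\sum_j j^2 s_j=d^2+3$ and $\sum_j j s_j=3d-E'_k(-K_W)$, with $\sum_j s_j$ bounded by the number of blown-up points; when $\ell\ge 1$ the curve $B_0$ gives the analogous system with $\bar B_0^2$ the self-intersection of its image after the contractions, while for $\ell=0$ only the four curves $E'_k$ and the two curves $F',H'$ occur. Bounding $d$ yields a short finite list of admissible multi-degree tuples, which I would collapse to a normal form by Cremona quadratic transformations based at triples of high-multiplicity points (as in Proposition \ref{p.equiv0}), the irreducibility of each $E'_k$, $F'$, $H'$ ruling out the forbidden proximities.

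Finally, on the normal form I would derive the contradiction by the two mechanisms already exploited in Propositions \ref{p.3l-12} and \ref{p.3l-1}. At every point $P$ of the configuration that is planar (not infinitely near to another), the total multiplicity of the branch divisor must be $\equiv 0\pmod 3$; writing $\omega=e^{2\pi i/3}$ and assigning eigenvalues $\omega^{\nu_k}$ to the $E'_k$ and $\omega^{\nu_F},\omega^{2\nu_F}$ to $F',H'$, these congruences, together with $\nu_k,\nu_F\equiv 1,2\pmod 3$, are forced to be inconsistent. When instead the eigenvalue count is satisfiable, the proximity inequalities make two irreducible exceptional curves share too many base points (for instance forcing $F'H'\ge 1$ while $F'H'=0$ on $Y$), which is again impossible. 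I expect the main obstacle to be precisely the interplay of these two mechanisms: one must keep exact track of which points remain planar after each Cremona reduction, since only there does the mod-$3$ branch condition apply, and the case $n=3\ell-3$ is the most delicate because the larger number $11+3\ell$ of blown-up points produces many more admissible tuples and a correspondingly longer chain of quadratic reductions before the contradiction becomes visible.
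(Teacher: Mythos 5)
Your proposal takes essentially the same route as the paper, which proves Theorem \ref{t.no3lDP} only by declaring the computations ``analogous'' to those of Theorems \ref{t.no3lDP1} and \ref{t.3l-1} and deferring details to \cite[sections 5.2.1, 5.2.2]{Pa}: your contractions to $\mathbb P^2$ blown up at eight, seven, and $11+3\ell$ points (with $\bar N_3\equiv -K_W$, $\bar N_1\equiv -K_W$, and $\bar N_1$ the line class respectively, and $\overline{2B_0+E'}=\bar N-3K_W$), the resulting finite Diophantine lists, the Cremona normalization controlled by proximity inequalities, and the mod-$3$ branch-eigenvalue contradictions at planar points are precisely the method the paper executes in its detailed Del Pezzo cases, and your numerology ($K_W^2=1$ and $2$, $|-7K_{\mathbb P^2}|$ and $|-5K_{\mathbb P^2}|$, $\ell=1$ forced when $n=3\ell-2$ or $n=3\ell-3$) checks out. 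One small correction to carry into the execution: on $W$ the images $\bar E'_k$ need not remain $(-3)$-curves, since their self-intersections rise according to their intersections with the contracted cycles (exactly as in the paper's tables for $n=3\ell-1$, where $\bar{E'}_i^2$ takes values $-2$, $-1$, etc.), so your equation $\sum_j j^2 s_j=d^2+3$ must read $\sum_j j^2 s_j=d^2-\bar{E'}_k^2$ with $\bar{E'}_k^2$ determined case by case from the cycle intersections.
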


Collecting the proofs of theorems \ref{t.i}, \ref{t.ii}, \ref{t.iii2},  \ref{t.no0}, \ref{t.no1rul}, \ref{t.no1},  \ref{t.no3lDP1},  \ref{t.3l-1} and \ref{t.no3lDP} we eventually obtain theorem \ref{t.final}.

\end{document}